\documentclass[12pt]{amsart}
\usepackage{graphicx,color}
\usepackage{colordvi}
\usepackage{amssymb, mathrsfs, amsfonts, amsmath}
\usepackage{latexsym}

\setlength{\topmargin}{-20mm} \setlength{\textwidth}{16cm}
\setlength{\textheight}{248mm} \setlength{\oddsidemargin}{0cm}
\setlength{\evensidemargin}{0mm}

\newtheorem{theorem}{Theorem}[section]
\newtheorem{lemma}[theorem]{Lemma}
\newtheorem{corollary}[theorem]{Corollary}
\newtheorem{proposition}[theorem]{Proposition}
\newtheorem{definition}[theorem]{Definition}
\newtheorem{notation}[theorem]{Notation}
\newtheorem{example}[theorem]{Example}
\newtheorem{remark}[theorem]{Remark}

\newcommand{\R}{\mathbb R}
\newcommand{\Q}{\mathbb Q}

%

%
%

%
%







\newcommand{\F}{\mathbb F}

\newcommand{\M}{\mathcal M}

\newcommand{\K}{\mathcal{K}}

\begin{document}

\title[Outer Lipschitz Classification of Normal Pairs of H\"older Triangles]{Outer Lipschitz Classification of Normal Pairs of H\"older Triangles}

\author[]{Lev Birbrair*}
\address{Departamento de Matem\'atica, Universidade Federal do Cear\'a
(UFC), Campus do Pici, Bloco 914, Cep. 60455-760. Fortaleza-Ce,
Brasil}
\address{Department of Mathematics, Jagiellonian University, Prof.~Stanisława Łojasiewicza 6, 30-348, Kraków, Poland} \email{lev.birbrair@gmail.com}

\author[]{Andrei Gabrielov}
\address{Department of Mathematics, Purdue University,
West Lafayette, IN 47907, USA}\email{gabriea@purdue.edu}

\date{\today}

\keywords{definable surfaces, Lipschitz geometry, normal embedding, arc spaces}
\subjclass[2010]{51F30, 14P10, 03C64}

\begin{abstract}
 {A normal pair of H\"older triangles is the union of two normally embedded H\"older triangles
  satisfying some natural conditions on the tangency orders of their boundary arcs.
  It is a special case of a surface germ, a germ at the origin of a two-dimensional closed semialgebraic
  (or, more general, definable in a polynomially bounded o-minimal structure) subset of $\R^n$.
 Classification of normal pairs considered in this paper is a step towards
  outer Lipschitz classification of definable surface germs.
  In the paper \cite{BG} we introduced a combinatorial invariant of the outer Lipschitz equivalence class of normal pairs,
  called $\sigma\tau$-pizza, and conjectured that it is complete:
 two normal pairs of H\"older triangles with the same $\sigma\tau$-pizzas are outer Lipschitz equivalent.
 In this paper we prove that conjecture and define realizability conditions for the $\sigma\tau$-pizza invariant.
 Moreover, only one of the two pizzas in the $\sigma\tau$-pizza invariant,
 together with some admissible permutations related to $\sigma$ and $\tau$, is sufficient for the existence and
 uniqueness, up to outer Lipschitz equivalence, of a normal pair of H\"older triangles.}
\end{abstract}
\maketitle

\section{Introduction}\label{sec:intro}
All sets, functions and maps in this paper are germs at the origin of $\R^n$ definable in a polynomially bounded o-minimal structure over $\R$ with the field of exponents
$\F$.
The simplest (and most important in applications) examples of such structures are real semialgebraic and global subanalytic sets, with ${\F}={\Q}$ (see \cite{Dries}).

There are two natural metrics on a connected  set $X\subset\R^n$: the \emph{inner metric}, where the distance between two points of $X$
is the length of a shortest path in $X$ connecting these points,
and the \emph{outer metric}, where the distance between two points of $X$ is their distance in $\R^n$.
A set $X$ is called \emph{normally embedded} (see \cite{birbrair2000normal}) if its inner and outer metrics are equivalent.
There are three natural equivalence relations associated with these metrics. Two sets $X$ and $Y$ are \emph{inner} (resp., \emph{outer}) Lipschitz equivalent if there is an
inner (resp., outer) bi-Lipschitz homeomorphism $h:X\to Y$. The sets $X$ and $Y$ are \emph{ambient} Lipschitz equivalent if the homeomorphism $h:X\to Y$ can be extended to a
bi-Lipschitz homeomorphism $H$ of the ambient space. The ambient equivalence is stronger than the outer equivalence, and the outer equivalence is stronger then the inner
equivalence.  Finiteness theorems of Mostowski \cite{Mostowski} and Valette \cite{valette2005Lip} show that there are finitely many ambient Lipschitz equivalence classes in
any definable family.

Inner Lipschitz classification of surface germs was established by the first author in \cite{birbrair1999}.
The building block of the inner Lipschitz classification of surface germs is a \emph{$\beta$-H\"older triangle} (see Definition \ref{holder}).
A combinatorial model for the inner Lipschitz equivalence class of a surface germ $X$ is a \emph{Canonical H\"older Complex},
based on a decomposition of $X$ into H\"older triangles and isolated arcs (see \cite{birbrair1999}).

Outer Lipschitz geometry of surface germs is considerably more complicated.
A special case of a surface germ $(T, Graph(f))$, the union of a H\"older triangle $T$
and a graph of a Lipschitz function $f$ defined on $T$. Two such pairs $(T, Graph(f))$ and $(T, Graph(g))$ are outer Lipschitz equivalent when the functions $f$ and $g$ are contact Lipschitz equivalent \cite{birbrair2014lipschitz}.

This relates outer Lipschitz geometry of surface germs with the Lipschitz geometry of functions.
A complete invariant of the Lipschitz contact equivalence of Lipschitz functions, called \emph{minimal pizza}, was defined in \cite{birbrair2014lipschitz}.
Informally, a pizza for a Lipschitz function $f$ on a normally embedded H\"older triangle $T$ is a decomposition of $T$ into ``pizza slices,''
subtriangles  $T_i$ of $T$, such that the order of $f$ on each arc $\gamma\subset T_i$ depends linearly on the tangency order of $\gamma$
with a boundary arc of $T_i$. Thus a pizza can be encoded by a piecewise-linear combinatorial object, reminiscent of tropical geometry.
A pizza is minimal if the union of any two adjacent pizza slices is not a pizza slice.

A general pair of normally embedded H\"older triangles is considerably more complicated the a pair $(T, Graph(f))$.
Part of this complexity was described in our paper \cite{BG}, even more of it being uncovered in the current paper.

In this paper we consider \emph{normal pairs} of H\"older triangles defined in \cite{BG},
surface germs $X=T\cup T'\subset\R^n$, where $T$ and $T'$ are normally embedded H\"older triangles satisfying natural conditions (\ref{tord-tord}) on the tangency orders of their boundary arcs.
Let $f:T\to\R$ and $g:T'\to\R$ be Lipschitz functions defined as the distances in $\R^n$ from the points in each of these two triangles to the other triangle. The first question is whether $X$ is outer Lipschitz equivalent to the union of $T$ and the graph of the distance function $f$.
Some examples (see Figure \ref{fig:pi}) show that the answer may be negative. These examples show that the theory is much more rich, then one can expect.
Another natural question is whether the minimal pizzas of $f$ and $g$ are equivalent.
It was shown in the paper \cite{BG} that the answer to this question is also negative.
The minimal pizzas for $f$ and $g$ may even have different numbers of pizza slices (see Examples \ref{example:varpi} and \ref{example:varpi2} below).

In paper \cite{BG} we defined the \emph{$\sigma\tau$-pizza} invariant of a normal pair $(T,T')$ of H\"older triangles,
consisting of the minimal pizzas $\Lambda$ and $\Lambda'$ associated with the distance functions $f$ and $g$ on $T$ and $T'$,
characteristic permutation $\sigma$ and characteristic correspondence $\tau$, where
$\sigma$ and $\tau$ detect relations between certain elements of the pizzas $\Lambda$ and $\Lambda'$ as follows.
The spaces of arcs of $T$ and $T'$ contain outer Lipschitz invariant subsets, called \emph{maximum zones} (see Definition \ref{maxmin}).
The permutation $\sigma$ in \cite[Definition 4.5]{BG} (see Proposition \ref{sigma} and Definition \ref{characteristic} below) is encoding a canonical one-to-one correspondence between the maximum zones of $\Lambda$ and $\Lambda'$.
Two pizza slices $T_i$ and $T'_j$ of $\Lambda$ and $\Lambda'$ are called \emph{transverse} if the distance functions of the pair $(T_i,T'_j)$ are equivalent to the distances from the points of one of these triangles to a boundary arc of another one.
Otherwise $T_i$ and $T'_j$ are called \emph{coherent} (see Definition \ref{def:pizzaslicezone-transverse}).
There is a canonical one-to-one correspondence $\tau$ between coherent pizza slices of $\Lambda$ and $\Lambda'$ (see
\cite[Definition 4.8]{BG} and Definition \ref{def:tau} below). The theory, created in \cite{BG} can be interpreted as a study of the dynamics of the family of intersections of the surface with a small sphere, when the radius of the sphere tends to zero. The main tool of the theory is a serious use of non-archimedean geometry.

The main result of \cite{BG} states that $\sigma\tau$-pizzas of outer Lipschitz equivalent normal pairs of H\"older triangles are combinatorially equivalent.  In this paper we show that the converse is also true:
The pizzas $\Lambda$ and $\Lambda'$, together with the characteristic permutation $\sigma$
and characteristic correspondence $\tau$, constitute a \emph{complete invariant} of the outer
Lipschitz equivalence class of normal pairs of H\"older triangles.
Moreover, given any one of the two pizzas, and given the permutation $\sigma$ and correspondence $\tau$ (more precisely, given
$\sigma$ and a permutation $\varpi$ associated with $\sigma$ and $\tau$, see Definition \ref{def:varpi})
satisfying some explicit admissibility conditions,
a normal pair $(T,T')$ with the given pizza, $\sigma$ and $\tau$ exists and is unique up to outer Lipschitz equivalence.
The admissibility conditions are necessary: they are satisfied by the $\sigma\tau$-pizza invariant of any normal pair $(T,T')$.

Section \ref{sec:prelim} of this paper introduces the main tools of our study.
We consider the Valette link \cite{Valette-Link} of a germ $X$, i.e., the set $V(X)$ of arcs in $X$ parameterized by the distance to the origin.
It has a structure of a non-archimedean metric space,
where the metric is defined by the tangency order of arcs.

The main advantage of working in $V(X)$, instead of $X$ itself, is the possibility to define outer Lipschitz invariant sets of arcs
in $V(X)$, while the only outer Lipschitz invariant arcs in $X$ are Lipschitz singular arcs (see Definition \ref{singulararc}).
We describe the properties of sets and maps in the spaces of arcs corresponding to the geometric properties of surface germs and their bi-Lipschitz maps.
In particular, we give another proof of the theorem of Fernandes (see \cite{Alexandre}) that a map between two germs is outer bi-Lipschitz if, and only if, the corresponding
map between their Valette links is an isometry.
We introduce a notion of \emph{combinatorial normal embedding} of a H\"older triangle, in terms of its Valette link, and prove that combinatorial normal embedding is equivalent to normal embedding.

We remind the definition of a pizza from \cite{birbrair2014lipschitz}, and define an \emph{abstract pizza} (see Definition \ref{abstract-pizza})
as a combinatorial encoding of an equivalence class of a pizza associated with a non-negative Lipschitz function on a normally embedded H\"older triangle.
We show that a pizza, unique up to combinatorial equivalence, can be recovered from an abstract pizza (see Theorem \ref{pizza-realization}).
Thus a minimal abstract pizza represents a contact equivalence class of a non-negative Lipschitz function.

At the end of Section \ref{sec:prelim} we reformulate the notion of pizza in terms of \emph{pizza zones}, Lipschitz invariant subsets
of the Valette link of a normally embedded H\"older triangle $T$, where the boundary arcs of pizza slices of a minimal pizza associated with
a Lipschitz function on $T$ may be selected.

In Section \ref{sec:sigma} we define \emph{maximal exponent zones} (or simply \emph{maximum zones}),
the pizza zones of the minimal pizzas $\Lambda$ and $\Lambda'$ on $T$ and $T'$, associated with the distance functions $f$ and $g$,
where the orders of these functions attain local maxima, and introduce the \emph{characteristic permutation} $\sigma$ of a normal pair $(T,T')$
encoding one-to-one correspondence between the maximum zones of $\Lambda$ and $\Lambda'$.

In Section \ref{sec:tau} we define \emph{transverse} and \emph{coherent} pizza slices of the pizzas $\Lambda$ and $\Lambda'$
and introduce the correspondence $\tau$ between their coherent pizza slices.
This is a \emph{signed} correspondence: since pizza slices of $\Lambda$ and $\Lambda'$ have orientations induced by the orientations
of $T$ and $T'$, the action of $\tau$ may either preserve or reverse that orientation.
To understand relations between $\sigma$ and $\tau$, we investigate combinatorial and metric properties of the $\sigma\tau$-pizza
invariant defined in \cite{BG}.
Note that $\sigma$ and $\tau$ are of a rather different nature: the permutation
$\sigma$ is encoding a one-to-one correspondence between maximum zones
of pizzas $\Lambda$ and $\Lambda'$, which are some of their pizza zones,
while $\tau$ is a one-to-one correspondence between coherent pizza slices of $\Lambda$ and $\Lambda'$,
which cannot in general be extended to a one-to-one correspondence between their pizza zones.
We start with the permutation $\upsilon$ and the sign function $s$ on coherent pizza slices induced by $\tau$ (see Definition \ref{upsilon}).
The triple $(\sigma,\upsilon,s)$ defined for a pair $(T,T')$ satisfies \emph{allowability conditions} (see Definition \ref{def:allowable})
which can be formulated in terms of the pizza $\Lambda$ on $T$, based on the properties of sets of pizza slices of $\Lambda$
called \emph{caravans} (see Definition \ref{def:caravan}).
A triple $(\sigma,\upsilon,s)$ is \emph{allowable} when these conditions are satisfied.

To combine $\sigma$ with $\upsilon$, we consider the disjoint union $\mathcal K$ of the sets of maximum zones and
 coherent pizza slices of $\Lambda$, and the corresponding set $\mathcal K'$ for $\Lambda'$.
 These two sets have the same number of elements $K$.
 Ordering them according to orientation of $T$ and $T'$, we define the \emph{combined characteristic permutation} $\omega$
of the pair $(T,T')$ on a set of $K$ elements (see Definition \ref{omega})
 and show that, given a pizza $\Lambda$ on $T$ and an allowable triple $(\sigma,\upsilon,s)$,
 the permutation $\omega$ can be uniquely determined, even when $T'$ and $\Lambda'$ are not known.
This construction is important for the realization theorem (Theorem \ref{general-blocks}) in Section \ref{sec:blocks-general}.

In Section \ref{sec:invariant} we prove (Theorem \ref{complete})
that two normal pairs $(T,T')$ and $(S,S')$ of H\"older triangles are outer Lipschitz equivalent if, and only if, their $\sigma\tau$-pizza invariants are combinatorially equivalent.

In Section \ref{sec:blocks} we introduce a combinatorial notion of \emph{blocks} that allows us to establish relations between the metric (pizzas)
and combinatorial ($\sigma$ and $\tau$) parts of the $\sigma\tau$-pizza invariant, which are necessary and sufficient for the existence and uniqueness, up to outer Lipschitz equivalence, of a normal pair of H\"older triangles.
We do this first in the \emph{totally transverse} case, when there are no coherent pizza slices.

For a totally transverse pizza $\Lambda$ associated with a non-negative Lipschitz function $f$ on a H\"older triangle $T$,
a family $\lambda_0,\ldots,\lambda_{n-1}$ of $n$ arcs in $V(T)$,
ordered according to orientation of $T$,
which are either the boundary arcs of $T$ or belong to maximum zones of $\Lambda$,
such that each maximum zone contains exactly one of these arcs,
is called a \emph{supporting family} (see Definition \ref{supporting-family}).
The pizza $\Lambda$ is completely determined by the exponents $\beta_{ij}=tord(\lambda_i,\lambda_j)$ and $q_i=ord_{\lambda_i} f$ associated with a supporting family.

Let $\chi$ be a permutation of the set $[n]=\{0,\ldots,n-1\}$ of $n$ elements.
A \emph{segment} of $[n]$ is a non-empty set of consecutive indices $\{i,\ldots,k\}$.
A segment $B$ of $[n]$ is called a \emph{block} of $\chi$ if the set $\chi(B)$ is also a segment of $[n]$
(not necessarily in increasing order).
Each non-empty subset $J$ of $[n]$ is contained in a unique minimal block $B_{\chi}(J)$ of $\chi$.
A permutation $\chi$ of $[n]$
is called \emph{admissible} with respect to a Lipschitz function $f$ on $T$ (or with respect to a minimal pizza $\Lambda$ on $T$ associated with $f$) if it satisfies the \emph{block conditions}:
$\beta_{ij}\le\beta_{ik}$ for all $k\in B_{\chi}(\{i,j\})$ (condition (\ref{beta:blocks}) in Theorem \ref{beta-block}).

Given a totally transverse pizza $\Lambda$ with a supporting family of $n$ arcs on a H\"older triangle $T$,
associated with a non-negative function $f$ on $T$,
and an admissible with respect to $f$ permutation $\pi$ of $[n]$, there exists a unique, up to outer Lipschitz equivalence,
totally transverse normal pair $(T,T')$ realizing the pizza $\Lambda$ and the permutation $\pi$ of $[n]$
compatible with the characteristic permutation $\sigma$ of the pair $(T,T')$ on the family of maximum zones.

In Section \ref{sec:blocks-general} we establish the existence and uniqueness, up to
outer Lipschitz equivalence, of a general normal pair of H\"older triangles,
based on admissibility conditions, including the block conditions, for an analog $\varpi$ of the permutation $\pi$.
In this section we define \emph{pre-pizza} (see Definition \ref{def:pre-pizza}) obtained from a minimal pizza by removing \emph{non-essential} pizza zones where the action of
$\sigma$ and $\tau$
is not defined, and \emph{twin pre-pizza} (see Definition \ref{def:twin-pre-pizza}) obtained from a pre-pizza by adding ``twin arcs''
to ensure that $\tau$ is one-to-one and compatible with $\sigma$ on the expanded set of arcs.
In the \emph{totally transverse} case, when there are no coherent pizza slices, pre-pizza is determined by the maximum zones and twin pre-pizza
is the same as pre-pizza.
The minimal pizza, unique up to combinatorial equivalence, can be recovered from the corresponding pre-pizza,
and a pre-pizza can be recovered from the corresponding twin pre-pizza.
Using definitions of pre-pizza and twin pre-pizza, we define admissibility conditions for the permutation $\varpi$. The construction of $\varpi$ is based on the combined characteristic permutation $\omega$ defined in Section \ref{sec:tau}.
The admissibility conditions for $\varpi$ include the allowability conditions for the triple $(\sigma,\upsilon,s)$ and the block conditions  (\ref{dart:blocks}).
The main result of this section (and also the main realization result of the paper) is similar to the realization theorem (Theorem \ref{transverse-blocks}) for the totally transverse case in Section \ref{sec:blocks}.

Given a H\"older triangle $T$, a pizza $\Lambda$ and an admissible permutation $\varpi$, one can construct a normally embedded triangle $T'$, such that $(T,T')$ is a normal pair of H\"older triangles
with the pizza $\Lambda$ associated with the distance function $f(x)=dist(x,T')$ on $T$, and $\varpi$ is compatible with the characteristic permutation $\sigma$ and characteristic correspondence $\tau$ of the pair $(T,T')$.

Some remarks about the figures. We try to illustrate the ``dynamics'' of the link of a surface germ $X$,
dependence of the intersection of $X$ with a small sphere on the radius of the sphere.
Accordingly, in our figures $X$ is a curve, a pair of H\"older triangles is the union of two intervals, and
some arcs in $X$ are marked as points. We try to imitate the non-archimedean metric on $V(X)$ by representing
the arcs with higher tangency order by the nearby points in the plane, and mark some important zones as shaded disks.
We hope it will bring some intuition to the reader.

The second author would like to express his gratitude for the warm hospitality to the Jagiellonian University in Krakow, Poland,
which he visited while this paper was being written.

\section{Preliminaries}\label{sec:prelim}
As stated above, all sets, functions and maps in this paper are germs at the origin of $\R^n$ definable in a polynomially bounded o-minimal structure.
To simplify notations, we write $X$ for a germ $(X,0)\subset(\R^n,0)$.

\begin{definition}\label{metrics}\normalfont
A germ $X\subset\R^n$ inherits two metrics from the ambient space: the \emph{inner metric}, the distance $idist(x,y)$ between two points $x$ and $y$ of
$X$ being the length of the shortest path connecting them inside $X$, and the \emph{outer metric} with the distance $dist(x,y)=|x-y|$.
A germ $X$ is \emph{normally embedded} if these two metrics are equivalent.
For a point $x\in X$ and a subset $Y\subset X$, the \emph{inner distance} is $idist(x,Y)=\inf_{y\in Y} idist(x,y)$ and the \emph{outer distance} is $dist(x,Y)=\inf_{y\in Y}|x-y|$.\newline
A \emph{surface germ} is a closed germ $X$ at the origin such that $\dim_\R X=2$.
\end{definition}

\begin{remark}\label{inner-pancake}\normalfont
In general, the inner metric is not definable, but there is a definable ``pancake metric'' (see \cite{kurdyka-orro})
equivalent to the inner metric.
\end{remark}

\begin{definition}\label{arc}\normalfont
An \emph{arc} in $\R^n$ is (a germ at the origin of) a mapping $\gamma:[0,\epsilon)\rightarrow \R^n$ such that $\gamma(0)=\mathbf 0$.
Unless otherwise specified, we suppose that arcs are parameterized by the distance to the origin, i.e., $|\gamma(t)|=t$.
We usually identify an arc $\gamma$ with its image in $\R^n$.
The \emph{Valette link} of $X$ is the set $V(X)$ of all arcs $\gamma\subset X$.
\end{definition}

\begin{definition}\label{ordonarc}\normalfont
Let $f\not\equiv 0$ be a Lipschitz function defined on an arc $\gamma$, such that $f(\mathbf 0)=0$.
The \emph{order} of $f$ on $\gamma$ is the exponent $q=ord_\gamma f\in\F_{\ge 1}$ such that $f(\gamma(t))=c t^q+o(t^q)$ as $t\to 0$, where $c\ne 0$.
If $f\equiv 0$ on $\gamma$, then $ord_\gamma f=\infty$.
\end{definition}

\begin{definition}\label{tord}\normalfont
If $\gamma$ and $\gamma'$ are two arcs, then
the \emph{tangency order} between them is defined as $tord(\gamma,\gamma')=ord_{\gamma}|\gamma(t)-\gamma'(t)|$.
The tangency order between an arc $\gamma$ and a set of arcs $Z\subset V(X)$  is defined as
$tord(\gamma,Z)=\sup_{\lambda\in Z} tord(\gamma,\lambda)$.
The tangency order between two subsets $Z$ and $Z'$ of $V(X)$ is defined as $tord(Z,Z')=\sup_{\gamma\in Z} tord(\gamma,Z')$.
Similarly, $itord_X (\gamma,\gamma')$, $itord_X (\gamma,Z)$ and $itord_X (Z,Z')$ denote the tangency orders with respect to the inner metric.
The distance $\xi(\gamma,\gamma')=1/tord(\gamma,\gamma')$ between arcs in $X$ defines a \emph{non-archimedean metric} $\xi$ on $V(X)$.
\end{definition}

\begin{proposition}\label{important-proposition} \emph{(See \cite{Alexandre}.)}
 Let $X$ and $Y$ be two germs at the origin of $\R^n$. A homeomorphism $\Phi:X\to Y$ preserving the distance to the origin is bi-Lipschitz if,
 and only if, for any two arcs $\gamma_1, \gamma_2 \in V(X)$ one has
 \begin{equation}\label{twoarcs}
 tord(\gamma_1,\gamma_2)=tord(\Phi(\gamma_1),\Phi(\gamma_2)).
 \end{equation}
\end{proposition}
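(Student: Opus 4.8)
The plan is to prove the two implications separately; the forward one is routine, and essentially all the content sits in the converse, where the crucial point is a formula recovering the ambient distance between points on two arcs from their tangency order.

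The forward direction I would dispatch at once. Assuming $\Phi$ is bi-Lipschitz with constant $L\ge 1$, note first that $\Phi(\mathbf 0)=\mathbf 0$ and, since $\Phi$ preserves the distance to the origin, each $\Phi\circ\gamma_i$ is again parameterized by the distance to the origin: $|\Phi(\gamma_i(t))|=|\gamma_i(t)|=t$. Then $L^{-1}|\gamma_1(t)-\gamma_2(t)|\le|\Phi(\gamma_1(t))-\Phi(\gamma_2(t))|\le L|\gamma_1(t)-\gamma_2(t)|$, and multiplying a definable function of $t$ by a factor bounded away from $0$ and $\infty$ does not change its order in $t$; hence $ord_t|\gamma_1(t)-\gamma_2(t)|=ord_t|\Phi(\gamma_1(t))-\Phi(\gamma_2(t))|$, which is exactly \eqref{twoarcs}.

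For the converse — the substance of the statement — I would first observe that \eqref{twoarcs} is symmetric under $\gamma_i\leftrightarrow\Phi(\gamma_i)$, so $\Phi^{-1}$ has the same property and it is enough to prove that $\Phi$ is Lipschitz. The engine is the metric estimate: for any $\alpha,\beta\in V(X)$ with $q=tord(\alpha,\beta)$ and any $\tau,\tau'\to 0$ with $\tau\asymp\tau'$,
\begin{equation}\label{eq:metricest}
|\alpha(\tau)-\beta(\tau')|\asymp\tau^{q}+|\tau-\tau'|,
\end{equation}
with constants depending only on $\alpha$, $\beta$ and the ratio $\tau/\tau'$ (if $q=\infty$ then $\alpha=\beta$ and this just says $|\alpha(\tau)-\alpha(\tau')|\asymp|\tau-\tau'|$). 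The upper bound I would get by the triangle inequality through $\beta(\tau)$: $|\alpha(\tau)-\beta(\tau)|\asymp\tau^q$ by the definition of $q$, and $|\beta(\tau)-\beta(\tau')|\lesssim|\tau-\tau'|$ because, by the monotonicity theorem, a definable arc parameterized by the distance to the origin is, near the origin, bi-Lipschitz with respect to its parameter (its velocity tends to a unit vector). The lower bound I would obtain from $|\alpha(\tau)-\beta(\tau')|\ge\bigl|\,|\alpha(\tau)|-|\beta(\tau')|\,\bigr|=|\tau-\tau'|$ together with a reverse-triangle estimate $|\alpha(\tau)-\beta(\tau')|\ge|\alpha(\tau)-\beta(\tau)|-|\beta(\tau)-\beta(\tau')|$, which is $\gtrsim\tau^q$ when $|\tau-\tau'|$ is a small enough multiple of $\tau^q$ and $\ge|\tau-\tau'|\gtrsim\tau^q$ otherwise; taking the maximum of the two lower bounds gives \eqref{eq:metricest}.

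With \eqref{eq:metricest} in hand I would argue by contradiction. If $\Phi$ is not Lipschitz, the definable function $(x,y)\mapsto|\Phi(x)-\Phi(y)|/|x-y|$ on $(X\times X)\setminus\Delta$ (the diagonal removed) is unbounded near the origin, so the Curve Selection Lemma yields a definable arc $t\mapsto(x(t),y(t))$, $t\in(0,\epsilon)$, with $x(t),y(t)\to\mathbf 0$, $x(t)\ne y(t)$, and the ratio tending to $\infty$; shrinking $\epsilon$ we may assume $x(t),y(t)\ne\mathbf 0$. Using that $\Phi$ preserves the distance to the origin, $\bigl|\,|x(t)|-|y(t)|\,\bigr|\le|\Phi(x(t))-\Phi(y(t))|\le|x(t)|+|y(t)|$ and likewise $\bigl|\,|x(t)|-|y(t)|\,\bigr|\le|x(t)-y(t)|$, so the definable ratio $|x(t)|/|y(t)|$ (which has a limit in $[0,\infty]$) must tend to a finite positive value — a limit $0$ or $\infty$ would force both $|x(t)-y(t)|$ and $|\Phi(x(t))-\Phi(y(t))|$ to be comparable to $\max(|x(t)|,|y(t)|)$, keeping the ratio bounded. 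Thus $|x(t)|\asymp|y(t)|$. Reparameterizing the arcs $\{x(\cdot)\}$ and $\{y(\cdot)\}$ by the distance to the origin (legitimate since $|x(t)|,|y(t)|$ are eventually strictly increasing) gives $\gamma_1,\gamma_2\in V(X)$ with $x(t)=\gamma_1(\tau)$, $y(t)=\gamma_2(\tau')$, $\tau=|x(t)|\asymp\tau'=|y(t)|\to 0$. Applying \eqref{eq:metricest} once in $X$ with $q=tord(\gamma_1,\gamma_2)$ and once in $Y$ with $q'=tord(\Phi\circ\gamma_1,\Phi\circ\gamma_2)$ yields $|x(t)-y(t)|\asymp\tau^{q}+|\tau-\tau'|$ and $|\Phi(x(t))-\Phi(y(t))|\asymp\tau^{q'}+|\tau-\tau'|$, with constants uniform in $t$ along this arc; by \eqref{twoarcs} for $\gamma_1,\gamma_2$ one has $q=q'$, hence $|\Phi(x(t))-\Phi(y(t))|\asymp|x(t)-y(t)|$ along the arc, contradicting the blow-up. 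So $\Phi$, and by symmetry $\Phi^{-1}$, is Lipschitz.

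The step I expect to be the main obstacle is the estimate \eqref{eq:metricest}: one must verify that a definable arc parameterized by the distance to the origin is bi-Lipschitz with respect to its parameter near the origin — this is precisely what lets a single number, the tangency order, pin down the ambient distance between points on two arcs up to multiplicative constants — and one must then keep all the comparison constants uniform along the arc produced by curve selection. Everything else is standard manipulation with the Curve Selection Lemma, the triangle inequality, and the monotonicity theorem for definable functions of one variable.
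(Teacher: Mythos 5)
Your proof is correct and follows the same overall strategy as the paper's — curve selection applied to the set of pairs where the Lipschitz ratio blows up, then a contradiction with condition \eqref{twoarcs} — but you fill in the gap that the paper leaves implicit. The paper's proof stops at the inequality $|\gamma_1(z)-\gamma_2(z)|<z\,|\Phi(\gamma_1(z))-\Phi(\gamma_2(z))|$ and simply declares it ``in contradiction with \eqref{twoarcs},'' without explaining how: the curve-selection arc is parameterized by $z$, not by the distance to the origin, and the two components $\gamma_1(z),\gamma_2(z)$ generally sit at different radii, so translating a blow-up along such an arc into a failure of \eqref{twoarcs} for two arcs of $V(X)$ is precisely the nontrivial step. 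Your estimate $|\alpha(\tau)-\beta(\tau')|\asymp\tau^{q}+|\tau-\tau'|$ for $q=tord(\alpha,\beta)$ and $\tau\asymp\tau'$, together with the preliminary reduction $|x(t)|\asymp|y(t)|$ via the reverse triangle inequality and preservation of the norm, is exactly what makes the paper's one-line conclusion rigorous. The two proofs are thus the same in spirit; yours is the honest version. (Two small things worth flagging in your write-up: the claim that a definable arc parameterized by arc-length-to-origin is Lipschitz in its parameter near the origin relies on the polynomial boundedness of the structure, which you should cite explicitly; and the constants in your estimate \eqref{eq:metricest} depend on the pair $(\alpha,\beta)$ and on the bound for the ratio $\tau/\tau'$, but since in the final step you apply it to a single fixed pair $(\gamma_1,\gamma_2)$ and a single fixed pair $(\Phi\circ\gamma_1,\Phi\circ\gamma_2)$ along one definable arc where $|x(t)|/|y(t)|$ converges to a positive limit, this uniformity is legitimately available.)
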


\begin{proof} We present a proof, slightly different from the proof given in \cite{Alexandre}.
It is similar to the proof of the main result of \cite{comRodrigo}.
If $\Phi$ is bi-Lipschitz then (\ref{twoarcs}) is obviously satisfied.
Let us show that $\Phi$ is a Lipschitz map when (\ref{twoarcs}) is satisfied.

Consider the set $W\subset\R^n\times\R^n\times\R$ defined as
\begin{equation}\label{W}
W=\{(x_1,x_2,z):x_1\in X,\,x_2\in X,\,0<z<1,\; |x_1-x_2|<z |\Phi(x_1)-\Phi(x_2)|\}
\end{equation}
Note that, since $\Phi$ preserves the distance to the origin, the set $W$ does not contain any points with $x_1=0$ or $x_2=0$.
If $\Phi$ is not a Lipschitz map, then the closure of the set $W_c=W\cap\{z=c\}$ contains the point $(0,0,c)$, for $0<c<1$.
By the arc selection lemma, the set $W\cup\{0,0,0\}$ contains an arc $\gamma(z)=(\gamma_1(z),\gamma_2(z),z)$
parameterized by $z\ge 0$, such that $\lim_{z\to 0} (\gamma_1(z),\gamma_2(z))=(0,0)$ and
$|\gamma_1(z)-\gamma_2(z)|<z |\Phi(\gamma_1(z))-\Phi(\gamma_2(z))|$.
Then the projection of $\gamma$ to $\R^n\times\R^n$ along the $z$-axis is an arc $\Gamma(z)=(\gamma_1(z),\gamma_2(z))\subset X\times X$ parameterized by $z$, such that
\begin{equation}\label{gamma}
|\gamma_1(z)-\gamma_2(z)|<z |\Phi(\gamma_1(z))-\Phi(\gamma_2(z))|,
\end{equation}
in contradiction with (\ref{twoarcs}).

The same arguments applied to $\Phi^{-1}$ show that $\Phi^{-1}$ is also a Lipschitz map.
\end{proof}

\begin{definition}\label{standard holder}
	\normalfont For $\beta \in \F$, $\beta \ge 1$, the \emph{standard $\beta$-H\"older triangle} is (a germ at the origin of) the set
	\begin{equation}\label{Formula:Standard Holder triangle}
	T_\beta = \{(u,v)\in \R^2 \mid u\ge 0, \; 0\le v \le u^\beta\}.
	\end{equation}
	The arcs $\{u\ge 0,\; v=0\}$ and $\{u\ge 0,\; v=u^\beta\}$ are the \emph{boundary arcs} of $T_\beta$.
\end{definition}

\begin{definition}\label{holder}\normalfont
A $\beta$-\emph{H\"older triangle} is (a germ at the origin of) a set $T \subset \R^n$ that is inner bi-Lipschitz homeomorphic
to the standard $\beta$-H\"older triangle (\ref{Formula:Standard Holder triangle}).
The number $\beta=\mu(T) \in \F$ is called the \emph{exponent} of $T$.
The arcs $\gamma_1$ and $\gamma_2$ of $T$ mapped to the boundary arcs of $T_\beta$ by an inner bi-Lipschitz homeomorphism are the
\emph{boundary arcs} of $T$ (notation $T=T(\gamma_1,\gamma_2)$).
All other arcs of $T$ are its \emph{interior arcs}. The set of interior arcs of $T$ is denoted by $I(T)$.
An arc $\gamma\subset T$ is \emph{generic} if $itord(\gamma,\gamma_1)=itord(\gamma,\gamma_2)$.
The set of generic arcs of $T$ is denoted by $G(T)$.
\end{definition}

\begin{definition}\label{def:transverse}\normalfont
Two normally embedded H\"older triangles $T$ and $T'$ are called \emph{transverse} if there is a boundary arc $\tilde\lambda$ of $T$ and a boundary arc $\tilde\lambda'$ of $T'$ such that
$tord(\tilde\lambda,\gamma')=tord(\gamma',T)$ for any arc $\gamma'$ of $T'$ and $tord(\tilde\lambda',\gamma)=tord(\gamma,T')$ for any arc $\gamma$ of $T$.
 \end{definition}

\begin{remark}\label{rem:transverse}\normalfont
Two transverse H\"older triangles $T$ and $T'$ in Definition \ref{def:transverse} either are disjoint or have a
common boundary arc $\lambda$. In the latter case, $\tilde\lambda=\tilde\lambda'=\lambda$.
\end{remark}

\begin{definition}\label{combinatorialLNE}\normalfont
Let $T=\bigcup T_i$ be a H\"older triangle decomposed into normally embedded subtriangles $T_i=T(\lambda_{i-1},\lambda_i)$,
such that $T_i \cap T_{i+1} = \lambda_i$.
We say that $T$ is \emph{combinatorially normally embedded} if any two triangles $T_i$ and $T_j$ are transverse (see Definition \ref{def:transverse}) and
\begin{equation}\label{check-LNE1}
tord(\lambda_i,\lambda_j)=\min(tord(\lambda_i,\lambda_k),tord(\lambda_k,\lambda_j))\;\;\text{for all}\;\; i,j,k\;\; \text{such that}\;\;i<k<j.
\end{equation}
\end{definition}

\begin{proposition}\label{combinatorialLNE-prop}
If a H\"older triangle $T$ is combinatorially normally embedded, then it is normally embedded.
\end{proposition}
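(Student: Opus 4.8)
\emph{Overview.} The plan is to work in the Valette link, reduce normal embedding of $T$ to the equality of inner and outer tangency orders on $V(T)$, and then propagate this equality from the pieces $T_k$ to all of $T$ using the non-archimedean inequality for $tord$ together with condition (\ref{check-LNE1}). Recall that a germ $X$ is normally embedded if and only if $itord_X(\gamma,\gamma')=tord(\gamma,\gamma')$ for all $\gamma,\gamma'\in V(X)$: the inequality $itord_X\le tord$ always holds (because $dist\le idist$), so only $itord_T\ge tord$ needs to be proved. If $\gamma$ and $\gamma'$ both lie in one subtriangle $T_k$, then $tord(\gamma,\gamma')=itord_{T_k}(\gamma,\gamma')$ by normal embedding of $T_k$, while $itord_{T_k}(\gamma,\gamma')\le itord_T(\gamma,\gamma')$ since $T_k\subset T$; hence equality holds. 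So the whole content is to treat arcs $\gamma\subset T_i$ and $\gamma'\subset T_j$ with $i<j$.

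\emph{The chain formula.} For such $\gamma,\gamma'$ I would prove, by induction on $j-i$, that
\[ tord(\gamma,\gamma')=itord_T(\gamma,\gamma')=\min\bigl(tord(\gamma,\lambda_i),\,tord(\lambda_i,\lambda_{j-1}),\,tord(\lambda_{j-1},\gamma')\bigr), \]
with the middle term read as $+\infty$ when $j=i+1$. The bound $tord(\gamma,\gamma')\ge$ (right side) is the iterated non-archimedean inequality $tord(\gamma,\gamma')\ge\min\bigl(tord(\gamma,\lambda_i),tord(\lambda_i,\lambda_{i+1}),\dots,tord(\lambda_{j-2},\lambda_{j-1}),tord(\lambda_{j-1},\gamma')\bigr)$, since (\ref{check-LNE1}) collapses $\min_{i\le k<j-1}tord(\lambda_k,\lambda_{k+1})$ to $tord(\lambda_i,\lambda_{j-1})$. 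For the matching inner bound, concatenate a shortest path in $T_i$ from $\gamma(t)$ to $\lambda_i(t)$, shortest paths in $T_{i+1},\dots,T_{j-1}$ joining consecutive $\lambda_k(t)$, and a shortest path in $T_j$ from $\lambda_{j-1}(t)$ to $\gamma'(t)$; by normal embedding of each subtriangle the length of each piece has order equal to the corresponding outer tangency order, so the total length has order $\ge\min\bigl(tord(\gamma,\lambda_i),tord(\lambda_i,\lambda_{i+1}),\dots,tord(\lambda_{j-1},\gamma')\bigr)$, i.e. $itord_T(\gamma,\gamma')\ge$ (right side), again using (\ref{check-LNE1}).

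\emph{The outer bound and the main obstacle.} It remains to prove $tord(\gamma,\gamma')\le$ (right side); together with the two lower bounds this yields the chain formula and in particular $itord_T(\gamma,\gamma')=tord(\gamma,\gamma')$, which completes the proof via the reduction above. By transversality of $T_i$ and $T_j$ there are boundary arcs $\tilde\lambda$ of $T_i$ and $\tilde\lambda'$ of $T_j$ with $tord(\gamma,\gamma')\le tord(\gamma',T_i)=tord(\tilde\lambda,\gamma')$ and $tord(\gamma,\gamma')\le tord(\gamma,T_j)=tord(\tilde\lambda',\gamma)$. The crux, and the main obstacle, is to show that one may take $\tilde\lambda=\lambda_i$ and $\tilde\lambda'=\lambda_{j-1}$, the boundary arcs of $T_i$ and $T_j$ that face each other in the decomposition: since $\lambda_i\in T_i$ one has $tord(\tilde\lambda,\gamma')=tord(\gamma',T_i)\ge tord(\lambda_i,\gamma')$ for every arc $\gamma'$ of $T_j$, and applying the ``isosceles'' property of the ultrametric $\xi$ to the triple $\{\tilde\lambda,\lambda_i,\gamma'\}$, together with (\ref{check-LNE1}) to evaluate the tangency orders among the $\lambda_k$, forces $tord(\tilde\lambda,\gamma')=tord(\lambda_i,\gamma')$ for all $\gamma'$ of $T_j$ (symmetrically $\tilde\lambda'=\lambda_{j-1}$); the degenerate subcases in which a far boundary arc a priori realizes the transversality are handled by combining the bound coming from $\tilde\lambda$ with the one coming from $\tilde\lambda'$ and a further use of the isosceles property, and this is the technical heart of the argument.

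Granting this, $\lambda_i$ is a boundary arc of $T_{i+1}$ and $\lambda_{j-1}$ is a boundary arc of $T_{j-1}$, so the inductive hypothesis applied with the smaller gaps $j-i-1$ and $j-1-i$ gives $tord(\lambda_i,\gamma')=\min\bigl(tord(\lambda_i,\lambda_{j-1}),tord(\lambda_{j-1},\gamma')\bigr)$ and $tord(\gamma,\lambda_{j-1})=\min\bigl(tord(\gamma,\lambda_i),tord(\lambda_i,\lambda_{j-1})\bigr)$. Taking the minimum of the two resulting upper bounds on $tord(\gamma,\gamma')$ yields exactly $\min\bigl(tord(\gamma,\lambda_i),tord(\lambda_i,\lambda_{j-1}),tord(\lambda_{j-1},\gamma')\bigr)$, which closes the induction. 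Hence $itord_T(\gamma,\gamma')=tord(\gamma,\gamma')$ for all arcs of $T$, and therefore $T$ is normally embedded.
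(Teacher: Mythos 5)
Your overall strategy coincides with the paper's: reduce to arcs $\gamma\subset T_i$, $\gamma'\subset T_j$ in distinct pieces, use transversality of the pair $(T_i,T_j)$ to bound $tord(\gamma,\gamma')$ by tangency orders with boundary arcs, and invoke the non-archimedean (isosceles) property together with condition (\ref{check-LNE1}) to transfer that bound to the facing arcs $\lambda_i$ and $\lambda_{j-1}$. The paper organizes this as an induction on the number of pieces with an explicit three-triangle case; you package it as a chain formula for $tord$ and $itord_T$. Your two lower bounds (the iterated ultrametric inequality for $tord$, and path concatenation for $itord_T$) are fine.

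The gap sits exactly at what you call the technical heart, and it is a genuine gap as written. First, the intermediate claim that the isosceles property of $\xi$ ``forces $tord(\tilde\lambda,\gamma')=tord(\lambda_i,\gamma')$ for all $\gamma'$ of $T_j$'' is false as stated: if $\tilde\lambda=\lambda_{i-1}$ and $tord(\lambda_{i-1},\gamma')>tord(\lambda_i,\gamma')$, the isosceles property applied to $\{\lambda_{i-1},\lambda_i,\gamma'\}$ only yields $tord(\lambda_i,\gamma')=tord(\lambda_{i-1},\lambda_i)=\mu(T_i)$, which is not by itself a contradiction; excluding this is the whole problem. Second, the remaining case where both transversality witnesses are the far arcs, $\tilde\lambda=\lambda_{i-1}$ and $\tilde\lambda'=\lambda_j$, is asserted to follow from ``combining the two bounds and a further use of the isosceles property'' but is never carried out. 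It does close, and here is how: assume $\alpha=tord(\gamma,\gamma')$ exceeds the right-hand side $M$ of your chain formula; the bounds $tord(\lambda_{i-1},\gamma')\ge\alpha$ and $tord(\gamma,\lambda_j)\ge\alpha$ give, via the chain $\lambda_{i-1},\gamma',\gamma,\lambda_j$, that $tord(\lambda_{i-1},\lambda_j)\ge\alpha$; condition (\ref{check-LNE1}) then gives $tord(\lambda_{i-1},\lambda_j)\le\min\bigl(tord(\lambda_{i-1},\lambda_i),\,tord(\lambda_i,\lambda_{j-1}),\,tord(\lambda_{j-1},\lambda_j)\bigr)$, and since $tord(\gamma,\lambda_i)\ge tord(\lambda_{i-1},\lambda_i)=\mu(T_i)$ and $tord(\lambda_{j-1},\gamma')\ge tord(\lambda_{j-1},\lambda_j)=\mu(T_j)$ by normal embedding of $T_i$ and $T_j$, one concludes $\alpha\le M$, a contradiction. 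This is precisely the paper's case $\tilde\lambda=\lambda_0$, $\tilde\lambda'=\lambda_3$. So the missing piece is this explicit case analysis rather than a new idea, but as submitted the only nontrivial inequality of the proof is not established.
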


\begin{proof}
We proceed by induction on the number of triangles $T_i$.
If $T$ consists of two normally embedded triangles $T_1=T(\lambda_0,\lambda_1)$ and $T_2=T(\lambda_1,\lambda_2)$ with the common boundary arc $\lambda_1$,
then transversality of $T_1$ and $T_2$, with $\tilde\lambda=\tilde\lambda'=\lambda_1$, means
$tord(\lambda_1, \gamma')=tord(\gamma',T_1)$ for any arc $\gamma'$ of $T_2$ and $tord(\lambda_1,\gamma)=tord(\gamma,T_2)$ for any arc $\gamma$ of $T_1$.
This implies $tord(\gamma,\gamma')\le tord(\gamma',T_1)=tord(\lambda_1,\gamma')$ and $tord(\gamma,\gamma')\le tord(\gamma, T_2)=tord(\lambda_2\gamma)$ for any arcs $\gamma\subset T_1$ and $\gamma'\subset T_2$. Thus
\begin{equation}\label{LNE2}
tord(\gamma,\gamma')\le\min(tord(\gamma,\lambda_1),tord(\lambda_1,\gamma'))=itord(\gamma,\gamma'),
\end{equation}
so $T_1\cup T_2$ is normally embedded.
Moreover, $tord(\lambda_0,\lambda_2)=\min(tord(\lambda_0,\lambda_1),tord(\lambda_1,\lambda_2)$, and condition (\ref{check-LNE1}) is automatically satisfied.

We consider next a triangle $T=T_1\cup T_2\cup T_3$, where $T_1=T(\lambda_0,\lambda_1),\;T_2=T(\lambda_1,\lambda_2)$ and $T_3=T(\lambda_2,\lambda_3)$,
where triangles $T_1,\;T_2$ and $T_3$ are normally embedded, any two of them are transversal, and condition (\ref{check-LNE1}) is satisfied.
The first step of induction implies that the sub-triangles $T_1\cup T_2$ and $T_2\cup T_3$ of $T$ are normally embedded.
To prove that $T$ is normally embedded, it is enough to show that $T_1$ is transverse to $T_2\cup T_3$ and to apply the first step of induction to
these two triangles.

If $\gamma$ is an arc in $T_1$, then $\beta=tord(\gamma,\lambda_1)=tord(\gamma,T_2)$, since $T_1\cup T_2$ is normally embedded.
To show that $\beta\ge tord(\gamma,T_3)$, let us assume that $\alpha=tord(\gamma,\gamma')>\beta$ for some arc $\gamma'\subset T_3)$.
Since triangles $T_1$ and $T_3$ are transverse, there are boundary arcs $\tilde\lambda$ of $T_1$ and $\tilde\lambda'$ of $T_3$
such that $\alpha\le tord(\tilde\lambda,\gamma')$ and $\alpha\le tord(\tilde\lambda',\gamma)$.

If $\tilde\lambda=\lambda_1$, then $tord(\gamma',\lambda_1)\ge\alpha$.
By the non-archimedean property of the tangency order, $tord(\gamma,\lambda_1)\ge\alpha$, in contradiction with the assumption $\alpha>\beta$.

If $\tilde\lambda'=\lambda_2$, then $tord(\gamma,\lambda_2)\ge\alpha$, in contradiction with $tord(\gamma,\lambda_2)\le tord(\gamma,\lambda_1)=\beta$.

In the remaining case $\tilde\lambda=\lambda_0$ and $\tilde\lambda'=\lambda_3$, we have $tord(\gamma',\lambda_0)\ge\alpha$
and $tord(\gamma,\lambda_3)\ge\alpha$. By the non-archimedean property of the tangency order, $tord(\lambda_0,\lambda_3)\ge\alpha$,
in contradiction with the inequality $tord(\lambda_0,\lambda_3)\le tord(\lambda_0,\lambda_1)\le\beta$ which follows from condition (\ref{check-LNE1}).

The case $T=T_1\cup\cdots\cup T_k$ for $k>3$ is similar. Consider the case $k>3$.

By the induction hypotheses  $T=T_1\cup\cdots\cup T_{k-1}$ is LNE and $T=T_2\cup\cdots\cup T_k$ and moreover $T_1$ and $T_k$ are transverse. Then the situation is reduced to the case of 3 triangles.

\end{proof}

\begin{definition}\label{singulararc}\normalfont Let $X$ be a surface germ.
An arc $\gamma\in V(X)$ is called \emph{Lipschitz non-singular} if it is topologically non-singular and
there exists a normally embedded H\"older triangle $T\subset X$ such that $\gamma\in I(T)$.
Otherwise, $\gamma$ is \emph{Lipschitz singular}.
A H\"older triangle $T$ is \emph{non-singular} if any arc $\gamma\in I(T)$ is Lipschitz non-singular.
\end{definition}

\begin{definition}\label{def:Qf}\normalfont
For a Lipschitz function $f$ on a H\"older triangle $T$, let
\begin{equation}\label{eqn:Qf}
Q_f(T)=\bigcup_{\gamma\in V(T)}ord_\gamma f.
\end{equation}
\end{definition}
\begin{remark}\label{remark-prosto} \normalfont
It was shown in \cite[ Lemma 3.3]{birbrair2014lipschitz} that $Q_f(T)$ is either a point or a closed interval in $\mathbb{F}\cup \{\infty\}$.
\end{remark}
\begin{definition}\label{def:elementary}\normalfont
A H\"older triangle $T$ is \emph{elementary} with respect to a Lipschitz function $f$ on $T$ if, for any $q\in Q_f(T)$ and any two arcs
$\gamma$ and $\gamma'$ in $T$ such that $ord_{\gamma}f=ord_{\gamma'}f=q$, the order of $f$ is $q$ on any arc in the H\"older triangle
$T(\gamma,\gamma')\subset T$.
\end{definition}

\begin{definition}\label{def:width function}\normalfont Let $T$ be a normally embedded H\"older triangle and $f$ a Lipschitz function on $T$.
For an arc $\gamma \subset T$, the \emph{width} $\mu_T(\gamma,f)$ of $\gamma$ with respect to $f$ is the infimum of exponents of H\"older triangles
$T'\subset T$ containing $\gamma$ such that $Q_f(T')$ is a point.
For $q\in Q_f(T)$ let $\mu_{T,f}(q)$ be the set of exponents $\mu_T(\gamma,f)$, where $\gamma$ is any arc in $T$ such that $ord_{\gamma}f=q$.
For each $q\in Q_f(T)$, the set $\mu_{T,f}(q)$ is finite (see \cite{birbrair2014lipschitz}).
This defines a multivalued \emph{width function} $\mu_{T,f}: Q_f(T)\to \F\cup \{\infty\}$.
If $T$ is an elementary H\"older triangle with respect to $f$, then the function $\mu_{T,f}$ is single valued.
When $f$ is fixed, we write $\mu_T(\gamma)$ and $\mu_T$ instead of $\mu_T(\gamma,f)$ and $\mu_{T,f}$.

The \emph{depth} $\nu_T(\gamma,f)$ of an arc $\gamma$ with respect to $f$
is the infimum of exponents of H\"older triangles $T'\subset T$ such that $\gamma\in G(T')$ and $Q_f(T')$ is a point.
By definition, $\nu_T(\gamma,f)=\infty$ when there are no such triangles $T'$.
\end{definition}

\begin{definition}\label{def:contact-equiv}\normalfont
 Two function germs  $f,g: (T,0) \to (\R,0)$ on a normally embedded H\"older triangle $T$ are
\emph{Lipschitz contact equivalent} if there exist two germs of outer
bi-Lipschitz homeomorphisms $h:(T,0) \to (T,0)$ and  $\Omega: (T\times\R,0) \to (T \times \R,0)$ such that
$\Omega(T \times \{0\}) = T\times \{0\}$ and the following diagram is commutative:
\[
\begin{array}{lllll}
(T,0) & \stackrel{(id,\, f)}{\longrightarrow}
 & (T \times \R,0) & \stackrel{\pi}{\longrightarrow} & (T,0) \\
\,\,\,h \, \downarrow &  & \,\,\,\, \Omega \, \downarrow  &  & \,\,\, h \, \downarrow \\
(T,0)& \stackrel{(id, \,g)}{\longrightarrow} & (T \times \R,0) & \stackrel{\pi}{\longrightarrow}& (T,0) \\
\end{array}
\]

\end{definition}

\begin{definition}\label{def:pizza-slice}\normalfont
Let $T=T(\gamma_1,\gamma_2)$ be a normally embedded $\beta$-H\"older triangle and $f$ a Lipschitz function on $T$.
We say that $T$ is a \emph{pizza slice} associated with
$f$ if it is elementary with respect to $f$, either $Q_f(T)$ is not a point and $\mu_{T,f}(q)=aq+b$ is an affine function on $Q_f(T)$, or
$Q_f(T)=\{q\}$ is a point and $\mu_{T,f}(q)=\beta$ is a single exponent.
If $T$ is a pizza slice and $Q_f(T)$ is not a point, then $a\ne 0$, and one of the boundary arcs of $T$ (either $\tilde\gamma=\gamma_1$ or $\tilde\gamma=\gamma_2$) such that
$\mu_T(\tilde\gamma,f)=\max_{q\in Q_f(T)}\mu_{T,f}(q)$ is called the \emph{supporting arc} of $T$ with respect to $f$.
If $Q_f(T)$ is a point then supporting arc is not defined.
\end{definition}

\begin{proposition}\label{prop:width function properties}
Let $T$ be a normally embedded $\beta$-H\"older triangle which is a pizza slice associated with a non-negative Lipschitz function $f$.
Then $\beta\le\mu_{T,f}(q)\le \max(q,\beta)$ for all $q \in Q$.
If $\tilde\gamma$ is the supporting arc of $T$ with respect to $f$, then
$\mu_T(\gamma,f)=tord(\tilde\gamma,\gamma)$ for all arcs $\gamma\subset T$ such that $\mu_T(\gamma,f)<\mu_T(\tilde\gamma,f)$.
In particular, $\min_{q\in Q}\mu_{T,f}(q)=\beta$.
\end{proposition}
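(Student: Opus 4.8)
The plan is to dispatch the two estimates and the minimum by elementary arguments, reducing the assertion about the supporting arc to the structure of pizza slices from \cite{birbrair2014lipschitz}. Write $Q=Q_f(T)$ and recall that in a normally embedded H\"older triangle any two arcs have tangency order at least its exponent; consequently every subtriangle $T'=T(\delta_1,\delta_2)\subseteq T$ is normally embedded with $\mu(T')=tord(\delta_1,\delta_2)\ge\beta$, and any two arcs of $T'$ have tangency order at least $\mu(T')$. In particular the infimum in Definition \ref{def:width function} runs over exponents $\ge\beta$, so $\mu_{T,f}(q)\ge\beta$ for every $q\in Q$. If $Q=\{q_0\}$ is a point, then $\mu_{T,f}(q_0)=\beta$ by Definition \ref{def:pizza-slice}, there is no supporting arc, and all three assertions are immediate. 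So assume $Q=[q_-,q_+]$ with $q_-<q_+$ and $\mu_{T,f}(q)=aq+b$ with $a\ne 0$. If some arc $\gamma\subseteq T$ had $ord_\gamma f=q<\beta$, then every arc $\gamma'$ of $T$ would satisfy $tord(\gamma',\gamma)\ge\beta>q$, hence $|\gamma(t)-\gamma'(t)|=o(t^q)$; as $f$ is Lipschitz and $f(\gamma(t))=ct^q+o(t^q)$ with $c\ne0$, this would force $ord_{\gamma'}f=q$ and $Q=\{q\}$, a contradiction. Thus $q\ge\beta$, i.e.\ $\max(q,\beta)=q$, for all $q\in Q$, and it remains to prove $\mu_{T,f}(q)\le q$, the supporting-arc identity, and $\min_{q\in Q}\mu_{T,f}(q)=\beta$.

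For the upper bound, fix $q\in Q$ and an arc $\gamma\subseteq T$ with $ord_\gamma f=q$. Since the exponents of H\"older subtriangles of $T$ through $\gamma$ fill the interval $[\beta,\infty]$ (with one boundary arc equal to $\gamma$ when $\gamma$ is a boundary arc of $T$), for each $\varepsilon>0$ choose a subtriangle $T_\varepsilon\ni\gamma$ with $\mu(T_\varepsilon)=q+\varepsilon$. Every arc of $T_\varepsilon$ has tangency order $\ge\mu(T_\varepsilon)=q+\varepsilon>q$ with $\gamma$, so, by the Lipschitz argument of the previous paragraph, its order for $f$ equals $q$; hence $Q_f(T_\varepsilon)=\{q\}$ and $\mu_T(\gamma,f)\le q+\varepsilon$. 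Letting $\varepsilon\to0$ gives $\mu_{T,f}(q)\le q$.

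For the supporting-arc identity I use two structural facts about pizza slices. First, $ord_\cdot f$ is monotone along $V(T)$ between its boundary arcs (this follows from elementarity together with the tameness of $ord_\cdot f$; see \cite{birbrair2014lipschitz}). Second, since $\mu_{T,f}$ is affine and non-constant, $M:=\mu_T(\tilde\gamma,f)=\max_{q\in Q}\mu_{T,f}(q)$ is attained at a single endpoint $q_{\max}$ of $Q$, so $ord_{\tilde\gamma}f=q_{\max}$; assume $q_{\max}=q_+$ (the case $q_{\max}=q_-$ being symmetric, with sublevel sets replacing superlevel sets below). For $q'\in Q$ put $R_{q'}=\{\gamma'\in V(T):ord_{\gamma'}f\ge q'\}$; by monotonicity this is the initial segment of $V(T)$ at $\tilde\gamma$, a H\"older triangle $T(\tilde\gamma,\rho_{q'})$ with $ord_{\rho_{q'}}f=q'$. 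A constant-order subtriangle $S\subseteq T$ through an arc of $R_{q'}$ lies in $R_{q'}$, since its common order is $\ge q'$; hence $R_{q'}$ is again a pizza slice for $f$ with $\mu_{R_{q'},f}=\mu_{T,f}|_{[q',q_+]}$, and the lower bound of the first paragraph, applied to the pizza slice $R_{q'}$, gives $\mu_{T,f}(q')\ge\mu(R_{q'})=tord(\tilde\gamma,\rho_{q'})$. The geometric content of a pizza slice established in \cite{birbrair2014lipschitz} is that equality holds here: $\mu(R_{q'})=\mu_{T,f}(q')$. Granting this, let $\gamma\subseteq T$ be any arc with $w:=\mu_T(\gamma,f)<M$ and put $q=ord_\gamma f\ne q_+$. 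Since $\gamma\in R_q=T(\tilde\gamma,\rho_q)$ and $\tilde\gamma$ is a boundary arc of $R_q$, we get $tord(\tilde\gamma,\gamma)\ge\mu(R_q)=\mu_{T,f}(q)=w$. Conversely, for $q'\in Q$ with $q'>q$: if $tord(\tilde\gamma,\gamma)>\mu_{T,f}(q')=tord(\tilde\gamma,\rho_{q'})$, then, as $tord(\tilde\gamma,\cdot)$ is weakly decreasing away from $\tilde\gamma$, $\gamma$ would lie strictly before $\rho_{q'}$, i.e.\ in $R_{q'}$, forcing $q=ord_\gamma f\ge q'>q$; hence $tord(\tilde\gamma,\gamma)\le\mu_{T,f}(q')$, and letting $q'\to q^+$ yields $tord(\tilde\gamma,\gamma)\le\mu_{T,f}(q)=w$. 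Therefore $\mu_T(\gamma,f)=tord(\tilde\gamma,\gamma)$.

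Finally, $\min_{q\in Q}\mu_{T,f}(q)=\beta$ follows from this identity. As $Q$ is non-degenerate, the two boundary arcs $\gamma_1,\gamma_2$ of $T$ have distinct orders for $f$ — otherwise elementarity would force $f$ to have constant order on $T=T(\gamma_1,\gamma_2)$ — hence distinct widths, so exactly one of them is the supporting arc $\tilde\gamma$ and the other, $\eta$, satisfies $\mu_T(\eta,f)<M$. Normal embedding gives $tord(\tilde\gamma,\eta)=\mu(T)=\beta$, so the identity yields $\mu_T(\eta,f)=\beta$; together with $\mu_{T,f}\ge\beta$ on $Q$ this gives $\min_{q\in Q}\mu_{T,f}(q)=\beta$. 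The main obstacle is the equality $\mu(R_{q'})=\mu_{T,f}(q')$: the inequality ``$\le$'' is supplied by the elementary lower bound applied to the subtriangle $R_{q'}$, but the reverse — that the order-$q'$ level set is wide enough, in exponent, to realize the value $\mu_{T,f}(q')$ — is not a consequence of elementarity and the affine width function alone and requires the finer structure of pizza slices from \cite{birbrair2014lipschitz}.
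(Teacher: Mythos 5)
Your proof is correct and follows essentially the same route as the paper's (very terse) proof: the two bounds come from the Lipschitz property of $f$, and the supporting-arc identity comes from the non-archimedean property of the tangency order together with the structure of pizza slices from \cite{birbrair2014lipschitz}. The one step you flag as unproved, $\mu(R_{q'})=\mu_{T,f}(q')$, is precisely the content of Lemma 3.3 of that reference, which the paper's own proof also cites, so your deferral is legitimate.
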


\begin{proof}
Since $f$ is a Lipschitz function, we have $\beta\le\mu_{T,f}(q)\le \max(q,\beta)$.
The equality  $\mu_T(\gamma,f)=tord(\tilde\gamma,\gamma)$ follows from the non-archimedean property of the tangency order
(see also \cite[Lemma 3.3]{birbrair2014lipschitz}).
\end{proof}

\begin{definition}\label{pizza-decomp}\normalfont (See \cite[Definition 2.13]{birbrair2014lipschitz}.)
Let $f$ be a non-negative Lipschitz function on a normally embedded $\beta$-H\"older triangle $T=T(\gamma_1,\gamma_2)$
oriented from $\gamma_1$ to $\gamma_2$.
A \emph{pizza} on $T$ associated with $f$ is a decomposition $\Lambda = \{T_\ell\}_{\ell=1}^p$ of $T$ into H\"older triangles $T_\ell=T(\lambda_{\ell-1},\lambda_\ell)$
ordered according to the orientation of $T$, such that
$\lambda_0=\gamma_1$ and $\lambda_p=\gamma_2$ are the boundary arcs of $T,\;
T_\ell\cap T_{\ell+1}=\lambda_\ell$ for $0<\ell< p$, and
each triangle $T_\ell$ is a pizza slice associated with $f$.
The pizza $\Lambda$ comes with the following \emph{toppings}:\newline
$\bullet$ Exponents $q_\ell=ord_{\lambda_\ell}f$ for $0\le\ell\le p$;\newline
$\bullet$ Exponents $\beta_\ell=\mu(T_\ell)$ for $1\le\ell\le p$;\newline
$\bullet$ Intervals of exponents $Q_\ell=[q_{\ell-1},q_\ell]$ for $1\le\ell\le p$;\newline
$\bullet$ Affine functions $\mu_\ell(q)$ on $Q_\ell$ for $1\le\ell\le p$, such that $\mu_\ell(q)\le q$ for all $q\in Q_\ell$ and $\min_{q\in Q_\ell}\mu_\ell(q)=\beta_\ell$,
or $\mu_\ell(q_\ell)=\beta_\ell$ is a single exponent when $Q_\ell=\{q_\ell\}$ is a point;\newline
$\bullet$ Exponents $\nu_\ell=\nu_T(\lambda_\ell,f)$, where $\nu_0=\nu_p=\infty,\;
\nu_\ell=\max(\mu_\ell(q_\ell),\mu_{\ell+1}(q_\ell))$ for $0<\ell<p$.\newline
The pizza $\Lambda$ is \emph{minimal} if $T_{\ell-1}\cup T_\ell$ is not a pizza slice for any $\ell>1$.
\end{definition}

\begin{definition}\label{abstract-pizza} \normalfont (See \cite[Definition 2.12]{birbrair2014lipschitz}.)
An \emph{abstract pizza} ${\mathcal P}$ on a set $\{0,\ldots,p\}$ is a finite sequence $\{q_\ell,\nu_\ell\}_{\ell=0}^p$ of exponents $q_\ell$ and $\nu_\ell$ in $\F_{\ge 1}\cup\{\infty\}$, and a finite
collection $\{\beta_\ell,Q_\ell,\mu_\ell\}_{\ell=1}^p$, where $\beta_\ell\in\F_{\ge 1}$,
$Q_\ell=[q_{\ell-1},q_\ell]\subset \F_{\ge 1}\cup\{\infty\}$ is either a point or a closed interval,
$\mu_\ell:Q_\ell\to\F\cup\{\infty\}$ is an affine function, non-constant when $Q_\ell$ is not a point, such that $\mu_\ell(q)\le q$ for all $q\in Q_\ell$ and $\min_{q\in
Q_\ell}\mu_\ell(q)=\beta_\ell$. If $Q_\ell=\{q_\ell\}$ is a point, then $\mu_\ell(q_\ell)=\beta_\ell$ is a single exponent.
Exponents $\nu_\ell$ are defined as follows:
$\nu_0=\nu_p=\infty$ and $\nu_\ell=\max(\mu_\ell(q_\ell),\mu_{\ell+1}(q_\ell))$ for $0<\ell< p$.\newline
An abstract pizza is \emph{reducible} if $p>1$ and one of the following conditions is satisfied:\newline
(a) $Q_\ell=Q_{\ell+1}=\{q_\ell\}$ for some $\ell<p$.\newline
(b) $Q_\ell=\{q_\ell\}\ne Q_{\ell+1}$ and $\beta_\ell\ge\mu_{\ell+1}(q_\ell)$ for some $\ell<p$.\newline
(c) $Q_\ell\ne\{q_\ell\}=Q_{\ell+1}$ and $\beta_{\ell+1}\ge\mu_\ell(q_\ell)$ for some $\ell<p$.\newline
(d) $Q_\ell\ne\{q_\ell\}\ne Q_{\ell+1}$ and $Q_\ell\cap Q_{\ell+1}=\{q_\ell\}$ for some $\ell<p$, and the function $\mu(q)$ on $Q_\ell\cup Q_{\ell+1}$, such that
$\mu(q)=\mu_\ell(q)$ for $q\in Q_\ell$ and
$\mu(q)=\mu_{\ell+1}(q)$ for $q\in Q_{\ell+1}$, is affine.\newline
An abstract pizza is \emph{minimal} if it is not reducible.
\end{definition}

\begin{remark} \normalfont We do not need the signs $\pm$ that are part of the definition in
\cite{birbrair2014lipschitz}, since only non-negative functions are considered in this paper.
\end{remark}

\begin{lemma}\label{lem:abstract-pizza}
Let ${\mathcal P}=\{\{q_\ell\}_{\ell=0}^p,\;\{\beta_\ell,Q_\ell,\mu_\ell\}_{\ell=1}^p,\;\{\nu_\ell\}_{\ell=0}^p\}$, where
$p>1$, be an abstract pizza. If $q_{\ell-1}=q_\ell$ and $\min(\nu_{\ell-1},\nu_\ell)=\beta_\ell$ for some $\ell>0$,
then $\mathcal P$ is reducible.
\end{lemma}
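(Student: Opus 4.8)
The plan is to verify that the hypotheses $q_{\ell-1}=q_\ell$ and $\min(\nu_{\ell-1},\nu_\ell)=\beta_\ell$ force one of the four reducibility conditions (a)--(d) from Definition \ref{abstract-pizza}, where $\nu_j=\max(\mu_j(q_j),\mu_{j+1}(q_j))$ for $0<j<p$ and $\nu_0=\nu_p=\infty$. Since $q_{\ell-1}=q_\ell$, the interval $Q_\ell=[q_{\ell-1},q_\ell]=\{q_\ell\}$ is a single point, so $\mu_\ell$ is the single exponent $\beta_\ell$ at $q_\ell$. Thus we are automatically in one of the situations where $Q_\ell$ is a point, and the only question is whether we can pair $T_\ell$ up with a neighbor — either $T_{\ell-1}$ (to its left) or $T_{\ell+1}$ (to its right) — so that conditions (a), (b) or (c) apply. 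The idea is to split into cases according to whether $\ell=p$, $\ell=1$, or $0<\ell<p$, and in the last (generic) case to use the hypothesis $\min(\nu_{\ell-1},\nu_\ell)=\beta_\ell$ to decide which neighbor to reduce against.

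First I would handle the boundary cases. If $\ell=p$, then $\nu_\ell=\nu_p=\infty$, so $\min(\nu_{\ell-1},\nu_p)=\nu_{p-1}=\beta_p$; but $\nu_{p-1}=\max(\mu_{p-1}(q_{p-1}),\mu_p(q_{p-1}))\ge\mu_{p-1}(q_{p-1})$, which combined with $\nu_{p-1}=\beta_p$ gives $\beta_p\ge\mu_{p-1}(q_{p-1})=\mu_{p-1}(q_\ell)$ (using $q_{p-1}=q_p$). If additionally $Q_{p-1}=\{q_{p-1}\}$ is a point, condition (a) holds for the index $p-1$; otherwise $Q_{p-1}\ne\{q_{p-1}\}=Q_p$ and $\beta_p\ge\mu_{p-1}(q_{p-1})$ is exactly condition (c) for the index $p-1$. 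The case $\ell=1$ is symmetric, using $\nu_0=\infty$ to get $\nu_1=\beta_1$ and hence $\beta_1\ge\mu_2(q_1)$, which yields (a) or (b) for the index $\ell=1$.

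For the generic case $0<\ell<p$ with $\ell\ne 1,p$, the hypothesis $\min(\nu_{\ell-1},\nu_\ell)=\beta_\ell$ means at least one of $\nu_{\ell-1}\le\beta_\ell$ or $\nu_\ell\le\beta_\ell$ holds (in fact equality, since $\nu_{\ell-1}=\max(\mu_{\ell-1}(q_{\ell-1}),\mu_\ell(q_{\ell-1}))\ge\mu_\ell(q_{\ell-1})=\beta_\ell$ always, and similarly $\nu_\ell\ge\mu_\ell(q_\ell)=\beta_\ell$). Suppose $\nu_{\ell-1}=\beta_\ell$; then $\max(\mu_{\ell-1}(q_{\ell-1}),\mu_\ell(q_{\ell-1}))=\beta_\ell$ forces $\mu_{\ell-1}(q_{\ell-1})\le\beta_\ell$, and now I reduce $T_{\ell-1}$ against $T_\ell$: if $Q_{\ell-1}$ is a point this is condition (a) at index $\ell-1$, and if $Q_{\ell-1}$ is not a point it is condition (c) at index $\ell-1$ (here $\beta_\ell=\beta_{(\ell-1)+1}\ge\mu_{\ell-1}(q_{\ell-1})$). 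Symmetrically, if $\nu_\ell=\beta_\ell$ then $\mu_{\ell+1}(q_\ell)\le\beta_\ell$, giving condition (a) or (b) at index $\ell$. In every case $\mathcal P$ is reducible.

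The only mildly delicate point — and the one I would be most careful about — is the bookkeeping of indices and the matching of the hypotheses against the precise inequalities in (b) and (c): condition (b) at index $j$ requires $\beta_j\ge\mu_{j+1}(q_j)$ with $Q_j$ a point, while (c) at index $j$ requires $\beta_{j+1}\ge\mu_j(q_j)$ with $Q_{j+1}$ a point, and one must make sure the "point" hypothesis is assigned to the correct slice in each sub-case. There is no real analytic content; it is a finite verification that the numerical data cannot evade all of (a)--(d). I would also note that the stated inequality $\min(\nu_{\ell-1},\nu_\ell)=\beta_\ell$ is automatically an equality of the form $\min(\nu_{\ell-1},\nu_\ell)\le\beta_\ell$ becoming an equality, since as observed both $\nu_{\ell-1}$ and $\nu_\ell$ are bounded below by $\mu_\ell$ evaluated at the (single) point $q_\ell=q_{\ell-1}$, namely by $\beta_\ell$; this observation is what lets the case analysis close cleanly.
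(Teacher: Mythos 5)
Your proof is correct and follows the same route as the paper, which in this case simply asserts that one of conditions (a)--(c) from Definition \ref{abstract-pizza} must hold under the given hypotheses and stops there. You have supplied the actual case check: observing that $q_{\ell-1}=q_\ell$ makes $Q_\ell$ a point so $\mu_\ell(q_{\ell-1})=\mu_\ell(q_\ell)=\beta_\ell$, hence both $\nu_{\ell-1}$ and $\nu_\ell$ are bounded below by $\beta_\ell$; the hypothesis forces one of them to equal $\beta_\ell$, which in turn pins down either $\mu_{\ell-1}(q_{\ell-1})\le\beta_\ell$ (giving (a) or (c) at index $\ell-1$) or $\mu_{\ell+1}(q_\ell)\le\beta_\ell$ (giving (a) or (b) at index $\ell$), with the boundary cases $\ell=1,p$ handled via $\nu_0=\nu_p=\infty$. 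This is exactly the bookkeeping the paper leaves implicit, and your index assignments for (b) and (c) are the right ones.
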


\begin{proof}
According to Definition \ref{abstract-pizza}, an abstract pizza is reducible if one of the conditions (a)-(d) is satisfied.
If $q_{\ell-1}=q_\ell$ and $\min(\nu_{\ell-1},\nu_\ell)=\beta_\ell$, then one of conditions (a)-(c) is satisfied, thus the abstract pizza is reducible.
\end{proof}

\begin{definition}\label{geometric-abstract} \normalfont Given a pizza $\Lambda = \{T_\ell\}_{\ell=1}^p$, where $T_\ell=T(\lambda_{\ell-1},\lambda_\ell)$,  associated with a
non-negative Lipschitz function $f$ on an oriented $\beta$-H\"older triangle $T$, the \emph{corresponding abstract pizza} ${\mathcal P}$ is defined by setting
$q_\ell=ord_{\lambda_\ell} f$, $\beta_\ell=\mu(T_\ell)=tord(\lambda_{\ell-1},\lambda_\ell)$, $Q_\ell=Q_f(T_\ell)$,
$\mu_\ell(q)=\mu_{T_\ell,f}(q)$, $\nu_\ell=\nu_T(\lambda_\ell,f)$. The abstract pizza ${\mathcal P}$ is minimal if, and only if, the pizza $\Lambda$ is minimal.
\end{definition}

Conversely, any abstract pizza as in Definition \ref{abstract-pizza} is associated with the pizza of a non-negative Lipschitz function on a standard $\beta$-H\"older triangle
$T_\beta$, where $\beta=\min_\ell\beta_\ell$ (see Theorem \ref{pizza-realization} below).
To construct such a function, we start with the definition of a standard pizza slice.

\begin{definition}\label{pizza_slice_standard}\normalfont
Given exponents $\beta$ and $\underline q$ in $\F_{\ge 1}$, exponent $\tilde q\ne\underline q$ in $\F_{\ge 1}\cup\{\infty\}$, and a non-constant affine function
$\mu(q)=a(q-\alpha)$ from $Q=[\underline q,\tilde q]$ to $\F\cup\{\infty\}$,
such that $\mu(q)\le q$ for all $q\in Q$ and $\min_{q\in Q}\mu(q)=\mu(\underline q)=\beta\ge 1$, let
\begin{equation}\label{phi}
\phi=\phi_{\beta,\underline q,\tilde q,\mu}(u,v)=u^\alpha v^{1/a}
\end{equation}
be a function on the $\beta$-H\"older triangle $T_{\beta,\kappa}=\{u\ge 0,\,u^\kappa\le v\le u^\beta\}$.
Here $\kappa=\max_{q\in Q}\mu(q)=\mu(\tilde q)>\beta$ and $u^\kappa\equiv 0$ if $\kappa=\infty$.
Let $h_{\beta,\kappa}$ be a bi-Lipschitz mapping
$(u,v)\mapsto\left(u,u^\kappa+v(1-u^{\kappa-\beta})\right)$ from the standard $\beta$-H\"older triangle $T_\beta$ to $T_{\beta,\kappa}$,
and let
\begin{equation}\label{psi}
\psi=\psi_{\beta,\underline q,\tilde q,\mu}(u,v)=\phi_{\beta,\underline q,\tilde q,\mu}(h_{\beta,\kappa}(u,v)).
\end{equation}
The pair $(T_\beta, \psi_{\beta,\underline q,\tilde q,\mu})$ is called the \emph{standard pizza slice}
associated with $\beta,\,\underline q,\,\tilde q$ and $\mu$.
When $\underline q=\tilde q$, thus $Q=\{\tilde q\}$ is a point and $\mu(\tilde q)=\beta$ is a single exponent,
the standard pizza slice is defined as $(T_\beta, \psi)$, where
\begin{equation}\label{psi-point}
 \psi=\psi_{\beta,\underline q,\tilde q,\mu}(u,v)=u^{\tilde q}.
\end{equation}
\end{definition}

\begin{lemma}\label{slice-geometric} The function $\psi=\psi_{\beta,\underline q,\tilde q,\mu}(u,v)$ in (\ref{psi}) is Lipschitz, and $T_\beta$ is a pizza slice associated
with $\psi$, with the affine width function $\mu(q)=a(q-\alpha)$ defined on $Q_\psi(T_\beta)=[\underline q,\tilde q]$.
\end{lemma}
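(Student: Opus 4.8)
The plan is to reduce the statement, via the bi-Lipschitz change of coordinates $h_{\beta,\kappa}$ furnished in Definition \ref{pizza_slice_standard}, to a direct computation of the orders of $\phi=u^{\alpha}v^{1/a}$ on arcs of $T_{\beta,\kappa}=\{u\ge 0,\ u^{\kappa}\le v\le u^{\beta}\}$, where those orders are read off from the two boundary exponents $\beta$ and $\kappa$. First I would dispose of the degenerate case $\underline q=\tilde q$: here $\psi=u^{\tilde q}$ is Lipschitz (since $\tilde q\ge 1$) and depends only on $u$, so $Q_{\psi}(T')=\{\tilde q\}$ for every H\"older triangle $T'\subset T_{\beta}$; hence $T_{\beta}$ is trivially elementary with respect to $\psi$, $\mu_{T_{\beta}}(\gamma,\psi)=\mu(T_{\beta})=\beta$ for every arc $\gamma$, and $(T_{\beta},\psi)$ is a pizza slice with single exponent $\mu_{T_{\beta},\psi}(\tilde q)=\beta$, as required. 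From now on $\underline q\ne\tilde q$, so $a\ne 0$; writing $\mu(q)=a(q-\alpha)$, the normalizations $\mu(\underline q)=\beta$ and $\mu(\tilde q)=\kappa$ read $\underline q=\alpha+\beta/a$ and $\tilde q=\alpha+\kappa/a$.

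Next I would transfer everything through $h=h_{\beta,\kappa}\colon T_{\beta}\to T_{\beta,\kappa}$. Since $h$ is bi-Lipschitz and $T_{\beta}$ is normally embedded, $T_{\beta,\kappa}$ is normally embedded too (inner and outer distances on $T_{\beta,\kappa}$ are comparable to those on $T_{\beta}$ through $h$), and $h$ carries H\"older subtriangles of $T_{\beta}$ to H\"older subtriangles of $T_{\beta,\kappa}$ of the same exponent. By Proposition \ref{important-proposition}, $h$ preserves tangency orders of arcs, and since $\psi=\phi\circ h$ it preserves the orders of the corresponding functions on arcs; therefore $Q_{\psi}(T')=Q_{\phi}(h(T'))$ for every subtriangle $T'$, the triangle $T_{\beta}$ is elementary with respect to $\psi$ iff $T_{\beta,\kappa}$ is elementary with respect to $\phi$, the width functions correspond under $h$, and $\psi$ is Lipschitz iff $\phi$ is Lipschitz on $T_{\beta,\kappa}$. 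Thus it suffices to prove that $\phi$ is Lipschitz on $T_{\beta,\kappa}$ and that $(T_{\beta,\kappa},\phi)$ is a pizza slice with width function $\mu(q)=a(q-\alpha)$ on $Q_{\phi}(T_{\beta,\kappa})=Q$.

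For the core computation I would parametrize an arbitrary arc $\delta\subset T_{\beta,\kappa}$ by $u$, writing $\delta(u)=(u,\theta(u))$ with $u^{\kappa}\le\theta(u)\le u^{\beta}$, and set $b=ord_{u}\theta\in[\beta,\kappa]$. Then $\phi(\delta(u))=u^{\alpha}\theta(u)^{1/a}$ has order $\alpha+b/a$, and $b\mapsto\alpha+b/a$ is an affine bijection of $[\beta,\kappa]$ onto $Q$, inverse to $q\mapsto\mu(q)$; in particular $Q_{\phi}(T_{\beta,\kappa})=Q$. The hypothesis $\mu(q)\le q$ on $Q$, together with $\beta\ge 1$ and $\underline q\ge 1$, produces a uniform bound on the exponents occurring in $\nabla\phi=(\alpha u^{\alpha-1}v^{1/a},\ a^{-1}u^{\alpha}v^{1/a-1})$ over $T_{\beta,\kappa}$, estimating $v$ from above by $u^{\beta}$ or from below by $u^{\kappa}$ according to the sign of the relevant exponent; since $T_{\beta,\kappa}$ is normally embedded, this bounded gradient yields that $\phi$ is Lipschitz. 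Finally, if $\delta,\delta'\subset T_{\beta,\kappa}$ have $ord\,\phi=q$, then both have second coordinate of order $b=\mu(q)$, hence so does every arc of $T(\delta,\delta')$, which therefore has constant $\phi$-order $q$; this gives elementariness, and the non-archimedean description of H\"older subtriangles of $T_{\beta,\kappa}$ (as in \cite[Lemma 3.3]{birbrair2014lipschitz}, cf.\ Proposition \ref{prop:width function properties}) shows that the infimum of exponents of H\"older triangles through $\delta$ on which $\phi$ has constant order equals $b$, i.e.\ $\mu_{T_{\beta,\kappa}}(\delta,\phi)=b=\mu(q)$. Hence $\mu_{T_{\beta,\kappa},\phi}$ is the single-valued affine function $\mu(q)=a(q-\alpha)$ on $Q$, so $(T_{\beta,\kappa},\phi)$ is a pizza slice; transporting back by $h$ completes the argument.

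The main obstacle is this last point — the exact identification of the width $\mu_{T_{\beta,\kappa}}(\delta,\phi)$ with $b$, i.e.\ that no H\"older triangle through $\delta$ of exponent strictly smaller than $b$ keeps $\phi$ of constant order, and that elementariness survives the case where $\delta$ and $\delta'$ share the same leading $v$-coefficient — which relies on the careful analysis of H\"older subtriangles of $T_{\beta,\kappa}$ from \cite{birbrair2014lipschitz}. Checking the bi-Lipschitz property of $h_{\beta,\kappa}$ asserted in Definition \ref{pizza_slice_standard} (where $\kappa>\beta$ keeps $1-u^{\kappa-\beta}$ bounded away from $0$) and handling the case $\kappa=\infty$ (there $h_{\beta,\kappa}=\mathrm{id}$, $T_{\beta,\kappa}=T_{\beta}$, $\psi=\phi$, and the argument only simplifies) are routine.
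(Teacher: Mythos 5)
Your proposal follows essentially the same route as the paper's proof: reduce via the bi-Lipschitz map $h_{\beta,\kappa}$ to the function $\phi=u^{\alpha}v^{1/a}$ on $T_{\beta,\kappa}$, read off the order of $\phi$ along an arc $v\sim u^{\mu}$ as $q(\mu)=\alpha+\mu/a$, and bound the partial derivatives $\alpha u^{\alpha-1}v^{1/a}$ and $a^{-1}u^{\alpha}v^{1/a-1}$ using $q(\mu)\ge 1$ and $q(\mu)\ge\mu$ for $\mu\in[\beta,\kappa]$, with elementariness coming from monotonicity of $\phi$ in $v$. The only cosmetic difference is that the paper makes the case split in the sign and size of $a$ (to pin down where the extrema of $q(\mu)$ sit and to handle $\kappa=\infty$) fully explicit, whereas you compress it into the phrase ``estimating $v$ from above or below according to the sign of the exponent''; carrying that out is precisely the three-case analysis in the paper.
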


\begin{proof}
Since $h_{\beta,\kappa}$ is (the germ at the origin of) a bi-Lipschitz homeomorphism $T_\beta\to T_{\beta,\kappa}$,
it is enough to show that $\phi=u^\alpha v^{1/a}$ in (\ref{phi}) is a Lipschitz function on $T_{\beta,\kappa}$.
Moreover, as the family of arcs $\gamma_\mu=\{v=u^\mu\}\subset T_{\beta,\kappa}$, for $\mu\in\F,\;\kappa\ge\mu\ge\beta$, is dense in $T_{\beta,\kappa}$, it is enough to show
that $d\phi(u,u^\mu)/du=(\alpha+\frac\mu{a}) u^{\alpha+\frac\mu{a}-1}$, and $\partial\phi/\partial v=\frac1a u^\alpha v^{\frac1a-1}$ restricted to $\gamma_\mu$, are uniformly bounded in $\mu$ in a neighborhood of the origin.
Since $\mu(q)=a(q-\alpha)$, we have $q(\mu)=\alpha+\frac\mu{a}$. In particular $T_{\beta,\kappa}$ is elementary because $h_{\beta,\kappa}$ is a bi-Lipschitz homeomorphism and $\phi=u^\alpha v^{1/a}$ is monotone.
We consider three cases, depending on the value of $a$.

{\bf Case} $0<a\le 1$.
Note that, when $\kappa=\infty$, we have $T_{\beta,\kappa}=T_\beta$ and $\psi=\phi=u^\alpha v^{1/a}$.
Since $q$ may be arbitrary large in that case, condition $1\le\beta\le\mu(q)\le q$ for all $q\in Q$
implies that this is possible only when $0<a\le 1$.
As the minimal value of $q(\mu)$ is $q(\beta)=\alpha+\frac\beta{a}$, we have
$\alpha\ge q\left(1-\frac1a\right)\ge\left(\alpha+\frac\beta{a}\right)\left(1-\frac1a\right)$.
Thus $\alpha\ge\beta\left(1-\frac1a\right)$ and $\alpha+\frac\mu{a}-1\ge\beta+\frac{\mu-\beta}a-1\ge0$.
Thus implies that $d\phi(u,u^\mu)/du=(\alpha+\frac\mu{a}) u^{\alpha+\frac\mu{a}-1}$ is either monotone increasing
function of $u$ for $u>0$, or a constant $\alpha+\frac\mu{a}=1$ when $\mu=\beta=1$ and $\alpha=1-\frac1a$.
Since this function is bounded as a function of $\mu$ for a fixed positive $u<1$, it is uniformly bounded on
a neighborhood of the origin in $T_\beta$.
This proves also that $d\phi(u,u^\mu)/du$ is bounded on $T_{\beta,\kappa}\subset T_\beta$ when $\kappa\ne\infty$.
Similarly, $\partial\phi/\partial v=\frac1a u^\alpha v^{\frac1a-1}$ restricted to $\gamma_\mu$ is
$\frac1a u^{\alpha+\frac\mu{a}-\mu}\le u^{(\mu-\beta)\left(\frac1a-1\right)}$ when $0\le u\le 1$, as
$\alpha+\frac\mu{a}-\mu\ge (\mu-\beta)\left(\frac1a-1\right)$.

{\bf Case} $a>1$. In that case $\kappa\ne\infty$, thus $\mu\le\kappa$ is bounded.
Condition $\mu(q)\le q$ is equivalent to $\alpha\ge q(1-\frac1a)$ for all $q\in Q$.
Since the maximal value of $q\in Q$ is $q(\kappa)=\alpha+\frac\kappa{a}$, this implies $\alpha\ge\left(\alpha+\frac\kappa{a}\right)\left(1-\frac1a\right)$, thus
$\alpha\ge\kappa\left(1-\frac1a\right)$.
We have $d\phi(u,u^\mu)/du=\left(\alpha+\frac\mu{a}\right) u^{\alpha+\frac\mu{a}-1}$,
which is bounded since $\alpha+\frac\mu{a}=q(\mu)\le q(\kappa)=\alpha+\frac\kappa{a}$ and $\alpha+\frac\mu{a}-1=q(\mu)-1\ge0$.
Similarly, $\partial\phi/\partial v=\frac1a u^\alpha v^{\frac1a-1}$ restricted to $\gamma_\mu$ is
$\frac1a u^{\alpha+\frac\mu{a}-\mu}$ which is bounded when $0\le u\le 1$, as
$\alpha+\frac\mu{a}-\mu\ge (\kappa-\mu)\left(1-\frac1a\right)\ge 0$.

{\bf Case} $a<0$. In that case $\kappa\ne\infty$, thus $\mu\le\kappa$ is bounded.
Condition $\mu(q)\le q$ is equivalent to $\alpha\le q(1-\frac1a)$ for all $q\in Q$.
Since the minimal value of $q\in Q$ is $q(\kappa)=\alpha+\frac\kappa{a}$, this implies $\alpha\le\left(\alpha+\frac\kappa{a}\right)\left(1-\frac1a\right)$, thus
$\alpha\ge\kappa\left(1-\frac1a\right)$.
We have $d\phi(u,u^\mu)/du=\left(\alpha+\frac\mu{a}\right) u^{\alpha+\frac\mu{a}-1}$,
which is bounded, as $\alpha+\frac\mu{a}=q(\mu)\le q(\beta)=\alpha+\frac\beta{a}$ and $\alpha+\frac\mu{a}-1=q(\mu)-1\ge 0$.
Similarly, $\partial\phi/\partial v=\frac1a u^\alpha v^{\frac1a-1}$ restricted to $\gamma_\mu$ is
$\frac1a u^{\alpha+\frac\mu{a}-\mu}$ which is bounded when $0\le u\le 1$, as
$\alpha+\frac\mu{a}-\mu\ge (\kappa-\mu)\left(1-\frac1a\right)\ge 0$.
\end{proof}

\begin{theorem}\label{pizza-realization} Given an abstract pizza
${\mathcal P}=\left\{\{q_\ell\}_{\ell=0}^p,\;\{\beta_\ell,Q_\ell,\mu_\ell\}_{\ell=1}^p\right\}$,
there is a non-negative  Lipschitz function $f:T_{\beta}\to\R$,
where $\beta=\min_\ell(\beta_\ell)$, such that $\mathcal P$ is the abstract pizza corresponding to the pizza
$\Lambda = \{T_\ell\}_{\ell=1}^p$ on $T_\beta$ associated with $f$.
\end{theorem}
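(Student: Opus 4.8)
The plan is to build $f$ by gluing together, along their boundary arcs, the standard pizza slices of Definition~\ref{pizza_slice_standard}, one for each index $\ell\in\{1,\dots,p\}$, and then to check that the resulting decomposition of the glued triangle has $\mathcal P$ as its corresponding abstract pizza. \textbf{Step 1 (local pieces).} For each $\ell$, relabel the endpoints of $Q_\ell$ so that $\underline q$ is the endpoint at which $\mu_\ell$ attains its minimum $\beta_\ell$ (nothing to do when $Q_\ell$ is a point) and $\tilde q$ the other endpoint, and let $(T_{\beta_\ell},\psi_\ell)$ be the standard pizza slice associated with $\beta_\ell,\underline q,\tilde q,\mu_\ell$. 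By Lemma~\ref{slice-geometric}, $\psi_\ell\ge 0$ is Lipschitz, $T_{\beta_\ell}$ is a pizza slice associated with $\psi_\ell$ with $Q_{\psi_\ell}(T_{\beta_\ell})=Q_\ell$, $\mu(T_{\beta_\ell})=\beta_\ell$ and $\mu_{T_{\beta_\ell},\psi_\ell}=\mu_\ell$; from (\ref{phi})--(\ref{psi}) (and (\ref{psi-point}) in the point case) $\psi_\ell$ restricts to $u^{\underline q}$ on the width-$\beta_\ell$ boundary arc of $T_{\beta_\ell}$ and to $u^{\tilde q}$ on the supporting boundary arc. After a bi-Lipschitz reparametrization of $T_{\beta_\ell}$ we may assume these restrictions are honest powers $t\mapsto t^{\underline q}$, $t\mapsto t^{\tilde q}$ of the arc-length parameter.

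\textbf{Step 2 (gluing).} Assemble $T=T_1\cup\dots\cup T_p$, with $T_\ell$ a copy of $T_{\beta_\ell}$, by embedding these copies in $\R^N$ for $N$ large enough so that $T_\ell\cap T_{\ell+1}=\lambda_\ell$ is a common boundary arc along which the boundary restrictions of $\psi_\ell$ and $\psi_{\ell+1}$ both equal $t\mapsto t^{q_\ell}$ (hence the $\psi_\ell$ glue to a well-defined non-negative function $f$ on $T$), each $T_\ell$ is normally embedded, any two pieces $T_\ell,T_m$ with $\ell<m$ are transverse (Definition~\ref{def:transverse}) with distinguished arcs $\lambda_\ell,\lambda_{m-1}$, and the arcs $\lambda_0,\dots,\lambda_p$, with $tord(\lambda_{\ell-1},\lambda_\ell)=\beta_\ell$, satisfy condition~(\ref{check-LNE1}). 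Concretely this can be achieved by a ``fibered'' embedding in which the slices hang off a common low-dimensional spine carrying the arcs $\lambda_\ell$, so that the distance between points of different slices is governed by their distances to the spine, or by an explicit model inside a single triangle $T_\beta\subset\R^3$. Proposition~\ref{combinatorialLNE-prop} then shows $T$ is normally embedded; iterating~(\ref{check-LNE1}) gives $tord(\lambda_0,\lambda_p)=\min_\ell\beta_\ell=\beta$, so $T=T(\lambda_0,\lambda_p)$ is a $\beta$-H\"older triangle, outer bi-Lipschitz to $T_\beta$ since it is normally embedded; and $f$ is Lipschitz on $T$ because each $\psi_\ell$ is, the pieces agree on overlaps, and the inner and outer metrics of $T$ are equivalent.

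\textbf{Step 3 (matching the toppings).} Orienting $T$ from $\lambda_0$ to $\lambda_p$, the decomposition $\Lambda=\{T_\ell\}_{\ell=1}^p$ is a pizza on $T$ associated with $f$: each $T_\ell=T(\lambda_{\ell-1},\lambda_\ell)$ is normally embedded and is a pizza slice associated with $f|_{T_\ell}=\psi_\ell$, since whether a normally embedded triangle is a pizza slice depends only on the triangle and the function on it. The toppings $q_\ell=ord_{\lambda_\ell}f$, $\beta_\ell=\mu(T_\ell)=tord(\lambda_{\ell-1},\lambda_\ell)$, $Q_\ell=Q_f(T_\ell)$ and $\mu_\ell=\mu_{T_\ell,f}$ are computed entirely inside $T_\ell$, hence coincide with those of $\mathcal P$ by Step~1. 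For the depths, $\nu_T(\lambda_0,f)=\nu_T(\lambda_p,f)=\infty$ since $\lambda_0,\lambda_p$ are boundary arcs of $T$; for $0<\ell<p$, any H\"older triangle $T'\subset T$ in which $\lambda_\ell$ is generic must straddle $\lambda_\ell$, so $T'\cap T_\ell$ and $T'\cap T_{\ell+1}$ are triangles with $\lambda_\ell$ as a boundary arc, and when $Q_f(T')$ is a point (necessarily $\{q_\ell\}$) their exponents are at least $\mu_\ell(q_\ell)$ and $\mu_{\ell+1}(q_\ell)$; genericity of $\lambda_\ell$ together with normal embedding forces the two boundary arcs of $T'$ to have the same tangency $\mu(T')$ with $\lambda_\ell$, whence $\mu(T')\ge\max(\mu_\ell(q_\ell),\mu_{\ell+1}(q_\ell))$, and this value is realized, so $\nu_T(\lambda_\ell,f)=\nu_\ell$. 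Thus the abstract pizza corresponding to $\Lambda$ is $\mathcal P$, and transporting $f$ to $T_\beta$ along an outer bi-Lipschitz homeomorphism completes the proof.

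\textbf{Main obstacle.} The delicate part is Step~2: the mutual tangency orders between arcs of different slices must come out exactly as the minima of the intervening exponents $\beta_k$ --- this is what forces $T$ to have exponent $\beta$ and keeps the slice toppings ``local'' --- while at the same time $f$ must stay single-valued (which is what requires the orientation bookkeeping and the reparametrization making the boundary restrictions honest powers $t^{q_\ell}$), and the shared arc $\lambda_\ell$ must be allowed to carry possibly different widths $\mu_\ell(q_\ell)$ and $\mu_{\ell+1}(q_\ell)$ on its two sides, so that $\lambda_\ell$ need not be generic in $T$. Once a valid combinatorial embedding is in place, Proposition~\ref{combinatorialLNE-prop} and the intrinsic nature of the slice toppings make the rest routine; the case where several consecutive $\beta_\ell$ coincide needs to be handled separately in the explicit model.
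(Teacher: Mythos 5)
Your Step 1 and Step 3 match the paper's strategy, but Step 2 --- which you yourself identify as the ``main obstacle'' --- is exactly the step you do not carry out, and it is the only step that requires real work. Saying that the gluing ``can be achieved by a fibered embedding in which the slices hang off a common low-dimensional spine, or by an explicit model inside a single triangle'' is a statement of intent, not a construction: you never exhibit an embedding, never verify that $tord(\lambda_i,\lambda_j)=\min_{i<k\le j}\beta_k$ for non-adjacent indices, never verify transversality of non-adjacent slices, and never check that the glued $f$ is single-valued and Lipschitz across the seams when the supporting arc of $T_\ell$ sits on one side or the other. Since the whole content of the theorem is the existence of such a configuration, the proposal as written has a genuine gap.

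The paper's proof closes this gap by never leaving the plane: it subdivides the standard triangle $T_\beta\subset\R^2$ by the explicit graphs $\lambda_\ell=\{v=u^{\alpha_\ell}\}$ with $\alpha_\ell=\min_{i\le\ell}\beta_i$, and pulls back each standard slice $\psi_\ell$ through the explicit fiberwise-affine homeomorphisms $H^\pm_\ell$ of (\ref{hell+})--(\ref{hell-}), choosing $H^+_\ell$ or $H^-_\ell$ according to which boundary arc is the supporting arc. Inside a planar convex triangle all the metric issues you worry about evaporate: the subtriangles are automatically normally embedded and pairwise transverse, the tangency orders are read off from the exponents of the bounding graphs, there is no need for Proposition \ref{combinatorialLNE-prop} or a high-dimensional ambient space, and Lipschitzness of $f$ reduces to Lemma \ref{slice-geometric} plus the matching $f|_{\lambda_\ell}=u^{q_\ell}$ on the seams. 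If you want to keep your gluing-in-$\R^N$ architecture, you would need to write down the embedding explicitly (the $w_j$-coordinate construction in the proof of Theorem \ref{transverse-blocks} shows what this entails); otherwise, replace Step 2 by the paper's planar subdivision. Your Step 3 is fine but partly redundant, since by Lemma \ref{depth} the exponents $\nu_\ell$ are determined by $\mu_\ell(q_\ell)$ and $\mu_{\ell+1}(q_\ell)$ once the other toppings are matched.
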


\begin{proof}
The standard $\beta$-H\"older triangle $T_\beta$ can be decomposed
into $\beta_\ell$-H\"older triangles $T_\ell=T(\lambda_{\ell-1},\lambda_\ell)$, where $1\le\ell\le p$,
by the arcs $\lambda_\ell=\{u\ge 0,\;v=v_\ell(u)\}$, where $0\le\ell\le p$, $v_0(u)\equiv 0$ and $v_p(u)=u^\beta$.
For each $\ell=1,\ldots,p$, let $\mu_\ell(q)=a_\ell q+b_\ell$ and $v_\ell(u)=u^{\alpha_\ell}$, where $\alpha_\ell=min{\beta_i}$ for $i=1..\ell$.

If $a_\ell=0$ then $Q_\ell=\{q_\ell\}$ is a point and $f|_{T_\ell}=u^{q_\ell}$.

If $a_\ell>0$ and $q_{\ell-1}>q_\ell$, or $a_\ell<0$ and $q_{\ell-1}<q_\ell$, thus $\mu_\ell(q_\ell)=\beta$ is the minimal value of $\mu_\ell(q)$ and $\mu_\ell(q_{\ell-1})$ is
its maximal value,
we define a bi-Lipschitz homeomorphism $H^+_\ell:T_{\ell}\to T_{\beta_\ell}$, and we define $f(u,v)|_{T_\ell}$ as follows.
\begin{equation}\label{hell+}
H^+_\ell(u,v)=\big(u, u^{\beta_\ell}(v-v_{\ell-1}(u))/(v_\ell(u)-v_{\ell-1}(u))\big),
\end{equation}
such that $H^+_\ell(\lambda_{\ell-1})=\{v\equiv 0\}$ and $H^+_\ell(\lambda_\ell)=\{v=u^{\beta_\ell}\}$, and set
\begin{equation}\label{fell+}
f(u,v)|_{T_\ell}=\psi_\ell(H^+_\ell(u,v))
\end{equation}
where $\psi_\ell=\psi_{\beta_\ell,q_\ell,q_{\ell-1},\mu_\ell}$ is the standard pizza slice (\ref{psi}).
In this case, $\lambda_{\ell-1}$ is the supporting arc $\tilde\lambda_\ell$ of $T_\ell$ with respect to $f$,
$f|_{\lambda_{\ell-1}}=u^{q_{\ell-1}}$ and $f|_{\lambda_\ell}=u^{q_\ell}$.

If $a_\ell>0$ and $q_{\ell-1}<q_\ell$, or $a_\ell<0$ and $q_{\ell-1}>q_\ell$, thus $\mu_\ell(q_{\ell-1})=\beta$ is the minimal value of $\mu_\ell(q)$ and $\mu_\ell(q_\ell)$
is its maximal value,
we define a bi-Lipschitz homeomorphism $H^-_\ell:T_{\ell}\to T_{\beta_\ell}$,
\begin{equation}\label{hell-}
H^-_\ell(u,v)=\big(u, u^{\beta_{\ell}}(v-v_\ell(u))/(v_{\ell-1}(u)-v_\ell(u))\big),
\end{equation}
such that $H^-_\ell(\lambda_\ell)=\{v\equiv 0\}$ and $H^-_\ell(\lambda_{\ell-1})=\{v=u^{\beta_\ell}\}$, and set
\begin{equation}\label{fell-}
f(u,v)|_{T_\ell}=\psi_\ell(H^-_\ell(u,v))
\end{equation}
where $\psi_\ell=\psi_{\beta_\ell,q_{\ell-1},q_\ell,\mu_\ell}$ is the standard pizza slice (\ref{psi}).
In this case, $\lambda_\ell$ is the supporting arc $\tilde\lambda_\ell$ of $T_\ell$ with respect to $f$, $f|_{\lambda_{\ell-1}}=u^{q_{\ell-1}}$ and
$f|_{\lambda_\ell}=u^{q_\ell}$.

Since the standard pizza slice (\ref{psi}) and (\ref{psi-point}) is a Lipschitz function, and $f|_{\lambda_\ell}=u^{q_\ell}$
for $0\le\ell\le p$, the function $f$ is a Lipschitz function on $T_\beta$, such that $\mathcal P$ is the abstract pizza corresponding to the pizza associated with $f$.
\end{proof}

\begin{definition}\label{equivalent_pizza}\normalfont
Two pizzas are called \emph{combinatorially equivalent} if the corresponding abstract pizzas are the same. In other words, 
two pizzas $\Lambda=\{T_\ell=T(\lambda_{\ell-1},\lambda_\ell)\}_{\ell=1}^p$ and $\Lambda'=\{T'_\ell=T(\lambda'_{\ell-1},\lambda'_\ell)\}_{\ell=1}^p$
on a H\"older triangle $T$, associated with Lipschitz functions $f$ and $g$ on $T$, respectively,
are combinatorially equivalent if the exponents $\beta_\ell=\beta'_\ell$ of $T_\ell$ and $T'_\ell$ are the same,
the exponents $q_\ell=ord_{\lambda_\ell} f=q'_\ell=ord_{\lambda'_\ell} g$ are the same,
and the width functions $\mu_\ell(q)\equiv\mu'_\ell(q)$ on $Q_\ell=Q'_\ell$ are the same for $\Lambda$ and $\Lambda'$.
For each $\ell$ such that $Q_\ell=Q'_\ell$ is not a point, equalities $q_{\ell-1}=q'_{\ell-1},\;q_\ell=q'_\ell$
and $\mu_\ell(q)\equiv\mu'_\ell(q)$ imply that supporting arcs $\tilde\lambda_\ell$ and $\tilde\lambda'_\ell$
for $T_\ell$ and $T'_\ell$ are consistent: either $\tilde\lambda_\ell=\lambda_{\ell-1}$ and $\tilde\lambda'_\ell=\lambda'_{\ell-1}$
or $\tilde\lambda_\ell=\lambda_\ell$ and $\tilde\lambda'_\ell=\lambda'_\ell$.
 
\end{definition}

\begin{proposition}\label{contact_equiv} \emph{(See \cite[Theorem 4.9]{birbrair2014lipschitz}.)}
Two non-negative Lipschitz functions $f$ and $g$ on a normally embedded H\"older triangle $T$ are contact Lipschitz equivalent
if, and only if, minimal pizzas on $T$ associated with $f$ and $g$ are combinatorially equivalent.
\end{proposition}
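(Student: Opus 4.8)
The plan is to prove both implications by combining the Valette-link description of outer bi-Lipschitz maps with the standard pizza slices of Definition \ref{pizza_slice_standard}. The guiding observation is that every item of pizza data attached to a subtriangle $T''\subset T$ — the set $Q_f(T'')$, elementarity with respect to $f$, the width function $\mu_{T'',f}$, the depth, and the property of being a pizza slice with a prescribed supporting arc — is expressed purely in terms of tangency orders of arcs of $T$ and the orders $ord_\gamma f$; hence a self-homeomorphism $h$ of $T$ that preserves tangency orders and satisfies $ord_\gamma f=ord_{h(\gamma)}g$ for all arcs $\gamma$ carries a (minimal) pizza of $f$ to a (minimal) pizza of $g$ with the same toppings.

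For the implication \emph{contact equivalent $\Rightarrow$ combinatorially equivalent minimal pizzas}, let $h$ and $\Omega$ realize the contact equivalence as in Definition \ref{def:contact-equiv}. Since $\Omega$ is outer bi-Lipschitz and maps $T\times\{0\}$ to itself, and $f,g\ge 0$, for each $x\in T$ the number $f(x)=dist((x,f(x)),T\times\{0\})$ is within a fixed factor of $dist(\Omega(x,f(x)),T\times\{0\})=dist((h(x),g(h(x))),T\times\{0\})=g(h(x))$, so $ord_\gamma f=ord_{h(\gamma)}g$ for every $\gamma\in V(T)$. Being an outer bi-Lipschitz self-homeomorphism of $T$ fixing the origin, $h$ preserves tangency orders of arcs, i.e. $tord(\gamma_1,\gamma_2)=tord(h(\gamma_1),h(\gamma_2))$ (the theorem of Fernandes, see \cite{Alexandre} and Proposition \ref{important-proposition}). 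By the guiding observation, $h$ maps the minimal pizza $\Lambda$ of $f$ to a minimal pizza of $g$ with abstract pizza equal to that of $\Lambda$; since the minimal pizza of $g$ is unique up to combinatorial equivalence (see \cite{birbrair2014lipschitz}), the abstract pizzas of $\Lambda$ and $\Lambda'$ coincide, i.e. $\Lambda$ and $\Lambda'$ are combinatorially equivalent. (If $h$ interchanges the boundary arcs of $T$ one first reverses the orientation of $\Lambda'$, which is the natural reading of combinatorial equivalence here.)

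For the converse, write $\Lambda=\{T_\ell=T(\lambda_{\ell-1},\lambda_\ell)\}_{\ell=1}^p$ and $\Lambda'=\{T'_\ell=T(\lambda'_{\ell-1},\lambda'_\ell)\}_{\ell=1}^p$ for the minimal pizzas of $f$ and $g$, so $\beta_\ell=\beta'_\ell$, $q_\ell=q'_\ell$, $\mu_\ell\equiv\mu'_\ell$ on $Q_\ell=Q'_\ell$, and the supporting arcs of $T_\ell$ and $T'_\ell$ are consistent. I would assemble $h$ and $\Omega$ slice by slice. First, parameterizing all arcs by distance to the origin, fix bi-Lipschitz identifications $\lambda_\ell\to\lambda'_\ell$ and compatible fibered bi-Lipschitz identifications $\lambda_\ell\times\R\to\lambda'_\ell\times\R$ carrying $f|_{\lambda_\ell}$ to $g|_{\lambda'_\ell}$ (possible since $q_\ell=q'_\ell$). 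Next, for each $\ell$ the triangle $T_\ell$ is a pizza slice for $f$ with data $(\beta_\ell,Q_\ell,\mu_\ell)$ and a supporting arc; by Lemma \ref{slice-geometric} together with its converse (a function realizing these data is contact equivalent to the standard pizza slice $\psi_{\beta_\ell,\underline q,\tilde q,\mu_\ell}$ of Definition \ref{pizza_slice_standard}, with $\underline q,\tilde q$ the endpoints of $Q_\ell$ where $\mu_\ell$ is minimal, resp. maximal; see \cite{birbrair2014lipschitz}), $f|_{T_\ell}$ and $g|_{T'_\ell}$ are both contact equivalent to $\psi_{\beta_\ell,\underline q,\tilde q,\mu_\ell}$, the latter because the data coincide and the supporting arc is consistent. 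Composing these contact equivalences and adjusting by bi-Lipschitz maps supported near the boundary arcs, I obtain a fibered outer bi-Lipschitz $h_\ell:T_\ell\to T'_\ell$ and a contact equivalence $\Omega_\ell:T_\ell\times\R\to T'_\ell\times\R$ restricting to the chosen maps over $\lambda_{\ell-1}$ and $\lambda_\ell$. Finally, glue: the $h_\ell$ agree on the shared arcs $\lambda_\ell$ and the $\Omega_\ell$ on $\lambda_\ell\times\R$, so they assemble to $h:T\to T$ and $\Omega:T\times\R\to T\times\R$; these are globally outer bi-Lipschitz because the decomposition $T=\bigcup T_\ell$ is combinatorially normally embedded (Definition \ref{combinatorialLNE}, valid since $T$ is normally embedded), so the tangency order of arcs in distinct slices is determined by their tangency orders with the separating arcs $\lambda_k$, which $h$ preserves, whence Proposition \ref{important-proposition} applies (and likewise for $\Omega$ on $T\times\R$, via the reasoning of Proposition \ref{combinatorialLNE-prop}). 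A direct check of the square in Definition \ref{def:contact-equiv} completes the proof.

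The step I expect to be the main obstacle is the single-slice assertion used in the converse: that a non-negative Lipschitz function making $T$ a pizza slice is, up to contact equivalence with controlled boundary behaviour, determined by $\beta$, $Q$, $\mu$ and the supporting arc, equivalently is contact equivalent to the explicit monomial model $\psi_{\beta,\underline q,\tilde q,\mu}$. This requires genuine Lipschitz-geometric work — constructing and estimating the straightening bi-Lipschitz homeomorphisms near both boundary arcs — rather than mere bookkeeping with tangency orders; granting it, the gluing and the Valette-link invariance are routine and the forward implication is formal.
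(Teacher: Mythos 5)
This proposition is stated in the paper as an imported result (\cite[Theorem 4.9]{birbrair2014lipschitz}) and is given no proof here, so there is no in-paper argument to compare against; your proposal is in effect a reconstruction of the proof from the cited source, and it follows that source's strategy. Your forward direction is correct and essentially complete: the identity $f(x)=dist((x,f(x)),T\times\{0\})$ for $f\ge 0$, together with $\Omega(T\times\{0\})=T\times\{0\})$ and the commutative diagram, does give $f\asymp g\circ h$, and since every topping of a minimal pizza (exponents of subtriangles, $Q_f$, width, depth, minimality) is expressed through tangency orders and orders of $f$ along arcs — both preserved by $h$ by Proposition \ref{important-proposition} — the image of a minimal pizza for $f$ is a minimal pizza for $g$ with the same abstract pizza, and uniqueness of the pizza zones (Lemma \ref{MP}) finishes it, up to the orientation caveat you note.

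The converse, however, is not self-contained at its decisive step. You reduce everything to the claim that a pizza slice for a non-negative Lipschitz function, with prescribed $(\beta,Q,\mu)$ and supporting arc, is contact equivalent to the monomial model $\psi_{\beta,\underline q,\tilde q,\mu}$ of Definition \ref{pizza_slice_standard} \emph{with controllable behaviour on both boundary arcs} (so that the slice-wise equivalences can be matched on the shared arcs $\lambda_\ell$ and glued). Lemma \ref{slice-geometric} only gives the easy direction (the model realizes the data); the converse normal form, and the freedom to adjust it near the boundary to fit prescribed fibered identifications over $\lambda_{\ell-1}$ and $\lambda_\ell$, is precisely the analytic content of \cite{birbrair2014lipschitz}, i.e.\ of the theorem being proved. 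Deferring it to that reference makes the argument circular as a proof of this proposition, though legitimate as an explanation of why the statement holds. The gluing itself (piecewise bi-Lipschitz maps on a normally embedded $T$, resp.\ $T\times\R$, agreeing on the separating arcs assemble to an outer bi-Lipschitz map) is fine and matches the gluing arguments the paper uses elsewhere, e.g.\ in the proof of Theorem \ref{complete}.
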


\begin{definition}\label{zone}\normalfont (See \cite[Definitions 2.36 and 2.40]{GS}.) Let $X$ be a surface germ.
A non-empty set of arcs $Z\subset V(X)$ is called a \emph{zone} if, for any two arcs $\gamma_1\ne\gamma_2$ in $Z$, there exists a
non-singular H\"older triangle $T=T(\gamma_1,\gamma_2)$ such that $V(T)\subset Z$.
A \emph{singular zone} is a zone $Z=\{\gamma\}$ consisting of a single arc $\gamma$.
A zone $Z$ is \emph{normally embedded} if, for any two arcs $\gamma_1\ne\gamma_2$ in $Z$,
there exists a normally embedded H\"older triangle $T=T(\gamma_1,\gamma_2)$ such that $V(T)\subset Z$.
The \emph{order} of a zone $Z$ is defined as $\mu(Z)=\inf_{\gamma,\gamma'\in Z} tord(\gamma,\gamma')$.
If $Z$ is a singular zone, then $\mu(Z)=\infty$. A zone $Z$ is called a $\beta$-zone when $\mu(Z)=\beta$.
\end{definition}

\begin{definition}\label{closed}\normalfont
(See \cite[Definition 2.43]{GS}.)
A $\beta$-zone $Z$ is \emph{closed} if there are two arcs $\gamma$ and $\gamma'$ in $Z$ such that $tord(\gamma,\gamma')=\beta$.
Otherwise, $Z$ is an \emph{open} zone.
A zone $Z\subset V(X)$ is \emph{perfect} if, for any two arcs $\gamma$ and $\gamma'$ in $Z$, there exists a H\"older triangle $T\subset X$ such that
$V(T)\subset Z$ and both $\gamma$ and $\gamma'$ are generic arcs of $T$.
By definition, a singular zone is closed perfect.
\end{definition}

\begin{remark}\label{perfect-zone-remark} \normalfont (see \cite[Lemma 2.29]{BG})
A zone $Z\subset V(X)$ is perfect if, and only if, for any two arcs $\{\gamma_1,\gamma_2\} \in Z$ there exists an inner bi-Lipschitz map $\Psi:X \to X$, such that
$\Psi(\gamma_1)=\gamma_2$ and $\Psi$ is identity on $V(X)\setminus Z$.
If $Z$ and $Z'$ are disjoint closed perfect zones, then $itord(Z,Z')=itord(\gamma,\gamma')$ for any arcs $\gamma\in Z$ and $\gamma'\in Z'$.
\end{remark}

\begin{definition}\label{q-zone}\normalfont (See \cite[Definition 2.22]{BG}.)
Let $f:T\to\R$ be a Lipschitz function on a normally embedded H\"older triangle $T$.
A zone $Z\subset V(T)$ is a $q$-\emph{order zone} for $f$ if $ord_\gamma f=q$ for any arc $\gamma\in Z$.
A $q$-order zone for $f$ is \emph{maximal} if it is not a proper subset of any other $q$-order zone for $f$.
The \emph{width zone} $W_T(\gamma,f)$ of an arc $\gamma\subset T$ with respect to $f$ is the maximal $q$-order zone
for $f$ containing $\gamma$, where $q=ord_\gamma f$.
The order of $W_T(\gamma,f)$ is $\mu_T(\gamma,f)$.
The \emph{depth zone} $D_T(\gamma,f)$ of an arc $\gamma\subset T$ with respect to $f$ is
the union of zones $G(T')$ for all triangles $T'\subset T$ such that $\gamma\in G(T')$ and $Q_f(T')$ is a point.
The order of $D_T(\gamma,f)$ is $\nu_T(\gamma,f)$.
By definition, $D_T(\gamma,f)=\{\gamma\}$ and $\nu_T(\gamma,f)=\infty$ when there are no such triangles $T'$.
\end{definition}

\begin{lemma}\label{width zone is closed} \emph{(See \cite[Lemma 2.23]{BG}.)}
If $f$ is a Lipschitz function on a normally embedded H\"older triangle $T$, then the width zone $W_T(\gamma,f)$ is closed
for any arc $\gamma\subset T$.
\end{lemma}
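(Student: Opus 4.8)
The plan is to reduce the closedness of $W:=W_T(\gamma,f)$ to the single fact that the infimum defining the width is actually attained. Set $q=ord_\gamma f$ and $\beta=\mu(W)=\mu_T(\gamma,f)$ (the last equality by Definition \ref{q-zone}). If $\beta=\infty$ then $W=\{\gamma\}$ is a singular zone, which is closed by the convention in Definition \ref{closed}, so assume $\beta<\infty$; then by Definition \ref{def:width function} there is at least one non-degenerate H\"older triangle $T'\subseteq T$ with $\gamma\subset T'$ and $Q_f(T')=\{q\}$. The goal is to produce such a triangle $T_0=T_0(\gamma_1,\gamma_2)$ with, in addition, $\mu(T_0)=\beta$.

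Granting the existence of $T_0$, the lemma follows quickly. Since $T$ is a normally embedded H\"older triangle it is non-singular: it is a topological manifold with corners, so all its arcs are topologically non-singular, and any interior arc $\delta\in I(T)$ lies in $I(T)$ with $T$ itself a normally embedded H\"older triangle, so $\delta$ is Lipschitz non-singular in the sense of Definition \ref{singulararc}. Its sub-triangle $T_0$ is therefore also non-singular (interior arcs of $T_0$ are interior arcs of $T$), and $T_0$ is normally embedded as a H\"older sub-triangle of the normally embedded $T$. Consequently $V(T_0)$ is a zone; as $Q_f(T_0)=\{q\}$ it is a $q$-order zone, and it contains $\gamma$, so by maximality of $W_T(\gamma,f)$ we get $V(T_0)\subseteq W$. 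Finally, because $T_0$ is normally embedded, the tangency order of its boundary arcs computed in $\R^n$ equals its inner exponent $\mu(T_0)=\beta$; since $\gamma_1,\gamma_2\in W$, this shows the infimum $\mu(W)=\beta$ is attained, i.e.\ $W$ is closed.

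For the construction of $T_0$ I would give two interchangeable arguments. The first is by o-minimality: the H\"older sub-triangles $T'$ of $T$ that contain $\gamma$ and satisfy $Q_f(T')=\{q\}$ form a definable family, parameterized by their boundary arcs inside a definable chart of $V(T)$, and on it the exponent $\mu(T')$ is a definable function bounded below by $\beta$; by the curve selection lemma one gets a definable one-parameter family $T'_s$, $0<s<\varepsilon$, of such triangles with $\mu(T'_s)\searrow\beta$, and its limit $T_0$ as $s\to0$ is again a H\"older triangle containing $\gamma$ with $\mu(T_0)=\beta$ and $Q_f(T_0)=\{q\}$ — the last point because $\gamma\subset T_0$ forces $q\in Q_f(T_0)$ and, by polynomial boundedness of the structure, the order of $f$ along the arcs of the family $T'_s$ passes to the limit, so no larger orders appear. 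The second argument avoids limits: choose a pizza for $f$ on $T$ (Proposition \ref{contact_equiv}, \cite{birbrair2014lipschitz}); on each pizza slice $T_\ell$, Proposition \ref{prop:width function properties} gives $\mu_{T_\ell}(\delta,f)=tord(\tilde\gamma_\ell,\delta)$, so the width is realized by an honest tangency order, and intersecting the finitely many pizza slices with $W$ exhibits $\mu(W)=\beta$ as the minimum of finitely many realized exponents, hence as $\mu(T_0)$ for a suitable slice or sub-slice $T_0$ through $\gamma$.

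The main obstacle is precisely this second step, the attainment of the infimum in the definition of the width. The order function can a priori jump upward under limits of arcs, so in the first argument one must argue carefully that the limiting widest triangle still carries $f$ with constant order $q$; this is exactly where polynomial boundedness of the o-minimal structure (equivalently, the finiteness of the set of widths $\mu_{T,f}(q)$ from \cite{birbrair2014lipschitz} together with the pizza decomposition) is used. Everything else — the reductions, the identification $\beta=\mu(W)=\mu_T(\gamma,f)$, non-singularity and normal embedding of $T_0$, and the equality of boundary tangency with the exponent — is routine.
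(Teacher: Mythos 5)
The paper itself does not prove this lemma — it is quoted verbatim from \cite[Lemma 2.23]{BG} — so there is no in-paper argument to measure you against; I can only assess your proposal on its merits. Your overall reduction is sound: granting the identification $\mu(W_T(\gamma,f))=\mu_T(\gamma,f)$ asserted in Definition \ref{q-zone}, closedness is exactly the statement that the infimum in Definition \ref{def:width function} is attained by a normally embedded triangle $T_0\ni\gamma$ with $Q_f(T_0)=\{q\}$ and $\mu(T_0)=\mu_T(\gamma,f)$, whose boundary arcs then realize the tangency order $\mu(W)$. Of your two routes to $T_0$, only the second (pizza-based) one actually works, and it is essentially the standard argument: inside each pizza slice the width is an honest tangency order with the supporting arc (Proposition \ref{prop:width function properties}), the level set of that tangency order is a closed zone, and the finitely many slices meeting $W$ are glued along the closed perfect pizza zones of Lemmas \ref{MP} and \ref{depth}, so $\mu(W)$ is a minimum of finitely many realized exponents. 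The first route is not ``interchangeable'' with it: the union of an increasing definable family $T'_s$ with $\mu(T'_s)\searrow\beta$ need not be a closed H\"older triangle, and on the limiting boundary arc the order of $f$ can jump strictly above $q$ (orders do not pass to limits of arcs; only a semicontinuity holds), so $Q_f(T_0)=\{q\}$ cannot be concluded without invoking the same finiteness/pizza machinery — at which point the limit argument is redundant. You flag this weakness yourself; I would drop the first argument entirely and flesh out the second, in particular the case where $W$ crosses a pizza zone $D_\ell$ into an adjacent slice, where the non-archimedean inequality is needed to see that the minimal tangency order is still attained by a pair of arcs lying on opposite sides of $D_\ell$.
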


\begin{definition}\label{def:zone-slice}\normalfont
(See \cite[Definition 2.24]{BG}.) Let $T$ be a normally embedded H\"older triangle and $f$ a Lipschitz function on $T$.
If $Z\subset V(T)$ is a zone, we define $Q_f(Z)$
as the set of all exponents $ord_\gamma f$ for $\gamma\in Z$. The zone $Z$ is \emph{elementary} with respect to $f$ if the set of arcs $\gamma\in Z$
such that $ord_\gamma f=q$ is a zone for each $q\in Q_f(Z)$.

For $\gamma\in Z$ and $q=ord_\gamma f$, the \emph{width} $\mu_Z(\gamma,f)$ of $\gamma$ with respect to $f$ is the infimum of exponents of H\"older triangles $T'$ containing
$\gamma$ such that $V(T')\subset Z$ and $Q_f(T')$ is a point.
The \emph{width zone} $W_Z(\gamma,f)$ of $\gamma$ with respect to $f$ is the maximal subzone of $Z$ containing $\gamma$ such that $q=ord_\lambda f$ for all arcs
$\lambda\in W_Z(\gamma,f)$. The order of $W_Z(\gamma,f)$ is $\mu_Z(\gamma,f)$.
For $q\in Q_f(Z)$ let $\mu_{Z,f}(q)$ be the set of exponents $\mu_Z(\gamma,f)$, where $\gamma\in Z$ is any arc such that $ord_{\gamma}f=q$.
It follows from Lemma 3.3 from (\cite{birbrair2014lipschitz}) that, for each $q\in Q_f(Z)$, the set $\mu_{Z,f}(q)$ is finite.
This defines a multivalued \emph{width function} $\mu_{Z,f}: Q_f(Z)\to \F\cup \{\infty\}$.
If $Z$ is an elementary zone with respect to $f$ then the function $\mu_{Z,f}$ is single valued.

We say that $Z$ is a \emph{pizza slice zone} associated with
$f$ if it is elementary with respect to $f$ and unless $Q_f(Z)$ is a point $\mu_{Z,f}(q)=aq+b$ is an affine function on
$Q_f(Z)$.
\end{definition}

\begin{lemma}\label{depth} \emph{(See \cite[Lemma 2.25]{BG}.)}
Let $f$ be a Lipschitz function on a normally embedded H\"older triangle $T$.
Let $\gamma$ be an interior arc of $T$, so that $T=T'\cup T''$ and $T'\cap T''=\{\gamma\}$.
Then either $\mu_{T'}(\gamma,f)=\mu_{T''}(\gamma,f)$ and $\nu_T(\gamma,f)=\mu_T(\gamma,f)= \mu_{T'}(\gamma,f)=\mu_{T''}(\gamma,f)$, or
$\nu_T(\gamma,f)=\max(\mu_{T'}(\gamma,f),\mu_{T''}(\gamma,f))>\mu_T(\gamma,f)$.
In both cases, $D_T(\gamma,f)$ is a closed perfect zone.
\end{lemma}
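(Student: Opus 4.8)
The plan is to transfer the statement to the Valette link and reduce it to the combinatorics of the two closed width zones $W_{T'}(\gamma,f)$ and $W_{T''}(\gamma,f)$ (closed by Lemma \ref{width zone is closed}). Two elementary observations are used throughout: since $T$ is normally embedded and $\gamma$ separates $T$ into $T'$ and $T''$ with $T'\cap T''=\{\gamma\}$, one has $tord(\gamma_1,\gamma_2)=\min\big(tord(\gamma_1,\gamma),tord(\gamma,\gamma_2)\big)$ for $\gamma_1\in V(T')$, $\gamma_2\in V(T'')$; and any H\"older triangle having $\gamma$ as a boundary arc lies entirely in $T'$ or entirely in $T''$. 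From the second observation I would first deduce $W_T(\gamma,f)=W_{T'}(\gamma,f)\cup W_{T''}(\gamma,f)$ (a $q$-order zone in $T$ through $\gamma$, intersected with $T'$, is a $q$-order zone in $T'$ through $\gamma$, hence contained in $W_{T'}(\gamma,f)$; conversely $W_{T'}(\gamma,f)\cup W_{T''}(\gamma,f)$ is a $q$-order zone in $T$ through $\gamma$), and then, taking infima of $tord$, obtain $\mu_T(\gamma,f)=\min\big(\mu_{T'}(\gamma,f),\mu_{T''}(\gamma,f)\big)$.

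Next I would establish $\nu_T(\gamma,f)=\max\big(\mu_{T'}(\gamma,f),\mu_{T''}(\gamma,f)\big)=:\nu$. For $\nu_T(\gamma,f)\ge\nu$: if $T_0\subset T$ has $\gamma\in G(T_0)$ and $Q_f(T_0)=\{q\}$, then $\gamma$ is interior to $T_0$, so $T_0=(T_0\cap T')\cup(T_0\cap T'')$ with each piece a H\"older triangle having $\gamma$ as a boundary arc and $Q_f$ a point; genericity of $\gamma$ in $T_0$ forces each piece to have exponent $\mu(T_0)$, so $\mu(T_0)\ge\mu_{T'}(\gamma,f)$ and $\mu(T_0)\ge\mu_{T''}(\gamma,f)$, and taking the infimum over such $T_0$ gives $\nu_T(\gamma,f)\ge\nu$ (if there is no such $T_0$, then $\nu_T(\gamma,f)=\infty$ and one of the widths is $\infty$). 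For the reverse inequality (when $\nu<\infty$) the key sub-fact is: a closed $\beta$-zone $Z\ni\gamma$ with $\beta\le\nu$ contains an arc $\delta$ with $tord(\gamma,\delta)=\nu$ admitting a H\"older triangle $T(\gamma,\delta)\subset Z$ of exponent $\nu$; this follows because closedness produces an arc in $Z$ at tangency exactly $\beta$ from $\gamma$, the connecting triangle $T(\gamma,\alpha)\subset Z$ then has exponent $\beta$, and being inside the normally embedded triangle $T'$ it contains arcs at every tangency in $[\beta,\infty]$ from $\gamma$. Applying this to $W_{T'}(\gamma,f)$ and $W_{T''}(\gamma,f)$ yields $\delta'\in W_{T'}(\gamma,f)$, $\delta''\in W_{T''}(\gamma,f)$ and triangles $T(\gamma,\delta')\subset W_{T'}(\gamma,f)$, $T(\gamma,\delta'')\subset W_{T''}(\gamma,f)$ of exponent $\nu$; then $T_0=T(\gamma,\delta')\cup T(\gamma,\delta'')=T(\delta',\delta'')$ satisfies $\mu(T_0)=\nu$, $\gamma\in G(T_0)$, $Q_f(T_0)=\{q\}$, so $\nu_T(\gamma,f)\le\nu$. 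The dichotomy now follows: if $\mu_{T'}(\gamma,f)=\mu_{T''}(\gamma,f)=m$ then $\mu_T(\gamma,f)=\nu_T(\gamma,f)=m$; otherwise $\nu_T(\gamma,f)=\max>\min=\mu_T(\gamma,f)$.

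For the last assertion, $D_T(\gamma,f)$ a closed perfect zone: when $\nu=\infty$ it is the singular zone $\{\gamma\}$, closed perfect by definition. When $\nu<\infty$, I would identify $D_T(\gamma,f)=Z'\cup Z''$ with $Z'=\{\eta\in W_{T'}(\gamma,f):tord(\gamma,\eta)\ge\nu\}$ and $Z''=\{\eta\in W_{T''}(\gamma,f):tord(\gamma,\eta)\ge\nu\}$: the inclusion $\subseteq$ holds because a generic arc $\eta$ of a valid $T_0$ has $ord_\eta f=q$, lies in $T'$ or $T''$, and (being generic together with $\gamma$ in $T_0$) satisfies $tord(\gamma,\eta)\ge\mu(T_0)\ge\nu$; the inclusion $\supseteq$ is obtained by repeating the construction above, adjoining $\eta$ so that it too becomes a generic arc of the constructed triangle. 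Each of $Z'$, $Z''$ is the intersection of a closed width zone with the closed ``ball'' $\{\eta:tord(\gamma,\eta)\ge\nu\}$, hence a closed $\nu$-zone containing $\gamma$, and since $\gamma$ separates $T'$ from $T''$ their union $Z'\cup Z''$ is again a closed $\nu$-zone containing $\gamma$; as it contains $G(T(\delta',\delta''))$ its order is exactly $\nu=\nu_T(\gamma,f)$. Finally, for perfectness: given $\eta_1,\eta_2\in D_T(\gamma,f)$, all pairwise tangencies in $\{\gamma,\eta_1,\eta_2\}$ are $\ge\nu$, so applying the sub-fact on both sides of $\gamma$ I would build one valid H\"older triangle $T_0$ of exponent $\nu$ having $\gamma,\eta_1,\eta_2$ as generic arcs; then $G(T_0)\subset D_T(\gamma,f)$ is a perfect zone, and a slight enlargement of $T(\eta_1,\eta_2)$ inside $G(T_0)$ gives the H\"older triangle required by perfectness of $D_T(\gamma,f)$.

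I expect the genuine difficulty to be concentrated in this last step: showing that $D_T(\gamma,f)$ is a zone and is perfect amounts to producing, for any two of its arcs, a common valid H\"older triangle in which those two arcs and $\gamma$ are simultaneously generic. This needs careful ultrametric bookkeeping — placing the boundary arcs $\delta',\delta''$ at tangency \emph{exactly} $\nu$ from the whole cluster $\{\gamma,\eta_1,\eta_2\}$ while keeping the connecting triangles inside the respective width zones — and a separate treatment of the borderline cases $\mu_{T'}(\gamma,f)=\nu$ or $\mu_{T''}(\gamma,f)=\nu$, in which the relevant width zone already has order $\nu$ and hence leaves little room. The remaining steps are routine consequences of the non-archimedean property of $tord$, the coincidence of inner and outer tangency on the normally embedded $T$, and the definitions of width and depth.
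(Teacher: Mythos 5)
This lemma is not proved in the paper at all --- it is imported verbatim from \cite{BG} (Lemma 2.25 there), so there is no internal proof to compare yours with. On its own merits, the first two thirds of your argument are sound: splitting a valid triangle $T_0$ (one with $\gamma\in G(T_0)$ and $Q_f(T_0)$ a point) along $\gamma$ gives $\nu_T(\gamma,f)\ge\max(\mu_{T'},\mu_{T''})$, and producing boundary arcs $\delta',\delta''$ at tangency exactly $\nu$ from $\gamma$ inside the two closed width zones gives the reverse inequality; together with $\mu_T=\min(\mu_{T'},\mu_{T''})$ this yields the stated dichotomy.

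The last part, however, contains a genuine error, located exactly at the borderline case you flagged: the identification $D_T(\gamma,f)=Z'\cup Z''$ is false. Take $T=\{u\ge 0,\ 0\le v\le u\}$, $\gamma=\{v=2u^2\}$, and $f(u,v)=u^4+\max(0,\,v-3u^2)+\max(0,\,u^2-v)$. Then $q=4$, the width zones are $W_{T'}(\gamma,f)=\{\lambda:\ 2u^2\le\lambda(u)\le 3u^2+O(u^4)\}$ and $W_{T''}(\gamma,f)=\{\lambda:\ u^2-O(u^4)\le\lambda(u)\le 2u^2\}$, so $\mu_{T'}=\mu_{T''}=\nu=2$ and your $Z'\cup Z''$ is all of $W_{T'}\cup W_{T''}$, which contains $\eta=\{v=3u^2\}$. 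But $\eta\notin D_T(\gamma,f)$: if $\eta\in G(T_0)$ and $\gamma\in G(T_0)$ with $Q_f(T_0)=\{4\}$, the boundary arc $\delta'$ of $T_0$ lying beyond $\eta$ must have $ord_{\delta'}f=4$, forcing $tord(\eta,\delta')\ge 4$, whereas genericity of $\eta$ forces $tord(\eta,\delta')=tord(\eta,\delta'')\le tord(\eta,\gamma)=2$. Here $D_T(\gamma,f)=\{\lambda:\ \lambda(u)=cu^2+o(u^2),\ 1<c<3\}$, strictly smaller than $Z'\cup Z''$: the arcs sitting at the far edge of a width zone of order exactly $\nu$, with no companion at tangency exactly $\nu$ beyond them inside that zone, lie in $Z'\cup Z''$ but not in the depth zone. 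Closedness of $D_T(\gamma,f)$ survives (any valid $T_0$ of exponent $\nu$ already contributes two generic arcs at mutual tangency $\nu$), but your proof of perfectness rests on the false description and must be redone: one needs to characterize which arcs of the width zones actually admit a far-side companion at tangency $\nu$, and then show any two such arcs are simultaneously generic in a single Hölder triangle whose Valette link lies in $\bigcup G(T_0)$.
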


\begin{lemma}\label{MP} \emph{(See \cite[Proposition 2.30]{BG}.)}
Let $f$ be a non-negative Lipschitz function on a normally embedded H\"older triangle $T=T(\gamma_1,\gamma_2)$, oriented from $\gamma_1$ to $\gamma_2$.
There exists a unique finite family $\{D_\ell\}_{\ell=0}^p$ of disjoint zones $D_\ell\subset V(T)$ with the following properties:\newline
\emph{\bf 1.} The singular zones $D_0=\{\gamma_1\}$ and $D_p=\{\gamma_2\}$ are the boundary arcs of $T$.\newline
\emph{\bf 2.} For $0<\ell<p, D_\ell$ is a closed perfect $\nu_\ell$-zone, where $\nu_\ell=\nu_T(\gamma,f)$ for any arc $\gamma\in D_\ell$.
In particular, $D_\ell$ is a $q_\ell$-order zone for $f$, where $q_\ell=ord_\gamma f$.
Moreover, $D_\ell$ is a maximal perfect $q_\ell$-order zone for $f$: if $Z\supseteq D_\ell$ is a perfect $q_\ell$-order zone for $f$,
then $Z=D_\ell$.\newline
\emph{\bf 3.} Any choice of arcs $\lambda_\ell\in D_\ell$ defines a minimal pizza  $\Lambda = \{T_\ell\}_{\ell=1}^p$, where $T_\ell=T(\lambda_{\ell-1},\lambda_\ell)$ on $T$
associated with $f$.\newline
\emph{\bf 4.} Any minimal pizza on $T$ associated with $f$ can be obtained as a decomposition $\{T_\ell\}_{\ell=1}^p$ of $T$ defined by some choice of arcs $\lambda_\ell\in
D_\ell$ for $\ell=0,\ldots,p$.
\end{lemma}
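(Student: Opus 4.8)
The plan is to build the family $\{D_\ell\}_{\ell=0}^p$ from an arbitrary minimal pizza associated with $f$, and then to show that the result does not depend on the pizza. By the existence theorem for pizzas (\cite{birbrair2014lipschitz}), there is a minimal pizza $\Lambda=\{T_\ell=T(\lambda_{\ell-1},\lambda_\ell)\}_{\ell=1}^p$ on $T$ associated with $f$. Set $D_0=\{\gamma_1\}$, $D_p=\{\gamma_2\}$, and $D_\ell=D_T(\lambda_\ell,f)$ for $0<\ell<p$. By Lemma \ref{depth}, each $D_\ell$ with $0<\ell<p$ is a closed perfect zone. By Definition \ref{q-zone}, $D_\ell$ is the union of the zones $G(T')$ over the triangles $T'\subseteq T$ with $\lambda_\ell\in G(T')$ and $Q_f(T')$ a point; since that point is then $ord_{\lambda_\ell}f=q_\ell$, every arc of $D_\ell$ has $f$-order $q_\ell$, so $D_\ell$ is a $q_\ell$-order zone, and its order is $\nu_\ell=\nu_T(\lambda_\ell,f)$. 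This establishes property 1 and the first two assertions of property 2.

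Next I prove the maximality in property 2 and the disjointness of the $D_\ell$. Let $Z\supseteq D_\ell$ be a perfect $q_\ell$-order zone, and take $\gamma\in Z$. By perfectness there is a H\"older triangle $T'$ with $V(T')\subseteq Z$ such that $\gamma$ and $\lambda_\ell$ are both generic arcs of $T'$; since $Z$ is a $q_\ell$-order zone, $Q_f(T')=\{q_\ell\}$, hence $G(T')\subseteq D_T(\lambda_\ell,f)=D_\ell$ by Definition \ref{q-zone}, so $\gamma\in D_\ell$ and $Z=D_\ell$. The same construction, applied through a common arc and using the inner bi-Lipschitz maps of Remark \ref{perfect-zone-remark}, shows that the union of two perfect $q_\ell$-order zones that share an arc is again a perfect $q_\ell$-order zone; therefore the maximal perfect $q_\ell$-order zone through a given arc is unique, $D_\ell$ is the maximal perfect $q_\ell$-order zone through each of its arcs, and $D_\ell\cap D_m=\varnothing$ whenever $D_\ell\ne D_m$. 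That the $p+1$ listed zones are pairwise distinct follows from minimality of $\Lambda$: if $D_\ell=D_{\ell+1}$ then $V(T_{\ell+1})$ lies in a single perfect $q_\ell$-order zone, which forces $T_\ell\cup T_{\ell+1}$ to be a pizza slice, contradicting minimality through the reducibility conditions of Definition \ref{abstract-pizza}.

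It remains to prove properties 3 and 4, after which uniqueness is formal. For property 3, pick $\lambda'_\ell\in D_\ell$ (necessarily $\lambda'_0=\gamma_1$ and $\lambda'_p=\gamma_2$). As $D_\ell$ is a perfect $q_\ell$-order zone containing $\lambda_\ell$ and $\lambda'_\ell$, there is a triangle $T''\subseteq D_\ell$ with $\lambda_\ell,\lambda'_\ell\in G(T'')$ and $Q_f(T'')=\{q_\ell\}$, so $f$ has constant order $q_\ell$ on $T(\lambda_\ell,\lambda'_\ell)\subseteq T''$ and $tord(\lambda_\ell,\lambda'_\ell)\ge\nu_\ell$. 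Using the non-archimedean property of $tord$ and Definitions \ref{def:pizza-slice} and \ref{pizza-decomp}, one checks that replacing $\lambda_\ell$ by $\lambda'_\ell$ keeps each slice $T(\lambda'_{\ell-1},\lambda'_\ell)$ elementary with respect to $f$, with the same exponents $\beta_\ell$, the same interval $Q_\ell$, and the same affine width function $\mu_\ell$; hence $\Lambda'=\{T(\lambda'_{\ell-1},\lambda'_\ell)\}_{\ell=1}^p$ is a pizza on $T$ associated with $f$ with the same corresponding abstract pizza as $\Lambda$, and it is minimal since minimality is a property of the abstract pizza (Definition \ref{abstract-pizza}). For property 4, let $\Lambda'=\{T(\lambda'_{\ell-1},\lambda'_\ell)\}$ be any minimal pizza associated with $f$; by Proposition \ref{contact_equiv} it has the same number $p$ of slices and the same toppings as $\Lambda$, so $ord_{\lambda'_\ell}f=q_\ell$ and $\nu_T(\lambda'_\ell,f)=\nu_\ell$, and the zones $D_T(\lambda'_\ell,f)$ are, by the same reasoning as above applied to $\Lambda'$, pairwise disjoint maximal perfect $q_\ell$-order zones ordered along $T$ between $D_0$ and $D_p$ in the same way as the intervals $Q_\ell$ are arranged; comparing with $\{D_\ell\}$ and using uniqueness of the maximal perfect $q_\ell$-order zone in each position gives $D_T(\lambda'_\ell,f)=D_\ell$, hence $\lambda'_\ell\in D_\ell$. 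Finally, if $\{D'_\ell\}$ is another family satisfying properties 1--4, then choosing $\lambda_\ell\in D'_\ell$ produces a minimal pizza by property 3 for $\{D'_\ell\}$, whose arcs lie in $D_\ell$ by property 4 for $\{D_\ell\}$; since $D'_\ell$ is a perfect $q_\ell$-order zone meeting the maximal such zone $D_\ell$, we get $D'_\ell\subseteq D_\ell$, and symmetrically $D_\ell\subseteq D'_\ell$, so $D'_\ell=D_\ell$.

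The step I expect to be the main obstacle is the verification inside property 3 (and the matching position-uniqueness step in property 4): one must show that the full pizza-slice structure --- elementariness together with the affine width function and the exponents $\beta_\ell$ --- is exactly preserved when a boundary arc is moved through its depth zone, and that such a move can neither create nor destroy minimality. This rests on a careful analysis of how the multivalued width function $\mu_{T,f}$ behaves near the perfect zones $D_\ell$, where its ``jumps'' occur, together with the fact that these jumps are intrinsic to $f$; the remaining steps are formal manipulations with perfect zones and with the non-archimedean property of the tangency order.
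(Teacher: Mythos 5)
The paper does not prove Lemma~\ref{MP} --- it is quoted from \cite[Proposition 2.30]{BG} --- so there is no in-paper argument to compare against, and I am assessing your sketch on its own. Your overall plan (set $D_\ell=D_T(\lambda_\ell,f)$ from one minimal pizza, use Lemma~\ref{depth} for closedness and perfectness, prove maximality directly from the definition of the depth zone, then show independence of the chosen pizza) is the natural one given this paper's toolbox, and your maximality argument in the second paragraph is correct.

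Two steps need more than you give them, and one of them is not the step you flag. For disjointness you use that the union of two perfect $q_\ell$-order zones sharing an arc is again a perfect $q_\ell$-order zone, attributing it to Remark~\ref{perfect-zone-remark}; but that remark only supplies inner bi-Lipschitz maps supported inside each zone \emph{separately}, and it does not obviously produce a single H\"older triangle $T'$ with $V(T')\subset Z_1\cup Z_2$ in which two given arcs from different $Z_i$ are simultaneously \emph{generic}, which is what perfectness of the union requires. A cleaner, fully in-paper route is available: if $D_\ell\cap D_{\ell+1}\neq\varnothing$ then $q_\ell=q_{\ell+1}$, and since $D_\ell$, $D_{\ell+1}$ are $\nu_\ell$- and $\nu_{\ell+1}$-zones with $tord(\lambda_\ell,\lambda_{\ell+1})=\beta_{\ell+1}$, the non-archimedean property gives $\beta_{\ell+1}\geq\min(\nu_\ell,\nu_{\ell+1})\geq\beta_{\ell+1}$, and Lemma~\ref{lem:abstract-pizza} then contradicts minimality of $\Lambda$ (the case $|\ell-m|>1$ follows by looking at the whole chain between them). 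The second gap is in property~4: the phrase ``using uniqueness of the maximal perfect $q_\ell$-order zone in each position'' is precisely what must be proved, not a fact already in hand. Proposition~\ref{contact_equiv} gives equality of the abstract toppings, but a priori $V(T)$ may contain many maximal perfect $q_\ell$-order zones of order $\nu_\ell$, and nothing yet forces a second minimal pizza for $f$ to select the same ones as the first. You need to take the smallest $\ell$ with $D_T(\lambda'_\ell,f)\neq D_\ell$ and derive a contradiction from $T(\lambda'_{\ell-1},\lambda'_\ell)$ being required to be a pizza slice with the same affine width function $\mu_\ell$ on $Q_\ell$ while avoiding $D_\ell$; this is the same flavor of analysis you acknowledge is needed for property~3, but it is a separate step and should be spelled out rather than folded into ``same position.''
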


\begin{definition}\label{def:MP}\normalfont The zones $D_\ell$ in Lemma \ref{MP} are called \emph{pizza zones} of a minimal pizza on $T$
associated with $f$.
\end{definition}

\begin{corollary}\label{zone-slice} Let $\{D_\ell\}_{\ell=0}^p$ be the family of pizza zones of a minimal pizza
 $\Lambda = \{T_\ell\}_{\ell=1}^p$ on $T$ associated with $f$, where $T_\ell=T(\lambda_{\ell-1},\lambda_\ell)$, as in Lemma \ref{MP}.
For each $\ell=1,\ldots,p$, the set $Y_\ell=D_{\ell-1}\cup D_\ell\cup V(T_\ell)$ is a pizza slice zone associated with $f$,
independent of the choice of arcs $\lambda_\ell\in D_\ell$.
Moreover, $Y_\ell$ is a maximal pizza slice zone: if $Y\supseteq Y_\ell$ is a pizza slice zone associated with $f$, then $Y=Y_\ell$.
\end{corollary}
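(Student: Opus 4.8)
The plan is to establish, in order, that $Y_\ell$ is a pizza slice zone, that it does not depend on the choice of the arcs $\lambda_j\in D_j$, and that it is maximal. The tools are Lemma \ref{MP} — each $D_j$ with $0<j<p$ is a closed perfect $\nu_j$-zone which is the \emph{maximal} perfect $q_j$-order zone for $f$, and $\nu_j=\max(\mu_j(q_j),\mu_{j+1}(q_j))$ — together with item 3 of Lemma \ref{MP}, that $T_\ell=T(\lambda_{\ell-1},\lambda_\ell)$ is a pizza slice associated with $f$ with affine width function $\mu_\ell$ and $Q_f(T_\ell)=Q_\ell$.

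First, $Y_\ell$ is a zone, independent of the choices. Orienting $V(T)$ from $\gamma_1$ to $\gamma_2$, one has $V(T)=D_0\cup I_1\cup D_1\cup\dots\cup I_p\cup D_p$ as an ordered disjoint union, with $D_0=\{\gamma_1\}$, $D_p=\{\gamma_2\}$, where $I_j$ denotes the set of arcs strictly between $D_{j-1}$ and $D_j$; each $I_j$ equals $V(T(\lambda_{j-1},\lambda_j))\setminus(D_{j-1}\cup D_j)$ for every choice of $\lambda_{j-1}\in D_{j-1},\ \lambda_j\in D_j$, and hence depends only on the zones $D_j$. Since $D_{\ell-1}\cup D_\ell\cup V(T_\ell)=D_{\ell-1}\cup I_\ell\cup D_\ell$, the set $Y_\ell$ is the contiguous block $D_{\ell-1}\cup I_\ell\cup D_\ell$ of this order; it is independent of the choices and order-convex, so the H\"older triangle between any two of its arcs lies in $Y_\ell$ and is non-singular (all these triangles sitting in normally embedded pieces of $T$). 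Hence $Y_\ell$ is a zone.

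Next, $Y_\ell$ is a pizza slice zone associated with $f$. Because all arcs of $D_{\ell-1}$ (resp.\ $D_\ell$) have $f$-order $q_{\ell-1}$ (resp.\ $q_\ell$), the endpoints of $Q_\ell$, one gets $Q_f(Y_\ell)=Q_\ell$. For $q$ in the interior of $Q_\ell$ the $q$-order arcs of $Y_\ell$ coincide with those of $V(T_\ell)$; they form a zone by elementarity of $T_\ell$, and any H\"older triangle $T'\subset Y_\ell$ with $Q_f(T')=\{q\}$ lies in $V(T_\ell)$, so $\mu_{Y_\ell,f}(q)=\mu_{T_\ell,f}(q)=\mu_\ell(q)$. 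For $q=q_{\ell-1}$, the $q$-order arcs of $Y_\ell$ are $D_{\ell-1}$ together with the $q_{\ell-1}$-order (width) zone of $T_\ell$; this union is a zone since the two overlap in $\lambda_{\ell-1}$, and its order equals $\mu_\ell(q_{\ell-1})$, using $\mu(D_{\ell-1})=\nu_{\ell-1}\ge\mu_\ell(q_{\ell-1})$ and the non-archimedean inequality through $\lambda_{\ell-1}$, whence $\mu_{Y_\ell,f}(q_{\ell-1})=\mu_\ell(q_{\ell-1})$; symmetrically $\mu_{Y_\ell,f}(q_\ell)=\mu_\ell(q_\ell)$. Thus $Y_\ell$ is elementary with $\mu_{Y_\ell,f}=\mu_\ell$, affine (or the constant $\beta_\ell$ when $Q_\ell$ is a point), so it is a pizza slice zone.

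Finally, maximality. Let $Y\supseteq Y_\ell$ be a pizza slice zone, $Y\ne Y_\ell$. As a zone in the ordered $V(T)$, $Y$ is order-convex, so it reaches past $D_{\ell-1}$ toward $\gamma_1$ or past $D_\ell$ toward $\gamma_2$; say the latter, so $\ell<p$ and $Y$ meets $I_{\ell+1}$. If $Q_\ell\cap Q_{\ell+1}$ is larger than $\{q_\ell\}$, then for $q$ interior to the overlap the $q$-order arcs of $Y$ break into a part inside $V(T_\ell)$ and a part inside $V(T_{\ell+1})$, separated by $D_\ell$ (whose arcs have the different $f$-order $q_\ell$), contradicting elementarity of $Y$; the same device applies if $Y$ also reaches past $D_{\ell-1}$ and $q\in Q_{\ell-1}\cap Q_\ell$. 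If instead $Q_\ell\cap Q_{\ell+1}=\{q_\ell\}$, the "constant-$f$-order triangles stay in one slice" argument gives $\mu_{Y,f}=\mu_\ell$ near $q_\ell$ from the $Q_\ell$ side and $\mu_{Y,f}=\mu_{\ell+1}$ near $q_\ell$ from the $Q_{\ell+1}$ side; affineness (hence continuity) of $\mu_{Y,f}$, the identity $\nu_\ell=\max(\mu_\ell(q_\ell),\mu_{\ell+1}(q_\ell))$, and — when $Q_\ell$ or $Q_{\ell+1}$ is a point — the maximality of $D_\ell$ among perfect $q_\ell$-order zones then force exactly one of the reducibility conditions (a)--(d) of Definition \ref{abstract-pizza} at position $\ell$, contradicting minimality of $\Lambda$. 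I expect this last step to be the main obstacle: the delicate work is showing that $\mu_{Y,f}$ genuinely records both $\mu_\ell$ and $\mu_{\ell+1}$, controlling the endpoint value $\mu_{Y,f}(q_\ell)$ against $\nu_\ell$, and pairing each geometric configuration with precisely one of the four reducibility conditions.
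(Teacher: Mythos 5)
The paper states this corollary without a written proof (it is presented as a direct consequence of Lemma \ref{MP}), so there is no argument of the authors to compare yours against; I am judging the proposal on its own. Your first two steps are sound: the identification $Y_\ell=D_{\ell-1}\cup I_\ell\cup D_\ell$ as an order-convex interval of $V(T)$ gives both the zone property and independence of the choices $\lambda_j\in D_j$, and the computation $Q_f(Y_\ell)=Q_\ell$, elementarity, and $\mu_{Y_\ell,f}\equiv\mu_\ell$ (via $\nu_{\ell-1}\ge\mu_\ell(q_{\ell-1})$ and the non-archimedean inequality through $\lambda_{\ell-1}$) is correct.

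The gap is in the maximality step, and it is the one you half-anticipate. Your dichotomy is organized by how $Q_\ell$ and $Q_{\ell+1}$ intersect, but it must be organized by which arcs beyond $D_\ell$ the larger zone $Y$ actually contains. Order-convexity only guarantees that $Y$ contains some arc $\gamma_0\succ D_\ell$ together with $V(T(\lambda_\ell,\gamma_0))$; if every such $\gamma_0$ satisfies $ord_{\gamma_0}f=q_\ell$ (i.e.\ $Y$ only spills into $W_{T_{\ell+1}}(\lambda_\ell,f)\setminus D_\ell$), then $Q_f(Y)=Q_\ell$, no level set of $f$ in $Y$ is disconnected, and $\mu_{Y,f}$ never ``records $\mu_{\ell+1}$'' on any non-degenerate subinterval — so neither branch of your argument applies, whatever $Q_\ell\cap Q_{\ell+1}$ is. This case must be handled separately: one shows via $\nu_\ell=\max(\mu_\ell(q_\ell),\mu_{\ell+1}(q_\ell))$ and the description of $D_\ell$ as a depth zone (Lemma \ref{depth}) that $W_{T_{\ell+1}}(\lambda_\ell,f)\setminus D_\ell\neq\emptyset$ forces $\mu_{\ell+1}(q_\ell)<\mu_\ell(q_\ell)=\nu_\ell$ and that any arc in this difference has $tord(\cdot,\lambda_\ell)<\nu_\ell$; adding it to $Y$ then drops $\mu_{Y,f}(q_\ell)$ strictly below $\mu_\ell(q_\ell)=\lim_{q\to q_\ell}\mu_{Y,f}(q)$, contradicting affineness when $Q_\ell$ is not a point, while when $Q_\ell=\{q_\ell\}$ the inequality $\beta_\ell\ge\mu_{\ell+1}(q_\ell)$ is exactly reducibility condition (b), contradicting minimality. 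You invoke the maximality of $D_\ell$ among perfect $q_\ell$-order zones only ``when $Q_\ell$ or $Q_{\ell+1}$ is a point,'' which is too narrow. Relatedly, in your first branch you need $Y$ to contain a $q$-order arc of $T_{\ell+1}$ for some $q\ne q_\ell$ in the overlap, which again depends on how far $Y$ reaches into $T_{\ell+1}$, not merely on $Q_\ell\cap Q_{\ell+1}$ being larger than $\{q_\ell\}$. With the case split redone around ``does $Y$ reach an arc of order $\ne q_\ell$ beyond $D_\ell$,'' the rest of your outline (disconnected level sets versus the piecewise description of $\mu_{Y,f}$ forcing conditions (a)--(d)) goes through.
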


\begin{definition}\label{zone-order}\normalfont
If $T=T(\gamma_1,\gamma_2)$ is a H\"older triangle oriented from $\gamma_1$ to $\gamma_2$,
then its Valette link $V(T)$ is totally ordered: for two arcs $\lambda_1\ne\lambda_2$ in $V(T)$,
we define $\lambda_1\prec\lambda_2$ when orientation of $T$ induces orientation from $\lambda_1$ to $\lambda_2$ on $T(\lambda_1,\lambda_2)$.
This order of $V(T)$ defines a partial order on the set of zones in $V(T)$: for two zones $Z_1\ne Z_2$ in $V(T)$, we define
$Z_1\prec Z_2$ when either $Z_2\setminus Z_1$ is a zone and $\lambda_1\prec\lambda_2$ for any arcs $\lambda_1\in Z_1$ and $\lambda_2\in Z_2\setminus Z_1$, or when
$Z_1\setminus Z_2$ is a zone and $\lambda_1\prec\lambda_2$ for any arcs $\lambda_1\in Z_1\setminus Z_2$ and $\lambda_2\in Z_2$.
If $\Lambda$ is a pizza on $T$, then this partial order defines a total order on the set of all pizza zones $D_\ell$ and $Y_\ell$ of $\Lambda$,
consistent with the total order $\lambda_{\ell-1}\prec T_\ell\prec\lambda_\ell$ on the disjoint union of the sets of arcs and
pizza slices of $\Lambda$ induced by orientation of $T$.
If $Z$ is a zone in $V(T)$ and $T_\ell$ is a pizza slice of $\Lambda$, we write $Z\prec T_\ell$ or $T_\ell\prec Z$ when
$Z\prec V(T_\ell)$ or $V(T_\ell)\prec Z$.
\end{definition}

\section{Maximal exponent zones and the permutation $\sigma$}\label{sec:sigma}

We consider normally embedded H\"older triangles $T$ and $T'$. Let $f:T\to \R$ and $g:T'\to \R$ be the distance functions and let $\Lambda$ and $\tilde\Lambda$ be the corresponding pizzas.

\begin{definition}\label{def:tord-tord}\normalfont
 A pair of arcs $(\gamma,\gamma')$, where $\gamma\subset T$ and
$\gamma'\subset T'$, is called \emph{normal} when $tord(\gamma,T')=tord(\gamma,\gamma')=tord(\gamma',T)$.
A pair $(T,T')$ of normally embedded H\"older triangles $T=T(\gamma_1,\gamma_2)$ and $T'=T(\gamma'_1,\gamma'_2)$, oriented from
$\gamma_1$ to $\gamma_2$ and from $\gamma'_1$ to $\gamma'_2$ respectively, is called a \emph{normal pair}
if $(\gamma_1,\gamma'_1)$ and $(\gamma_2,\gamma'_2)$ are normal pairs, i.e., the following condition is satisfied:
 \begin{equation}\label{tord-tord}
tord(\gamma_1,T')=tord(\gamma_1,\gamma'_1)=tord(\gamma'_1,T),\;\; tord(\gamma_2,T')=tord(\gamma_2,\gamma'_2)=tord(\gamma'_2,T).
\end{equation}
\end{definition}

\begin{definition}\label{maxmin}\normalfont (See \cite[Definition 4.2]{BG}.)
Let $T(\gamma_1,\gamma_2)$ be a normally embedded H\"older triangle, oriented from $\gamma_1$ to $\gamma_2$. Let $\Lambda=\{T_\ell\}_{\ell=1}^p$ a minimal pizza on $T$, associated with a non-negative Lipschitz function $f$ defined on $T$.
Let $\{D_\ell\}_{\ell=0}^p$ be the pizza zones of $\Lambda$, ordered according to the orientation of $T$,
and let $q_\ell=tord(D_\ell,T')=ord_{\lambda_\ell}f$.
A pizza zone $D_\ell$ is called a \emph{maximal exponent zone} for $f$ (or simply a \emph{maximum zone})
and $\lambda_\ell\in D_\ell$ is called a \emph{maximum arc} of $\Lambda$ if one of the following inequalities holds:
\begin{equation}\label{eq:max}
\begin{split}
 \ell=0,&\quad\beta_1<q_0\ge q_1,\\
 \ell=p,&\quad\beta_p<q_p\ge q_{p-1},\\
  0<\ell<p,&\quad \max(\beta_{\ell},\beta_{\ell+1})<q_\ell\ge\max(q_{\ell-1},q_{\ell+1}).
 \end{split}
 \end{equation}
If a zone $D_\ell$ is not a maximum zone, it is called a \emph{minimal exponent zone} for $f$ (or simply a \emph{minimum zone})
and an arc $\lambda_\ell$ of $\Lambda$ is called a \emph{minimum arc} if either $\ell=0$ and $q_0\le q_1$, or $0<\ell<p$ and $q_\ell\le\min(q_{\ell-1},q_{\ell+1})$, or
$\ell=p$ and $q_p\le q_{p-1}$.
In particular, each boundary arc of $T$ is either a maximum zone or a minimum zone.\newline
If $\mathcal P$ is a minimal abstract pizza corresponding to $\Lambda$ (see Definition \ref{geometric-abstract})
then $\ell$ is called a \emph{maximum index} of $\mathcal P$ if conditions (\ref{eq:max}) are satisfied.\newline
If $(T,T')$ is a normal pair of H\"older triangles with distance functions
$f(x)=dist(x,T')$ and $g(x')=dist(x',T)$, we use the following notations:
$\Lambda'=\{T'_{\ell'}\}_{\ell'=1}^{p'}$, where $T'_{\ell'}=T(\lambda'_{\ell'-1},\lambda'_{\ell'})$,  is the  minimal pizza  on $T'$ associated with $g$.
 Maximum and minimum zones $D'_{\ell'}\subset V(T')$ and arcs $\lambda'_{\ell'}$ of $\Lambda'$
are defined replacing $T$ with $T'$ and $f$ with $g$.

\end{definition}

\begin{proposition}\label{sigma} \emph{(See \cite[Proposition 4.4]{BG}.)}
Let $(T,T')$, where $T=T(\gamma_1,\gamma_2)$ and $T'=T(\gamma'_1,\gamma'_2)$, be a normal pair of H\"older triangles.
Let $\M=\{M_i\}_{i=1}^m$ and $\M'=\{M'_{j}\}_{j=1}^{m'}$ be the sets of maximum zones in $V(T)$ and $V(T')$
for minimal pizzas $\Lambda$ and $\Lambda'$ associated with the distance functions $f(x)=dist(x,T')$ and $g(x')=dist(x',T)$,
 ordered according to the orientations of $T$ and $T'$.
Let $\bar q_i=tord(M_i,T')$ and $\bar q'_{j}=tord(M'_{j},T)$.
Then $m'=m$, and there is a one-to-one correspondence $j=\sigma(i)$ between the sets $\M=\{M_i\}_{i=1}^m$ and $\M'=\{M'_j\}_{j=1}^m$,
such that $\mu(M'_j)=\mu(M_i)$ and $tord(M_i,M'_j)=\bar q_i=\bar q'_j$ for $j=\sigma(i)$.
If $\{\gamma_1\}=M_1$ is a maximum zone of $\Lambda$, then $\{\gamma'_1\}=M'_1$ is a maximum zone of $\Lambda'$ and $\sigma(1)=1$.
If $\{\gamma_2\}=M_m$ is a maximum zone of $\Lambda$, then $\{\gamma'_2\}=M'_m$ is a maximum zone of $\Lambda'$ and $\sigma(m)=m$.
\end{proposition}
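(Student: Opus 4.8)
The plan is to construct $\sigma$ arc by arc, using only the non-archimedean tangency order, and then show simultaneously that the construction descends to a well-defined bijection between maximum zones which preserves $\mu$ and the exponents $\bar q$. Fix a maximum arc $\lambda_i\in M_i$; then $\bar q_i=ord_{\lambda_i}f=tord(\lambda_i,T')$, and by definability this supremum is attained, so there is an arc $\mu_i\in V(T')$ with $tord(\lambda_i,\mu_i)=\bar q_i$. Trivially $ord_{\mu_i}g=tord(\mu_i,T)\ge tord(\mu_i,\lambda_i)=\bar q_i$. The decisive step is to prove that in fact $ord_{\mu_i}g=\bar q_i$ and that $\mu_i$ lies in a maximum zone of $\Lambda'$. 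If $ord_{\mu_i}g>\bar q_i$, pick $\delta\in V(T)$ with $tord(\mu_i,\delta)>\bar q_i$; the non-archimedean property forces $tord(\lambda_i,\delta)=\bar q_i$, while $ord_\delta f=tord(\delta,T')\ge tord(\delta,\mu_i)>\bar q_i$. But $M_i=D_\ell$ is a maximum zone, so by (\ref{eq:max}), the inequality $\mu_\ell(q)\le q$ on the pizza slices adjacent to $D_\ell$, and the combinatorial normal embedding property (\ref{check-LNE1}), every arc of $T$ at tangency order $\ge\bar q_i$ from $\lambda_i$ must lie in those adjacent slices, on which $f\le\bar q_i$ --- a contradiction. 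Hence $ord_{\mu_i}g=\bar q_i$; and the same argument, now run inside $T'$ (a hypothetical increase of $g$ past a neighbouring pizza zone of $\mu_i$, pulled back along an arc of $T$ at tangency order $\bar q_i$ from $\mu_i$, would again force $f>\bar q_i$ inside $M_i$), shows $\mu_i$ is a maximum arc of $\Lambda'$. Writing $M'_j$ for the maximum zone containing $\mu_i$, we get $\bar q'_j=\bar q_i$ because $g\equiv\bar q'_j$ on $M'_j$, and $tord(M_i,M'_j)=\bar q_i$ because $tord(\gamma,\gamma')\le f(\gamma)=\bar q_i$ for every $\gamma\in M_i$, $\gamma'\in M'_j$, with equality attained by $(\lambda_i,\mu_i)$.

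Next I would show the assignment $M_i\mapsto M'_j$ is well defined, bijective, and $\mu$-preserving. The point throughout is that a non-singular maximum zone has order at most the corresponding exponent $\bar q$ (since pizza-slice width functions satisfy $\mu_\ell(q)\le q$), so repeated ultrametric manipulations among four arcs $\gamma,\tilde\gamma\in M_i$ and their matched partners $\gamma',\tilde\gamma'\in V(T')$, which satisfy $tord(\gamma,\gamma')=tord(\tilde\gamma,\tilde\gamma')=\bar q_i$, yield $tord(\gamma',\tilde\gamma')\ge\min(tord(\gamma,\tilde\gamma),\bar q_i)$, and the same with primes removed. Combined with the fact that, by minimality of $\Lambda'$ and Lemma \ref{MP}, two arcs lying in distinct maximum zones of $\Lambda'$, both of $g$-value $\bar q_i$, have tangency order strictly below $\bar q_i$, this forces all partners of arcs of $M_i$ to lie in one maximum zone of $\Lambda'$ (well-definedness), shows the symmetric construction with $(T,f)$ and $(T',g)$ interchanged is a two-sided inverse of $\sigma$ (an arc matched to $\mu_i$ may always be taken to be $\lambda_i\in M_i$), hence $\sigma$ is a bijection and $m'=m$, and gives $\mu(M'_{\sigma(i)})\ge\mu(M_i)$; symmetry then upgrades this to $\mu(M'_{\sigma(i)})=\mu(M_i)$, the singular case $\mu=\infty$ being precisely the statement that $\sigma$ sends singular maximum zones to singular ones. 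Finally, if $\{\gamma_1\}=M_1$, the normal pair condition (\ref{tord-tord}) gives $tord(\gamma_1,\gamma'_1)=tord(\gamma_1,T')=\bar q_1$, so $\gamma'_1$ is one of the arcs matched to $\gamma_1$ and hence lies in $M'_{\sigma(1)}$; but $\mu(M'_{\sigma(1)})=\mu(M_1)=\infty$, so $M'_{\sigma(1)}=\{\gamma'_1\}$, which being a boundary arc of $T'$ is its first maximum zone, i.e.\ $\sigma(1)=1$. Symmetrically $\sigma(m)=m$ when $\{\gamma_2\}=M_m$.

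I expect the main obstacle to be the decisive step of the first paragraph: showing that $\mu_i$ genuinely lands in a maximum zone of $\Lambda'$ with exponent exactly $\bar q_i$. This is where the maximum condition (\ref{eq:max}) must be combined with the inequality $\mu_\ell(q)\le q$ and the combinatorial normal embedding property (\ref{check-LNE1}) to pin down exactly which arcs of $T$ (respectively $T'$) can sit within tangency order $\ge\bar q_i$ of a fixed arc, and thereby to prevent the distance function from escaping past a local maximum. The bookkeeping in the second paragraph is the second delicate point, because well-definedness, bijectivity, and the identity $\mu(M_i)=\mu(M'_{\sigma(i)})$ are genuinely intertwined through the inequality $\mu(M_i)\le\bar q_i$ and cannot be established in isolation.
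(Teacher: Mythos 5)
First, note that the paper itself does not prove Proposition \ref{sigma}: it is quoted from \cite[Proposition 4.4]{BG}, so there is no in-paper argument to compare against; your proposal must therefore stand on its own. Its first half does: the localization step is correct. Since $M_i=D_\ell$ is a maximum zone, any arc $\delta\subset T$ with $tord(\lambda_i,\delta)\ge\bar q_i>\max(\beta_\ell,\beta_{\ell+1})$ must lie in $T_\ell\cup T_{\ell+1}$, where $Q_\ell$ and $Q_{\ell+1}$ have maximum $q_\ell=\bar q_i$, so $ord_\delta f\le\bar q_i$; this does rule out $ord_{\mu_i}g>\bar q_i$ exactly as you say.

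The genuine gap is in the claim that $\mu_i$ lands in a \emph{maximum} zone of $\Lambda'$. Your argument only excludes an arc $\gamma'\subset T'$ with $ord_{\gamma'}g>\bar q_i$ \emph{and} $tord(\mu_i,\gamma')\ge\bar q_i$: such a $\gamma'$ pulls back to a $\delta\subset T$ with $tord(\lambda_i,\delta)\ge\bar q_i$ and $ord_\delta f>\bar q_i$, a contradiction. But suppose $\mu_i$ lies in a \emph{coherent} pizza zone $D'_{\ell'}$ of $\Lambda'$, i.e.\ $\nu'_{\ell'}=\mu(D'_{\ell'})<q'_{\ell'}=\bar q_i$, adjacent to a coherent pizza slice $T'_{\ell'+1}$ with $q'_{\ell'+1}>\bar q_i$ (so $D'_{\ell'}$ is not a maximum zone). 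Then every arc $\gamma'\subset T'_{\ell'+1}$ with $ord_{\gamma'}g>\bar q_i$ satisfies $tord(\mu_i,\gamma')\le\mu'_{\ell'+1}(\bar q_i)\le\nu'_{\ell'}<\bar q_i$: the escape of $g$ to higher order happens at tangency order strictly below $\bar q_i$, the pulled-back arc $\delta$ is not forced anywhere near $M_i$, and no contradiction results. Ruling out this configuration is precisely where one needs the correspondence between coherent pizza zones and the graph structure of coherent pairs (\cite[Proposition 3.9]{BG} and its consequences), not just the ultrametric inequality together with (\ref{eq:max}) and $\mu_\ell(q)\le q$. The same unresolved case undermines the second paragraph: well-definedness and the identity $\mu(M'_{\sigma(i)})=\mu(M_i)$ are only problematic when $\mu(M_i)=\nu_\ell<\bar q_i$, i.e.\ exactly in the coherent case, where two arcs $\gamma,\tilde\gamma\in M_i$ with $tord(\gamma,\tilde\gamma)=\nu_\ell<\bar q_i$ have partners whose mutual tangency order you can bound below by $\nu_\ell$ but which you cannot place in a single perfect zone of $\Lambda'$ without the coherent-zone correspondence. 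Your argument is complete for transverse maximum zones ($\nu_\ell=q_\ell$); the coherent case needs a genuinely different input.
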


\begin{definition}\label{characteristic}\normalfont (See \cite[Definition 4.5]{BG}.)
The permutation $\sigma$ of the set $[m]=\{1,\ldots,m\}$ in Proposition \ref{sigma} is called
the \emph{characteristic permutation} of the normal pair $(T,T')$ of H\"older triangles.
\end{definition}

\begin{remark}\label{non-arch}\normalfont Since the H\"older triangles $T$ and $T'$ are normally embedded and the zones $M_i$ and $M'_i$ are closed perfect,
Remark \ref{perfect-zone-remark} implies that $\xi(M_i,M_j) = 1/tord(M_i, M_j)$ and $\xi'(M'_i,M'_j) = 1/tord(M'_i, M'_j)$
(see Definition \ref{tord}) define non-archimedean metrics $\xi$ and $\xi'$ on the sets $\M$ and $\M'$ of the maximum zones of $\Lambda$ and $\Lambda'$.
The characteristic permutation $\sigma$ defines an isometry $\sigma:\M \to\M'$ with respect to these metrics.
\end{remark}

\section{Transverse and coherent pizza slices, the correspondence $\tau$}\label{sec:tau}

As in the previous section we consider normally embedded H\"older triangles $T$ and $T'$. Let $f:T\to \R$ and $g:T'\to \R$ be the distance functions and let $\Lambda$ and $\tilde\Lambda$ be the corresponding minimal pizzas . Let  $\Lambda=\{T_\ell\}_{\ell=1}^p$, where $T_\ell=(\lambda_{\ell-1},\lambda_\ell)$, be the minimal pizza on $T$ associated with $f$.

\begin{definition}\label{def:pizzaslicezone-transverse}\normalfont (See \cite[Definition 4.6]{BG}.)
Let $\{D_\ell\}_{\ell=0}^p$ be the pizza zones (see Lemma \ref{MP}) such that $\lambda_\ell\in D_\ell$.
We have $\mu(D_\ell)=\nu(\lambda_\ell)$ and $q_\ell=ord_{\lambda_\ell} f$.
Let $Y_\ell=D_{\ell-1}\cup V(T_\ell)\cup D_\ell$ be the maximal pizza slice zones associated with $f$ (see Corollary \ref{zone-slice}).
An interior pizza zone $D_\ell$ is called \emph{transverse} if $q_\ell=\mu(D_\ell)$ and \emph{coherent} otherwise.
A pizza slice $T_\ell$, and a pizza slice zone $Y_\ell$, are called \emph{transverse}
if either $\ell=p=1$ and $q_0=q_1\le\beta$ or $Q_\ell=[q_{\ell-1},q_\ell]$ is not a point and the width function
 $\mu_\ell$ on $Q_\ell$ (see Definition \ref{def:width function}) satisfies $\mu_\ell(q)\equiv q$.
 Otherwise, $T_\ell$ and $Y_\ell$ are called \emph{coherent}.
 A boundary arc $\check\gamma$ of $T$ is called \emph{transverse} if $\mu(\check\gamma)=ord_f(\check\gamma)$.
The function $f$ is called \emph{totally transverse} if all pizza slices $T_\ell$ of a minimal pizza associated with $f$ are transverse.
Alternatively, $f$ is totally transverse if either $ord_\gamma f<\beta$ or $ord_\gamma f=\mu_T(\gamma,f)$ for all arcs $\gamma\in V(T)$.\newline
If $(T,T')$ is a normal pair of H\"older triangles, $\Lambda=\{T_\ell\}_{\ell=1}^p$ is the minimal pizza on $T$ associated with $f(x)=dist(x,T')$ and
$\Lambda'=\{T'_{\ell'}\}_{\ell'=1}^{p'}$ is a minimal pizza on $T'$ associated with $g(x')=dist(x',T)$, then transverse pizza zones $D'_{\ell'}$ of $\Lambda'$,
transverse and coherent pizza slices $T'_{\ell'}$, maximal pizza slice zones $Y'_{\ell'}$, and affine functions $\mu'_{\ell'}(q)$ on $Q'_{\ell'}$, are defined similarly,
replacing $T$ with $T'$ and $f$ with $g$.
\end{definition}

\begin{definition}\label{def:totally-transverse} \normalfont
A normal pair $(T,T')$ of H\"older triangles, where $T=T(\gamma_1,\gamma_2)$ and $T'=T(\gamma'_1,\gamma'_2)$,
with the distance functions $f(x)=dist(x,T')$ and $g(x')=dist(x',T)$ on $T$ and $T'$,
respectively, is called \emph{totally transverse} if the distance function $f$ on $T$ is totally transverse.
\end{definition}

\begin{remark}\label{rem:totally-transverse} \normalfont
It follows from \cite[Proposition 4.7]{BG} that a pair $(T,T')$ is totally transverse if, and only if,
the distance function $g$ on $T'$ is totally transverse.
\end{remark}

Let $T=T(\gamma_1,\gamma_2)$ be a normally embedded $\beta$-H\"older triangle and $f$ a totally transverse non-negative Lipschitz function on $T$.
Let $\M=\{M_i\}_{i=1}^m$, be the set of maximum zones in $V(T)$ of a minimal pizza $\Lambda$ on $T$ associated with $f$.
We may assume that at least one maximum zone exists, as $m=0$ appears only in a trivial case when $ord_\gamma f\le\beta$ for all $\gamma\subset T$.
Let $n=m-1$ when both boundary arcs of $T$ are maximum zones, $n=m$ when only one of the boundary arcs of $T$ is a maximum zone, $n=m+1$ otherwise.
Selecting $n+1$ arcs $\theta_0,\ldots,\theta_n$ so that $\theta_0=\gamma_1$, $\theta_n=\gamma_2$, and there is exactly
one arc $\theta_j$ in each maximum zone $M_i$, we define a decomposition of $T$ into $n$ H\"older triangles $\bar T_j=T(\theta_{j-1},\theta_j)$.
Here $i=j+1$ if $\{\gamma_1\}$ is a maximum zone and $i=j$ otherwise.
Let $\bar q_j=ord_{\theta_j} f$ and $\bar\beta_j=\mu(\bar T_j)$. Then
\begin{equation}\label{barq}\begin{split}
\bar q_0&>\bar\beta_1\;\text{when}\;\{\gamma_1\}\;\text{is a maximum zone, otherwise}\;\bar q_0=\bar\beta_1,\\
\bar q_n&>\bar\beta_n\;\text{when}\;\{\gamma_2\}\;\text{is a maximum zone, otherwise}\;\bar q_n=\bar\beta_n,\\
\bar q_j&>\max(\bar\beta_j,\bar\beta_{j+1})\;\;\text{for}\;\; 0<j<n.
\end{split}
\end{equation}

\begin{lemma}\label{determination} Let $T=T(\gamma_1,\gamma_2)$  be a normally embedded $\beta$-H\"older triangle oriented from $\gamma_1$ to $\gamma_2$.
Let $\{\bar\beta_j\}_{j=1}^n$ be exponents such that $\min_j(\bar\beta_j)=\beta$, and $\{\bar q_j\}_{j=0}^n$ be exponents, such that  the inequalities (\ref{barq}) hold. Then here exists a totally
transverse non-negative Lipschitz function $f$ on $T$, unique up to contact Lipschitz equivalence, such that exponents $\bar\beta_j$
and $\bar q_j$ are determined by the maximum zones of a minimal pizza on $T$ associated with $f$.
\end{lemma}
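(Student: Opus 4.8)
The plan is to reduce the statement, in both directions, to the combinatorics of abstract pizzas (Definition \ref{abstract-pizza}): for existence I will write down an explicit minimal abstract pizza $\mathcal{P}$ built from the data $\{\bar\beta_j\}_{j=1}^n$, $\{\bar q_j\}_{j=0}^n$ and realize it by Theorem \ref{pizza-realization}, and for uniqueness I will show that the minimal abstract pizza of \emph{any} totally transverse function on $T$ with the prescribed maximum zones must coincide with $\mathcal{P}$, after which Proposition \ref{contact_equiv} finishes the job. As in the text preceding the lemma I assume $m\ge1$, since $m=0$ forces, via (\ref{barq}), $n=1$ and $\bar q_0=\bar q_1=\bar\beta_1=\beta$, the trivial case. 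To build $\mathcal{P}$: form the sequence of prescribed orders $\bar q_0,\dots,\bar q_n$; for each $j$ with $\bar q_{j-1}>\bar\beta_j$ and $\bar q_j>\bar\beta_j$ insert one extra value $\bar\beta_j$ between $\bar q_{j-1}$ and $\bar q_j$, creating two consecutive slices of exponent $\bar\beta_j$ on which the order of $f$ runs monotonically from $\bar q_{j-1}$ down to $\bar\beta_j$ and then up to $\bar q_j$; for the exceptional indices (namely $j=1$ with $\bar q_0=\bar\beta_1$, or $j=n$ with $\bar q_n=\bar\beta_n$, which by (\ref{barq}) are the only $j$ for which the two adjacent orders do not both exceed $\bar\beta_j$) insert nothing, creating a single slice of exponent $\bar\beta_j$ with $Q$-interval the closed interval joining $\bar q_{j-1}$ and $\bar q_j$. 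On every resulting slice set $\mu_\ell(q)\equiv q$, so $\beta_\ell=\min Q_\ell$ is the slice exponent and $\min_\ell\beta_\ell=\min_j\bar\beta_j=\beta$. One checks that $\mathcal{P}$ satisfies Definition \ref{abstract-pizza} and is minimal: it has no point-slices, so reducibility conditions (a)--(c) fail, and every internal arc of $\mathcal{P}$ is a strict local extremum of $\{q_\ell\}$, so $Q_\ell\cap Q_{\ell+1}$ is always a non-degenerate interval and condition (d) fails as well.

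Applying Theorem \ref{pizza-realization} to $\mathcal{P}$ produces a non-negative Lipschitz function on $T_\beta$, which I transport to $T$ by an orientation-preserving bi-Lipschitz homeomorphism $T_\beta\to T$; since $T$ is normally embedded this homeomorphism is outer bi-Lipschitz, so it carries the minimal pizza, its maximum zones, and all tangency and order exponents faithfully to $T$, yielding the desired $f$. Every slice of $\mathcal{P}$ is transverse (non-degenerate $Q_\ell$, $\mu_\ell\equiv q$), hence $f$ is totally transverse; and checking (\ref{eq:max}) against $\{q_\ell\}$ shows the maximum zones of the minimal pizza of $f$ are exactly the zones of the arcs $\theta_j$, with $ord_{\theta_j}f=\bar q_j$, while $\mu(\bar T_j)=\bar\beta_j$ — for a two-slice $\bar T_j$ this uses the identity $tord(\theta_{j-1},\theta_j)=\min(tord(\theta_{j-1},\rho_j),\,tord(\rho_j,\theta_j))$, valid for the interior arc $\rho_j$ because $T$ is normally embedded. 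For uniqueness, let $f$ be any totally transverse non-negative Lipschitz function on $T$ whose maximum zones yield the given $\bar\beta_j$ and $\bar q_j$. By total transversality every slice of its minimal pizza is transverse, and (as $m\ge1$) none is the trivial slice, so each has a non-degenerate $Q_\ell$ with $\mu_\ell(q)\equiv q$; in particular there are no point-slices and $\{q_\ell\}$ has no two consecutive equal terms. Two observations then pin the pizza down: (i) every internal local maximum $\lambda_\ell$ of $\{q_\ell\}$ satisfies $\max(\beta_\ell,\beta_{\ell+1})=\max(q_{\ell-1},q_{\ell+1})<q_\ell$, so by (\ref{eq:max}) it is a maximum zone; and (ii) by minimality — reducibility condition (d) — the order of $f$ cannot move monotonically in the same direction across two consecutive slices, since otherwise $\mu\equiv q$ would be affine across their common arc. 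Hence the minimal pizza consists of the maximum zones separating the $n$ triangles $\bar T_j$, each $\bar T_j$ being a single slice exactly when one of its endpoints is a non-maximal boundary arc of $T$ (equivalently $\bar q_0=\bar\beta_1$ or $\bar q_n=\bar\beta_n$) and otherwise two slices meeting at a single minimum zone; by the same normal-embedding identity the order of $f$ on that minimum zone equals $\mu(\bar T_j)=\bar\beta_j$, and on the maximum zones it equals the prescribed $\bar q_j$. Thus the minimal abstract pizza of $f$ equals $\mathcal{P}$, and Proposition \ref{contact_equiv} gives that any two such $f$ are contact Lipschitz equivalent.

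The main obstacle is the rigidity invoked in the uniqueness part: proving that between two consecutive maximum zones a totally transverse minimal pizza has exactly one minimum zone and exactly two slices, with the order of $f$ on that minimum zone forced to be $\bar\beta_j$. This uses, in combination, three facts — a totally transverse function admits no point-slices (Definition \ref{def:pizzaslicezone-transverse}); minimality, through reducibility condition (d) of Definition \ref{abstract-pizza}, forbids two consecutive transverse slices on which the order varies monotonically in the same direction; and the ultrametric (``tree'') property of $tord$ on a normally embedded H\"older triangle, which identifies $\mu(\bar T_j)$ with the common exponent of its constituent slices and hence with the order of $f$ on the intervening minimum zone. The remaining bookkeeping — which boundary arcs of $T$ are maximum zones, dictated by the equalities versus strict inequalities in (\ref{barq}), and the corresponding three cases for $n$ in terms of $m$ — is routine but must be tracked throughout.
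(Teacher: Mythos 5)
Your proposal is correct and follows essentially the same route as the paper: both construct the minimal pizza by inserting a generic minimum arc (order $\bar\beta_j$, width function $\mu\equiv q$) into each triangle $\bar T_j$ whose boundary arcs both have order exceeding $\bar\beta_j$, leave the remaining boundary triangles as single slices, and then invoke Theorem \ref{pizza-realization} for existence. The one place you go further is uniqueness: the paper simply cites \cite{birbrair2014lipschitz}, while you actually verify that minimality (via reducibility condition (d)) plus total transversality force the abstract pizza of any admissible $f$ to coincide with the one you built, which is a useful elaboration of the step the paper takes for granted.
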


\begin{proof}

Let $p=2n$ if $\bar q_0>\bar\beta_1$ and $\bar q_n>\bar\beta_n$, $p=2n-2$ if $\bar q_0=\bar\beta_1$ and $\bar q_n=\bar\beta_n$, $p=2n-1$ otherwise.
Then one can define a pizza $\{T_\ell\}_{\ell=1}^p$ on $T$ as follows.
Consider decomposition $\{\bar T_j\}_{j=1}^n$ of $T$ into $\bar\beta_j$-H\"older triangles $\bar T_j=T(\bar\theta_{j-1},\bar\theta_j)$.
Decompose each H\"older triangle $\bar T_j$, for $1<j<n$, into two $\bar\beta_j$-H\"older triangles $T^-_j=T(\bar\theta_{j-1},\lambda_j)$ and
$T^+_j=T(\lambda_j,\bar\theta_j)$ by a generic arc $\lambda_j\subset\bar T_j$. Decompose similarly $\bar T_1$ if $\bar q_0>\bar\beta_1$
and $\bar T_n$ if $\bar q_n>\bar\beta_n$. This defines a decomposition of $T$ into $p$ H\"older triangles.
If exponents $q_j=\bar\beta_j$ are assigned to $\lambda_j$, and width
functions $\mu^\pm_j(q)\equiv q$ on the intervals $Q^\pm_j$ are assigned to H\"older triangles $T^\pm_j$,
where $Q^-_j=[\bar q_{j-1},q_j]$ and $Q^+_j=[\bar q_j,q_j]$, then this decomposition
becomes a minimal pizza for a totally transverse function $f$ on $T$, such that the arcs $\bar\theta_j$ either are the boundary arcs of $T$
or belong to maximum zones of the pizza, and the arcs $\lambda_j$ belong to minimum zones of the pizza which are not the boundary arcs of $T$.
According to Theorem \ref{pizza-realization}, such a function $f$ exists.  It is unique up to contact Lipschitz equivalence according to the results of \cite{birbrair2014lipschitz}.
\end{proof}

\begin{definition}\label{def:pizzaslice-oriented}\normalfont
Let $(T,T')$, where $T=T(\gamma_1,\gamma_2)$ and $T'=T(\gamma'_1,\gamma'_2)$, be two normally embedded $\beta$-H\"older triangles satisfying (\ref{tord-tord}), such that $T$
is a pizza slice associated with $f(x)=dist(x,T')$ and $tord(T,T')=tord(\gamma_1,\gamma'_1)>\beta$.
The pair $(T,T')$ is called \emph{positively oriented} if either $T$ is oriented from $\gamma_1$ to $\gamma_2$ and $T'$ from $\gamma'_1$ to $\gamma'_2$, or if $T$ is oriented
from $\gamma_2$ to $\gamma_1$ and $T'$ from $\gamma'_2$ to $\gamma'_1$.
Otherwise, the pair $(T,T')$ is called \emph{negatively oriented}.
\end{definition}

\begin{proposition}\label{pizzaslice-oriented}\emph{(See \cite[Proposition 4.7]{BG}.)}
Let $(T,T')$ be a normal pair of H\"older triangles, and
let $T_\ell$, $D_\ell$, $Q_\ell$, $\mu_\ell$, $T'_{\ell'}$,
$D'_{\ell'}$, $Q'_{\ell'}$, $\mu'_{\ell'}$ be as in Definition \ref{def:pizzaslicezone-transverse}.
Then, for each index $\ell$ such that the pizza slice $T_\ell$ is coherent, there is a unique index $\ell'=\tau(\ell)$ such that
$Q'_{\ell'}=Q_\ell$, $\mu'_{\ell'}(q)\equiv\mu_\ell(q)$ and one of the following two conditions holds:
\begin{multline}\label{tord-plus}
tord(D_\ell,T') = tord(D_\ell,D'_{\ell'}) = tord(D'_{\ell'},T), \\
tord(D_{\ell-1},T') = tord(D_{\ell-1},D'_{\ell'-1}) = tord(D'_{\ell'-1},T);
\end{multline}
\begin{multline}\label{tord-minus}
tord(D_\ell,T') = tord(D_\ell,D'_{\ell'-1}) = tord(D'_{\ell'-1},T), \\
tord(D_{\ell-1},T') = tord(D_{\ell-1},D'_{\ell'}) = tord(D'_{\ell'},T).
\end{multline}
\end{proposition}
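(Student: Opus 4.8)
The plan is to prove this by induction on the number of pizza slices of $\Lambda$, exploiting the normal-pair condition (\ref{tord-tord}) together with the properties of pizza zones from Lemma \ref{MP} and Corollary \ref{zone-slice}. The statement is essentially a refinement of Proposition \ref{sigma}: whereas $\sigma$ matches maximum zones, here we must match the \emph{coherent} pizza zones $D_\ell$ (those with $q_\ell>\mu(D_\ell)$) and the coherent pizza slices $T_\ell$ between them. First I would record, for each coherent pizza slice $T_\ell=T(\lambda_{\ell-1},\lambda_\ell)$ of $\Lambda$, the exponent data $Q_\ell=[q_{\ell-1},q_\ell]$ and the affine width function $\mu_\ell$, and show that these are determined by the \emph{outer} geometry of the pair: $q_\ell=ord_{\lambda_\ell}f=tord(\lambda_\ell,T')$ by definition of $f$ as the distance to $T'$, and $\mu_\ell(q)$ is computed from tangency orders of arcs in $T_\ell$ with arcs in $T'$, using that $T$ is normally embedded so inner and outer tangency orders of arcs in $T_\ell$ coincide.

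The core of the argument is the following local claim, to be proved for a single coherent pizza slice $T_\ell$: there is a unique pizza slice $T'_{\ell'}$ of $\Lambda'$ whose arcs realize the maximal tangency with the arcs of $T_\ell$, and the distance functions match up. Here I would use the non-archimedean (ultrametric) property of $tord$ on $V(\R^n)$: for an arc $\gamma\in T_\ell$, the set of arcs $\gamma'\in T'$ attaining $tord(\gamma,\gamma')=tord(\gamma,T')$ forms a zone, and the normal-pair condition forces these "closest" arcs to organize into a pizza zone of $\Lambda'$. The supporting arc of $T_\ell$ (Definition \ref{def:pizza-slice}) — either $\lambda_{\ell-1}$ or $\lambda_\ell$, whichever has larger width — is the key: its depth zone $D_{\ell-1}$ or $D_\ell$ is a maximum zone, hence by Proposition \ref{sigma} corresponds to a maximum zone of $\Lambda'$ via $\sigma$, and this pins down one endpoint of the candidate $T'_{\ell'}$. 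The other endpoint is then forced because $\mu'_{\ell'}$ must agree with $\mu_\ell$ and $T'$ is normally embedded, so the width function of $T'_{\ell'}$ with respect to $g$ must have the same slope; combined with $Q'_{\ell'}=Q_\ell$ this determines $\ell'$ uniquely. The two alternatives (\ref{tord-plus}) and (\ref{tord-minus}) arise precisely from whether the correspondence sends $D_\ell\mapsto D'_{\ell'}$ and $D_{\ell-1}\mapsto D'_{\ell'-1}$ (orientation preserving) or swaps them (orientation reversing); this is the dichotomy already present in Definition \ref{def:pizzaslice-oriented}, and I would invoke Proposition \ref{pizzaslice-oriented}'s sister result or re-derive it by checking that a coherent pizza slice, restricted to the pair $(T_\ell, T'_{\ell'})$, satisfies the hypotheses of that definition since $tord(T_\ell,T')>\beta_\ell$ by coherence.

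The main obstacle I anticipate is establishing the \emph{uniqueness} of $\ell'$ and ruling out "interference" from non-adjacent pizza slices — that is, showing no other slice $T'_k$ of $\Lambda'$ with $Q'_k=Q_\ell$ and matching width function can also satisfy (\ref{tord-plus}) or (\ref{tord-minus}). This requires a careful ordering argument: I would use Definition \ref{zone-order} to put the pizza zones of $\Lambda$ and $\Lambda'$ in compatible total orders, show via the isometry $\sigma:\M\to\M'$ of Remark \ref{non-arch} that the maximum zones already appear in a $\sigma$-compatible order, and then argue that a coherent pizza zone $D_\ell$ is sandwiched between two maximum zones $M_i\prec D_\ell\prec M_{i'}$ (its "supporting" neighbors in the depth structure), so its image under $\tau$ is trapped between $M'_{\sigma(i)}$ and $M'_{\sigma(i')}$, leaving only one possibility. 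Once the local claim is proved for each coherent slice, the global correspondence $\tau$ is assembled by uniqueness, and the compatibility of $Q$, $\mu$ and the tangency conditions holds slice by slice.
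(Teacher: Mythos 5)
First, note that this paper does not prove the proposition at all: it is quoted verbatim from \cite[Proposition 4.7]{BG}, so there is no internal argument to compare yours against. Judged on its own merits, your sketch has a genuine gap at the step you yourself identify as the anchor of the whole construction. You claim that the depth zone of the supporting arc of a coherent pizza slice $T_\ell$ is a maximum zone, so that Proposition \ref{sigma} pins down one endpoint of the candidate $T'_{\ell'}$ via $\sigma$. This is false in general: a coherent pizza slice can be \emph{tied} on the side of its supporting arc (Definitions \ref{def:adjacent} and \ref{def:tied}), meaning there is no maximum zone adjacent to it there --- the exponent $q$ keeps increasing into the next coherent slice. The entire apparatus of caravans in Section \ref{sec:tau} (Definition \ref{def:caravan}, Proposition \ref{prop:tied}) exists precisely because coherent slices need not abut maximum zones, so the correspondence for them cannot be read off from $\sigma$ alone. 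Your uniqueness argument has a parallel problem: you propose to trap $\tau(D_\ell)$ between $M'_{\sigma(i)}$ and $M'_{\sigma(i')}$ for the two maximum zones sandwiching $D_\ell$, but $\sigma$ is a genuinely nontrivial permutation, so $M'_{\sigma(i)}$ and $M'_{\sigma(i')}$ need not be adjacent in $T'$, need not appear in that order, and the region between them can contain many candidate slices.

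The second missing ingredient is existence of a slice $T'_{\ell'}$ with $Q'_{\ell'}=Q_\ell$ and $\mu'_{\ell'}\equiv\mu_\ell$. Asserting that ``the distance functions match up'' is exactly the content to be proved; the actual mechanism (used implicitly throughout this paper, e.g.\ via the graph homeomorphism $h_\ell:\Gamma_\ell\to T'_{\tau(\ell)}$ in Step 1 of the proof of Theorem \ref{complete}) is that for a coherent pair the relevant portion of $T'$ is outer bi-Lipschitz equivalent to the graph of $f|_{T_\ell}$ over $T_\ell$, which is what transports the toppings $Q_\ell$ and $\mu_\ell$ to $\Lambda'$. Without that graph-structure result (or an equivalent non-archimedean argument identifying the zone of arcs in $T'$ realizing $tord(\gamma,T')$ for $\gamma\in T_\ell$ and computing its width function), neither the equality of toppings nor the dichotomy (\ref{tord-plus})/(\ref{tord-minus}) follows. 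Your appeal to ``Proposition \ref{pizzaslice-oriented}'s sister result'' is circular, since that is the statement being proved.
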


\begin{definition}\label{def:tau}\normalfont (See \cite[Definition 4.8]{BG}.)
Let $(T,T')$ be a normal pair of H\"older triangles, and
let $\Lambda$ and $\Lambda'$ be minimal pizzas on $T$ and $T'$ associated with the distance functions $f(x)=dist(x,T')$ and $g(x')=dist(x',T)$ respectively.
According to Proposition \ref{pizzaslice-oriented}, the sets $\mathcal L$ and $\mathcal L'$ of coherent pizza slices
of $\Lambda$ and $\Lambda'$ have the same number of elements, and there is a canonical one-to-one \emph{characteristic correspondence} $\tau: \ell'=\tau(\ell)$ between these
two sets.
A pair $(T_\ell,T'_{\ell'})$ of coherent pizza slices, where $\ell'=\tau(\ell)$ is called \emph{positively oriented} if (\ref{tord-plus}) holds and \emph{negatively oriented}
if (\ref{tord-minus}) holds.
Alternatively, we say that $\tau$ is \emph{positive} (resp., \emph{negative}) on $T_\ell$.
\end{definition}

\begin{definition}\label{upsilon}\normalfont
Let $L$ be the number of coherent pizza slices of $\Lambda$, same as the number of coherent pizza slices of $\Lambda'$.
Since pizza slices of $\Lambda$ and $\Lambda'$ are ordered according to orientations of $T$ and $T'$, respectively, the characteristic correspondence $\tau$ induces a
permutation $\upsilon$ of the set $[L]=\{1,\ldots,L\}$, such that $j=\upsilon(i)$ when $T_\ell$ is the $i$-th coherent pizza slice of $\Lambda$
and $T'_{\tau(\ell)}$ is the $j$-th coherent pizza slice of $\Lambda'$.
Since $\tau$ is a signed correspondence, it defines a sign function
$s:[L]\to\{+,-\}$, where $s(l)=``+"$ (resp., $s(l)=``-"$) if $\tau$ is positive (resp., negative) on the $l$-th coherent pizza slice $T_\ell$ of $\Lambda$.
\end{definition}

\begin{proposition}\label{sigma=tau} \emph{(See \cite[Proposition 4.10]{BG}.)}
Let $(T,T')$ be a normal pair of H\"older triangles.
Let $\Lambda$ and $\Lambda'$ be minimal pizzas on $T$ and $T'$ associated with the distance functions $f(x)=dist(x,T')$ and $g(x')=dist(x',T)$.
Let $T_\ell$ and $T'_{\ell'}$ respectively, where $\ell'=\tau(\ell)$, be coherent pizza slices
such that a pizza zone $D$ of $\Lambda$, containing one of the boundary arcs of $T_\ell$ (say $D=D_\ell$, the case $D=D_{\ell-1}$ being similar) is a maximum zone $M_i$ of $\Lambda$.
Then the pizza zone $D'$ of $\Lambda'$ containing one of the boundary arcs of $T'_{\ell'}$ (either $D'=D'_{\ell'}$ when $\tau$ is positive on $T_\ell$ or $D'=D'_{\ell'-1}$ when $\tau$ is
negative) is a maximum zone $M'_{j}$ of $\Lambda'$, where $j=\sigma(i)$.
\end{proposition}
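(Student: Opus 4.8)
The plan is to reduce to a single canonical case, read off the relevant tangency-order equalities from Proposition \ref{pizzaslice-oriented}, verify the maximum-zone inequalities (\ref{eq:max}) for the matched pizza zone on the $\Lambda'$ side, and finish with the characterization of $\sigma$ in Proposition \ref{sigma}. First I would dispose of the case where $D$ is a boundary arc of $T$: then $D=D_{\ell-1}=\{\gamma_1\}$ with $\ell=1$, or $D=D_\ell=\{\gamma_2\}$ with $\ell=p$, and the assertion is precisely the boundary clause of Proposition \ref{sigma}, once one checks via the appropriate line of (\ref{tord-plus})--(\ref{tord-minus}) together with the normal-pair condition (\ref{tord-tord}) that the pizza zone of $\Lambda'$ matched to $D$ is the corresponding boundary arc of $T'$. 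For the main case assume $D=D_\ell$ is an interior pizza zone and $\tau$ is positive on $T_\ell$ (the case $D=D_{\ell-1}$ uses the second lines of (\ref{tord-plus})--(\ref{tord-minus}) in place of the first, and the negative case replaces $D'_{\ell'}$ by $D'_{\ell'-1}$ throughout). Writing $i$ for the index with $D_\ell=M_i$ and $\bar q_i=tord(M_i,T')$, Proposition \ref{pizzaslice-oriented} supplies $\ell'=\tau(\ell)$ with $Q'_{\ell'}=Q_\ell$, $\mu'_{\ell'}\equiv\mu_\ell$, and, from the first line of (\ref{tord-plus}), $tord(M_i,D'_{\ell'})=tord(D'_{\ell'},T)=\bar q_i$; in particular the order $q'_{\ell'}$ of the pizza zone $D'_{\ell'}$ equals $\bar q_i$, which in turn equals $q_\ell$.

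Next I would show $D'_{\ell'}$ is a maximum zone of $\Lambda'$ by verifying (\ref{eq:max}). Since $\mu'_{\ell'}\equiv\mu_\ell$ we get $\beta'_{\ell'}=\beta_\ell$, and the second line of (\ref{tord-plus}), matching $D_{\ell-1}$ with $D'_{\ell'-1}$, gives $q'_{\ell'-1}=q_{\ell-1}$; since $M_i=D_\ell$ satisfies (\ref{eq:max}) we have $\beta_\ell<q_\ell$ and $q_{\ell-1}\le q_\ell$, hence $\beta'_{\ell'}<q'_{\ell'}$ and $q'_{\ell'-1}\le q'_{\ell'}$, which is the half of (\ref{eq:max}) for $D'_{\ell'}$ involving only the slice $T'_{\ell'}$. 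For the half involving the neighbouring slice $T'_{\ell'+1}$, I would show that the slice $T_{\ell+1}$ on the other side of $M_i$ determines $T'_{\ell'+1}$ through the shared pizza zone $M_i$: when $T_{\ell+1}$ is coherent, applying Proposition \ref{pizzaslice-oriented} to $T_{\ell+1}$ forces its matched slice to be adjacent to $D'_{\ell'}$, hence equal to $T'_{\ell'+1}$ with $\tau$ positive there; when $T_{\ell+1}$ is transverse, minimality of $\Lambda'$ together with (\ref{tord-tord}) forces $T'_{\ell'+1}$ to be transverse with $Q'_{\ell'+1}=Q_{\ell+1}$. Either way $q'_{\ell'+1}=q_{\ell+1}$ and $\beta'_{\ell'+1}=\beta_{\ell+1}$, so the inequalities $\beta_{\ell+1}<q_\ell$ and $q_{\ell+1}\le q_\ell$ of (\ref{eq:max}) give $\beta'_{\ell'+1}<q'_{\ell'}$ and $q'_{\ell'+1}\le q'_{\ell'}$. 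Thus $D'_{\ell'}$ satisfies (\ref{eq:max}) and is a maximum zone $M'_k$ of $\Lambda'$, and moreover $\mu(M'_k)=\nu'_{\ell'}=\max(\mu'_{\ell'}(q'_{\ell'}),\mu'_{\ell'+1}(q'_{\ell'}))=\max(\mu_\ell(q_\ell),\mu_{\ell+1}(q_\ell))=\nu_\ell=\mu(M_i)$.

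It then remains to identify $k$ with $\sigma(i)$. At this point $M'_k=D'_{\ell'}$ is a maximum zone of $\Lambda'$ with $\mu(M'_k)=\mu(M_i)$ and $tord(M_i,M'_k)=\bar q_i=tord(M'_k,T)$. By Proposition \ref{sigma}, together with the fact (Remark \ref{non-arch}) that $\sigma$ is the isometry between the non-archimedean metric spaces of maximum zones of $\Lambda$ and of $\Lambda'$, the index $\sigma(i)$ is singled out among the maximum zones of $\Lambda'$ by exactly these two properties, so $k=\sigma(i)$, which is the claim.

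The main obstacle is the ``far side'' part of the second step. Since $\tau$ only controls the matched pair $T_\ell\leftrightarrow T'_{\ell'}$ directly, transporting the inequalities (\ref{eq:max}) across the shared pizza zone $M_i$ to the neighbouring slice $T'_{\ell'+1}$ requires both the consistency of $\tau$ at a pizza zone common to two coherent slices and, in the case where one of the two slices meeting $M_i$ is transverse (so that $\tau$ is not directly applicable to it), a separate argument — from minimality of the two minimal pizzas and the normal-pair condition (\ref{tord-tord}) — that the matched slice on the $\Lambda'$ side is again transverse with the same interval $Q$ and width function $\mu$.
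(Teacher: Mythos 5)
This proposition is not proved in the present paper (it is quoted from \cite{BG}, Proposition 4.10), so your attempt has to stand on its own. Your outline is the right one: read off the matched slice $T'_{\ell'}$ and its toppings from Proposition \ref{pizzaslice-oriented}, check the maximum-zone inequalities (\ref{eq:max}) at $D'_{\ell'}$, then invoke the uniqueness built into Proposition \ref{sigma} and Remark \ref{non-arch} to pin down the index as $\sigma(i)$. The near-side inequalities ($\beta'_{\ell'}=\beta_\ell<q_\ell=q'_{\ell'}$ and $q'_{\ell'-1}=q_{\ell-1}\le q'_{\ell'}$) do follow from (\ref{tord-plus}) and $\mu'_{\ell'}\equiv\mu_\ell$ as you say, and the boundary reduction at the beginning is plausible.

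The gap is exactly where you locate it yourself, and it is not filled. The far-side half of (\ref{eq:max}) is never actually established. When $T_{\ell+1}$ is coherent, your assertion that Proposition \ref{pizzaslice-oriented} ``forces'' $\tau(\ell+1)=\ell'+1$ is precisely the maximum-zone case of Remark \ref{rem:coherent-zone} --- and the paper derives that case of the remark \emph{from} Proposition \ref{sigma=tau}, so appealing to it here is circular. A non-circular argument would have to show directly that the pizza zone of $\Lambda'$ produced by applying (\ref{tord-plus})/(\ref{tord-minus}) to $T_{\ell+1}$ is the same zone $D'_{\ell'}$; this requires work, e.g.\ via the characterization of pizza zones as maximal perfect $q$-order zones (Lemma \ref{MP}, item 2) and the non-archimedean/Lipschitz estimates, which your proposal does not carry out. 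When $T_{\ell+1}$ is transverse, the claim that $T'_{\ell'+1}$ is transverse with $Q'_{\ell'+1}=Q_{\ell+1}$ is stated with no supporting argument at all, and $\tau$ gives you no direct handle on it since $\tau$ only acts on coherent slices; you flag this as needing ``a separate argument'' but never supply one. A possible way to avoid the coherent/transverse far-side split entirely: you already know $tord(M_i,D'_{\ell'})=\bar q_i=tord(M_i,M'_{\sigma(i)})$, so if $D'_{\ell'}\ne M'_{\sigma(i)}$ the non-archimedean inequality gives $tord(D'_{\ell'},M'_{\sigma(i)})\ge\bar q_i$, the Lipschitz estimate on $g$ with normal embedding of $T'$ forces $ord_\zeta g\ge\bar q_i$ on every arc of the subtriangle of $T'$ spanned by the two zones, and one should then contradict the maximality of the perfect $\bar q_i$-order zone $M'_{\sigma(i)}$ (Lemma \ref{MP}, item 2; Remark \ref{perfect-zone-remark}). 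But that route also needs to be checked, and it is not in your write-up. As written, the proof stops short exactly at the step that carries the proposition's content.
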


\begin{definition}\label{def:adjacent}\normalfont
Let $T_\ell$ be a coherent pizza slice of $\Lambda$ such that $q_\ell\ge q_{\ell-1}$.
A maximum zone $M_i$ of $\Lambda$ is called \emph{right-adjacent} to $T_\ell$ if $tord(\lambda_\ell,M_i)\ge q_\ell$.
If there are no maximum zones of $\Lambda$ right-adjacent to $T_\ell$, then $T_\ell$ is called \emph{tied on the right}.
Similarly, if $T_\ell$ is a coherent pizza slice of $\Lambda$ such that $q_{\ell-1}\ge q_\ell$, then a
maximum zone $M_i$ of $\Lambda$ is called \emph{left-adjacent} to $T_\ell$ when $tord(\lambda_{\ell-1},M_i)\ge q_{\ell-1}$.
If  there are no maximum zones left-adjacent to $T_\ell$, then $T_\ell$ is called \emph{tied on the left}.
Note that $T_\ell$ may be tied on both sides, or have both right- and left-adjacent maximum zones, when $q_{\ell-1}=q_\ell$.
\end{definition}

\begin{remark}\label{maximum-near}\normalfont
Let $T_\ell=T(\lambda_{\ell-1},\lambda_\ell)$ be a coherent pizza slice of $\Lambda$ such that $q_{\ell}\ge q_{\ell-1}$,
and let $M_i$ be a maximum zone of $\Lambda$ right-adjacent to $T_\ell$.
Since the pizza zone $D_\ell$ of $\Lambda$ has exponent $\nu_\ell\le q_\ell$, either $\lambda_\ell\in M_i$ or $tord(\lambda_\ell,M_i)=q_\ell$.
Similarly, if $q_{\ell-1}\ge q_\ell$ and $M_i$ is a maximum zone of $\Lambda$ left-adjacent to $T_\ell$, then
either $\lambda_{\ell-1}\in M_i$ or $tord(\lambda_{\ell-1},M_i)=q_{\ell-1}$.
\end{remark}

\begin{remark}\label{rem:tied}\normalfont
If a coherent pizza slice $T_\ell$ of $\Lambda$ is tied on the right, it cannot be the last coherent pizza slice of $\Lambda$:
if $\lambda_\ell=\gamma_2$ is a boundary arc of $T$, then it is a maximum zone adjacent to $T_\ell$,
and if $T_{\ell+1}$ is a transverse pizza slice of $\Lambda$ with $q_{\ell+1}>q_\ell$, then either $\lambda_{\ell+1}$
belongs to a maximum zone adjacent to $T_\ell$ or $T_{\ell+2}$ is a coherent pizza slice of $\Lambda$,
since two consecutive transverse pizza slices of a minimal pizza have either a maximal or a minimal common pizza zone.
Thus, if $T_\ell$ is tied on the right, then either $T_{\ell+1}$ is a coherent pizza slice of $\Lambda$
with $q_{\ell+1}>q_\ell$, or $T_{\ell+1}$ is a transverse pizza slice of $\Lambda$ with $q_{\ell+1}>q_\ell$ and
$T_{\ell+2}$ is a coherent pizza slice of $\Lambda$ with $q_{\ell+2}>q_{\ell+1}$.

If a coherent pizza slice $T_\ell$ of $\Lambda$ is tied on the left, then either $T_{\ell-1}$ is a coherent pizza slice
of $\Lambda$ with $q_{\ell-2}>q_{\ell-1}$, or $T_{\ell-1}$ is a transverse pizza slice of $\Lambda$ with $q_{\ell-2}>q_{\ell-1}$ and
$T_{\ell-2}$ is a coherent pizza slice of $\Lambda$ with $q_{\ell-3}>q_{\ell-2}$.
\end{remark}

\begin{definition}\label{def:tied}\normalfont
If $T_\ell$ is a coherent pizza slice of $\Lambda$ tied on the right, then two coherent pizza slices in Remark \ref{rem:tied}, either $T_\ell$ and $T_{\ell+1}$ or $T_\ell$ and $T_{\ell+2}$, are called \emph{right-tied}.

Similarly, if $T_\ell$ is a coherent pizza slice of $\Lambda$  tied on the left, then two coherent pizza slices in Remark \ref{rem:tied}, either $T_\ell$ and $T_{\ell-1}$ or $T_\ell$ and $T_{\ell-2}$, are called \emph{left-tied}.
\end{definition}

 \begin{definition}\label{def:caravan}\normalfont
A sequence $C=\{T_\ell,\ldots,T_{\ell+k}\}$ (resp., $C=\{T_\ell,\ldots,T_{\ell-k}\}$) of consecutive pizza slices of $\Lambda$, where $T_\ell$ and $T_{\ell+k}$ (resp., $T_\ell$ and $T_{\ell-k}$) are coherent pizza slices,
is called a \emph{rightward caravan} (resp., a \emph{leftward caravan})
if each coherent pizza slice in $C$ is tied on the right (resp., tied on the left), except the last pizza slice $T_{\ell+k}$ (resp., $T_{\ell-k}$)
of $C$ being not tied.
The non-empty set $\mathcal A(C)$ of maximum zones of $\Lambda$ right-adjacent to $T_{\ell+k}$ (resp., left-adjacent to $T_{\ell-k}$) is called the \emph{adjacent set} of a caravan $C$.\newline
If $M_i$ is a maximum zone of $\Lambda$ such that $M_i\prec T_\ell$ (resp., $M_i\prec T_{\ell-k}$), we say that $M_i\prec C$.
If $M_i$ is a maximum zone of $\Lambda$ such that $T_{\ell+k}\prec M_i$ (resp., $T_\ell\prec M_i$), we say that $C\prec M_i$.
In particular $C\prec M_i$ (resp., $M_i\prec C$) for any maximum zone $M_i\in\mathcal A(C)$.\newline
A caravan $C'$ of pizza slices of $\Lambda'$ and its adjacent set $\mathcal A(C')$ are defined similarly,
replacing the pizza slices and maximum zones
of $\Lambda$ with the pizza slices and maximum zones of $\Lambda'$.
\end{definition}

\begin{remark}\label{rem:caravan}\normalfont
It follows from Definitions \ref{def:tied} and \ref{def:caravan} that each coherent pizza slice of $\Lambda$
belongs to at least one caravan.
If two rightwards (resp., leftwards) caravans are not disjoint, then one of them is a subset of another, and
their adjacent sets of maximum zones are the same.
A rightward (resp., leftward) caravan may consist of a single coherent pizza slice $T_\ell$ of $\Lambda$ when $T_\ell$ is not tied on the right (resp., on the left).\newline
If $C$ is a caravan of pizza slices of $\Lambda$, then the set $V(C)=\bigcup_{k:T_k\in C} V(T_k)$ is a zone in $V(T)$.
Partial order on zones in $V(T)$ (see Definition \ref{zone-order}) induces total order on the set of all (rightward and leftward) caravans of pizza slices of $\Lambda$, such
that $C_1\prec C_2$ when $V(C_1)\prec V(C_2)$.
Similarly, partial order on zones in $V(T')$ induces total order on the set of all caravans of pizza slices of $\Lambda'$.\newline
If $C=\{T_\ell,\ldots,T_{\ell+k}\}$ is a rightward caravan of pizza slices of $\Lambda$ and $M_i$ is a maximum zone of $\Lambda$ not right-adjacent to $C$, then either $M_i\prec T_\ell$ or
$M_j\prec M_i$ for any maximum zone $M_j\in\mathcal A(C)$.
Similarly, if $C=\{T_\ell,\ldots,T_{\ell-k}\}$ is a leftward caravan of pizza slices of $\Lambda$ and $M_i$ is a maximum zone of $\Lambda$ not left-adjacent to $C$, then either $T_\ell\prec
M_i$ or $M_i\prec M_j$ for any maximum zone $M_j\in\mathcal A(C)$.
\end{remark}

\begin{proposition}\label{prop:tied}
Let $(T,T')$ be a normal pair of H\"older triangles, and let $\Lambda$ and $\Lambda'$ be minimal pizzas on $T$ and $T'$, respectively,
associated with the distance functions $f(x)=dist(x,T')$ and $g(x')=dist(x',T)$.
Let $\tau$ be the characteristic correspondence between coherent pizza slices of $\Lambda$ and $\Lambda'$.
If two coherent pizza slices of $\Lambda$ (either $T_\ell$ and $T_{\ell+1}$ or $T_{\ell}$ and $T_{\ell+2}$)
are tied, then $\tau$ is either positive on both of these pizza slices, with either $\tau(\ell+1)=\tau(\ell)+1$ or $\tau(\ell+2)=\tau(\ell)+2$,
 or negative on both of these pizza slices, with either $\tau(\ell+1)=\tau(\ell)-1$ or $\tau(\ell+2)=\tau(\ell)-2$.
 Moreover, coherent pizza slices of $\Lambda'$ assigned by $\tau$ to tied pizza slices of $\Lambda$ are also tied.
 If two tied pizza slices of $\Lambda$ belong to a caravan $C$ and the tied pizza slices of $\Lambda'$
 assigned to them by $\tau$ belong to a caravan $C'=\tau(C)$, then the set of maximum zones
 of $\Lambda'$ adjacent to $C'$ is $\mathcal A(C')=\{M'_{\sigma(i)}: M_i\in\mathcal A(C)\}$.
\end{proposition}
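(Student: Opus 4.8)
The plan is to read off everything from the tangency-order identities (\ref{tord-plus})--(\ref{tord-minus}) that define $\tau$, from Proposition \ref{sigma=tau}, and from the local description of tied slices in Remark \ref{rem:tied}. First I would reduce to \emph{right}-tied pairs: reversing the orientations of both $T$ and $T'$ simultaneously interchanges ``right'' and ``left'', conjugates $\sigma$ and $\tau$ by the relevant order-reversing bijections, and preserves the normal pair and the statement. By Remark \ref{rem:tied}, a right-tied pair is either \textbf{(i)} $\{T_\ell,T_{\ell+1}\}$ with $T_{\ell+1}$ coherent and $q_\ell<q_{\ell+1}$, so that $T_\ell$ and $T_{\ell+1}$ share the pizza zone $D_\ell$, or \textbf{(ii)} $\{T_\ell,T_{\ell+2}\}$ with $T_{\ell+1}$ transverse, $T_{\ell+2}$ coherent and $q_\ell<q_{\ell+1}<q_{\ell+2}$; in case (ii) a short computation of $\nu_\ell$ and $\nu_{\ell+1}$ (using $\mu_{\ell+1}(q)\equiv q$) shows that $D_\ell$ and $D_{\ell+1}$ are transverse pizza zones.

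For the first assertion I would apply Proposition \ref{pizzaslice-oriented} to both members of the tied pair and track the pizza zone of $\Lambda'$ that gets \emph{normally paired} with the shared zone $D_\ell$ (case (i)), or with the pair $D_\ell,D_{\ell+1}$ flanking the transverse slice (case (ii)); here ``$D'$ is normally paired with $D$'' means $tord(D,T')=tord(D,D')=tord(D',T)$. Writing $\ell'=\tau(\ell)$ and $\ell''=\tau(\ell+1)$ in case (i), the identity (\ref{tord-plus}) or (\ref{tord-minus}) for $T_\ell$ makes $D'_{\ell'}$ (if $\tau$ is positive on $T_\ell$) or $D'_{\ell'-1}$ (if negative) the normal partner of $D_\ell$, and the corresponding identity for $T_{\ell+1}$ (for which $D_\ell=D_{(\ell+1)-1}$) makes the normal partner of $D_\ell$ equal to $D'_{\ell''-1}$ (if $\tau$ is positive on $T_{\ell+1}$) or $D'_{\ell''}$ (if negative). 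Equating these two partners, and using that $\tau$ is a bijection (so $\ell'\ne\ell''$), leaves only $\ell''=\ell'+1$ with $\tau$ positive on both slices, or $\ell''=\ell'-1$ with $\tau$ negative on both. Case (ii) is analogous: the common normal partners of the transverse zones $D_\ell$ and $D_{\ell+1}$ turn out to be the two boundary zones of a single transverse pizza slice of $\Lambda'$ lying between $T'_{\tau(\ell)}$ and $T'_{\tau(\ell+2)}$, which forces $\tau(\ell+2)=\tau(\ell)\pm2$ with a single sign.

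Next I would show the $\tau$-images are tied. Knowing the sign $s$, the boundary zone of $T'_{\tau(\ell)}$ that corresponds to $D_\ell$ is identified, $Q'_{\tau(\ell)}=Q_\ell$, and $D_\ell$ sits on the side of order $q_\ell$; so to prove $T'_{\tau(\ell)}$ is tied (on the right when $s$ is positive, on the left when $s$ is negative) it suffices to rule out a maximum zone $M'_j$ of $\Lambda'$ adjacent to $T'_{\tau(\ell)}$ on that side. If such an $M'_j$ existed, then by Remark \ref{maximum-near} the relevant boundary arc of $T'_{\tau(\ell)}$ lies in $M'_j$ or has tangency order equal to its $g$-value with $M'_j$, and Proposition \ref{sigma=tau} (together with Remark \ref{maximum-near}) transports this to the statement that $M_i$, where $j=\sigma(i)$, is adjacent to $T_\ell$ on the right --- contradicting the hypothesis that $T_\ell$ is tied on the right. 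Hence $T'_{\tau(\ell)}$ is tied, and the dichotomy of Remark \ref{rem:tied} applied in $\Lambda'$, combined with case (i)/(ii) from Step 2, shows its tied partner is exactly $T'_{\tau(\ell+k)}$; iterating along a caravan $C$ of $\Lambda$ then gives a caravan $C'=\tau(C)$ of $\Lambda'$, with the untied last slice of $C$ mapped to the untied last slice of $C'$ (again by reading Proposition \ref{sigma=tau} in both directions via $\tau^{\pm1}$, $\sigma^{\pm1}$). Finally, for the adjacent sets: if $T_{\ell+k'}$ is the untied last slice of $C$, then a maximum zone $M_i$ lies in $\mathcal A(C)$ iff $M_i$ is adjacent to $T_{\ell+k'}$ on the relevant side iff (by Remark \ref{maximum-near} and Proposition \ref{sigma=tau}) $M'_{\sigma(i)}$ is adjacent to $T'_{\tau(\ell+k')}$ on the relevant side iff $M'_{\sigma(i)}\in\mathcal A(C')$, which is the last assertion.

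The step I expect to be the main obstacle is ``equating the two normal partners'' in the second paragraph: one must exclude the possibility that the constraints coming from the two members of the tied pair refer to two \emph{distinct} pizza zones of $\Lambda'$ carrying the same exponent interval, so that the indices genuinely differ by $1$ (or by $2$, with a transverse slice in between) rather than merely matching combinatorially. I expect this to need minimality of both pizzas together with the closed perfect structure of pizza zones (Remark \ref{perfect-zone-remark}) and the non-archimedean property of $tord$; the transverse sub-case (ii) additionally uses the (routine) fact that a transverse pizza slice of $\Lambda'$ is determined by its two transverse boundary zones.
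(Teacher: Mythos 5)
Your skeleton is right and you cite the right ingredients (Proposition \ref{pizzaslice-oriented}, Proposition \ref{sigma=tau}, Remark \ref{maximum-near}), and the third paragraph (images are tied; adjacent sets match) matches the paper's argument. But the step you flag as ``the main obstacle'' is a genuine gap, and your guess about what fills it is off-target.

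The problem with ``equating the two normal partners'' is this: Proposition \ref{pizzaslice-oriented} does \emph{not} assert that the pizza zone $D'$ of $\Lambda'$ with $tord(D_\ell,T')=tord(D_\ell,D')=tord(D',T)$ is unique. Applied to $T_\ell$ it tells you $tord(D_\ell,D'_{\ell'})=q_\ell$ (or $\ell'-1$, depending on sign); applied to $T_{\ell+1}$ it tells you $tord(D_\ell,D'_{\ell''-1})=q_\ell$ (or $\ell''$). The non-archimedean inequality gives $tord(D'_{\ell'},D'_{\ell''-1})\ge q_\ell$, but that alone does not force $\ell''-1=\ell'$ --- there could be pizza zones of $\Lambda'$ between them. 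Your proposed fix via ``minimality of both pizzas and the closed perfect structure of pizza zones'' does not close this: minimality of $\Lambda'$ does not exclude a \emph{maximum} zone sitting in the interval $T(\lambda'_{\ell'},\lambda'_{\ell''-1})$, and if one sits there the offset is not $\pm1$. The indispensable input is precisely the \emph{tied} hypothesis on $T_\ell$, which you only invoke later (to show the $\tau$-images are tied) rather than in this step. The paper's proof does not equate zones at all: it argues by contradiction. Assuming $\tau$ negative on $T_{\ell+1}$ (or assuming $\ell''\ne\ell'+1$), it uses normal embedding of $T'$ to locate a subtriangle $\tilde T=T(\lambda'_{\ell'},\lambda'_{\ell''-1})$ with $tord(\gamma',T)\ge q_\ell$ for all $\gamma'\in\tilde T$; since $q_{\ell+1}>q_\ell$, $\tilde T$ must contain a maximum zone $M'_j=M'_{\sigma(i)}$; and then Proposition \ref{sigma=tau} pulls this back to a maximum zone $M_i$ of $\Lambda$ with $tord(M_i,\lambda_\ell)\ge q_\ell$, contradicting that $T_\ell$ is tied on the right. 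This is where the tied hypothesis earns its keep --- your version defers it to the wrong step.

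Two smaller points. First, in case (ii) you assert that ``the common normal partners of $D_\ell$ and $D_{\ell+1}$ turn out to be the two boundary zones of a single transverse pizza slice of $\Lambda'$''; that is exactly the conclusion you want, not something that follows from the tangency identities alone, and it needs the same tied-based contradiction (the paper handles this by additionally observing that $T'_{\ell'+1}$ cannot be coherent, lest $T_{\ell+1}$ would be forced to be coherent, and that the order of the exponents $q,q'$ of the boundary arcs pins down the ordering within the caravan). Second, the reduction to right-tied pairs by reversing orientation is fine and harmless, though the paper simply treats one case and says the others are similar. So: correct scaffolding, correct ancillary lemmas, but the central offset/sign computation is not actually carried out, and the ingredient needed there is the tied hypothesis fed through Proposition \ref{sigma=tau}, not minimality.
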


\begin{proof}
Consider first the case of two consecutive pizza slices $T_\ell$ and $T_{\ell+1}$.
We may assume that $T_\ell$ is tied on the right, $\tau$ is positive on $T_\ell$,
and $T_{\ell+1}$ is a coherent pizza slice with $q_{\ell+1}>q_\ell$, the other cases being similar.
Let $\ell'=\tau(\ell),\;\ell''=\tau(\ell+1)$, and let $T'_{\ell'}=T(\lambda'_{\ell'-1},\lambda'_{\ell'})$ and $T'_{\ell''}=T(\lambda'_{\ell''-1},\lambda'_{\ell''})$
be coherent pizza slices of $\Lambda'$ corresponding to $T_\ell$ and $T_{\ell+1}$, respectively.

From Definition \ref{pizza-decomp} we have the following inequalities:
$\beta_\ell\le\min(q_{\ell-1},q_\ell)\le q_\ell$ (the last inequality holds since $T_\ell$ is tied on the right).
Since $\Lambda$ is a minimal pizza, $\beta_\ell<q_\ell$ when $q_{\ell-1}=q_\ell$. In any case $\beta_\ell<q_\ell$.
Also, \cite[Proposition 3.9]{BG} implies that $\mu'_{\ell'}(q)\equiv\mu_\ell(q),\;\mu'_{\ell''}(q)\equiv\mu_{\ell+1}(q)$,
$q_\ell=tord(\lambda_\ell,T')=tord(\lambda'_{\ell'},T)$ and $q_{\ell+1}=tord(\lambda_{\ell+1},T')=tord(\lambda'_{\ell''-1},T)$.
In particular, $\beta'_{\ell'}=\beta_\ell<q_\ell$ and $\beta'_{\ell''}=\beta_{\ell+1}$.
From the non-archimedean property of the tangency order, we have $tord(\lambda'_{\ell'},\lambda'_{\ell''})\ge q_\ell$.
Since $\lambda_\ell$ is a common arc of $T_\ell$ and $T_{\ell+1}$, we have
$tord(\lambda_\ell,\lambda'_{\ell'})=tord(\lambda_\ell,\lambda'_{\ell''})=q_\ell$.

Let us show first that $\tau$ is positive on $T_{\ell+1}$.
If $\tau$ is negative on $T_{\ell+1}$, then $tord(\lambda_\ell,\lambda'_{\ell''})=q_\ell>\beta_\ell$
implies $T'_{\ell'}\prec T'_{\ell''}$, since $T'$ is normally embedded.
Thus $\lambda'_{\ell''-1}\subset T(\lambda'_{\ell'},\lambda'_{\ell''})$ and $tord(\lambda'_{\ell'},\lambda'_{\ell''})\ge q_\ell$.
Since $\beta'_{\ell''}=\beta_{\ell+1}\le q_\ell$, this implies $tord(\lambda'_{\ell'},\lambda'_{\ell''})=\beta'_{\ell''}=\beta_{\ell+1}=q_\ell$.
As $q_{\ell+1}>q_\ell$, there is a maximum zone $M'_j\subset V(T(\lambda'_{\ell'},\lambda'_{\ell''}))$ of $\Lambda'$,
such that $tord(M'_j,\lambda_\ell)\ge q_\ell$. If $j=\sigma(i)$, then $tord(M_i,\lambda_\ell)\ge q_\ell$, a contradiction.

We are going to show next that $\ell''=\ell'+1$.
Since $\tau$ is positive on both $T_\ell$ and $T_{\ell+1}$ and $\beta_\ell<q_\ell$, the same argument as above shows that $T'_{\ell'}\prec T'_{\ell''}$.
Also, since $tord(\lambda'_{\ell'},T)=tord(\lambda'_{\ell''-1},T)=q_\ell$ and $tord(\lambda'_{\ell'},\lambda'_{\ell''-1})\ge q_\ell$,
we have $tord(\gamma',T)\ge q_\ell$ for any arc $\gamma'$ in $\tilde T=T(\lambda'_{\ell'},\lambda'_{\ell''-1})$, as $T'$ is normally embedded.
If $\tilde T$ contains an arc $\gamma'$ such that $tord(\gamma',T)>q_\ell$, then there is a
maximum zone $M'_j\subset V(\tilde T)$ of $\Lambda'$, corresponding to a maximum zone $M_i$ of $\Lambda$, where $j=\sigma(i)$, such that $tord(M_i,\lambda_\ell)\ge q_\ell$, in contradiction with $T_\ell$ being tied on the right.

The statement about the sets $\mathcal A(C)$ and $\mathcal A(C')$ of adjacent maximum zones follows from the isometry
 of the characteristic permutation $\sigma$ (see Remark \ref{non-arch}).

If $T_\ell$ and $T_{\ell+2}$ are two coherent pizza slices of $\Lambda$ such that $T_\ell$ is tied on the right,
$T_{\ell+1}$ is a transverse pizza slice, and $\tau$ is positive on $T_\ell$, then
the same arguments as above show that $\tau$ is positive on $T_{\ell+2}$,
the pizza slices $T'_{\ell'}$ and $T'_{\ell''}$ of $\Lambda'$ assigned by $\tau$ to $T_\ell$ and $T_{\ell+2}$
satisfy $T'_{\ell'}\prec T'_{\ell''}$, and $T'_{\ell'}$ is tied on the right.
Since $q_{\ell+1}>q_\ell$, the pizza slices $T'_{\ell'}$ and $T'_{\ell''}$
 are not consecutive pizza slices of $\Lambda'$, thus $\ell''>\ell'+1$.
 Also, $T'_{\ell'+1}$ is not a coherent pizza slice of $\Lambda'$,
 otherwise the same arguments as above applied to tied pizza slices $T'_{\ell'}$ and $T'_{\ell'+1}$
 would imply that $T_{\ell+1}$ is a coherent pizza slice, a contradiction.
 Thus $T'_{\ell'+1}$ is a transverse pizza slice of $\Lambda'$ and $T_{\ell'+2}$ is a coherent pizza slice of $\Lambda'$.
 Since $T'_{\ell''}$ and $T'_{\ell'+2}$ belong to the same rightward caravan $C'$ as $T'_{\ell'}$,
 and the order of coherent pizza slices in the caravans $C$ and $C'$ is the same as the order of the exponents $q$ and $q'$
 of their boundary arcs, we have $\ell''=\ell'+2$. This completes the proof of Proposition \ref{prop:tied}.
\end{proof}

\begin{corollary}\label{cor:tied}
Let $(T,T')$ be a normal pair of H\"older triangles, with the minimal pizzas $\Lambda$ and $\Lambda'$ on $T$ and $T'$ associated
with the distance functions $f(x)=dist(x,T')$ and $g(x')=dist(x',T)$, and let $C$ be a caravan of pizza slices of $\Lambda$.
Then the following properties hold:\newline
{\bf (A)} The characteristic correspondence $\tau$ has the same sign, either positive or negative, on all coherent pizza slices of $C$.
We say that $\tau$ is positive (resp., negative) on $C$.\newline
{\bf (B)} There is a caravan $C'=\tau(C)$ of pizza slices of $\Lambda'$ such that
$\tau$ defines a one-to-one correspondence between coherent pizza slices of $C$ and $C'$,
preserving their order when $\tau$ is positive on $C$, and reversing their order when $\tau$ is negative on $C$.
If $C$ is a rightward (resp., leftward) caravan of pizza slices of $\Lambda$ and $\tau$ is positive on $C$, then $C'=\tau(C)$ is a rightward (resp., leftward) caravan of pizza slices of $\Lambda'$.
Similarly, if $C$ is a rightward (resp., leftward) caravan of pizza slices of $\Lambda$ and $\tau$ is negative on $C$, then $C'=\tau(C)$ is a leftward (resp., rightward) caravan of pizza slices of $\Lambda'$.\newline
{\bf (C)} If $\tau$ is positive on a rightward (resp., leftward) caravan $C$ of pizza slices of $\Lambda$, then a maximum zone $M_i$ of $\Lambda$ is right-adjacent (resp., left-adjacent) to
$C$ if, and only if, the maximum zone $M'_{\sigma(i)}$ of $\Lambda'$ is right-adjacent (resp., left-adjacent) to $C'=\tau(C)$.
Similarly, if $\tau$ is negative on $C$, then a maximum zone $M_i$ of $\Lambda$ is right-adjacent (resp., left-adjacent) to $C$ if, and only if, the maximum zone
$M'_{\sigma(i)}$ of $\Lambda'$ is left-adjacent (resp., right-adjacent) to $C'=\tau(C)$.
Thus the adjacent set $\mathcal A(C)$ of a caravan $C$ of pizza slices of $\Lambda$ is mapped by $\sigma$ to the adjacent set $\mathcal A(C')$ of the caravan $C'=\tau(C)$ of pizza slices of $\Lambda'$.
\end{corollary}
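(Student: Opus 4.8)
The plan is to bootstrap Proposition \ref{prop:tied}, which settles the case of two tied coherent pizza slices, to an entire caravan by transitivity along its chain of coherent slices, and to collapse the four combinations of the sign of $\tau$ and the direction of the caravan to a single case by reversing orientations. By Definition \ref{def:pizzaslice-oriented}, reversing the orientation of $T'$ alone interchanges the positive and negative conditions (\ref{tord-plus}) and (\ref{tord-minus}) and turns rightward caravans of $\Lambda'$ into leftward ones, reversing the orientation of $T$ alone does the symmetric thing for $\Lambda$, and reversing both keeps the sign of $\tau$ while exchanging rightward and leftward caravans of both $\Lambda$ and $\Lambda'$. So it suffices to prove all three assertions assuming $C=\{T_\ell,\ldots,T_{\ell+k}\}$ is a rightward caravan of $\Lambda$ with $\tau$ positive on its first coherent slice; the "resp." statements then follow by applying this case to the appropriately reoriented pair and translating back.

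First I would list the coherent slices of $C$ in order, $T_{j_0}=T_\ell\prec T_{j_1}\prec\cdots\prec T_{j_r}=T_{\ell+k}$. By Definition \ref{def:caravan} each $T_{j_s}$ with $s<r$ is tied on the right, so by Remark \ref{rem:tied} the slices $T_{j_s}$ and $T_{j_{s+1}}$ are right-tied (Definition \ref{def:tied}) with $j_{s+1}-j_s\in\{1,2\}$. Proposition \ref{prop:tied} applied to each consecutive pair gives that $\tau$ has the same sign on $T_{j_s}$ and on $T_{j_{s+1}}$, that $\tau(j_{s+1})-\tau(j_s)=j_{s+1}-j_s$, and that $T'_{\tau(j_s)}$ and $T'_{\tau(j_{s+1})}$ are again tied coherent slices of $\Lambda'$ (tied on the right, since the sign is positive here). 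Transitivity then yields that $\tau$ is positive on all of $T_{j_0},\ldots,T_{j_r}$, which is \textbf{(A)}, and that $\tau(j_0)<\cdots<\tau(j_r)$, so the slices $T'_{\tau(j_0)},T'_{\tau(j_0)+1},\ldots,T'_{\tau(j_r)}$ are consecutive with every intermediate slice transverse and all but the last tied on the right. Put $C':=\{T'_{\tau(j_0)},\ldots,T'_{\tau(j_r)}\}$ together with the intervening transverse slices; then $C'$ is a rightward caravan provided $T'_{\tau(j_r)}$ is not tied on the right. If it were, then, since $\tau^{-1}$ is the characteristic correspondence of the normal pair $(T',T)$ and is positive on $T'_{\tau(j_r)}$ (positivity (\ref{tord-plus}) being symmetric in the two triangles), Proposition \ref{prop:tied} applied to $(T',T)$ would produce a coherent slice of $\Lambda$ right-tied with $T_{j_r}=T_{\ell+k}$, contradicting that $T_{\ell+k}$ is the last, untied, coherent slice of $C$. (When $r=0$ the caravan is a single untied coherent slice and only this last argument is needed.) Hence $C'=\tau(C)$ is a rightward caravan and $\tau$ restricts to an order-preserving bijection from the coherent slices of $C$ onto those of $C'$, which is \textbf{(B)}.

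For \textbf{(C)}: in this base case $\mathcal A(C)$ consists of the maximum zones of $\Lambda$ right-adjacent to $T_{\ell+k}$ and $\mathcal A(C')$ of those of $\Lambda'$ right-adjacent to $T'_{\tau(\ell+k)}$. When $r\ge 1$, the final clause of Proposition \ref{prop:tied}, applied to the tied pair $\{T_{j_{r-1}},T_{j_r}\}\subset C$ and its image $\{T'_{\tau(j_{r-1})},T'_{\tau(j_r)}\}\subset C'$, gives $\mathcal A(C')=\{M'_{\sigma(i)}:M_i\in\mathcal A(C)\}$, equivalently that $M_i$ is right-adjacent to $C$ if and only if $M'_{\sigma(i)}$ is right-adjacent to $C'$. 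When $r=0$ I would argue directly: by Remark \ref{maximum-near} a maximum zone $M_i$ is right-adjacent to $T_\ell$ either because $M_i$ coincides with the pizza zone $D_\ell$ — in which case Proposition \ref{sigma=tau} shows the corresponding boundary pizza zone of $T'_{\tau(\ell)}$ is $M'_{\sigma(i)}$, hence right-adjacent — or because $tord(\lambda_\ell,M_i)$ equals the common order $q_\ell$, and then the isometry $\sigma$ (Remark \ref{non-arch}) together with (\ref{tord-plus}) again forces $M'_{\sigma(i)}$ to be right-adjacent to $T'_{\tau(\ell)}$, the converse being symmetric via $\sigma^{-1}$. Carrying the orientation reversals of the reduction back through, "right-adjacent" becomes "left-adjacent" exactly when the direction of the caravan has been flipped, which is precisely when $\tau$ is negative; this gives the "resp." clauses of (C) together with the identity $\sigma(\mathcal A(C))=\mathcal A(C')$ in all four cases.

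The step I expect to be the main obstacle is the part of \textbf{(B)} asserting that the $\tau$-image of $C$ is not just a set of mutually tied coherent slices but a genuine \emph{maximal} rightward caravan of $\Lambda'$ — concretely, that $T'_{\tau(\ell+k)}$ inherits the "not tied on the right" property of $T_{\ell+k}$. This forces the symmetric use of Proposition \ref{prop:tied} for the pair $(T',T)$ and demands care that the sign and direction conventions transform correctly under the orientation reversals used in the reduction; the degenerate single-slice caravan must also be handled separately at each stage.
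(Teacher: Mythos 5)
Your overall strategy---iterate Proposition \ref{prop:tied} along the chain of tied coherent slices of the caravan, use its last clause to identify the adjacent sets, and obtain maximality of the image caravan (that $T'_{\tau(\ell+k)}$ is not tied) by applying the proposition to the pair $(T',T)$ with correspondence $\tau^{-1}$---is exactly how the paper intends this corollary to follow from Proposition \ref{prop:tied} (it is stated without a separate proof), and parts (A), (B), (C) are correctly assembled from it, including the single-slice caravan via Remark \ref{maximum-near}, Proposition \ref{sigma=tau} and the isometry of $\sigma$.

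The one step that does not work as written is the reduction of the four sign/direction combinations to a single case by reversing the orientation of $T'$ alone (or of $T$ alone). The notion of normal pair in Definition \ref{def:tord-tord} is not symmetric under reorienting one factor: condition (\ref{tord-tord}) pins $\gamma'_1$ as the boundary arc of $T'$ forming a normal pair of arcs with $\gamma_1$, so relabeling the boundary arcs of $T'$ in the opposite order would require $(\gamma_1,\gamma'_2)$ and $(\gamma_2,\gamma'_1)$ to be normal pairs, which need not hold; the reoriented pair is then not a normal pair, and the objects $\sigma$, $\tau$ your reduction invokes are no longer defined for it. Only the simultaneous reversal of both orientations is legitimate. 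The repair is immediate and costs nothing: Proposition \ref{prop:tied} already states the negative-sign conclusions ($\tau(\ell+1)=\tau(\ell)-1$ or $\tau(\ell+2)=\tau(\ell)-2$, with the image slices tied in the reversed order), so the negative cases of (A)--(C) should be run directly through those clauses, and the simultaneous double reversal still collapses rightward and leftward caravans to one case.
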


\begin{remark}\label{rem:coherent-zone}\normalfont
If $D_\ell$ is a coherent pizza zone (see Definition \ref{def:pizzaslicezone-transverse})
common to consecutive coherent pizza slices $T_\ell$ and $T_{\ell+1}$ of $\Lambda$,
then \cite[Proposition 3.9]{BG} implies that $T'_{\tau(\ell)}$ and $T'_{\tau(\ell+1)}$ are consecutive coherent pizza slices
of $\Lambda'$ with a common pizza zone $D'_{\ell'}$ of $\Lambda'$ corresponding to $D_\ell$.
Thus $\tau$ must have the same sign on these two pizza slices, with either $\ell'=\tau(\ell)$ and $\ell'+1=\tau(\ell+1)$
when $\tau$ is positive, or $\ell'=\tau(\ell+1)$ and $\ell'+1=\tau(\ell)$ when $\tau$ is negative.
Similarly, if $D_\ell=M_i$ is a maximum zone of $\Lambda$ common to two coherent pizza slices $T_\ell$ and $T_{\ell+1}$,
then Proposition \ref{sigma=tau} implies that $T'_{\tau(\ell)}$ and $T'_{\tau(\ell+1)}$ are consecutive coherent pizza slices
of $\Lambda'$ with a common pizza zone $D'_{\ell'}=M'_{\sigma(i)}$ corresponding to $D_\ell$, thus $\tau$ must have
the same sign on these two pizza slices.
\end{remark}

\begin{definition}\label{def:order}\normalfont
If $C$ is a rightward caravan of pizza slices of $\Lambda$ and $\tau$ is positive on $C$, or if
$C$ is a leftward caravan of pizza slices of $\Lambda$ and $\tau$ is negative on $C$,
let $j_+(C)=\min_{i:M_i\in\mathcal A(C)}\sigma(i)$ and $j_-(C)=j_+(C)-1$.
If $C$ is a rightward caravan of pizza slices of $\Lambda$ and $\tau$ is negative on $C$, or if
$C$ is a leftward caravan of pizza slices of $\Lambda$ and $\tau$ is positive on $C$,
let $j_-(C)=\max_{i:M_i\in\mathcal A(C)}\sigma(i)$ and $j_+(C)=j_-(C)+1$.
\end{definition}

\begin{proposition}\label{prop:order}
If $C$ is a caravan of pizza slices of $\Lambda$ and $C'=\tau(C)$ is the corresponding caravan of pizza slices of $\Lambda'$,
then the number of maximum zones $M'_j$ of $\Lambda'$ such that $M'_j\prec C'$ is $j_-(C)$.
If $C_1$ and $C_2$ are two caravans of pizza slices of $\Lambda$ such that $\tau(C_1)\prec\tau(C_2)$, then $j_-(C_1)\le j_-(C_2)$.
\end{proposition}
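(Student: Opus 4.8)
The plan is to translate the statement into a count of maximum zones of $\Lambda'$ lying before the caravan $C'=\tau(C)$, and to evaluate this count via Corollary \ref{cor:tied} and the structural description of caravans in Remark \ref{rem:caravan}. Fix $C$ and put $C'=\tau(C)$. By Corollary \ref{cor:tied}(B), $C'$ is a rightward caravan of $\Lambda'$ precisely when $C$ is rightward and $\tau$ is positive on $C$, or $C$ is leftward and $\tau$ is negative on $C$; otherwise $C'$ is a leftward caravan of $\Lambda'$. By Corollary \ref{cor:tied}(C), $\sigma$ restricts to a bijection $\mathcal A(C)\to\mathcal A(C')$.

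The key step is the following counting statement about a caravan $\bar C$ of $\Lambda'$: the maximum zones $M'_j$ with $M'_j\prec\bar C$ are exactly $M'_1,\ldots,M'_{r-1}$, where $r=\min\{j:M'_j\in\mathcal A(\bar C)\}$, when $\bar C$ is rightward, and exactly $M'_1,\ldots,M'_{s}$, where $s=\max\{j:M'_j\in\mathcal A(\bar C)\}$, when $\bar C$ is leftward. To prove it I would invoke the trichotomy in Remark \ref{rem:caravan}: a maximum zone of $\Lambda'$ not belonging to $\mathcal A(\bar C)$ either precedes $\bar C$ or follows every zone of $\mathcal A(\bar C)$. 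Combined with the facts that $\bar C\prec M'_j$ for $M'_j\in\mathcal A(\bar C)$ in the rightward case and $M'_j\prec\bar C$ in the leftward case (Definition \ref{def:caravan}), together with consistency of the indexing of maximum zones with the total order $\prec$ on $V(T')$ (Definition \ref{zone-order}), this yields: in the rightward case a maximum zone of index $<r$ cannot follow all of $\mathcal A(\bar C)$, hence precedes $\bar C$, while a maximum zone preceding $\bar C$ has index $<r$ because $\bar C\prec M'_r$; in the leftward case the same reasoning with $s$ in place of $r$ shows the maximum zones preceding $\bar C$ are precisely those of index $\le s$. Thus the count is $r-1$ or $s$, respectively (here $\mathcal A(\bar C)$ is non-empty, so $r$ and $s$ are well defined).

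Applying this with $\bar C=C'$ and rewriting $\mathcal A(C')=\sigma(\mathcal A(C))$, the number of maximum zones of $\Lambda'$ preceding $C'$ equals $\min\{\sigma(i):M_i\in\mathcal A(C)\}-1$ when $C'$ is rightward and $\max\{\sigma(i):M_i\in\mathcal A(C)\}$ when $C'$ is leftward. Going through the four combinations of the direction of $C$ and the sign of $\tau$ on $C$ as recalled above and comparing with Definition \ref{def:order}, one checks in each case that this count equals $j_-(C)$; this proves the first assertion. For the second, if $\tau(C_1)\prec\tau(C_2)$ then $C'_1\prec C'_2$ as zones of $V(T')$, and since $\prec$ restricted to caravans and maximum zones of $\Lambda'$ is consistent with the total order on $V(T')$, every maximum zone with $M'_j\prec C'_1$ also satisfies $M'_j\prec C'_2$; hence the count for $C'_1$ does not exceed that for $C'_2$, i.e.\ $j_-(C_1)\le j_-(C_2)$. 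The main obstacle is purely the case bookkeeping — keeping straight, in each of the four cases, whether $C'$ is rightward or leftward and whether Definition \ref{def:order} selects the minimum or the maximum of $\sigma$ on $\mathcal A(C)$ — since all of the geometric content is already packaged in Corollary \ref{cor:tied} and Remark \ref{rem:caravan}.
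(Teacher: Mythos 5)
Your proof follows essentially the same route as the paper's: reduce via Corollary \ref{cor:tied}(B) to determining whether $C'$ is rightward or leftward, transport $\mathcal A(C)$ to $\mathcal A(C')$ by $\sigma$ via Corollary \ref{cor:tied}(C), and use the trichotomy in Remark \ref{rem:caravan} to count the maximum zones of $\Lambda'$ preceding $C'$, matching the result against Definition \ref{def:order}. The only differences are cosmetic: the paper treats one of the four sign/direction cases and declares the others similar, whereas you spell out all four, and you additionally make explicit the (easy) argument for the monotonicity of $j_-$, which the paper leaves to the reader.
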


\begin{proof}
We may assume that $C$ is a rightward caravan and $\tau$ is positive on $C$, the other cases being similar.
Then $C'=\tau(C)$ is a rightward caravan of pizza slices of $\Lambda'$, according to item (B) of Corollary \ref{cor:tied},
and $C'\prec M'_j$ for all maximum zones $M'_j\in \mathcal A(C')$ of $\Lambda'$. Since $\mathcal A(C)$ is
mapped to $\mathcal A(C')$ by $\sigma$, according to item (C) of Corollary \ref{cor:tied}, the maximum
zone $M'_{j_+}$ has the minimal index $j=j_+$ among all maximum zones $M'_j\in\mathcal A(C')$ of $\Lambda'$.
Since all maximum zones $M'_j$ of $\Lambda'$ which are not in $\mathcal A(C')$ satisfy either $M'_j\prec C'$
or $M'_{j_+}\prec M'_j$, according to Remark \ref{rem:caravan}, there are exactly $j_-=j_+-1$ maximum zones $M'_j$ of $\Lambda'$ such that $M'_j\prec C'$.
\end{proof}

\begin{definition}\label{omega}\normalfont
Let $\K=\M\bigcup\mathcal L$ be the disjoint union of the set $\M$ of maximum zones of $\Lambda$
and the set $\mathcal L$ of coherent pizza slices of $\Lambda$,
and let $\K'=\M'\bigcup\mathcal L'$ be the corresponding set of maximum zones and coherent pizza slices of $\Lambda'$.
Notice that the sets $\mathcal K$ and $\mathcal K'$ are totally ordered according to orientations of $T$ and $T'$, respectively
(see Definition \ref{zone-order}).
Since the sets $\K$ and $\K'$ have the same number of elements $K=m+L$,
the combination of permutations $\sigma$ and $\upsilon$ of the sets $[m]$ and $[L]$, respectively, defines a permutation
$\omega$ of the set $[K]=\{1,\ldots,K\}$, such that $k'=\omega(k)$ either when the
$k$-th element of $\K$ corresponds to the $i$-th maximum zone $M_i$ of $\Lambda$ and the $k'$-th element of $K$ corresponds to
the $\sigma(i)$-th maximum zone $M'_{\sigma(i)}$ of $\Lambda'$,
or when the $k$-th element of $\mathcal K$ corresponds to the $l$-th coherent pizza slice $T_\ell$ of $\Lambda$ and the
$k'$-th element of $K$ corresponds to the $\upsilon(l)$-th coherent pizza slice $T'_{\tau(\ell)}$ of $\Lambda'$.
The permutation $\omega$ of $[K]$ is called the \emph{combined characteristic permutation} of the pair $(T,T')$.
\end{definition}

\begin{remark}\label{omega-remarkk}\normalfont
Let $(T,T')$ be a normal pair of H\"older triangles with the minimal pizzas $\Lambda$ and $\Lambda'$
on $T$ and $T'$ associated with the distance functions $f(x)=dist(x,T')$ and $g(x')=dist(x',T)$, respectively.
Let $(T,T'')$ be another normal pair of H\"older triangles with a minimal
pizza on $T$ associated with the distance function $dist(x,T'')$ combinatorially equivalent to $\Lambda$
and a minimal pizza $\Lambda''$ on $T''$ associated with the distance function $dist(x'',T)$.
Since the sets $\M'$ and $\M''$ of maximum zones of $\Lambda'$ and $\Lambda''$ have the same number of elements $m$,
and the sets $\mathcal L'$ and $\mathcal L''$ of coherent pizza slices of $\Lambda'$ and $\Lambda''$ have the same number of elements $L$,
the disjoint union $\K''$ of the sets $\M''$ and $\mathcal L''$ has the same number of elements $K=m+L$ as the sets
$\K$ and $\K'$ in Definition \ref{omega}.
\end{remark}

\begin{theorem}\label{omega-omega}
For two normal pairs $(T,T')$ and $(T,T'')$ of H\"older triangles in Remark \ref{omega-remarkk}, the following statement holds:
If the characteristic permutation $\sigma$ of the set $[m]$ for the pair $(T,T')$ is the same as the characteristic permutation of $[m]$
for the pair $(T,T'')$,
the permutation $\upsilon$ of the set $[L]$ induced by the characteristic correspondence $\tau$ for the pair $(T,T')$ is the same as the permutation of $[L]$ induced by the characteristic correspondence for the pair $(T,T'')$, and the sign function $s:[L]\to\{+,-\}$ of the pair $(T,T')$ is the same as the sign function of the pair $(T,T'')$, then the combined characteristic permutation $\omega$ of the set $[K]$ for the pair $(T,T')$ is the same as the
combined characteristic permutation of $[K]$ for the pair $(T,T'')$.
\end{theorem}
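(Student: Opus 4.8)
The plan is to show that the total order on the set $\K'$ used to define the combined characteristic permutation $\omega$ (Definition \ref{omega}) is already determined by $\Lambda$, $\sigma$, $\upsilon$ and $s$, so that it coincides for $(T,T')$ and $(T,T'')$. First note that the ordered set $\K=\M\sqcup\mathcal L$ and the bijection $\K\to\K'$ sending $M_i$ to $M'_{\sigma(i)}$ and the $i$-th coherent pizza slice of $\Lambda$ to the $\upsilon(i)$-th coherent pizza slice of $\Lambda'$ depend only on $\Lambda$, $\sigma$ and $\upsilon$. Hence $\omega$ is determined once the total order on $\K'=\M'\sqcup\mathcal L'$ (see Definition \ref{zone-order}) is known. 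Within $\M'$ this order is the order of the indices $j$, and within $\mathcal L'$ it is the order of the coherent pizza slices of $\Lambda'$; the only remaining information is the \emph{interleaving}, that is, for each coherent pizza slice $T'_{l'}$ of $\Lambda'$ the number $n(T'_{l'})$ of maximum zones $M'_j$ of $\Lambda'$ with $M'_j\prec T'_{l'}$. So it is enough to express $n(T'_{l'})$ in terms of $\Lambda$, $\sigma$, $\upsilon$ and $s$.

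I would first establish the auxiliary fact that \emph{no maximum zone of $\Lambda$ lies in the interior of a caravan}: if $C=\{T_\ell,\dots,T_{\ell+k}\}$ is a rightward caravan of $\Lambda$, then none of the arcs $\lambda_\ell,\dots,\lambda_{\ell+k-1}$ belongs to a maximum zone. Indeed, Remark \ref{rem:tied} shows that along a rightward caravan the exponents $q$ at the marked arcs strictly increase, and by the maximum condition (\ref{eq:max}) a pizza zone $D_j$ with $q_j<q_{j+1}$ cannot be a maximum zone; leftward caravans are handled by reversing the orientation, and the same statement holds for $\Lambda'$. Now fix a coherent pizza slice $T'_{l'}$ of $\Lambda'$ and let $T_\ell$ be the coherent pizza slice of $\Lambda$ with $\tau(\ell)=l'$ (equivalently, the $\upsilon^{-1}(l')$-th coherent slice of $\Lambda$). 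By Remark \ref{rem:caravan}, $T_\ell$ lies in some caravan $C$ of $\Lambda$, and by Corollary \ref{cor:tied} the set $C'=\tau(C)$ is a caravan of $\Lambda'$ containing $T'_{l'}$. Since $V(C')$ is an interval for the order on $V(T')$ and, by the auxiliary fact, contains no maximum zone of $\Lambda'$, the maximum zones of $\Lambda'$ preceding $T'_{l'}$ are exactly those preceding $C'$, and by Proposition \ref{prop:order} their number is $j_-(C)$. Finally, the family of caravans of $\Lambda$, their type (rightward or leftward) and their adjacent sets $\mathcal A(C)$ are intrinsic to $\Lambda$ (Definitions \ref{def:adjacent}, \ref{def:tied}, \ref{def:caravan}); the sign of $\tau$ on $C$ is constant by part (A) of Corollary \ref{cor:tied} and equals $s$ evaluated at the position among the coherent slices of $\Lambda$ of any coherent slice contained in $C$; and $j_-(C)$ is then read off from $\sigma(\mathcal A(C))$ together with the pair (type of $C$, sign of $\tau$ on $C$) via Definition \ref{def:order}. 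Hence $n(T'_{l'})=j_-(C)$ is a function of $\Lambda$, $\sigma$, $\upsilon$ and $s$ only.

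To conclude, since $(T,T')$ and $(T,T'')$ have the same $\Lambda$, $\sigma$, $\upsilon$ and $s$, they have the same caravan data of $\Lambda$, hence the same numbers $j_-(C)$, hence the same interleaving of maximum zones and coherent pizza slices, hence the same total order on $\K'$ under the identification by $\sigma$ and $\upsilon$; therefore their combined characteristic permutations $\omega$ coincide. The main obstacle is the caravan analysis: one must verify that maximum zones never occur in the interior of a caravan --- without this, Proposition \ref{prop:order} would only locate $T'_{l'}$ relative to its caravan rather than relative to all maximum zones --- and one must check, case by case, that every ingredient of $j_-(C)$ in Definition \ref{def:order} is intrinsic to $\Lambda$, $\sigma$ and $s$; the degenerate cases (a boundary arc of $T$ that is a maximum zone, a coherent slice tied on both sides) also require some attention.
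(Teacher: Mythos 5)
Your proof is correct and follows essentially the same route as the paper: reduce $\omega$ to the interleaving of maximum zones and coherent pizza slices in $\Lambda'$, locate each coherent slice in a caravan, and invoke Proposition \ref{prop:order} together with the fact that $j_-(C)$ is determined by $\Lambda$, $\sigma$ and $s$. Your auxiliary observation that no maximum zone lies in the interior of a caravan is a point the paper handles implicitly via Remark \ref{rem:caravan}, and your explicit derivation of it from the monotonicity of the exponents $q$ along a caravan is a useful addition.
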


\begin{proof}
The permutation $\sigma$
defines the same order on the sets $\M'$ and $\M''$ of maximum zones of $\Lambda'$ and $\Lambda''$, respectively,
consistent with orientations of $T'$ and $T''$, since $\sigma(i)<\sigma(j)$ implies that
$M'_{\sigma(i)}\prec M'_{\sigma(j)}$ and $M''_{\sigma(i)}\prec M''_{\sigma(j)}$.
Similarly, the permutation $\upsilon$ defines the same order on the sets $\mathcal L'$ and $\mathcal L''$ of coherent pizza slices
of $\Lambda'$ and $\Lambda''$, respectively, consistent with orientations of $T'$ and $T''$.
To prove that the permutation $\omega$ is the same for $(T,T')$ and $(T,T'')$, it is enough to show that a maximum zone $M'_j$
of $\Lambda'$ precedes the $k$-th coherent pizza slice of $\Lambda'$ if, and only if, a maximum zone $M''_j$
of $\Lambda''$ precedes the $k$-th coherent pizza slice of $\Lambda''$.
But this follows from Proposition \ref{prop:order}: if $k=\upsilon(l)$ and
$C$ is the caravan of pizza slices of $\Lambda$ containing the $l$-th coherent pizza slice of $\Lambda$,
then the $k$-th coherent pizza slice of $\Lambda'$ belongs to the caravan $C'$ of pizza slices of $\Lambda'$
and the $k$-th coherent pizza slice of $\Lambda''$ belongs to the caravan $C''$ of pizza slices of $\Lambda'$,
such that the number $j_-(C)$ of maximum zones of $\Lambda'$ preceding $C'$ is the same as
the number of maximum zones of $\Lambda''$ preceding $C''$, since $j_-(C)$ is determined by the pizza $\Lambda$, permutation $\sigma$ and
the sign function $s$.
\end{proof}

\begin{remark}\label{omega-remark}\normalfont If conditions of Theorem \ref{omega-omega} are satisfied, we say that $\omega$ is
\emph{determined} by $\Lambda,\;\sigma,\;\upsilon$ and $s$, meaning that $\omega$ is the same for all normal pairs of H\"older triangles for which
$\Lambda,\;\sigma,\;\upsilon$ and $s$ are the same.
In what follows, we are going to use this terminology for other invariants of normal pairs of H\"older triangles.
\end{remark}

\begin{definition}\label{def:allowable}\normalfont
Let $\Lambda$ be a minimal pizza on a normally embedded H\"older triangle $T$ associated with a non-negative Lipschitz function $f$ on $T$.
Let $m$ and $L$ be the numbers of maximum zones and coherent pizza slices of $\Lambda$, respectively.
Let $\sigma$ and $\upsilon$ be permutations of the sets $[m]=\{1,\ldots,m\}$ and $[L]=\{1,\ldots,L\}$, respectively,
and let $s:[L]\to\{+,-\}$ be a sign function on $[L]$. For any caravan $C$ of pizza slices of $\Lambda$, let $\mathcal A(C)$ be the adjacent set
of $C$ (see Definition \ref{def:caravan}), and let $j_-(C)$ be the index defined by $\sigma,\;\upsilon$ and $s$ in Definition \ref{def:order}.
A triple $(\sigma,\upsilon,s)$ is called \emph{allowable} if it satisfies the following conditions:\newline
$\mathbf{(A1)}$. \emph{If the $k$-th and $l$-th coherent pizza slices of $\Lambda$
belong to the same caravan $C$, then either $s(k)=s(l)=+$ and $\upsilon(l)-\upsilon(k)=l-k$, or $s(k)=s(l)=-$ and $\upsilon(l)-\upsilon(k)=k-l$.}\newline
$\mathbf{(A2)}$ \emph{If $C_1$ and $C_2$ are two caravans of pizza slices of $\Lambda$ such that $\tau(C_1)\prec\tau(C_2)$, then $j_-(C_1)\le j_-(C_2)$.}\newline
$\mathbf{(A3)}$ \emph{If $D_\ell$ is a coherent pizza zone or a maximum zone of $\Lambda$ adjacent to
the $k$-th and $(k+1)$-st coherent pizza slices of $\Lambda$, then $s(k)=s(k+1)$.}\newline
Note that these conditions are satisfied when $\sigma,\;\upsilon$ and $s$ are defined for a distance function $f(x)=dist(x,T')$ on $T$,
when $(T,T')$ is a normal pair of H\"older triangles (see Proposition \ref{prop:order} and Remark \ref{rem:coherent-zone}).
In particular, these conditions are necessary to define the combined
characteristic permutation $\omega$ of the pair $(T,T')$ (see Definition \ref{omega}).
\end{definition}

\begin{proposition}\label{prop:allowable}
Let $(\sigma,\upsilon,s)$ be an allowable triple, as in Definition \ref{def:allowable}, for a minimal pizza $\Lambda$
associated with a non-negative Lipschitz function $f$ on a normally embedded H\"older triangle $T$.
Let $\mathcal K$ be the disjoint union of the set $\M$ of $m$ maximum zones of $\Lambda$ and the set $\mathcal L$ of $L$ coherent pizza slices of $\Lambda$, ordered
according to orientation of $T$.
Then there exists a unique permutation $\omega$ of the set $[K]=\{1,\ldots,K\}$, where $K=m+L$, compatible with the permutations $\sigma$ and $\upsilon$
on the subsets of $\mathcal K$ corresponding to $\M$ and $\mathcal L$, respectively (see Remark \ref{rem:allowable} below)
and for any $k\in[K]$ such that the $k$-th element of $\mathcal K$ corresponds to a coherent pizza slice
of $\Lambda$ belonging to a caravan $C$, the number of indices $l\in[K]$ corresponding to maximum zones of $\Lambda$,
 such that $\omega(l)<\omega(k)$, is equal to $j_-(C)$.\newline
If the triple $(\sigma,\upsilon,s)$ is defined for a minimal pizza $\Lambda$ on $T$ associated with the distance function $f(x)=dist(x,T')$,
where $(T,T')$ is a normal pair of H\"older triangles,
then $\omega$ is the combined characteristic permutation of the pair $(T,T')$ (see Definition \ref{omega}).
\end{proposition}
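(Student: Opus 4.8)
The plan is to construct $\omega$ by directly describing the linearly ordered set $\mathcal K'$ that it is supposed to produce, and then to check that the allowability conditions make this description consistent and that it is forced, which gives both existence and uniqueness. First I would record what is already determined. Since $\M=\{M_i\}$ and $\M'=\{M'_j\}$ are ordered by the orientations of $T$ and $T'$, the datum $M_i\mapsto M'_{\sigma(i)}$ fixes the relative order of the images of the maximum zones (the $i$-th maximum zone occupies position $\sigma(i)$ among the $m$ maximum zones of $\Lambda'$); likewise $\upsilon$ fixes the relative order of the images of the coherent pizza slices. Hence the only datum still needed for a total order on $\mathcal K'$ is the \emph{interleaving}: for each coherent pizza slice of $\Lambda'$, how many maximum zones of $\Lambda'$ precede it.

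Second I would extract this interleaving from the triple $(\sigma,\upsilon,s)$. By Remark \ref{rem:caravan} every coherent pizza slice $c$ of $\Lambda$ lies in some caravan $C$, and I would argue that the integer $j_-(C)$ of Definition \ref{def:order} depends only on $c$: this is where conditions (A1) and (A3) enter, together with Corollary \ref{cor:tied} — the sign of $\tau$ is constant along a caravan, tied slices have adjacent sets matched by $\sigma$, and two overlapping caravans of the same orientation share their adjacent set, so $j_-$ is the same for all caravans through $c$. Call this common value $N(c)$; clearly $0\le N(c)\le m$. By condition (A2) and Proposition \ref{prop:order}, if $c_1$ comes before $c_2$ in the $\upsilon$-order (equivalently $\tau(c_1)\prec\tau(c_2)$ on $T'$) then $N(c_1)\le N(c_2)$. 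A weakly increasing function from the ordered set of coherent pizza slices of $\Lambda'$ to $\{0,\dots,m\}$ determines a unique shuffle of the $m$ ordered maximum zones with the $L$ ordered coherent pizza slices: place the first $N(c^{(1)})$ maximum zones, then $c^{(1)}$, then the next $N(c^{(2)})-N(c^{(1)})$ maximum zones, then $c^{(2)}$, and so on, ending with the remaining $m-N(c^{(L)})$ maximum zones. This yields a linear order on $\mathcal K'$, and reading off the position of the image of the $k$-th element of $\mathcal K$ in this order defines $\omega(k)$.

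Third, verification and uniqueness. By construction $\omega$ restricts to $\sigma$ on $\M$ and to $\upsilon$ on $\mathcal L$, i.e.\ it is compatible with $\sigma$ and $\upsilon$, and if the $k$-th element of $\mathcal K$ is a coherent pizza slice in a caravan $C$, then exactly the first $N(c)=j_-(C)$ maximum zones of $\Lambda'$ precede its image, so the counting condition holds. For uniqueness, any permutation with these properties must send $\M$ to $\M'$ in the $\sigma$-order, $\mathcal L$ to $\mathcal L'$ in the $\upsilon$-order, and must place exactly $j_-(C)$ maximum zones before the image of each coherent pizza slice in $C$; this pins down the shuffle, hence $\omega$. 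Finally, when $(\sigma,\upsilon,s)$ is the triple of a normal pair $(T,T')$, Proposition \ref{prop:order} says precisely that the genuine combined characteristic permutation of Definition \ref{omega} realizes the interleaving count $j_-(C)$, and it is compatible with $\sigma$ and $\upsilon$ by definition; by the uniqueness just established it equals the $\omega$ constructed above.

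The step I expect to be the main obstacle is the well-definedness in the second paragraph: showing that $N(c)=j_-(C)$ is independent of the caravan $C\ni c$ and that $N$ is weakly monotone in the $\upsilon$-order. Overlapping caravans (a rightward and a leftward one sharing a slice, or nested caravans of the same orientation) and the way the sign function $s$ interacts with Corollary \ref{cor:tied}(C) and Definition \ref{def:order} force a careful case analysis — which is exactly what conditions (A1)–(A3) are designed to make go through.
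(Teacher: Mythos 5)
Your approach matches the paper's: both observe that compatibility with $\sigma$ and $\upsilon$ already fixes the separate linear orders on the images of $\M$ and $\mathcal L$, so the only remaining freedom is the interleaving, which is read off $j_-(C)$ for a caravan $C$ containing the given coherent slice; condition (A1) makes the sign constant along a caravan, condition (A2) gives the required monotonicity of the interleaving counts, and Proposition~\ref{prop:order} identifies $\omega$ with the combined characteristic permutation in the geometric case. One caution: Corollary~\ref{cor:tied} and Proposition~\ref{prop:order}, which you cite to establish that $j_-(C)$ is independent of the choice of caravan through $c$ and weakly monotone in the $\upsilon$-order, are proved only for actual normal pairs $(T,T')$, so they are not available in the purely abstract setting of an allowable triple $(\sigma,\upsilon,s)$ — there the argument has to run directly from (A1)--(A3) and Remark~\ref{rem:caravan} (nested caravans of the same orientation share their adjacent set), which is what the paper's proof does.
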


\begin{remark}\label{rem:allowable}\normalfont
In Proposition \ref{prop:allowable}, ``compatible'' means the following:\newline
For any indices $k$ and $l$ of $[K]$ corresponding to maximum zones $M_i$ and $M_j$ of $\Lambda$ (resp., to the $i$-th and $j$-th
coherent pizza slices of $\Lambda$), we have $\omega(k)<\omega(l)$ if, and only if, $\sigma(i)<\sigma(j)$ (resp., $\upsilon(i)<\upsilon(j))$.
\end{remark}

\begin{proof}[Proof of Proposition \ref{prop:allowable}]
Let $K_m$ and $K_L$ be the subsets of indices $k\in [K]$ corresponding to the maximum zones and coherent pizza slices of $\Lambda$, respectively.
Since $\omega$ is compatible with $\sigma$ and $\upsilon$,
the order of indices $\omega(k)$ in the subsets $\omega(K_m)$ and $\omega(K_L)$ of $[K]$ is determined by the permutations $\sigma$ and $\upsilon$, respectively.
In particular, for each $k\in K_L$, the set $K_L(k)$ of indices $l\in K_L$, such that $\omega(l)<\omega(k)$, is known.
Thus the permutation $\omega$ would be completely defined if, for each $k\in K_L$, the set $K_m(k)$ of indices $i\in K_m$,
such that $\omega(i)<\omega(k)$, is known.
If $k\in K_L$ corresponds to a $j$-th coherent pizza slice of $\Lambda$ belonging to a caravan $C$ of pizza slices of $\Lambda$,
such that either $C$ is a rightward caravan and $s(j)=+$ or $C$ is a leftward caravan and $s(j)=-$, then, according to Definition \ref{def:order},
the set $K_m(k)$ consists of all indices of $K_m$ corresponding to the maximum zones $M_l$ of $\Lambda$ such that
$\sigma(l)<\min_{i\in\mathcal A(C)}\sigma(i)$.
Condition $\mathbf{(A1)}$ of Definition \ref{def:allowable} implies that the value $s(j)$ is the same for all coherent pizza slices of $\Lambda$
belonging to the same caravan $C$.
Similarly, if $k\in K_L$ corresponds to a $j$-th coherent pizza slice of $\Lambda$ belonging to a caravan $C$ of pizza slices of $\Lambda$,
such that either $C$ is a rightward caravan and $s(j)=-$ or $C$ is a leftward caravan and $s(j)=+$, then
the set $K_m(k)$ consists of all indices of $K_m$ corresponding to the maximum zones $M_l$ of $\Lambda$ such that
$\sigma(l)\le\max_{i\in\mathcal A(C)}\sigma(i)$.
Condition $\mathbf{(A2)}$ of Definition \ref{def:allowable} implies that $K_m(j)\subseteq K_m(j')$ when $\upsilon(j)<\upsilon(j')$.
Thus $\omega$ is determined by $\Lambda,\;\sigma,\;\upsilon$ and $s$.\newline
If $(T,T')$ is a normal pair of H\"older triangles, $\Lambda$ is a minimal pizza associated with the distance function $f(x)=dist(x,T')$ on $T$,
and $\sigma,\;\upsilon$ and $s$ are defined by the characteristic permutation $\sigma$ and characteristic correspondence $\tau$ of the pair $(T,T')$,
then the permutation $\omega$ is the combined characteristic permutation of the pair $(T,T')$, due to Definition \ref{omega} and Theorem \ref{omega-omega}.
\end{proof}

\begin{remark}\label{rem:tau}\normalfont
The characteristic correspondence $\tau$ can be extended to a correspondence between the pizza zones of $\Lambda$ and $\Lambda'$ adjacent
to coherent pizza slices as follows:
If $T_\ell$ is a coherent pizza slice of $\Lambda$ and $T'_{\ell'}=\tau(T_\ell)$, we can define
$\tau(D_{\ell-1})=D'_{\ell'-1}$ and $\tau(D_{\ell})=D'_{\ell'}$ (resp., $\tau(D_{\ell-1})=D'_{\ell'}$ and
$\tau(D_{\ell})=D'_{\ell'-1}$) if $\tau$ is positive (resp., negative) on $T_{\ell}$.\newline
Proposition \ref{pizzaslice-oriented} implies that this correspondence preserves pizza toppings: $\mu(\tau(D_{\ell-1}))=\mu(D_{\ell-1})$ and
$\mu(\tau(D_{\ell}))=\mu(D_{\ell}),\;tord(D_{\ell-1},\tau(D_{\ell-1}))=q_{\ell-1}$ and $tord(D_{\ell},\tau(D_{\ell}))=q_{\ell}$.\newline
If $\tau$ is positive on $T_\ell$, then $q'_{\ell'-1}=q_{\ell-1},\;q'_{\ell'}=q_\ell,\;\mu'_{\ell'}(q'_{\ell'-1})=\mu_\ell(q_{\ell-1})$ and
$\mu'_{\ell'}(q'_{\ell'})=\mu_\ell(q_\ell)$.\newline
If $\tau$ is negative on $T_\ell$, then $q'_{\ell'-1}=q_\ell,\;q'_{\ell'}=q_{\ell-1},\;\mu'_{\ell'}(q'_{\ell'-1})=\mu_\ell(q_\ell)$ and
$\mu'_{\ell'}(q'_{\ell'})=\mu_\ell(q_{\ell-1})$.\newline
This correspondence between the pizza zones of $\Lambda$ and $\Lambda'$ is not necessarily one-to-one:
a pizza zone of $\Lambda$ common to two coherent pizza slices may be ``split,'' assigned by $\tau$ to two different
pizza zones of $\Lambda'$,
and two pizza zones of $\Lambda$ may be assigned to the same ``split'' pizza zone of $\Lambda'$.
A boundary arc of $T$ adjacent to a coherent pizza slice of $\Lambda$ may be assigned by $\tau$ to an interior pizza zone of $\Lambda'$,
and an interior pizza zone of $\Lambda$ may be assigned to a boundary arc of $T'$ adjacent to a coherent pizza slice of $\Lambda'$.
However, the correspondence between the pizza zones of $\Lambda$ and $\Lambda'$ defined by $\tau$ is one-to-one
on coherent pizza zones and on the maximum zones (see Remark \ref{rem:coherent-zone}),
and on the pizza zones common to tied coherent triangles, due to Proposition \ref{prop:tied}.\newline
To recover one-to-one correspondence between \emph{essential} pizza zones of $\Lambda$ and $\Lambda'$ (see Definition \ref{def:primary})
compatible with $\sigma$ on the maximum zones and assigning the boundary arcs of $T'$ to the boundary arcs of $T$,
 we are going to define in Section \ref{sec:blocks-general} \emph{pre-pizzas} $\tilde\Lambda$ and $\tilde\Lambda'$ (see Definition \ref{def:pre-pizza})
 removing \emph{non-essential} arcs from pizzas $\Lambda$ and $\Lambda'$,  and
 \emph{twin pre-pizzas} $\check\Lambda$ and $\check\Lambda'$ (see Definition \ref{def:twin-pre-pizza}) expanding pre-pizzas $\tilde\Lambda$ and $\tilde\Lambda'$
 by \emph{twin arcs}.
\end{remark}

\begin{definition}\label{compatible}\normalfont
Let $(T,T')$ be a normal pair of H\"older triangles, and let
$\Lambda=\{T_\ell\}_{\ell=1}^p$ and $\Lambda'=\{T'_{\ell'}\}_{\ell=1}^{p'}$, where $T_\ell=T(\lambda_{\ell-1},\lambda_\ell)$ and
 $T'_{\ell'}=T(\lambda'_{{\ell'}-1},\lambda'_{\ell'})$,
 be minimal pizzas on $T$ and $T'$ associated with the distance functions $f$ and $g$.
 Then $\Lambda$ and $\Lambda'$ are called \emph{compatible} if, for any coherent pizza zones $D_{\ell}$ and $D'_{\ell'}=\tau(D_{\ell})$
  (see Remark \ref{rem:tau}) the  pair of arcs $(\lambda_\ell,\lambda'_{\ell'})$, where $\lambda_{\ell} \in D_\ell$ and
  $\lambda'_{\ell'} \in D'_{\ell'}$, is normal:
 \begin{equation}\label{eq:compatible}
 tord(\lambda_{\ell},\lambda'_{\ell'})= q_{\ell}=q'_{\ell'}.
 \end{equation}
It follows from \cite[Proposition 3.9]{BG} that, for any normal pair $(T,T')$ of H\"older triangles,
there exist compatible minimal pizzas $\Lambda$ and $\Lambda'$ associated with the distance functions $f$ and $g$.
Since the sets of coherent pizza zones in pizzas, pre-pizzas and twin pre-pizzas are the same,
pairs of pre-pizzas $\tilde\Lambda$ and $\tilde\Lambda'$, and pairs of twin pre-pizzas $\check\Lambda$ and $\check\Lambda'$,
corresponding to compatible pairs of pizzas $\Lambda$ and $\Lambda'$, are also compatible.\newline
In what follows, all pairs of pizzas, pre-pizzas and twin pre-pizzas
associated with the distance functions on normal pairs of H\"older triangles are assumed to be compatible.
\end{definition}

\begin{remark}\label{rem:compatible}\normalfont
If $D_\ell=M_i$ is a transverse maximum pizza zone of $\Lambda$ and $D_{\ell'}=M'_{\sigma(i)}$, or if $D_\ell$ is a transverse pizza zone of $\Lambda$
adjacent to a coherent pizza slice and $D_{\ell'}=\tau(D_\ell)$ (see Remark \ref{rem:tau}) then $(\lambda_\ell,\lambda'_{\ell'})$ is a normal pair of arcs (see Definition \ref{def:tord-tord}) for any $\lambda_\ell\in D_\ell$ and $\lambda'_{\ell'}\in D_{\ell'}$.
\end{remark}

\section{The $\sigma\tau$-pizza invariant}\label{sec:invariant}

Let $(T,T')$ be a normal pair of $\beta$-H\"older triangles $T=T(\gamma_1,\gamma_2)$ and $T'=T(\gamma'_1,\gamma'_2)$.
Let $\{D_\ell\}_{\ell=0}^p$ and $\{D'_{\ell'}\}_{\ell'=0}^{p'}$ be the sets of pizza zones
for the minimal pizzas $\Lambda_T=\{T_\ell\}_{\ell=1}^p$ and $\Lambda'_{T'}=\{T'_{\ell'}\}_{\ell'=1}^{p'}$ associated with the distance functions $f(x)=dist(x,T')$ and
$g(x')=dist(x',T)$, where
$T_\ell=T(\lambda_{\ell-1},\lambda_\ell),\;T'_{\ell'}=T(\lambda'_{\ell'-1},\lambda'_{\ell'})$, and the arcs $\lambda_\ell\in D_\ell$ and $\lambda'_{\ell'}\in D_{\ell'}$ are
selected so that the pizzas $\Lambda_T$ and $\Lambda'_{T'}$ are compatible (see Definition \ref{compatible}).
Let $\{M_i\}_{i=1}^m$ and $\{M'_j\}_{j=1}^m$ be the maximum zones in $V(T)$ and $V(T')$ for the functions $f$ and $g$.
Let $j=\sigma(i)$ be the characteristic permutation $\sigma$ of the pair $(T,T')$,
and let $\ell'=\tau(\ell)$ be the characteristic correspondence between the sets of coherent pizza slices of $\Lambda_T$ and $\Lambda'_{T'}$ (see Definition \ref{def:tau}).

\begin{definition}\label{def:sigmatau}\normalfont
The $\sigma\tau$\emph{-pizza} on $T\cup T'$ (see \cite[Definition 4.12]{BG})
consists of the minimal pizzas $\Lambda_T$ and $\Lambda'_{T'}$, characteristic permutation $\sigma$
and characteristic correspondence $\tau$.
\end{definition}

\begin{definition}\label{combinatorial} \normalfont
Two $\sigma\tau$-pizzas $(\Lambda_T,\Lambda'_{T'},\sigma_T,\tau_T)$ on $T\cup T'$ and $(\Lambda_S,\Lambda'_{S'},\sigma_S,\tau_S)$ on $S\cup S'$ are \emph{combinatorially
equivalent} if pizzas
$\Lambda_T$ and $\Lambda_S$ are combinatorially equivalent (see Definition \ref{equivalent_pizza}), pizzas $\Lambda'_{T'}$ and $\Lambda'_{S'}$ are combinatorially equivalent,
$\sigma_T=\sigma_S$ and $\tau_T=\tau_S$.
\end{definition}

It was shown in \cite[Theorem 4.13]{BG} that the $\sigma\tau$-pizza is an outer Lipschitz invariant of a normal pair of H\"older triangles.
The main result of this section is the following theorem (see \cite[Conjecture 4.14]{BG}).

\begin{theorem}\label{complete}
Let $(T,T')$ and $(S,S')$ be two normal pairs of H\"older triangles.
If the $\sigma\tau$-pizza on $T\cup T'$ is combinatorially equivalent to the $\sigma\tau$-pizza on $S\cup S'$,
then there is an orientation-preserving outer bi-Lipschitz homeomorphism $H:T\cup T'\to S\cup S'$ such that $H(T)=S$ and $H(T')=S'$.
\end{theorem}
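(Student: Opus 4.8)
The plan is to build the homeomorphism $H$ by combining the following three ingredients: (i) the realization procedure from Theorem \ref{pizza-realization}, which shows that the minimal pizzas $\Lambda_T$ and $\Lambda_S$, being combinatorially equivalent, arise from contact-equivalent distance functions; (ii) the reconstruction of a normal pair, up to outer Lipschitz equivalence, from the data $(\Lambda,\sigma,\tau)$, which is exactly what the realization theorems announced in the introduction (Theorems \ref{transverse-blocks} and \ref{general-blocks}) will provide; and (iii) Proposition \ref{important-proposition} (Fernandes' criterion), which reduces the construction of an outer bi-Lipschitz homeomorphism to the construction of a tangency-order-preserving bijection on Valette links. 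So the skeleton is: first construct $H$ on the level of arc spaces $V(T\cup T')\to V(S\cup S')$ preserving all pairwise tangency orders, then invoke Proposition \ref{important-proposition} to descend to an outer bi-Lipschitz homeomorphism of the germs, then check it respects the decomposition into the two triangles and the orientations.

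First I would treat $T$ and $S$ in isolation. Since $\Lambda_T$ and $\Lambda_S$ are combinatorially equivalent, Proposition \ref{contact_equiv} gives a contact Lipschitz equivalence between $f=dist(\cdot,T')$ and the corresponding distance function for $S$, hence (via Definition \ref{def:contact-equiv}) an outer bi-Lipschitz homeomorphism $h_T:T\to S$ carrying the pizza zones $D_\ell$ of $\Lambda_T$ to the pizza zones of $\Lambda_S$, preserving the exponents $q_\ell$, $\beta_\ell$, the width functions $\mu_\ell$ and the maximum/minimum status of each zone (by Definition \ref{maxmin}, all of these are determined by the abstract pizza). In particular $h_T$ induces a bijection of the maximum zones $\M_T\to\M_S$ and of the coherent pizza slices $\mathcal L_T\to\mathcal L_S$. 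Do the same on the other side to get $h_{T'}:T'\to S'$. The key point is that $\sigma$ and $\tau$ are the \emph{same} combinatorial data for both pairs, so $h_T$ and $h_{T'}$ are automatically \emph{coherent} with these correspondences: $\sigma_S\circ(h_T)_*=(h_{T'})_*\circ\sigma_T$ on maximum zones, and likewise for $\tau$ on coherent slices, with matching signs.

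The main work — and the main obstacle — is to verify that the pair of maps $(h_T,h_{T'})$, possibly after adjusting them within their contact-equivalence classes, glues to a single map preserving \emph{cross} tangency orders $tord(\gamma,\gamma')$ for $\gamma\subset T$, $\gamma'\subset T'$, not merely the tangency orders internal to each triangle. This is where the realization machinery is essential: one must show that for a normal pair, the full family of cross tangency orders is \emph{determined} (in the sense of Remark \ref{omega-remark}) by $\Lambda_T$, $\sigma$ and $\tau$ — equivalently by the combined characteristic permutation $\omega$ of Definition \ref{omega} and Proposition \ref{prop:allowable}, together with the pizza toppings. Concretely, for arcs $\gamma\subset T$ and $\gamma'\subset T'$ one has $tord(\gamma,\gamma')\le\min(tord(\gamma,Z),tord(Z',\gamma'))$ whenever $Z\to Z'$ under $\sigma$ (maximum zones) or under the extension of $\tau$ to adjacent pizza zones (Remark \ref{rem:tau}), and the normal-pair conditions (\ref{tord-tord}), together with Propositions \ref{sigma}, \ref{pizzaslice-oriented}, \ref{prop:tied} and Corollary \ref{cor:tied}, force these bounds to be equalities in a pattern governed entirely by $\omega$. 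Since $\omega_T=\omega_S$ (by Theorem \ref{omega-omega}, as $\sigma$, $\upsilon$ and $s$ agree), the cross tangency orders match. I would carry this out by case analysis on whether $\gamma,\gamma'$ lie over transverse or coherent zones, and over corresponding or non-corresponding zones under $\omega$, using the non-archimedean ultrametric inequality repeatedly to reduce every cross tangency order to tangency orders with the marked arcs in maximum zones and adjacent pizza zones.

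Finally, having a bijection $V(T\cup T')\to V(S\cup S')$ that preserves all pairwise tangency orders and preserves the distance to the origin (arcs are parameterized by $|\gamma(t)|=t$), Proposition \ref{important-proposition} yields an outer bi-Lipschitz homeomorphism $H:T\cup T'\to S\cup S'$. By construction $H$ restricts to $h_T$ on $T$ and $h_{T'}$ on $T'$, so $H(T)=S$, $H(T')=S'$; and since $h_T$, $h_{T'}$ were chosen to respect the orientations of the triangles (the pizzas $\Lambda_T$, $\Lambda_S$ come with orientations $\gamma_1\to\gamma_2$, and combinatorial equivalence of pizzas in Definition \ref{equivalent_pizza} is orientation-compatible), $H$ is orientation-preserving. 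The delicate point to watch is consistency at the shared boundary arcs, if $T$ and $T'$ have a common boundary arc (Remark \ref{rem:transverse}): there the two restrictions must agree, which they do because a common boundary arc is a transverse maximum zone on both sides and $\sigma$ fixes it (Proposition \ref{sigma}, last two sentences).
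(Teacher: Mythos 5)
Your skeleton is the same as the paper's: build $H$ piecewise, then reduce outer bi-Lipschitzness to preservation of tangency orders via Proposition \ref{important-proposition}. But there are two genuine gaps in how you propose to execute the middle.

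The first is the construction on coherent pizza slices. You build $h_T\colon T\to S$ and $h_{T'}\colon T'\to S'$ \emph{separately} from the contact equivalences supplied by Proposition \ref{contact_equiv}, and then say they ``glue ... possibly after adjusting them within their contact-equivalence classes.'' That adjustment is not optional and not cosmetic: it is the heart of the construction, and you never say what it is. The paper even supplies an explicit warning counterexample — $T=\{x\ge0,\,y\ge0\}$, $T'=\{z=y^2\}$, $H$ the identity on $T$ and $(x,y,y^2)\mapsto(x,2y,4y^2)$ on $T'$ — where both restrictions are bi-Lipschitz, respect the pizza zones and the trivial $\sigma,\tau$, yet the glued $H$ fails to preserve cross tangency orders ($tord(\gamma,\gamma')$ jumps from $4$ to $2$). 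The fix is that on each coherent pair $(T_\ell,T'_{\tau(\ell)})$, $H|_{T'_{\tau(\ell)}}$ must be \emph{determined by} $H|_{T_\ell}$ through the graph structure of the distance function, i.e.\ one forces the commutative diagram
$(id,f)\colon T_\ell\to\Gamma_\ell$, $h_\ell\colon\Gamma_\ell\to T'_{\tau(\ell)}$
to intertwine with the corresponding diagram for $(S_\ell,S'_{\tau(\ell)})$. Saying $h_T$ and $h_{T'}$ are ``coherent with $\sigma$ and $\tau$'' (a combinatorial statement about which zones map to which) is strictly weaker than this, as the counterexample shows.

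The second gap is in the verification of cross tangency orders. The inequality you write, $tord(\gamma,\gamma')\le\min\bigl(tord(\gamma,Z),tord(Z',\gamma')\bigr)$ ``whenever $Z\to Z'$,'' is not correct as an ultrametric statement: for a $Z$ far from $\gamma$ and a $Z'$ far from $\gamma'$, both right-hand terms can be small while $tord(\gamma,\gamma')$ is large. The paper's Step~2 instead shows $tord(H(\gamma'),S)=tord(\gamma',T)$ and then, for each $\gamma'\subset T'$, exhibits a specific proxy arc $\lambda\subset T$ (a boundary arc of a coherent slice, or an arc in the right maximum zone) with $tord(\gamma',\lambda)=tord(H(\gamma'),H(\lambda))=q$, and finishes by one clean application of the non-archimedean inequality — no appeal to ``a pattern governed by $\omega$.'' Your appeal to the realization theorems (Theorems \ref{transverse-blocks} and \ref{general-blocks}) is also circular here: their uniqueness clauses are deduced from Theorem \ref{complete}, so they cannot be used to prove it. The correct inputs from the earlier sections are Lemma \ref{count-order1} (in the totally transverse case) and Propositions \ref{pizzaslice-oriented}/\ref{sigma=tau} plus the diagram on coherent slices (in general), not the realization theorems.

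Your final paragraph — the role of Proposition \ref{important-proposition}, the orientation remark, the common-boundary-arc consistency via the last sentences of Proposition \ref{sigma} — is fine. The proposal would become a proof once you (a) replace independent choices of $h_T,h_{T'}$ with the graph-compatible choice on coherent slices, and (b) replace the heuristic tangency-order inequality and its invocation of the realization theorems with the proxy-arc argument.
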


We prove it first (see Theorem \ref{pre-complete} below) for totally transverse pairs of H\"older triangles.

\begin{lemma}\label{totally-pizza} Let $(T,T')$ be  a totally transverse pair of
$\beta$-H\"older triangles (see Definition \ref{def:pizzaslicezone-transverse}) with
the pizza $\Lambda_T$ on $T$ associated with the distance function $f(x)=dist(x,T')$.
Let $q_\ell=ord_{\lambda_\ell} f$, for $0\le\ell\le p$.
Then, unless $tord(T,T')\le\beta$, one of the boundary arcs  of each H\"older triangle $T_\ell$ belongs to a maximum zone $M=M_{i(\ell)}$ of $\Lambda_T$. \newline
If $\gamma\subset T_\ell$ is an arc such that $\gamma\notin M$, then $ord_\gamma f=tord(\gamma,M)$.
\end{lemma}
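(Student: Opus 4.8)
The plan is to analyze the structure of a totally transverse pizza slice $T_\ell = T(\lambda_{\ell-1},\lambda_\ell)$ directly from the definitions, using the normal-pair condition to pin down the orders $q_{\ell-1},q_\ell$ as tangency orders with $T'$. First I would recall from Definition \ref{def:pizzaslicezone-transverse} that totally transverse means the width function satisfies $\mu_\ell(q)\equiv q$ on $Q_\ell=[q_{\ell-1},q_\ell]$ (or $Q_\ell=\{q_\ell\}$ with $q_\ell\le\beta_\ell$ when $T_\ell$ is the only slice); equivalently, for every arc $\gamma\in V(T)$ either $ord_\gamma f<\beta$ or $ord_\gamma f=\mu_T(\gamma,f)$. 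Assuming $tord(T,T')>\beta$, I want to show that one of the endpoints $\lambda_{\ell-1},\lambda_\ell$ lies in a maximum zone. The key observation is that for a transverse pizza slice with non-degenerate $Q_\ell$, since $\mu_\ell(q)\equiv q$, the maximum of $\mu_\ell$ on $Q_\ell$ is attained at the endpoint where $q$ is larger, say $q_\ell\ge q_{\ell-1}$; then by Proposition \ref{prop:width function properties} the supporting arc $\tilde\gamma=\lambda_\ell$ satisfies $\mu_T(\gamma,f)=tord(\lambda_\ell,\gamma)$ for all $\gamma\subset T_\ell$ with $\mu_T(\gamma,f)<q_\ell$, and $\mu(D_\ell)=\nu_\ell=\max(\mu_\ell(q_\ell),\mu_{\ell+1}(q_\ell))\ge q_\ell$. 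Combined with the normal-pair condition $q_\ell=ord_{\lambda_\ell}f=tord(\lambda_\ell,T')$, this forces $D_\ell$ to be a maximum zone in the sense of (\ref{eq:max}): one checks the inequalities $\max(\beta_\ell,\beta_{\ell+1})<q_\ell\ge\max(q_{\ell-1},q_{\ell+1})$, using $\beta_\ell\le q$ for all $q\in Q_\ell$ and minimality of the pizza to get strict inequality, together with the transversality of the neighboring slices to control $q_{\ell+1}$.

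Next I would handle the second assertion. Fix $\gamma\subset T_\ell$ with $\gamma\notin M=M_{i(\ell)}$, where (say) $M$ is the maximum zone containing $\lambda_\ell$. Since $T_\ell$ is totally transverse, either $ord_\gamma f<\beta\le\mu_T(\gamma,f)$ — but then we would need $\beta=\mu_T(\gamma,f)$ and $ord_\gamma f$ strictly below, contradicting $ord_\gamma f=\mu_T(\gamma,f)$ unless $ord_\gamma f<\beta$ is the relevant alternative; I would argue this case cannot produce a problem because then $tord(\gamma,M)\le\mu(T_\ell)$-level considerations still give $ord_\gamma f=tord(\gamma,M)$ by the non-archimedean property — or $ord_\gamma f=\mu_T(\gamma,f)=tord(\tilde\gamma,\gamma)$ where $\tilde\gamma=\lambda_\ell$ is the supporting arc by Proposition \ref{prop:width function properties}. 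Since $\lambda_\ell\in M$ and $\gamma\notin M$, the non-archimedean property of tangency order (Definition \ref{tord}) gives $tord(\gamma,M)=tord(\gamma,\lambda_\ell)$: indeed for any $\lambda\in M$, $tord(\gamma,\lambda)\ge\min(tord(\gamma,\lambda_\ell),tord(\lambda_\ell,\lambda))$, and $tord(\lambda_\ell,\lambda)\ge\mu(M)\ge q_\ell>tord(\gamma,\lambda_\ell)$ since $\gamma\notin M$ means $tord(\gamma,\lambda_\ell)<\mu(M)$, so $tord(\gamma,\lambda)=tord(\gamma,\lambda_\ell)$ for all $\lambda\in M$, hence $tord(\gamma,M)=tord(\gamma,\lambda_\ell)=\mu_T(\gamma,f)=ord_\gamma f$. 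The case where the maximum zone adjacent to $T_\ell$ is $D_{\ell-1}$ (i.e. $q_{\ell-1}\ge q_\ell$) is symmetric, with $\lambda_{\ell-1}$ as supporting arc.

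The main obstacle I anticipate is the degenerate/boundary bookkeeping: (i) confirming that \emph{every} slice $T_\ell$ has such an adjacent maximum zone, including the extreme slices $T_1$ and $T_p$ where the boundary arcs $\gamma_1,\gamma_2$ of $T$ may or may not themselves be maximum zones — here I would use the first two lines of (\ref{eq:max}) and the hypothesis $tord(T,T')>\beta$ to rule out the trivial case $m=0$; and (ii) the case $Q_\ell=\{q_\ell\}$ is a point, which in the totally transverse setting can only occur (by minimality, Definition \ref{abstract-pizza}(a)–(c) and Lemma \ref{lem:abstract-pizza}) when $p=1$ and $q_1\le\beta$, i.e. exactly the excluded case $tord(T,T')\le\beta$; so under our hypothesis every $Q_\ell$ is a genuine interval and the supporting-arc argument applies. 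A careful statement of which endpoint is the supporting arc as a function of the sign of $q_\ell-q_{\ell-1}$, matching the recipe in the proof of Theorem \ref{pizza-realization}, should make all the inequality checks routine once the case analysis is organized.
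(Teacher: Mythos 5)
Your argument takes essentially the same route as the paper, whose proof simply cites Definition \ref{def:pizzaslicezone-transverse} and Proposition \ref{prop:width function properties} and leaves the unpacking to the reader; your case analysis (the supporting arc is the endpoint with the larger $q$, minimality rules out a monotone triple $q_{\ell-1}<q_\ell<q_{\ell+1}$ via the reducibility condition (d) of Definition \ref{abstract-pizza}, and the non-archimedean reduction $tord(\gamma,M)=tord(\gamma,\lambda_\ell)$) is the correct filling-in. One small simplification: the branch $ord_\gamma f<\beta$ in your second paragraph is vacuous once $tord(T,T')>\beta$, since for every transverse slice $\mu_\ell(q)\equiv q$ together with $\min_{q\in Q_\ell}\mu_\ell(q)=\beta_\ell\ge\beta$ forces $ord_\gamma f\ge\beta_\ell\ge\beta$ on $T_\ell$, so only the alternative $ord_\gamma f=\mu_T(\gamma,f)$ ever occurs.
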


\begin{proof} This follows immediately from Definition \ref{def:pizzaslicezone-transverse} and Proposition \ref{prop:width function properties}.
\end{proof}

\begin{remark}\label{rem:totally}\normalfont For a totally transverse pair $(T,T')$, the $\sigma\tau$-pizza does not contain the correspondence $\tau$, as the sets of
coherent pizza slices of $\Lambda_T$ and $\Lambda'_{T'}$ are empty.
Also, Lemma \ref{totally-pizza} implies that the pizza $\Lambda_T$
is completely determined by exponents $\beta_\ell$ of the pizza slices $T_\ell$ and exponents $q_\ell=ord_{\lambda_\ell} f$.
Similarly, the pizza $\Lambda'_{T'}$
is completely determined by exponents $\beta'_{\ell'}$ of the pizza slices $T'_{\ell'}$ and exponents $q'_{\ell'}=ord_{\lambda'_{\ell'}} g$.
\end{remark}

\begin{lemma}\label{count-order1}
Let $(T,T')$ be a totally transverse pair of $\beta$-H\"older triangles with the pizzas $\Lambda_T$ and $\Lambda'_{T'}$
associated with the distance functions $f(x)=dist(x,T')$ and $g(x')=dist(x',T)$ on $T$ and $T'$, respectively.
Let $\gamma'$ be an arc of a pizza slice $T'_{\ell'}$ of $\Lambda'_{T'}$, and let $q=tord(\gamma',T)$.
Let $Z_{\gamma'}$ be the set of all arcs $\gamma\subset T$ such that $tord(\gamma,\gamma')=q$.
If $q>\beta$ then there is a unique maximum zone $M_i$ of $\Lambda_T$ such that the maximum zone $M'_{\sigma(i)}$ of $\Lambda'_{T'}$
contains a boundary arc of $T'_{\ell'},\; tord(\gamma',M_i)=q$,
and $Z_{\gamma'}$ is the set of all arcs $\gamma\in T$ such that $tord(\gamma,M_i)\ge q$.
Moreover, $tord(\gamma,M_i)=tord(\gamma,\gamma')$ for any arc $\gamma \in V(T)\setminus Z_{\gamma'}$.
\end{lemma}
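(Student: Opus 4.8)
The plan is to unpack the totally transverse hypothesis using Lemma \ref{totally-pizza} and the normality condition (\ref{tord-tord}), and to reduce everything to the isometry property of the characteristic permutation $\sigma$ (Remark \ref{non-arch}). First I would observe that, since $(T,T')$ is a normal pair with distance functions $f$ and $g$, one has $q=tord(\gamma',T)=ord_{\gamma'}g$, so $q\in Q_g(T'_{\ell'})$. Applying Lemma \ref{totally-pizza} to the totally transverse pizza $\Lambda'_{T'}$, one of the boundary arcs of $T'_{\ell'}$ belongs to a maximum zone $M'_j$ of $\Lambda'_{T'}$, and for every arc $\gamma'$ of $T'_{\ell'}$ not in $M'_j$ we have $ord_{\gamma'}g=tord(\gamma',M'_j)$; hence $tord(\gamma',M'_j)=q$ (the case $\gamma'\in M'_j$ gives $q=\mu(M'_j)\ge$ the exponent, handled the same way). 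Writing $i=\sigma^{-1}(j)$, Proposition \ref{sigma} gives $tord(M_i,M'_j)=\bar q_i=\bar q'_j$ and $\mu(M_i)=\mu(M'_j)$; combined with the normality of the pair of arcs realizing the maximum zone correspondence, this yields $tord(\gamma',M_i)=tord(\gamma',M'_j)=q$. Uniqueness of $M_i$ follows because two distinct maximum zones of $\Lambda_T$ have tangency order strictly less than $q$ with each other (otherwise they would merge), so $\gamma'$ cannot have tangency order $q$ with two of them simultaneously unless one were $\prec$ the other in a way contradicting maximality — this is exactly the isometry statement in Remark \ref{non-arch}.

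Next I would identify $Z_{\gamma'}$ with $\{\gamma\in V(T): tord(\gamma,M_i)\ge q\}$. For the inclusion $\supseteq$: if $tord(\gamma,M_i)\ge q$, pick $\mu\in M_i$ with $tord(\gamma,\mu)\ge q$; since $tord(\gamma',\mu)$ can be arranged to equal $q$ (as $tord(\gamma',M_i)=q$ and, by normality of the corresponding arcs, there is $\mu\in M_i$ with $tord(\gamma',\mu)=q=tord(\mu,T')$), the non-archimedean property of $tord$ gives $tord(\gamma,\gamma')\ge\min(tord(\gamma,\mu),tord(\mu,\gamma'))=q$; and $tord(\gamma,\gamma')\le tord(\gamma',T)=q$ since $(T,T')$ is a normal pair, so $tord(\gamma,\gamma')=q$ and $\gamma\in Z_{\gamma'}$. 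For the inclusion $\subseteq$: suppose $tord(\gamma,\gamma')=q$ but $tord(\gamma,M_i)<q$. Using Lemma \ref{totally-pizza} on $\Lambda_T$, $\gamma$ lies in some pizza slice $T_k$ one of whose boundary arcs is in a maximum zone $M_{i'}$, and $ord_\gamma f=tord(\gamma,M_{i'})$. The inequality $tord(\gamma,\gamma')=q$ together with $tord(\gamma,\gamma')\le ord_\gamma f=tord(\gamma',T)$-type estimates (more precisely $tord(\gamma,\gamma')\le tord(\gamma,T')=ord_\gamma f$) forces $tord(\gamma,M_{i'})\ge q$, hence by uniqueness $M_{i'}=M_i$, contradicting $tord(\gamma,M_i)<q$.

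Finally, for the last assertion, take $\gamma\in V(T)\setminus Z_{\gamma'}$, so $tord(\gamma,M_i)<q$ and $tord(\gamma,\gamma')<q$ (by the characterization just proved, strict). Pick $\mu\in M_i$ realizing $tord(\gamma',\mu)=q$; then $tord(\gamma,\gamma')$ and $tord(\gamma,\mu)$ are both $<q=tord(\mu,\gamma')$, so by the non-archimedean property $tord(\gamma,\gamma')=tord(\gamma,\mu)$. Since this holds for every $\mu\in M_i$ realizing the value $q$ against $\gamma'$, and any two arcs of $M_i$ have mutual tangency order $\ge\mu(M_i)\ge q>tord(\gamma,M_i)$, we get $tord(\gamma,\mu)=tord(\gamma,M_i)$ for such $\mu$, whence $tord(\gamma,\gamma')=tord(\gamma,M_i)$ as claimed.

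The main obstacle I anticipate is the bookkeeping around the case where $\gamma'$ itself lies in the maximum zone $M'_j$ (so $q=\mu(M'_j)$ rather than a strictly larger order), and symmetrically where $\gamma$ lies in $M_i$; in these boundary cases one must be careful that "a boundary arc of $T'_{\ell'}$ belongs to $M'_j$" still forces $tord(\gamma',M_i)=q$, using that $M'_j$ is a closed perfect zone of order $\mu(M'_j)=\mu(M_i)$ and that the normal-pair condition pins down $tord$ between the two maximum zones. Everything else is a routine application of the non-archimedean triangle inequality and the already-established isometry $\sigma:\M\to\M'$.
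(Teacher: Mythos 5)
Your plan is substantively the paper's own argument: isolate the unique maximum zone $M'_j$ of $\Lambda'_{T'}$ that contains a boundary arc of $T'_{\ell'}$ (via the totally transverse structure, Lemma \ref{totally-pizza} / Proposition \ref{prop:width function properties}), pull it back to $M_i=\sigma^{-1}(M'_j)$ using Proposition \ref{sigma}, and then run the ultrametric inequality. Two wobbles are worth flagging. First, the uniqueness of $M_i$ is simpler than you suggest: it does not rest on distinct maximum zones having tangency order less than $q$ (that is false in general — $tord(M_i,M_k)$ can easily exceed $q$), but on the trivial fact that a pizza slice of a totally transverse pizza has exactly one boundary arc in a maximum zone (Lemma \ref{totally-pizza}), and $\sigma$ is a bijection; once $M'_j$ is pinned down, $M_i$ is forced. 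Second, your proof of $Z_{\gamma'}\subseteq\{\gamma:tord(\gamma,M_i)\ge q\}$ is needlessly circuitous (passing through $M_{i'}$ and an imprecise ``by uniqueness''); the paper gets it in one line: take $\mu\in M_i$ with $tord(\gamma',\mu)=q$, then for $\gamma\in Z_{\gamma'}$, $tord(\gamma,\mu)\ge\min(tord(\gamma,\gamma'),tord(\gamma',\mu))=q$. (Your detour can be salvaged — from $tord(\gamma,M_{i'})\ge q$ and $tord(\gamma,\gamma')=q$ you get $tord(\gamma',M_{i'})=q$, hence $tord(M_i,M_{i'})\ge q$, hence $tord(\gamma,M_i)\ge q$, contradiction — but it is longer than needed.) Your handling of the reverse inclusion and of the final ``moreover'' assertion is fine and matches the paper; indeed, the paper leaves the $\supseteq$ inclusion implicit (it follows from the ``moreover'' clause), and you make it explicit, which is a small improvement in exposition.
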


\begin{proof} Since $(T,T')$ is a totally transverse pair, we have $\mu'_{\ell'}(q)\equiv q$.
It follows from Proposition \ref{prop:width function properties} that, unless $q\le\beta$,
we have $q=tord(\gamma',M')$, where $M'$ is the unique maximal zone of $\Lambda'_{T'}$ containing a boundary arc of $T'_{\ell'}$.
If $M'=M'_{\sigma(i)}$, where $M_i$ is a maximum zone of $\Lambda_T$, then $tord(M_i,M')\ge q$, thus $tord(\gamma',M_i)=q$
by the non-archimedean property of the tangency order, since $q=tord(\gamma',T)$.
Also, $tord(\gamma,M_i)\ge q$ for any arc $\gamma\in Z_{\gamma'}$ by the non-archimedean property of the tangency order.
If $\gamma \in V(T)\setminus Z_{\gamma'}$, then $tord(\gamma,\gamma')<q$, thus $tord(\gamma,M_i)=tord(\gamma,\gamma')$
by the non-archimedean property of the tangency order.
\end{proof}

\begin{theorem}\label{pre-complete}
Let $(T,T')$ and $(S,S')$ be two totally transverse pairs of H\"older triangles.
If the $\sigma\tau$-pizzas of the pairs $(T,T')$ and $(S,S')$ are combinatorially equivalent, then there is an orientation-preserving outer bi-Lipschitz homeomorphism
$H:T\cup T'\to S\cup S'$ such that $H(T)=S$ and $H(T')=S'$.
\end{theorem}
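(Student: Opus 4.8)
The plan is to build $H$ by gluing an orientation-preserving outer bi-Lipschitz homeomorphism $h\colon T\to S$ to one $h'\colon T'\to S'$, where $h$ and $h'$ are chosen to respect the full combinatorial structure of the $\sigma\tau$-pizzas, and then to verify that the resulting map preserves the tangency order of every pair of arcs; by Proposition \ref{important-proposition} this is exactly what it takes for $H$ to be outer bi-Lipschitz. First I would unwind the hypothesis: since $(T,T')$ is totally transverse, the sets of coherent pizza slices of $\Lambda_T$ and of $\Lambda'_{T'}$ are empty (Remark \ref{rem:totally}), so combinatorial equivalence of the $\sigma\tau$-pizzas means precisely that $\Lambda_T$ is combinatorially equivalent to $\Lambda_S$, that $\Lambda'_{T'}$ is combinatorially equivalent to $\Lambda'_{S'}$, and that the characteristic permutations agree, $\sigma_T=\sigma_S=:\sigma$. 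In particular all four triangles have the same exponent $\beta=\min_\ell\beta_\ell$, and the combinatorial type of $\Lambda_T$ ($\cong\Lambda_S$) labels canonically, in the order induced by orientation, the maximum zones $M_1,\dots,M_m$ of $\Lambda_T$ and those of $\Lambda_S$; likewise $\Lambda'_{T'}$ ($\cong\Lambda'_{S'}$) labels the maximum zones $M'_1,\dots,M'_m$ of $\Lambda'_{T'}$ and of $\Lambda'_{S'}$.

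Next I would produce $h$ and $h'$. Because $T$ and $S$ are normally embedded $\beta$-H\"older triangles and the minimal pizzas of the distance functions $f(x)=dist(x,T')$ on $T$ and $f_S(y)=dist(y,S')$ on $S$ are combinatorially equivalent, Proposition \ref{contact_equiv}, together with normal embeddedness and the realization results behind Theorem \ref{pizza-realization}, supplies an orientation-preserving outer bi-Lipschitz homeomorphism $h\colon T\to S$ that carries the pizza zones of $\Lambda_T$ onto those of $\Lambda_S$ with the same toppings; in particular $h$ restricts to a bijection of the $i$-th maximum zone $M_i$ of $\Lambda_T$ onto the $i$-th maximum zone of $\Lambda_S$ for each $i$, and $ord_{h(\gamma)}f_S=ord_\gamma f$ for every arc $\gamma\subset T$. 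A symmetric construction gives an orientation-preserving outer bi-Lipschitz $h'\colon T'\to S'$ with the corresponding properties for $\Lambda'_{T'}$, $\Lambda'_{S'}$ and $g(x')=dist(x',T)$, $g_S(y')=dist(y',S)$. If $T$ and $T'$ share a boundary arc then, since the value $tord=\infty$ between shared boundary arcs is recorded by the pizza exponents $q_0$ and $q_p$, the triangles $S$ and $S'$ share the corresponding boundary arc, and I would modify $h$ and $h'$ (by composing with an orientation-preserving bi-Lipschitz self-map of $S$, resp.\ of $S'$, that fixes the pizza structure and moves one boundary arc to the other) so that $h$ and $h'$ agree on every shared boundary arc. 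Setting $H=h$ on $T$ and $H=h'$ on $T'$ then gives a well-defined orientation-preserving homeomorphism of germs with $H(T)=S$ and $H(T')=S'$.

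Finally I would check $tord(\gamma_1,\gamma_2)=tord(H(\gamma_1),H(\gamma_2))$ for all $\gamma_1,\gamma_2\in V(T\cup T')$. If $\gamma_1$ and $\gamma_2$ both lie in $V(T)$, or both in $V(T')$, this is immediate because $h$ and $h'$ are bi-Lipschitz. The crucial case is $\gamma_1\in V(T)\setminus V(T')$ and $\gamma_2\in V(T')\setminus V(T)$. Put $q=tord(\gamma_2,T)=ord_{\gamma_2}g$. When $q>\beta$, Lemma \ref{count-order1} identifies a unique maximum zone $M_i$ of $\Lambda_T$, namely the one with $i=\sigma^{-1}(j)$, where $M'_j$ is the maximum zone of $\Lambda'_{T'}$ containing a boundary arc of the pizza slice of $\Lambda'_{T'}$ that contains $\gamma_2$, and it yields $tord(\gamma_2,M_i)=q$ together with $tord(\gamma_1,\gamma_2)=\min(q,\,tord(\gamma_1,M_i))$. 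Evaluating the same recipe for $(S,S')$: one has $ord_{h'(\gamma_2)}g_S=ord_{\gamma_2}g=q$; the pizza slice of $\Lambda'_{S'}$ containing $h'(\gamma_2)$ corresponds to that of $\Lambda'_{T'}$ containing $\gamma_2$, so the relevant boundary arc lies in the $j$-th maximum zone of $\Lambda'_{S'}$, which via $\sigma_S=\sigma$ is matched with the $i$-th maximum zone $N_i$ of $\Lambda_S$; and $tord(h(\gamma_1),N_i)=\sup_{\lambda\in M_i}tord(h(\gamma_1),h(\lambda))=\sup_{\lambda\in M_i}tord(\gamma_1,\lambda)=tord(\gamma_1,M_i)$, since $h$ restricts to a tangency-order-preserving bijection $M_i\to N_i$. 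Hence $tord(H(\gamma_1),H(\gamma_2))=\min(q,\,tord(\gamma_1,M_i))=tord(\gamma_1,\gamma_2)$. The residual low-order case $q\le\beta$ (and the symmetric case $ord_{\gamma_1}f\le\beta$) is handled the same way: there all the tangency orders in play are at most $\beta$ and are, via the non-archimedean property and the boundary arcs of $T$ and $T'$, determined by the combinatorial data shared by the two pairs.

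I expect the cross-pair computation of the last paragraph to be the main obstacle: one must read off from Lemma \ref{count-order1} the precise dependence of $tord(\gamma_1,\gamma_2)$ on the maximum zones and on $\sigma$, confirm that the boundary and low-order cases contribute nothing new, and arrange $h$ and $h'$ compatibly on shared boundary arcs. The two-sided symmetry of a totally transverse pair (Remark \ref{rem:totally-transverse}) is what makes this recipe consistent no matter which triangle's maximum zones one uses to express it.
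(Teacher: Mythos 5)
Your proposal is correct and follows essentially the same route as the paper: build $H$ piecewise from the combinatorial equivalence of the pizzas so that it is bi-Lipschitz on $T$ and on $T'$ separately and maps maximum zones to the corresponding maximum zones, then use Lemma \ref{count-order1} together with $\sigma_T=\sigma_S$ to reduce every cross tangency order $tord(\gamma_1,\gamma_2)$ to $\min\bigl(q,\,tord(\gamma_1,M_i)\bigr)$, and conclude by Proposition \ref{important-proposition}. Your explicit treatment of shared boundary arcs and of the low-order case $q\le\beta$ is slightly more careful than the paper's, but it is the same argument.
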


\begin{proof}
Let $\Lambda_T=\{T_\ell\}_{\ell=1}^p$ and $\Lambda'_{T'}=\{T'_{\ell'}\}_{\ell'=1}^{p'}$ be minimal pizzas on $T$ and $T'$ associated with the distance functions
$f(x)=dist(x,T')$ and $g(x')=dist(x',T)$,
where $T_\ell=T(\lambda_{\ell-1},\lambda_\ell)$ and $T'_{\ell'}=T(\lambda'_{\ell'-1},\lambda'_{\ell'})$.
Let $\Lambda_S=\{S_\ell\}_{\ell=1}^p$ and $\Lambda'_{S'}=\{S'_{\ell'}\}_{\ell'=1}^{p'}$,
where $S_\ell=T(\theta_{\ell-1},\theta_\ell)$ and $S'_{\ell'}=T(\theta'_{\ell'-1},\theta'_{\ell'})$, be minimal pizzas on $S$ and $S'$ associated with the distance functions
$\phi(y)=dist(y,S')$ and $\psi(y')=dist(y',S)$, respectively.

Since the pizzas $\Lambda_T$ and $\Lambda_S$ are equivalent, the H\"older triangles $T_\ell$ and $S_\ell$ have the same exponent $\beta_\ell$ for each $\ell$.
Thus, for $1\le \ell\le p$, there exists a bi-Lipschitz homeomorphism $H_\ell:T_\ell\to S_\ell$ preserving the distance to the origin, such that
$H_\ell(\lambda_{\ell-1})=\theta_{\ell-1}$ and $H_\ell(\lambda_\ell)=\theta_\ell$.
Similarly, for $1\le\ell'\le p'$, there exists a bi-Lipschitz homeomorphism $H'_{\ell'}:T'_{\ell'}\to S'_{\ell'}$
preserving the distance to the origin, such that $H'_{\ell'}(\lambda'_{\ell'-1})=\theta'_{\ell'-1}$ and
$H'_{\ell'}(\lambda'_{\ell'})=\theta'_{\ell'}$.
Let us define a map $H:T\cup T'\to S\cup S'$ so that its
restriction to each H\"older triangle $T_\ell$ coincides with $H_\ell$, and its restriction to each H\"older triangle $T'_{\ell'}$ coincides with $H'_{\ell'}$. Since $T$,
$T'$, $S$ and $S'$ are normally embedded, $H$ defines outer bi-Lipschitz homeomorphisms
$T\to S$ and $T'\to S'$. In particular, for any two arcs $\gamma_1$ and $\gamma_2$ in $T$ we have $tord(H(\gamma_1),H(\gamma_2))=tord(\gamma_1,\gamma_2)$, and for any two
arcs $\gamma'_1$ and $\gamma'_2$ in $T'$ we have $tord(H(\gamma'_1),H(\gamma'_2))=tord(\gamma'_1,\gamma'_2)$.

Since $H(\lambda_\ell)=\theta_\ell$ and the pizzas on $T$ and $S$ are equivalent,
we have $tord(\lambda_\ell,T')=ord_{\lambda_\ell} f=ord_{\theta_\ell}\phi=tord(H(\lambda_\ell),S')$ for $0\le\ell\le p$.
Similarly, $tord(\lambda'_{\ell'},T)=tord(H(\lambda'_{\ell'}),S)$ for $0\le\ell'\le p'$.
This implies, in particular, that $H$ maps the maximum zones $M_i$ for $f$ to the maximum zones $N_i$ for $\phi$, and the maximum zones $M'_j$ for $g$ to the maximum zones
$N'_j$ for $\psi$.
Since $\sigma_S=\sigma_T$, we have $\bar q_i=tord(M_i,M'_j))=tord(N_i,N'_j)$, where $j=\sigma_T(i)=\sigma_S(i)$.

Let $\gamma$ and $\gamma'$ be any two arcs in $T$ and $T'$, respectively.
We are going to show that Lemma \ref{count-order1} implies $tord(H(\gamma),H(\gamma'))=tord(\gamma,\gamma')$.
Let $\gamma'\subset T'_{\ell'}$, a pizza slice of $\lambda'_T$, let $q=tord(\gamma',T)$,
and let $Z_{\gamma'}$ be the set of all arcs in $T$ having tangency order $q$ with $\gamma'$.
According to Lemma \ref{count-order1}, $Z_{\gamma'}$ is the set of all arcs $\gamma\subset T$ such that $tord(\gamma,M_i)\ge q$.
Thus $tord(H(\gamma),H(\gamma'))=tord(H(\gamma),N_i)=q$ for all arcs $\gamma\in Z_{\gamma'}$.
If $\gamma'\in M'_{\sigma(i)}$, then $Z_{\gamma'}=M_i$, thus $H(Z_{\gamma'})=\check M_i$ is the set of arcs in $S$ having tangency order $q$ with $H(\gamma')$.
If $\gamma'\notin M'_{\sigma(i)}$, then Lemma \ref{count-order1} applied to $(S,S')$ implies that $tord(\gamma,\gamma')=tord(H(\gamma),H(\gamma'))$ for any two arcs
$H(\gamma)\subset S$ and $H(\gamma')\subset S'$ such that $tord(H(\gamma),H(\gamma'))=q$.
Thus $H(Z_{\gamma'})$ coincides with the set $Z_{H(\gamma')}$ of arcs in $S$ having tangency order $q$ with $H(\gamma')$.
If $\gamma\notin Z_{\gamma'}$, then $H(\gamma)\notin Z_{H(\gamma')}$, and Lemma \ref{count-order1} implies that
$tord(H(\gamma),H(\gamma'))=tord(H(\gamma),N_i)=tord(\gamma,M_i)=tord(\gamma,\gamma')$.
This proves that $tord(H(\gamma),H(\gamma'))=tord(\gamma,\gamma')$ for all arcs $\gamma\subset T$ and $\gamma'\subset T'$.

Since $H$ preserves the tangency orders of any two arcs in $T\cup T'$, it is outer bi-Lipschitz by Proposition \ref{important-proposition}.
This completes the proof of Theorem \ref{pre-complete}.
\end{proof}

\begin{proof}[Proof of Theorem \ref{complete}] The proof is done in two steps. In {\bf Step 1} a homeomorphism $H:T\cup T'\to S\cup S'$
is defined for general pairs of H\"older triangles $(T,T')$ and $(S,S')$ with combinatorially equivalent $\sigma\tau$-pizzas.
In {\bf Step 2} we prove that the homeomorphism $H$ defined in Step 1 is outer bi-Lipschitz.

{\bf Step 1.}
Let $(T,T')$ and $(S,S')$ be two normal pairs of $\beta$-H\"older triangles with combinatorially equivalent $\sigma\tau$-pizzas.\newline
Let $\Lambda_T=\{T_\ell\}_{\ell=1}^p$ and $\Lambda'_{T'}=\{T'_{\ell'}\}_{\ell'=1}^{p'}$,
where $T_\ell=T(\lambda_{\ell-1},\lambda_\ell)$ and $T'_{\ell'}=T(\lambda'_{\ell'-1},\lambda'_{\ell'})$,
be compatible minimal pizzas on $T$ and $T'$
(see Definition \ref{compatible}) associated with the distance functions $f(x)=dist(x,T')$ and $g(x')=dist(x',T)$, respectively.
Let $\{D_\ell\}_{\ell=0}^p$ and $\{D'_{\ell'}\}_{\ell'=0}^{p'}$ be pizza zones of $\Lambda_T$ and $\Lambda'_{T'}$,
respectively, such that $\lambda_\ell\in D_\ell$ and $\lambda'_{\ell'}\in D'_{\ell'}$.\newline
Let $\Lambda_S=\{S_\ell\}_{\ell=1}^p$ and $\Lambda'_{S'}=\{S'_{\ell'}\}_{\ell'=1}^{p'}$,
where $S_\ell=T(\theta_{\ell-1},\theta_\ell)$ and $S'_{\ell'}=T(\theta'_{\ell'-1},\theta'_{\ell'})$,
be compatible minimal pizzas on $S$ and $S'$ associated with the distance functions
$\phi(y)=dist(y,S')$ and $\psi(y')=dist(y',S)$, respectively.
Let $\{\Delta_\ell\}_{\ell=0}^p$ and $\{\Delta'_{\ell'}\}_{\ell'=0}^{p'}$ be pizza zones of $\Lambda_S$ and $\Lambda'_{S'}$, respectively, such that $\theta_\ell\in\Delta_\ell$ and $\theta'_{\ell'}\in \Delta'_{\ell'}$.

Since the pizzas $\Lambda_T$ and $\Lambda_S$ are equivalent, we have $q_\ell=ord_{\lambda_\ell} f = ord_{\theta_\ell} \phi$ for $0\le\ell\le p$.
Since the pizzas $\Lambda'_{T'}$ and $\Lambda'_{S'}$ are equivalent, we have $q'_{\ell'}=ord_{\lambda'_{\ell'}} g = ord_{\theta'_{\ell'}} \psi$
 for $0\le\ell'\le p'$.

Let $D_\ell$ be a coherent interior pizza zone of $\Lambda_T$, i.e., $\mu(D_\ell)=\nu(\lambda_\ell)<q_\ell$.
Then $D_\ell$ is a maximal perfect $q_\ell$-order zone for $f$ (see Lemma \ref{MP}), and \cite[Proposition 3.9]{BG} implies that
there is a unique pizza zone $D'_{\ell'}=\tau(D_\ell)$ of $\Lambda'_{T'}$, a maximal perfect $q_\ell$-order zone for $g$,
such that $\mu(D'_{\ell'})=\mu(D_\ell)$ and $tord(D_\ell,D'_{\ell'})=q_\ell$.
Note that $\tau(D_\ell)$ is well defined, since $D_\ell$ is a coherent zone (see Definition \ref{compatible}).

Since $D_\ell$ is a coherent zone, it is a common pizza zone for two coherent pizza slices
$T_\ell$ and $T_{\ell+1}$ of $\Lambda_T$, thus $\tau(D_\ell)=D'_{\ell'}$ is a common pizza zone for
the pizza slices $T'_{\tau(\ell)}$ and $T'_{\tau(\ell+1)}$.
In particular, $\tau$ has the same sign (either positive or negative) on $T_{\ell}$ and $T_{\ell+1}$,
with $\ell'=\tau(\ell)$ and $\ell'+1=\tau(\ell+1)$ when $\tau$ is positive,
$\ell'=\tau(\ell+1)$ and $\ell'+1=\tau(\ell)$ when $\tau$ is negative.
Conversely, for any coherent pizza zone $D'_{\ell'}$ of $\Lambda'_{T'}$, there is a coherent pizza zone $D_\ell$ of $\Lambda_T$ such that $D'_{\ell'}=\tau(D_\ell)$.

Since pizzas $\Lambda_T$ and $\Lambda'_{T'}$ are compatible,
any coherent pair of pizza slices $(T_\ell,T'_{\tau(\ell)})$ satisfies (\ref{tord-tord}).
In particular, according to \cite[Proposition 3.2 and Theorem 3.20]{BG}, there is an outer bi-Lipschitz homeomorphism $h_\ell:\Gamma_\ell\to T'_{\tau(\ell)}$,
 where $\Gamma_\ell$ is the graph of $f|_{T_\ell}$, such that
$(id,h_\ell):T_\ell\cup\Gamma_\ell\to T_\ell\cup T'_{\tau(\ell)}$ is an outer bi-Lipschitz homeomorphism.
Similarly, since pizzas $\Lambda_S$ and $\Lambda'_{S'}$ are compatible, for any coherent pair $(S_\ell,S'_{\tau(\ell)})$
of pizza slices, there is an outer bi-Lipschitz homeomorphism $\eta_\ell:\Phi_\ell\to S'_{\tau(\ell)}$, where $\Phi_\ell$ is the graph of $\phi|_{S_\ell}$, such that $(id,\eta_\ell):S_\ell\cup\Phi_\ell\to S_\ell\cup S'_{\tau(\ell)}$ is an outer bi-Lipschitz homeomorphism.

To construct the homeomorphism $H:T\cup T'\to S\cup S'$,
we define it first on the arcs $\lambda_\ell$ and $\lambda'_{\ell'}$ of pizzas $\Lambda_T$ and $\Lambda'_{T'}$, so
that $H(\lambda_\ell)=\theta_\ell$ for $\ell=0,\ldots,p$, and $H(\lambda'_{\ell'})=\theta'_{\ell'}$ for $\ell'=0,\ldots,p'$.
Next, we define $H$ on coherent pairs of pizza slices of $\Lambda_T$ and $\Lambda'_{T'}$, so that
$H$ maps each coherent pair of pizza slices $(T_\ell,T'_{\tau(\ell)})$ to the coherent pair of pizza slices $(S_\ell,S'_{\tau(\ell)})$,
consistent with the action of $H$ on the boundary arcs of $T_\ell$ and $T'_{\tau(\ell)}$, and so that restrictions
$H|_{T_\ell}:T_\ell\to S_\ell$ and $H|_{T'_{\tau(\ell)}}: T'_{\tau(\ell)}\to S'_{\tau(\ell)}$ are bi-Lipschitz homeomorphisms.
For a given homeomorphism $H|_{T_\ell}$, we may choose a homeomorphism $H|_{T'_{\tau(\ell)}}$ so that the following diagram is commutative.
\begin{equation}\label{diagram}
\begin{array}{lllll}\medskip
\;\;\;\;S_\ell & \stackrel{\;\;\;(id,\, \phi)\;\;\;}{\longrightarrow}
 & \Phi_\ell & \stackrel{\eta_\ell}{\longrightarrow} & \;\;S'_{\tau(\ell)} \\
{\scriptstyle H|_{T_\ell}} \, \uparrow &  & \;\;\;\;\; \,  &  & \,\,\, \, \uparrow \, {\scriptstyle H|_{T'_{\tau(\ell)}}}\\
\;\;\;\;T_\ell& \stackrel{\;\;\;(id, \,f)\;\;\;}{\longrightarrow} & \Gamma_\ell & \stackrel{h_\ell}{\longrightarrow}& \;\;T'_{\tau(\ell)} \\
\end{array}
\end{equation}
Finally, we define $H$ on transverse pizza slices $T_\ell$ and $T'_{\ell'}$ of $\Lambda$ and $\Lambda'$ as
any bi-Lipschitz homeomorphisms $T_\ell\to S_\ell$ and $T'_{\ell'}\to S'_{\ell'}$
consistent with the action of $H$ on the boundary arcs of $T_\ell$ and $T'_{\ell'}$.

Note that, for a coherent pair $(T,T')$ of H\"older triangles,
such that $T$ is a pizza slice associated with a Lipschitz function $f$
and $T'$ is a graph of $f$, a mapping $H:T\cup T'\to T\cup T'$ that is
bi-Lipschitz on each of the two triangles may be not outer bi-Lipschitz on their union,
even when $H^* f$ is Lipschitz contact equivalent to $f$.

For example, let $T=\{x\ge 0,\,y\ge 0\}$ be the first quadrant in the $xy$-plane and $T'$ the graph of
a function $z=f(x,y)=y^2$ on $T$. Let $H(x,y,0)=(x,y,0)$ and $H(x,y,y^2)=(x,2y,4y^2)$.
If $\gamma=\{x\ge 0,\,y=x^2,\,z\equiv 0\}$ and $\gamma'=\{x\ge 0,\,y=x^2,\,z=x^4\}$
are arcs in $T$ and $T'$, then $tord(\gamma,\gamma')=4$ but $tord(H(\gamma),H(\gamma'))=2$.

{\bf Step 2.} Proof that the homeomorphism $H:T\cup T'\to S\cup S'$ is outer bi-Lipschitz.\newline
Since the H\"older triangles $T,\,T',\,S$ and $S'$ are normally embedded and restrictions of $H$ to pizza slices $T_\ell$ and $T'_{\ell'}$
are bi-Lipschitz homeomorphisms $T_\ell\to S_\ell$ and $T'_{\ell'}\to S'_{\ell'}$ consistent with the action of $H$ on their boundary arcs,
the homeomorphisms $H|_T:T\to T'$ and $H|_S:S\to S'$ are obviously bi-Lipschitz.
According to Proposition \ref{important-proposition}, it is enough to prove that $tord(H(\gamma),H(\gamma'))=tord(\gamma,\gamma')$
for any arcs $\gamma\in V(T)$ and $\gamma'\in V(T')$.

Let us show first that $tord(H(\gamma'),S)=tord(\gamma',T)$ for any arc $\gamma'\subset T'$.
Let $q=tord(\gamma',T)$. If $\gamma'$ belongs to a coherent pizza slice $T'_{\ell'}$ of $\Lambda'_{T'}$, where $\ell'=\tau(\ell)$,
the statement follows from (\ref{diagram}),
as $tord(\gamma',T)=tord(\gamma',T_\ell$) (see \cite[Proposition 4.7]{BG}) and $H$ is compatible with the graph structure of the pair $(T_\ell,T'_{\ell'})$.
If $\gamma'$ belongs to a transverse pizza slice $T'_{\ell'}$ of $\Lambda'_{T'}$, then either $\lambda'_{\ell'-1}$ or $\lambda'_{\ell'}$ is the supporting arc $\tilde\lambda'_{\ell'}$ of $T'_{\ell'}$ (see Definition \ref{def:pizza-slice}).
We may assume that $\tilde\lambda'_{\ell'}=\lambda'_{\ell'}$, the case of $\tilde\lambda'_{\ell'}=\lambda'_{\ell'-1}$ being similar.
Then, since $T'_{\ell'}$ is transverse, we have either $tord(\gamma',\lambda'_{\ell'})>tord(\lambda'_{\ell'},T)=q$, or
$q=tord(\gamma',\lambda'_{\ell'})<tord(\lambda'_{\ell'},T)$.
Since $H|_{T'_{\ell'}}:T'_{\ell'}\to S'_{\ell'}$ is a bi-Lipschitz map such that $H(\lambda'_{\ell'})=\theta'_{\ell'}$ is a supporting arc for
the transverse pizza slice $S'_{\ell'}$ of $\Lambda'_{S'}$,
we have either $tord(H(\gamma'),\theta'_{\ell'})>tord(\theta'_{\ell'}$,
in which case $tord(H(\gamma'),S)=tord(\theta'_{\ell'},S)=tord(\lambda'_{\ell'},T)=q$,
or $tord(H(\gamma',S)=tord(H(\gamma'),\theta'_{\ell'})=tord(\gamma',\lambda'_{\ell'}=q$.\newline
In any case, we have $tord(H(\gamma'),S)=tord(\gamma',T)=q$.

To show that $tord(\gamma',\gamma)=tord(H(\gamma'),H(\gamma))$ for any arcs $\gamma\subset T$ and $\gamma'\subset T'$,
note that, since pizzas $\Lambda_T$ and $\Lambda_S$ are combinatorially equivalent,
we have the same toppings $Q_\ell$ and $\mu_\ell(q)$ for $T_\ell$ and $S_\ell=H(T_\ell)$, for each $\ell=1,\ldots,p$.
Similarly, we have the same toppings $Q'_{\ell'}$ and $\mu'_{\ell'}(q)$ for $T'_{\ell'}$ and $S'_{\ell'}=H(T'_{\ell'})$, for each $\ell'=1,\ldots,p'$.
In particular, the homeomorphism $H$ maps coherent pizza slices of $\Lambda_T$ and $\Lambda'_{T'}$
 to coherent pizza slices of $\Lambda_S$ and $\Lambda'_{S'}$, and transverse pizza slices of $\Lambda_T$ and $\Lambda'_{T'}$
 to transverse pizza slices of $\Lambda_S$ and $\Lambda'_{S'}$, respectively.

Let $q=tord(\gamma',T)$, where $\gamma'$ is an arc in $T'$. We are going to choose an arc $\lambda$ in $T$, such that $tord(\gamma',\lambda)=tord(H(\gamma'),H(\lambda))=q$, as follows.
If $\gamma'\subset T'_{\tau(\ell)}$ belongs to a coherent pizza slice of $\Lambda'_{T'}$, then $\lambda\subset T_\ell$ can be chosen so that
$\gamma'$ is its image under composition of the maps $(id,f)$ and $h_\ell$ in the bottom row of the diagram (\ref{diagram}).
Otherwise, if $\gamma'$ belongs to a transverse pizza slice $T'_{\ell'}$ of $\Lambda'_{T'}$,
then either $tord(\gamma',\tilde\lambda'_{\ell'})>q$, where $\tilde\lambda'_{\ell'}$ is the supporting arc of $T'_{\ell'}$,
in which case $tord(\tilde\lambda',T)=q$, or $q=tord(\gamma',\tilde\lambda'_{\ell'})$.
The arc $\tilde\lambda'_{\ell'}$ is either a boundary arc of a coherent pizza slice of $\Lambda'_{T'}$ or belongs to a maximum zone $M'_j$ of $\Lambda'_{T'}$.
If $tord(\gamma',\tilde\lambda'_{\ell'})>q$, then the arc $\lambda$ for $\gamma'$ is the same as for $\tilde\lambda_{\ell'}$
(either as for an arc in a coherent pizza slice of $\Lambda'_{T'}$ or in a maximum zone $M_i$ of $\Lambda_T$ for $j=\sigma(i)$).
If $q=tord(\gamma',\tilde\lambda'_{\ell'})$, then $q'=tord(\tilde\lambda'_{\ell'},T)\ge q$.
In any case, there is an arc $\lambda\subset T$, which is
either a boundary arc of a coherent pizza slice of $\Lambda_T$ or belongs to a maximum zone $M_i$ of $\Lambda_T$, where $j=\sigma(i)$,
such that $tord(\lambda,\tilde\lambda')=tord(H(\lambda),H(\tilde\lambda'))=q'$. Thus $tord(\gamma',\lambda)=\min(q,q')=q$.
Since $H|_{T'}$ is a bi-Lipschitz homeomorphism, we have $tord(H(\gamma'),H(\tilde\lambda'_{\ell'}))=tord(\gamma',\tilde\lambda'_{\ell'})$,
thus $tord(H(\gamma'),H(\lambda))=\min(tord(H(\gamma'),H(\tilde\lambda')),tord(H(\lambda),H(\lambda')))=\min(q,q')=q$.

If $\gamma$ is any arc in $T$, then either $tord(\gamma,\lambda)\ge q$ and $tord(\gamma',\gamma)=tord(H(\gamma'),H(\gamma))=q$ or $tord(\gamma,\lambda)<q$ and
$tord(\gamma',\gamma)=tord(H(\gamma'),H(\gamma))<q$ by the non-archimedean property of the tangency order.
Thus $H$ preserves tangency orders of any two arcs in $T\cup T'$.
Proposition \ref{important-proposition} implies that $H$ is an outer bi-Lipschitz homeomorphism.

This completes the proof of Theorem \ref{complete}
\end{proof}

\section{Realization Theorem for Totally Transverse Pairs: Blocks}\label{sec:blocks}
In this section we formulate the necessary and sufficient conditions for the existence
 of a totally transverse normal pair $(T,T')$ of H\"older triangles with the given
$\sigma\tau$-pizza invariant.
Throughout this section we use the following notations:\newline
$\bullet\;(T,T')$, where $T=T(\gamma_1,\gamma_2)\; T'=T(\gamma'_1,\gamma'_2)$, is a totally transverse normal pair of
$\beta$-H\"older triangles, oriented from $\gamma_1$ to $\gamma_2$ and from $\gamma'_1$ to $\gamma'_2$, respectively.\newline
$\bullet\;\Lambda$ and $\Lambda'$ are minimal pizzas on $T$ and $T'$ associated with the distance functions $f(x)=dist(x,T')$ and $g(x')=dist(x',T)$,
respectively.\newline
$\bullet\;\M=\{M_i\}_{i=1}^m$ and $\M'=\{M'_i\}_{i=1}^m$ are the sets of maximum zones of $\Lambda$ and $\Lambda'$,
ordered according to the orientations of $T$ and $T'$.
We assume that $m>0$, thus at least one maximum zone exists.\newline
$\bullet\;$ The permutation $\sigma$ of the set $[m]=\{1,\ldots,m\}$ is the characteristic permutation between the sets $\M$ and $\M'$ of maximum zones
of $\Lambda$ and $\Lambda'$ (see Definition \ref{characteristic}).\newline
Since there are no coherent pizza slices in $\Lambda$ and $\Lambda'$, the correspondence $\tau$ is trivial.

The permutation $\sigma$ must be consistent with the metric data defined by the pizzas $\Lambda$ and $\Lambda'$ on $T$ and $T'$.
We are going to show that consistency conditions can be defined for only one of the two pizzas, say $\Lambda$,
so that the second pizza $\Lambda'$ exists and is unique up to combinatorial equivalence.

Let $\bar q_i=\bar q'_{\sigma(i)}=tord(M_i, M'_{\sigma(i)})$ for $i=1,\ldots,m$, and let $\bar\beta_i=tord(M_i,M_{i+1})$ for $i=1,\ldots,m-1$.
If $M_1\ne\{\gamma_1\}$, let $\bar\beta_0=tord(\gamma_1,M_1)$, otherwise let $\bar\beta_0=\beta$.
If $M_m\ne\{\gamma_2\}$, let $\bar\beta_m=tord(M_m,\gamma_2)$, otherwise $\bar\beta_m=\beta$.\newline
The following statement follows from Definition \ref{maxmin}.

\begin{lemma}\label{beta}
For $i=1,\ldots, m$ the following inequality hold:
\begin{equation}\label{q:blocks}
\bar q_i>\max(\bar\beta_{i-1},\bar\beta_i).
\end{equation}
\end{lemma}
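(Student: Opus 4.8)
The plan is to rewrite $\bar q_i$ and $\bar\beta_j$ in terms of the pizza data of $\Lambda$ and then read off (\ref{q:blocks}) directly from the defining inequalities (\ref{eq:max}) of maximum zones in Definition \ref{maxmin}. Let $\{D_\ell\}_{\ell=0}^p$ be the pizza zones of $\Lambda$ (Lemma \ref{MP}), let $T_\ell=T(\lambda_{\ell-1},\lambda_\ell)$, $q_\ell=ord_{\lambda_\ell}f$, $\beta_\ell=\mu(T_\ell)$, and write $M_i=D_{\ell_i}$ with $0\le\ell_1<\ell_2<\cdots<\ell_m\le p$, the $M_i$ inheriting the orientation order of the $D_\ell$. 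By Definition \ref{maxmin}, $\bar q_i=tord(M_i,T')=q_{\ell_i}$. Since $T$ is normally embedded, for $a<b$ the sub-triangle $T(\lambda_a,\lambda_b)=T_{a+1}\cup\cdots\cup T_b$ is normally embedded, so its exponent, which equals $tord(\lambda_a,\lambda_b)$, is $\min(\beta_{a+1},\ldots,\beta_b)$; in particular $tord(\lambda_a,\lambda_b)\le\beta_{a+1}$ and $tord(\lambda_a,\lambda_b)\le\beta_b$. Because the pizza zones are disjoint closed perfect zones, Remark \ref{perfect-zone-remark} together with the normal embedding of $T$ gives $tord(D_a,D_b)=tord(\lambda_a,\lambda_b)$ for any $\lambda_a\in D_a$, $\lambda_b\in D_b$.

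Using this I would record the needed bounds. For $1\le i\le m-1$, $\bar\beta_i=tord(M_i,M_{i+1})=tord(\lambda_{\ell_i},\lambda_{\ell_{i+1}})\le\min(\beta_{\ell_i+1},\beta_{\ell_{i+1}})$; if $M_1\ne\{\gamma_1\}$ then $\bar\beta_0=tord(\lambda_0,\lambda_{\ell_1})\le\beta_{\ell_1}$, and if $M_1=\{\gamma_1\}$ then $\bar\beta_0=\beta=\min_\ell\beta_\ell$; symmetrically $\bar\beta_m\le\beta_{\ell_m+1}$ when $M_m\ne\{\gamma_2\}$ and $\bar\beta_m=\beta$ otherwise. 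On the other side, (\ref{eq:max}) gives, for $M_i=D_{\ell_i}$: $q_0>\beta_1$ when $\ell_i=0$ (which forces $i=1$, $M_1=\{\gamma_1\}$); $q_p>\beta_p$ when $\ell_i=p$ (which forces $i=m$, $M_m=\{\gamma_2\}$); and $q_{\ell_i}>\max(\beta_{\ell_i},\beta_{\ell_i+1})$ when $0<\ell_i<p$. Now fix $i$. If $0<\ell_i<p$, then $M_i$ is not a boundary arc of $T$, so (using $M_{i-1}$ or $\gamma_1$ on the left and $M_{i+1}$ or $\gamma_2$ on the right) $\bar\beta_{i-1}\le\beta_{\ell_i}$ and $\bar\beta_i\le\beta_{\ell_i+1}$, whence $\bar q_i=q_{\ell_i}>\max(\beta_{\ell_i},\beta_{\ell_i+1})\ge\max(\bar\beta_{i-1},\bar\beta_i)$. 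If $\ell_i=0$, then $i=1$, $\bar\beta_0=\beta\le\beta_1$ and $\bar\beta_1\le\beta_1$, so $\bar q_1=q_0>\beta_1\ge\max(\bar\beta_0,\bar\beta_1)$. The case $\ell_i=p$ is symmetric: $i=m$, $\bar\beta_m=\beta\le\beta_p$ and $\bar\beta_{m-1}\le\beta_p$, so $\bar q_m=q_p>\beta_p\ge\max(\bar\beta_{m-1},\bar\beta_m)$. This establishes (\ref{q:blocks}) in all cases.

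There is no serious obstacle here; the only points requiring care are the bookkeeping of the boundary cases $\ell_i\in\{0,p\}$ (where one of $\bar\beta_{i-1},\bar\beta_i$ is forced to be $\beta$, and one uses $\beta\le\beta_\ell$ for all $\ell$) and the identification of tangency orders between the closed perfect zones $M_i$ with tangency orders between chosen representative arcs — which is exactly where the normal embedding hypothesis on $T$ enters, via Remark \ref{perfect-zone-remark} and the decomposition of $T(\lambda_a,\lambda_b)$ into pizza slices.
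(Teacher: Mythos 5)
Your proof is correct and follows essentially the same route as the paper's: both arguments identify $\bar q_i$ with $q_{\ell_i}$, bound $\bar\beta_{i-1}$ and $\bar\beta_i$ by the exponents $\beta_{\ell_i}$ and $\beta_{\ell_i+1}$ of the adjacent pizza slices via the non-archimedean property of the tangency order in the normally embedded triangle $T$, and then invoke the defining inequalities (\ref{eq:max}) of a maximum zone. The only difference is cosmetic — you read the strict inequality directly off the first half of (\ref{eq:max}), while the paper detours through $q_{\ell\pm1}$ and the minimality of the pizza — and your bookkeeping of the boundary cases $\ell_i\in\{0,p\}$ is, if anything, more explicit than the paper's.
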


\begin{proof}
If $M_i=D_\ell$ is an interior maximum zone of $\Lambda$,
then $M_i$ is a $q_\ell$-order zone for $f$, thus $\bar q_i=q_\ell$, and
either $q_\ell>q_{\ell+1}\ge tord(D_\ell,D_{\ell+1})\ge\bar\beta_i$
or $q_\ell=q_{\ell+1}>tord(D_\ell,D_{\ell+1})\ge\bar\beta_i$.
In both cases $\bar q_i>\bar\beta_i$.
Similarly, $\bar q_i>\bar\beta_{i-1}$.
If $M_1=\{\gamma_1\}$, then $\bar q_1>\beta$. If $M_m=\{\gamma_2\}$, then $\bar q_m>\beta$.
\end{proof}

We are going to define an extension of the permutation $\sigma$ between the sets $\M$ and $M'$
to a permutation $\pi$ between the sets $\M\cup\{\gamma_1,\gamma_2\}$ and $\M'\cup\{\gamma'_1,\gamma'_2\}$.
Let $\check\lambda_0,\ldots,\check\lambda_{n-1}$ be a set of $n$ arcs in $T$,
ordered according to the orientation of $T$, such that $\check\lambda_0=\gamma_1,\;\check\lambda_{n-1}=\gamma_2$,
and each maximum zone $M_i$ of $\Lambda$ contains exactly one of these arcs.
Here $n=m$ if $M_1=\{\gamma_1\}$ and $M_m=\{\gamma_2\}$, $n=m+2$ if $M_1\ne\{\gamma_1\}$ and $M_m\ne\{\gamma_2\}$, $n=m+1$ otherwise.
Note that, if $\check\lambda_j\in M_i$, then $i=j$ when $M_1\ne\{\gamma_1\}$, otherwise $i=j+1$.
Let $\check q_j=ord_{\check\lambda_j} f$, in particular, $\check q_j=\bar q_i$ when $\check\lambda_j\in M_i$.

\begin{definition}\label{def:pi}\normalfont
Let $n$ be the number of elements in the set $\M\cup\{\gamma_1,\gamma_2\}$.
An \emph{extended permutation} $\pi$ for $(T,T')$ is a permutation of the set $[n]=\{0,\ldots,n-1\}$ such that $\pi(0)=0,\;\pi(n-1)=n-1$,
and $\pi$ is compatible with $\sigma$ on the maximum zones: $\pi(j)=\sigma(j)$ when $M_1\ne\{\gamma_1\}$, otherwise $\pi(j)+1=\sigma(j+1)$.
\end{definition}

Let $\check\lambda'_0=\gamma'_1$ and $\check\lambda'_{n-1}=\gamma'_2$.
If $\check\lambda_j\in M_i$, let $\check\lambda'_{\pi(j)}$ be any arc in $M'_{\sigma(i)}$, thus
$\check q_j=tord(\check\lambda_j,\check\lambda'_{\pi(j)})=\bar q_i$.
This defines a set of $n$ arcs $\check\lambda'_0,\ldots,\check\lambda'_{n-1}$ in $T'$, ordered according to the orientation of $T'$.

\begin{definition}\label{def:block}\normalfont
Let $\chi$ be a permutation of a set $[n]=\{0,\ldots,n-1\}$. A segment of $[n]$ (a non-empty set of consecutive indices $\{j,\ldots,k\}\subseteq [n]$)
is called a \emph{block} of $\chi$ (see \cite{Atkinson}) if the set $\{\chi(j),\ldots,\chi(k)\}$ is also a set of consecutive indices
(not necessarily in increasing order).
A block is \emph{trivial} if it contains a single element, or if it is equal $[n]$.
\end{definition}

\begin{lemma}\label{lem:block}\normalfont
The following properties of blocks of a permutation $\chi$ of $[n]$ hold:
\begin{itemize}
\item[{\bf (i)}] If a segment $B$ of $[n]$ is a block of $\chi$ then the set $\chi(B)$ is a block of $\chi^{-1}$, thus $B\mapsto\chi(B)$ defines
one-to-one correspondence between the blocks of $\chi$ and $\chi^{-1}$.
\item[{\bf (ii)}] If two blocks $B$ and $B'$ of $\chi$ have non-empty intersection, then $B\cap B'$ and $B\cup B'$ are also blocks of $\chi$.
\item[\bf (iii)] Each non-empty subset $J$ of $[n]$ is contained in a unique \emph{minimal block} $B_{\chi}(J)$.
\item[{\bf (iv)}] Let $S(J)$ be the minimal segment of $[n]$ containing a non-empty subset $J$ of $[n]$. Then $|S(J)|=|J|$ if, and only if, $J$ is a segment.
\item[{\bf (v)}] For a non-empty set $J\subseteq [n]$, let us define
$\mathfrak{S}(J)=\chi^{-1}(S(\chi(S(J)),\;\;\mathfrak{S}^1(J)=\mathfrak{S}(J)$, and  $\mathfrak{S}^k(J)=\mathfrak{S}(\mathfrak{S}^{k-1}(J))$ for $k>1$.
    Then $\mathfrak{S}^k(J) \subseteq \mathfrak{S}^{k+1}(J),\;\; \mathfrak{S}^k(J) \subseteq B_{\chi}(J)$, and $\mathfrak{S}^{k+1}(J)=\mathfrak{S}^k(J)$ if, and only if, $\mathfrak{S}^k(J)=B_{\chi}(J)$.
\end{itemize}
\end{lemma}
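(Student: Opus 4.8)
The plan is to handle the five items in order, since (iii) uses (ii) and (v) uses (iii) and (iv), while (i), (ii), (iv) are elementary facts about intervals of integers combined with the bijectivity of $\chi$. For (i), I would note that, $\chi$ being a bijection of $[n]$, a segment $B$ is a block of $\chi$ exactly when $\chi(B)$ is a segment; then $\chi(B)$ is a segment whose $\chi^{-1}$-image $B$ is a segment, i.e.\ $\chi(B)$ is a block of $\chi^{-1}$. The maps $B\mapsto\chi(B)$ and $C\mapsto\chi^{-1}(C)$ are mutually inverse by the symmetric argument, giving the claimed one-to-one correspondence. For (ii), if $B,B'$ are segments with $x\in B\cap B'$, then $B\cap B'$ and $B\cup B'$ are segments (overlapping integer intervals); since $\chi(x)\in\chi(B)\cap\chi(B')$, the segments $\chi(B),\chi(B')$ overlap, so $\chi(B)\cap\chi(B')=\chi(B\cap B')$ and $\chi(B)\cup\chi(B')=\chi(B\cup B')$ (injectivity of $\chi$) are segments; hence $B\cap B'$ and $B\cup B'$ are blocks.

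For (iii), since $[n]$ itself is a block of $\chi$, the family of blocks containing a non-empty $J$ is a non-empty finite family; repeatedly applying (ii) — all pairwise intersections contain $J$, so are non-empty — shows that its intersection is again a block, and it is by construction the unique minimal block $B_\chi(J)$. Item (iv) is immediate: $S(J)=\{\min J,\ldots,\max J\}$, so $|S(J)|\ge|J|$ with equality iff $J=S(J)$, i.e.\ iff $J$ is a segment. For (v), I would first record that $\mathfrak{S}(Y)\supseteq S(Y)\supseteq Y$ for every non-empty $Y$: chasing the definition $\mathfrak{S}(Y)=\chi^{-1}(S(\chi(S(Y))))$, one has $\chi(S(Y))\subseteq S(\chi(S(Y)))$, hence $S(Y)\subseteq\mathfrak{S}(Y)$; this gives $\mathfrak{S}^k(J)\subseteq\mathfrak{S}^{k+1}(J)$. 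Next, $\mathfrak{S}(Y)\subseteq B_\chi(Y)$: indeed $S(Y)\subseteq B_\chi(Y)$, so $\chi(S(Y))\subseteq\chi(B_\chi(Y))$, which is a segment, whence $S(\chi(S(Y)))\subseteq\chi(B_\chi(Y))$, and applying $\chi^{-1}$ gives the inclusion. Since $Y\subseteq\mathfrak{S}(Y)\subseteq B_\chi(Y)$ forces $B_\chi(\mathfrak{S}(Y))=B_\chi(Y)$, induction yields $\mathfrak{S}^k(J)\subseteq B_\chi(J)$ for all $k$.

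It remains to prove the stabilization criterion in (v). If $\mathfrak{S}^k(J)=B_\chi(J)$, then $B_\chi(J)$ is a block, so both segment-hull operations in the definition of $\mathfrak{S}$ act trivially and $\mathfrak{S}^{k+1}(J)=B_\chi(J)$. Conversely, suppose $X:=\mathfrak{S}^k(J)$ satisfies $\mathfrak{S}(X)=X$; put $P=S(X)$ and $Q=S(\chi(P))$, so $X=\chi^{-1}(Q)$. From $\chi(P)\subseteq Q$ we get $P\subseteq\chi^{-1}(Q)=X$, while $X\subseteq S(X)=P$, hence $X=P$ is a segment; then $\chi(X)=Q$ is a segment, so $X$ is a block of $\chi$ containing $J$, whence $B_\chi(J)\subseteq X$, and with the bound above $X=B_\chi(J)$. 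The one point that needs care — and the only real content of the lemma — is this last implication: one must notice that a fixed point of the operator $\mathfrak{S}$ is automatically a block (a segment in the domain whose $\chi$-image is a segment), keeping in mind that a single application $\mathfrak{S}(J)$ need not itself be a segment, only a $\chi^{-1}$-preimage of one. Everything else is bookkeeping with unions, intersections, images and preimages of integer intervals under the bijection $\chi$, so I expect the whole proof to be short; as a byproduct, monotonicity together with the criterion shows the iteration $\mathfrak{S}^k(J)$ reaches $B_\chi(J)$ in finitely many steps.
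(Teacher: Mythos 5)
Your proof is correct and follows essentially the same route as the paper's: (i)–(iv) are the same elementary observations about images of segments under the bijection $\chi$, and your fixed-point argument for the stabilization criterion in (v) is just a more explicit rendering of the paper's cardinality argument that $J\subseteq\mathfrak{S}(J)\subseteq B_\chi(J)$ with equality forcing $J$ to be a block. No gaps.
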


\begin{proof}
(i) From the definition of a block, $\chi(B)$ is a segment of $[n]$ and $\chi^{-1}(\chi(B))=B$.\newline
(ii) If $\chi(B)$ and $\chi(B')$ are segments of $[n]$, then $\chi(B)\cap\chi(B')=\chi(B\cap B')\ne\emptyset$ is also a segment.
Similarly, if $\chi(B\cup B')$ is not a segment, then
$\chi(B)\cap\chi(B')=\chi(B\cap B')$ is empty, thus $B\cap B'$ is empty, a contradiction.\newline
(iii) Since $J\subset[n]$ and $[n]$ is a block of $[n]$,
 a minimal block of $[n]$ containing $J$ exists and is unique by (ii).\newline
(iv) Since $J\subseteq S(J)$, we have $|S(J)|=|J|$ only when $S(J)=J$.\newline
(v) Since $J\subseteq S(J)\subseteq B_{\chi}(J)$ and $\chi(S(J))\subseteq S(\chi(S(J))\subseteq \chi(B_{\chi}(J))$
for any non-empty subset $J$ of $[n]$, we have $J\subseteq\mathfrak{S}(J)\subseteq B_{\chi}(J)$,
thus $|\mathfrak{S}(J)|\ge|J|$, and equality holds only if $J=S(J)$ is a segment of $[n]$
and $\chi(S(J))=\chi(J)$ is a segment of $[n]$, in which case $J=B_{\chi}(J)$ is a block of $\chi$.
\end{proof}

\begin{notation}\label{ij-block}\normalfont
For $i\ne j$, $0\le i,j\le n-1$, let $B_{ij}=B_{\chi}(\{i,j\})$ be the minimal block of $\chi$ containing the set $\{i,j\}$ (see Lemma \ref{lem:block}),
and let $B'_{ij}=B_{\chi^{-1}}(\{i,j\})$ be the minimal block of $\chi^{-1}$ containing the set $\{i,j\}$.
\end{notation}

\begin{theorem}\label{beta-block}
Let $(T,T')$ be a totally transverse normal pair of H\"older triangles $T=T(\gamma_1,\gamma_2)$ and $T'=T(\gamma'_1,\gamma'_2)$,
with the minimal pizzas $\Lambda$ and $\Lambda'$ on $T$ and $T'$ associated with the distance functions $f(x)=dist(x,T')$ and $g(x')=dist(x',T)$, respectively.
Let $\{\check\lambda_0,\ldots,\check\lambda_{n-1}\}$ be a set of arcs in $T$
such that  $\check\lambda_0=\gamma_1,\;\check\lambda_{n-1}=\gamma_2$, and
each maximum zone of $\Lambda$ contains exactly one of the arcs $\check\lambda_i$.
For $i=1,\ldots,m$, if $\check\lambda_j\in M_i$, let $\check\lambda'_{\pi(j)}$ be any arc in $M'_{\sigma(i)}$,
where $\pi:[n]\to[n]$ is an extended permutation of $(T,T')$. Then
\begin{equation}\label{beta:blocks}
tord(\check\lambda_i,\check\lambda_j)\le tord(\check\lambda_k,\check\lambda_l)\; \text{for}\; \{k,l\}\subset B_{ij},
\end{equation}
\begin{equation}\label{beta:blocks-prime}
tord(\check\lambda'_i,\check\lambda'_j)\le tord(\check\lambda'_k,\check\lambda'_l)\; \text{for}\; \{k,l\}\subset B'_{ij}.
\end{equation}
\end{theorem}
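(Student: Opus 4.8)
The plan is to prove that the correspondence $\check\lambda_j\mapsto\check\lambda'_{\pi(j)}$ is an \emph{isometry} of finite ultrametric spaces, i.e. that $tord(\check\lambda_i,\check\lambda_j)=tord(\check\lambda'_{\pi(i)},\check\lambda'_{\pi(j)})$ for all $i,j$, and then to read off both inequalities (\ref{beta:blocks}) and (\ref{beta:blocks-prime}) from a combinatorial observation about ultrametrics on $[n]$ whose ``threshold classes'' are segments of $[n]$.

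First I would record the auxiliary min-formula. Writing $\bar\beta_k=tord(\check\lambda_k,\check\lambda_{k+1})$ and $\bar\beta'_k=tord(\check\lambda'_k,\check\lambda'_{k+1})$ for $0\le k<n-1$, and using that $T,T'$ are normally embedded with the arcs $\check\lambda_0\prec\cdots\prec\check\lambda_{n-1}$ (resp. $\check\lambda'_0\prec\cdots\prec\check\lambda'_{n-1}$) ordered by the orientations, the non-archimedean property of the tangency order becomes an equality along the order (this is the content of condition (\ref{check-LNE1}) for the decompositions of $T$ and $T'$ by these arcs), so
\[
tord(\check\lambda_i,\check\lambda_j)=\min_{\min(i,j)\le k<\max(i,j)}\bar\beta_k,\qquad tord(\check\lambda'_i,\check\lambda'_j)=\min_{\min(i,j)\le k<\max(i,j)}\bar\beta'_k.
\]
Next I would note two facts: (a) each pair $(\check\lambda_j,\check\lambda'_{\pi(j)})$ is normal, $tord(\check\lambda_j,\check\lambda'_{\pi(j)})=ord_{\check\lambda_j}f=ord_{\check\lambda'_{\pi(j)}}g=\check q_j$ --- for $\check\lambda_j$ in a maximum zone this is already stated in the text preceding the theorem (via Proposition \ref{sigma}), and for $\check\lambda_j$ a boundary arc it is (\ref{tord-tord}) with $\check\lambda'_{\pi(j)}$ the corresponding boundary arc of $T'$; and (b) $tord(\check\lambda_i,\check\lambda_j)\le\min(\check q_i,\check q_j)$ for $i\ne j$, which follows from the min-formula together with Lemma \ref{beta} (the degenerate boundary cases being handled using that $f$ is Lipschitz).

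The core step is a short ultrametric dichotomy. Using (a), (b) and the trivial bound $tord(\gamma,\gamma')\le\min(ord_\gamma f,\,ord_{\gamma'}g)$ for $\gamma\subset T$, $\gamma'\subset T'$, I would apply the non-archimedean property first to the triple $\check\lambda_i,\check\lambda_j,\check\lambda'_{\pi(j)}$ to obtain $tord(\check\lambda_i,\check\lambda'_{\pi(j)})=tord(\check\lambda_i,\check\lambda_j)$, and then to the triple $\check\lambda'_{\pi(i)},\check\lambda_i,\check\lambda'_{\pi(j)}$ to obtain $tord(\check\lambda'_{\pi(i)},\check\lambda'_{\pi(j)})=tord(\check\lambda_i,\check\lambda_j)$, which is the desired isometry. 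I expect this to be the main obstacle: in the degenerate case where $\check\lambda_i$ or $\check\lambda_j$ is a boundary arc not lying in a maximum zone the inequality in (b) can become an equality $tord(\check\lambda_i,\check\lambda_j)=\check q_j$, and then the dichotomy must be run using \emph{both} upper bounds $tord(\gamma,\gamma')\le ord_\gamma f$ and $tord(\gamma,\gamma')\le ord_{\gamma'}g$ rather than only one.

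Finally, to conclude (\ref{beta:blocks}): fix $i\ne j$, set $m=tord(\check\lambda_i,\check\lambda_j)$, and let $a\sim b$ mean $tord(\check\lambda_a,\check\lambda_b)\ge m$ (an equivalence relation by the non-archimedean property). By the min-formula each $\sim$-class is a segment of $[n]$; by the isometry, $\pi$ maps $\sim$-classes bijectively onto the analogous classes on the $T'$ side, which are again segments by the $T'$-min-formula. Hence the $\sim$-class $I$ containing $i$ --- which also contains $j$, since $tord(\check\lambda_i,\check\lambda_j)=m$ --- is a block of $\pi$ containing $\{i,j\}$, so the minimal such block satisfies $B_{ij}\subseteq I$; and for any $\{k,l\}\subseteq B_{ij}\subseteq I$ one has $k\sim l$, i.e. $tord(\check\lambda_k,\check\lambda_l)\ge m=tord(\check\lambda_i,\check\lambda_j)$, which is (\ref{beta:blocks}). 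Inequality (\ref{beta:blocks-prime}) then follows by applying the whole argument to the totally transverse normal pair $(T',T)$, whose characteristic permutation is $\sigma^{-1}$ and whose extended permutation is $\pi^{-1}$, with the arcs $\check\lambda'_j$ playing the role of the arcs $\check\lambda_j$.
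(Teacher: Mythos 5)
Your proposal is correct and follows essentially the same route as the paper: the heart of both arguments is the isometry $tord(\check\lambda'_{\pi(i)},\check\lambda'_{\pi(j)})=tord(\check\lambda_i,\check\lambda_j)$, obtained from the normality of the pairs $(\check\lambda_j,\check\lambda'_{\pi(j)})$ together with Lemma \ref{beta}, combined with the observation that sets of arcs with pairwise tangency order above a threshold form segments on both sides and hence blocks of $\pi$. The only (cosmetic) difference is that you package the block-containment step via threshold equivalence classes, whereas the paper reaches $B_\pi(\{i,j\})$ by iterating the segment-closure operator $\mathfrak{S}^k$ of Lemma \ref{lem:block}(v); your handling of the degenerate boundary cases in the isometry step is a correct filling-in of what the paper leaves implicit.
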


\begin{proof}
For each subset $I$ of $[n]$, we define
$T_I=T(\check\lambda_j,\check\lambda_k)\subset T$, where $j$ and $k$ are the minimal and maximal values of $i\in I$.
Let $T'_I=T(\check\lambda'_j,\check\lambda'_k)\subset T'$, where $j$ and $k$ are the minimal and maximal values of $i\in I$. For $I=\{i,j\}$,
Lemma \ref{beta} implies that exponents of the H\"older triangles $T'_{\pi(S(I))}$ and $T_{\mathfrak{S}^1(I)}$ are equal to the exponent $\beta$ of $T_I$.
It follows that $T_{B_\pi(I)}$ is also a $\beta$-H\"older triangle.
\end{proof}

\begin{corollary}\label{cor:beta-block}
If $B_{ij}=B_{kl}$ then $tord(\check\lambda_{i},\check\lambda_j)=tord(\check\lambda_{k},\check\lambda_l)$.
\end{corollary}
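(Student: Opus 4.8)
The plan is to deduce the corollary directly from Theorem \ref{beta-block} by applying inequality (\ref{beta:blocks}) twice, once in each direction. Recall from Lemma \ref{lem:block}{\bf (iii)} and Notation \ref{ij-block} that $B_{ij}=B_\pi(\{i,j\})$ is the \emph{minimal} block of $\pi$ containing $\{i,j\}$; in particular $\{i,j\}\subseteq B_{ij}$, and likewise $\{k,l\}\subseteq B_{kl}$. Note also that $tord$ is symmetric in its two arguments by Definition \ref{tord}, so the quantities $tord(\check\lambda_i,\check\lambda_j)$ and $tord(\check\lambda_k,\check\lambda_l)$ do not depend on the ordering of the pairs $\{i,j\}$ and $\{k,l\}$.

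Assuming $B_{ij}=B_{kl}$, I would first observe that $\{k,l\}\subseteq B_{kl}=B_{ij}$, so Theorem \ref{beta-block}, applied to the pair $\{i,j\}$ with the subset $\{k,l\}\subseteq B_{ij}$, gives
\[
tord(\check\lambda_i,\check\lambda_j)\le tord(\check\lambda_k,\check\lambda_l).
\]
Symmetrically, $\{i,j\}\subseteq B_{ij}=B_{kl}$, so applying Theorem \ref{beta-block} with the roles of the two pairs interchanged — that is, to the pair $\{k,l\}$ with the subset $\{i,j\}\subseteq B_{kl}$ — yields the reverse inequality $tord(\check\lambda_k,\check\lambda_l)\le tord(\check\lambda_i,\check\lambda_j)$. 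Combining the two inequalities gives the claimed equality $tord(\check\lambda_i,\check\lambda_j)=tord(\check\lambda_k,\check\lambda_l)$.

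There is essentially no obstacle here beyond unwinding the definition of the minimal block: the only point to check is that the hypothesis of Theorem \ref{beta-block} applies symmetrically to both pairs, which is immediate since every two-element subset of a block of $\pi$ is covered by that block, and both $B_{ij}$ and $B_{kl}$ equal the common block. (If one wanted the analogous statement for the primed arcs $\check\lambda'_i$, one would argue identically, using inequality (\ref{beta:blocks-prime}) in place of (\ref{beta:blocks}) and the blocks $B'_{ij}$ of $\pi^{-1}$.)
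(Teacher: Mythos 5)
Your proof is correct and is exactly the argument the paper intends: the corollary is stated without proof precisely because it follows by applying inequality (\ref{beta:blocks}) in both directions, as you do. The two applications — $\{k,l\}\subset B_{kl}=B_{ij}$ giving one inequality and $\{i,j\}\subset B_{ij}=B_{kl}$ giving the reverse — are the whole content, and your remark about the primed version via (\ref{beta:blocks-prime}) is also accurate.
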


\begin{remark}\label{rem:pi}\normalfont
Since $\pi(0)=0$ and $\pi(n-1)=n-1$, the segments $\{0,\ldots,n-2\}$ and $\{1,\ldots,n-1\}$ are blocks of $\pi$.
This implies that, for $i\ne j$ and $\{i,j\}\ne\{0,n-1\}$, each block $B_{ij}$ is non-trivial.
We have $B_{ij}=\{i,j\}$ if, and only if, $j=i\pm 1$ and $\pi(j)=\pi(i)\pm 1$.
\end{remark}

\begin{example}\label{example:pi}\normalfont
The permutation $\pi=(0,3,1,4,2,5)$ for the normal pair of H\"older triangles in Figure \ref{fig:pi} has three non-trivial blocks:
$B_{01}=\{0,\ldots,4\}$, $B_{45}=\{1,\ldots,5\}$, $B_{12}=B_{23}=B_{34}=\{1,\ldots,4\}$.
Accordingly, $tord(\check\lambda_0,\check\lambda_1)=tord(\check\lambda'_0,\check\lambda'_1)\le
tord(\check\lambda_1,\check\lambda_2)=tord(\check\lambda'_1,\check\lambda'_2)=tord(\check\lambda_2,\check\lambda_3)=tord(\check\lambda'_2,\check\lambda'_3)
=tord(\check\lambda_3,\check\lambda_4)=tord(\check\lambda'_3,\check\lambda'_4)\ge tord(\check\lambda_4,\check\lambda_5)=tord(\check\lambda'_4,\check\lambda'_5)$
in Theorem \ref{beta-block}.
\end{example}

\begin{figure}
\centering
\includegraphics[width=6in]{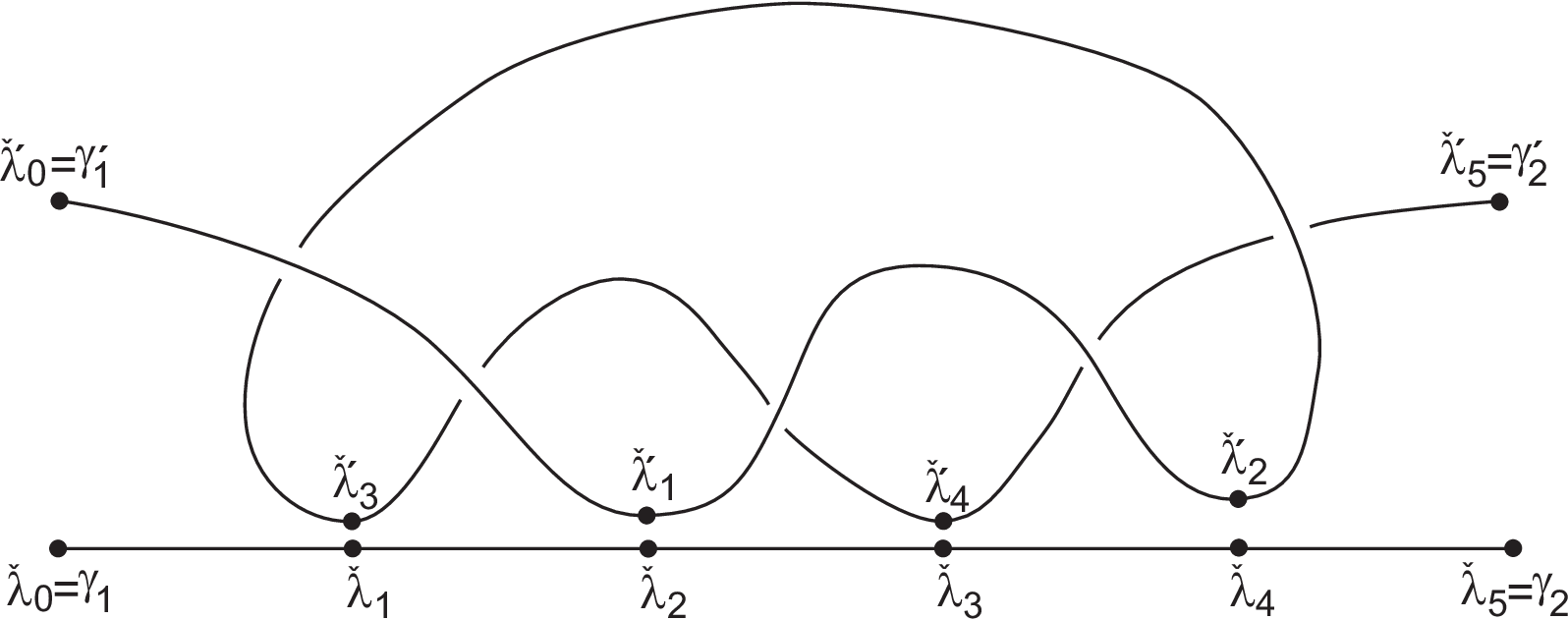}
\caption{Normally embedded triangles in Example \ref{example:pi}.}\label{fig:pi}
\end{figure}
\begin{figure}
\centering
\includegraphics[width=2in]{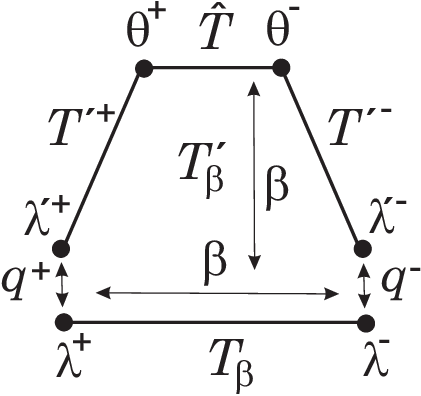}
\caption{Model pair of triangles in Definition \ref{model}.}\label{fig:model-theta}
\end{figure}
\begin{figure}
\centering
\includegraphics[width=6.3in]{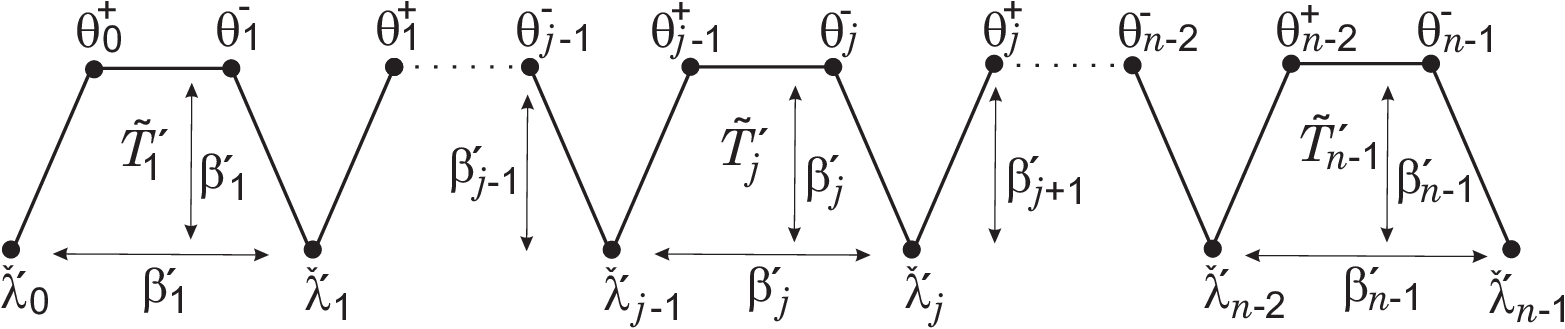}
\caption{Arcs $\check\lambda'_j$ and $\theta^\pm_j$ in the proof of Theorem \ref{transverse-blocks}.}\label{fig:order}
\end{figure}

\begin{definition}\label{supporting-family}\normalfont
Let $T=T(\gamma_1,\gamma_2)$ be a normally embedded H\"older triangle oriented from $\gamma_1$ to $\gamma_2$, and let $f(x)$ be a totally transverse function on $T$ (see Definition \ref{def:pizzaslicezone-transverse}). Let $\Lambda$ be a minimal pizza on $T$ associated with $f$.
A family $\{\check\lambda_j\}_{j=0}^{n-1}$ of $n$ arcs in $T$ ordered according to the orientation of $T$, such that $\check\lambda_0=\gamma_1$ and
$\check\lambda_{n-1}=\gamma_2$,
is called a \emph{supporting family} associated with $f$ if one of the following conditions is satisfied.

(A) Both arcs $\check\lambda_0=\gamma_1$ and $\check\lambda_{n-1}=\gamma_2$ are maximum zones, and each arc $\check\lambda_j$ belongs to a maximum zone $M_{j+1}$ of
$\Lambda$.

(B) The arc $\check\lambda_0=\gamma_1$ is a maximum zone, the arc $\check\lambda_{n-1}=\gamma_2$ is a minimum zone, and an arc $\check\lambda_j$ belongs to a maximum zone
$M_{j+1}$ of $\Lambda$ when $j<n-1$.

(C) The arc $\check\lambda_0=\gamma_1$ is a minimum zone, the arc $\check\lambda_{n-1}=\gamma_2$ is a maximum zone, and an arc $\check\lambda_j$ belongs to a maximum zone
$M_j$ of $\Lambda$ when $j>0$.

(D) Both arcs $\check\lambda_0=\gamma_1$ and $\check\lambda_{n-1}=\gamma_2$ are minimum zones, and an arc $\check\lambda_j$ belongs to a maximum zone $M_j$ of $\Lambda$ when
$0<j<n-1$.
\end{definition}

\begin{definition}\label{admissible}\normalfont
Let $\chi$ be a permutation of the set $[n]=\{0,\ldots,n-1\}$, such that $\chi(0)=0$ and $\chi(n-1)=n-1$.
For indices $i\ne j$ in $[n]$, let $B_{ij}=B_\chi(\{i,j\})$ be the minimal block of $\chi$ containing $\{i,j\}$ (see Notation \ref{ij-block}).
The permutation $\chi$ is called \emph{admissible} with respect to a totally transverse Lipschitz function $f$ on a H\"older triangle $T$ if
a supporting family $\{\check\lambda_j\}_{j=0}^{n-1}$ of arcs in $T$ associated with $f$ satisfies inequalities (\ref{beta:blocks}).
\end{definition}

\begin{remark}\label{admissible-remark}\normalfont
Let $(T,T')$ be a totally transverse normal pair of H\"older triangles, where $T=T(\gamma_1,\gamma_2)$ and $T'=T(\gamma'_1,\gamma'_2)$,
with the minimal pizzas $\Lambda$ and $\Lambda'$ associated with the distance functions $f(x)=dist(x,T')$ and $g(x')=dist(x',T)$, respectively.
Then the extended permutation $\pi$ of the pair $(T,T')$ (see Definition \ref{def:pi}) is admissible.
\end{remark}

\begin{definition}\label{model}\normalfont
Let $q^+,q^-,\beta$ be three exponents, such that $\beta \le \min(q^+,q^-)$.
Let $T_\beta\subset \R^2_{u,v}$, be the standard $\beta$-H\"older triangle (see Definition \ref{standard holder}) bounded by the arcs
$\lambda^+=\{u\ge 0,\,v\equiv 0\}$ and $\lambda^-=\{u\ge 0,\,v=u^\beta\}$.
Let $\lambda'^+=\{(u,v)\in \lambda^+,\, z=u^{q^+}\}$ and $\lambda'^-=\{(u,v)\in \lambda^-,\, z=u^{q^-}\}$
be two arcs in $\R^3_{u,v,z}$, where $\R^2_{u,v}=\{(u,v,z): z=0\}$ is a subspace of $\R^3_{u,v,z}$.
Let $\theta^+=\{(u,v,z)\in\lambda'^+,\, w=u^\beta\}$ and $\theta^-=\{(u,v,z)\in\lambda'^-,\, w=u^\beta\}$ be two arcs in $\R^4_{u,v,z,w}$,
where $\R^3_{u,v,z}=\{(u,v,z,w): w=0\}$ is a subspace of $\R^4_{u,v,z,w}$.

Let $T'^+=T(\lambda'^+,\theta^+)$ be the $\beta$-H\"older triangle in $\R^4_{u,v,z,w}$ obtained as the union
 of straight line segments with endpoints $\lambda'^+\cap\{u=u_0\}$ and $\theta^+\cap\{u=u_0\}$, parallel to the $w$-axis, over small non-negative $u_0$.
 Similarly, let $T'^-=T(\lambda'^-,\theta^-)$ be the $\beta$-H\"older triangle in $\R^4_{u,v,z,w}$ obtained as the union
 of straight line segments with endpoints $\lambda'^-\cap \{u=u_0\}$ and $\theta^-\cap \{u=u_0\}$, parallel to the $w$-axis.
Let $\hat T'=T(\theta^+,\theta^-)$ be the $\beta$-H\"older triangle in $\R^4_{u,v,z,w}$ obtained as the union
of straight line segments with endpoints $\theta^+\cap\{u=u_0\}$ and $\theta^-\cap\{u=u_0\}$.
Note that $T'^+$ and $\hat T'$ have a common boundary arc $\theta^+$, while $\hat T'$ and $T'^-$  have a common boundary arc $\theta^-$.
Thus $T'_\beta=T'^+\cup \hat T'\cup T'^-$ (see Fig.~\ref{fig:model-theta}) is a $\beta$-H\"older triangle.
The pair $(T_\beta,T'_\beta)$ of $\beta$-H\"older triangles is called a $(q^+,q^-,\beta)$-\emph{model}.
\end{definition}

\begin{remark}\label{model-remark}\normalfont
The $(q^+,q^-,\beta)$-model $(T_\beta,T'_\beta)$ is a totally transverse normal pair of $\beta$-H\"older triangles, with the given tangency orders $q^+$ and $q^-$ of their
boundary arcs, where $\beta\le\min(q^+,q^-)$.
Let $f(x)=dist(x,T')$ be the distance function on $T$.
If $\beta=q^+=q^-$, then $T_\beta$ is a single pizza slice of a minimal pizza associated with $f$,
and both boundary arcs of $T_\beta$ are minimum zones for $f$.
If $\max(q^+,q^-)>\beta=\min(q^+,q^-)$, then $T_\beta$ is a single pizza slice of a minimal pizza associated with $f$,
one of its boundary arcs is a maximum zone for $f$, another one is a minimum zone.
If $\beta<\min(q^+,q^-)$, then $T_\beta$ has two pizza slices of a minimal pizza associated with $f$,
both boundary arcs of $T_\beta$ are maximum zones for $f$, and the set $G(T_\beta)$ of its generic arcs is a minimum zone.
\end{remark}

The following theorem implies that the necessary conditions (\ref{q:blocks}) and (\ref{beta:blocks}) on the exponents $\bar q_i$ and $tord(\check\lambda_i,\check\lambda_j)$ are sufficient for the existence and uniqueness, up to outer Lipschitz equivalence,
of a totally transverse normal pair $(T,T')$ of H\"older triangles with the given
characteristic permutation $\sigma$ of a minimal pizza associated with the totally transverse distance function $f$ on $T$.
Given a normally embedded H\"older triangle $T$ and a supporting family $\{\check\lambda_i\}_{i=0}^{n-1}$ of arcs in $T$
for the minimal pizza $\Lambda$ on $T$ associated with $f$, a H\"older triangle $T'$ is constructed as follows.
Adding an extra variable $z$, we define a family of arcs $\{\check\lambda'_j\}_{j=0}^{n-1}$ that will become
a supporting family for a minimal pizza $\Lambda'$ on $T'$ associated with the distance function $g(x')=dist(x',T)$.
Each arc $\check\lambda'_j$ is the graph of a function $z=u^{\bar q_i}$ over the arc $\check\lambda_i$, where $j=\pi(i)$.
Next, any two consecutive arcs $\check\lambda'_{j-1}$ and $\check\lambda'_j$ are ``connected'' by a H\"older triangle $T'_j$
based on the model H\"older triangle $T'_{\beta'_j}$
(see Definition \ref{model} and Figure \ref{fig:model-theta} above) where $\beta'_j=tord(\check\lambda'_{j-1},\check\lambda'_j)$.
The H\"older triangle $T'$ is defined as the union of H\"older triangles $T'_j$.
To show that $T'$ is normally embedded, we use conditions (\ref{q:blocks}) and (\ref{beta:blocks}) to prove that triangles $T'_j$ are normally embedded and pairwise
transverse, thus $T'$ is combinatorially normally embedded  (see Definition \ref{combinatorialLNE} and Proposition \ref{combinatorialLNE-prop}).

\begin{theorem}\label{transverse-blocks}
Let $T=T(\gamma_1,\gamma_2)$ be a normally embedded H\"older triangle oriented from $\gamma_1$ to $\gamma_2$, and let
$\{\check\lambda_j\}_{j=0}^{n-1}$ be a supporting family of arcs in $T$ associated with a non-negative totally transverse Lipschitz function $f$ on $T$
(see Definition \ref{supporting-family}).
Then, for any admissible with respect to $f$ permutation $\pi$ of $[n]=\{0,\ldots,n-1\}$, there exists a unique, up to outer Lipschitz equivalence,
totally transverse normal pair $(T,T')$ of H\"older triangles, such that
the function $dist(x,T')$ on $T$ is contact equivalent to $f$ and $\pi$ is the extended permutation of the pair $(T,T')$ (see Definition \ref{def:pi}).
\end{theorem}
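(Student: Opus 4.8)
The plan is to prove existence by realizing the construction sketched just before the theorem, and to deduce uniqueness from Theorem~\ref{pre-complete}. For existence, I would first set $\check q'_j=\check q_{\pi^{-1}(j)}$ for $j\in[n]$ and, in a suitable ambient space obtained by adjoining new coordinates to the one containing $T$, lift each arc $\check\lambda_i$ to the arc $\check\lambda'_{\pi(i)}$ defined as the graph of $z=u^{\check q_i}$ over $\check\lambda_i$, where $u$ is the distance to the origin. Then for $j=1,\dots,n-1$ I would glue between $\check\lambda'_{j-1}$ and $\check\lambda'_j$ a copy of the $(\check q'_{j-1},\check q'_j,\beta'_j)$-model of Definition~\ref{model}, with $\lambda'^{+}=\check\lambda'_{j-1}$, $\lambda'^{-}=\check\lambda'_j$, and $\beta'_j=tord(\check\lambda'_{j-1},\check\lambda'_j)$, choosing coordinates for the $z,w$ directions of the model so that distinct pieces meet only along the lifted arcs $\check\lambda'_j$, and put $T'=\bigcup_{j=1}^{n-1}T'_j$. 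The hypothesis $\beta'_j\le\min(\check q'_{j-1},\check q'_j)$ needed to form each model holds because $\beta'_j$ is a tangency order bounded by the order of the $z$-coordinate of either arc, the case of equal $z$-orders being handled by Lemma~\ref{totally-pizza}; combined with Lemma~\ref{beta} and the admissibility inequalities (\ref{beta:blocks}) this also gives $\beta'_j=\min\bigl(tord(\check\lambda_{\pi^{-1}(j-1)},\check\lambda_{\pi^{-1}(j)}),\check q'_{j-1},\check q'_j\bigr)$.

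Next I would show that $T'$ is a normally embedded $\beta$-H\"older triangle by verifying that the decomposition $T'=\bigcup T'_j$, with $T'_j\cap T'_{j+1}=\check\lambda'_j$, is combinatorially normally embedded in the sense of Definition~\ref{combinatorialLNE}, so that Proposition~\ref{combinatorialLNE-prop} applies. Each $T'_j$ is normally embedded by Remark~\ref{model-remark}. For the transversality of $T'_j$ and $T'_k$ with $j<k$ I would apply Theorem~\ref{beta-block} to the permutation $\pi^{-1}$ to obtain that $tord(\check\lambda'_a,\check\lambda'_b)=\min_{c}tord(\check\lambda'_c,\check\lambda'_{c+1})$ over the indices $c$ of the minimal block $B'_{ab}$ of $\pi^{-1}$ for all $a<b$, that this minimum is attained at a consecutive pair lying between the two triangles, and that the corresponding boundary arcs serve as the data $\tilde\lambda,\tilde\lambda'$ of Definition~\ref{def:transverse}; condition (\ref{check-LNE1}) comes out of the same computation together with the non-archimedean property of $tord$ on $T$. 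I expect this to be the main obstacle: it is precisely here that the combinatorial block conditions (\ref{beta:blocks}) must be converted into the geometric transversality of the glued model pieces, and care is required in the cases where a lifted arc sits over a boundary arc of $T$, where $\check q'_{j-1}=\check q'_j$, or where $\beta'_j$ exceeds the exponent of the sub-triangle of $T$ between the corresponding arcs.

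Then I would verify the remaining properties. The pair $(T,T')$ is normal: since $\pi(0)=0$ and $\pi(n-1)=n-1$, the arcs lifted over $\gamma_1=\check\lambda_0$ and $\gamma_2=\check\lambda_{n-1}$ are $\check\lambda'_0$ and $\check\lambda'_{n-1}$, and normal embedding of $T$ and $T'$ together with the model structure give $tord(\gamma_1,T')=tord(\gamma_1,\check\lambda'_0)=ord_{\gamma_1}f=tord(\check\lambda'_0,T)$, and likewise with $\gamma_2,\check\lambda'_{n-1}$ in place of $\gamma_1,\check\lambda'_0$, which is (\ref{tord-tord}). For an arc in a maximum zone $M_a$ of $f$ the nearest part of $T'$ lies on the corresponding lifted arc, so $dist(\cdot,T')$ has order $\bar q_a$ there, while on an arc not in a maximum zone the order of $dist(\cdot,T')$ equals its tangency order to the appropriate $M_a$, as in Lemma~\ref{totally-pizza}; hence $dist(\cdot,T')$ is totally transverse with the same maximum zones and the same exponents as $f$, so by Remark~\ref{rem:totally} and Proposition~\ref{contact_equiv} it is contact equivalent to $f$. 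Finally, the maximum zones $M'_{\pi(i)}$ of $dist(\cdot,T)$ on $T'$ are the zones of the lifted arcs coming from maximum zones $M_i$ of $f$, with $tord(M_i,M'_{\pi(i)})=\bar q_i$ by construction, so the characteristic permutation of $(T,T')$ is obtained from $\pi$ as in Definition~\ref{def:pi} and $\pi$ is the extended permutation of $(T,T')$.

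For uniqueness, let $(T,\tilde T')$ be any totally transverse normal pair with $dist(\cdot,\tilde T')$ on $T$ contact equivalent to $f$ and with extended permutation $\pi$. Its $\sigma\tau$-pizza consists of a minimal pizza on $T$ combinatorially equivalent to the minimal pizza of $f$, the minimal pizza $\tilde\Lambda'$ on $\tilde T'$, and the characteristic permutation determined by $\pi$ via Definition~\ref{def:pi} (the correspondence $\tau$ being trivial in the totally transverse case). Since $\tilde\Lambda'$ is totally transverse, Remark~\ref{rem:totally} shows it is determined by its toppings $\beta'_{\ell'}$ and $q'_{\ell'}$; the $q'_{\ell'}$ are the $\bar q_i$'s, which are the orders of $f$ on the maximum zones, and Theorem~\ref{beta-block} applied to $\pi^{-1}$ forces the $\beta'_{\ell'}$ in terms of the pizza of $f$ and the block structure of $\pi^{-1}$. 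Hence $\tilde\Lambda'$ is determined up to combinatorial equivalence, so any two such realizations have combinatorially equivalent $\sigma\tau$-pizzas, and Theorem~\ref{pre-complete} produces an orientation-preserving outer bi-Lipschitz homeomorphism between them. This yields uniqueness up to outer Lipschitz equivalence and completes the argument.
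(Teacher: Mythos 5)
Your proposal is correct and follows essentially the same route as the paper's proof: lifting the supporting arcs $\check\lambda_i$ to $\R^3_{u,v,z}$ as graphs $z=u^{\check q_i}$, gluing a copy of the $(\check q'_{j-1},\check q'_j,\beta'_j)$-model in fresh $w$-directions between each consecutive pair of lifted arcs, establishing normal embedding of $T'$ via combinatorial normal embedding (Definition~\ref{combinatorialLNE}, Proposition~\ref{combinatorialLNE-prop}) by converting the block inequalities (\ref{beta:blocks}) into transversality of the glued model pieces, verifying that $dist(\cdot,T')$ is totally transverse with the right pizza data, and invoking Theorem~\ref{pre-complete} for uniqueness. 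The only cosmetic difference is that you spell out the uniqueness argument via Theorem~\ref{pre-complete} more explicitly than the paper does, but this matches the paper's intent.
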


\begin{proof}
We may assume that $T=T_\beta$ is a standard $\beta$-H\"older triangle (\ref{Formula:Standard Holder triangle}) in $\R^2_{uv}$,
the arcs $\check\lambda_i$ are germs at the origin of the graphs $\{u\ge 0,\,v=\check\lambda_i(u)\}$ of Lipschitz functions $\check\lambda_i(u)\ge 0$, and each
$T_i=T(\check\lambda_{i-1},\check\lambda_i)$ is a $\beta_i$-H\"older triangle.
From the non-archimedean property of the tangency order, we have
$tord(\check\lambda_i,\check\lambda_l)=\min\left(tord(\check\lambda_i,\check\lambda_k),tord(\check\lambda_k,\check\lambda_l)\right)$ for $i<k<l$.
It will be convenient in this proof to parameterize all arcs by $u$ instead of the distance to the origin.

To define a totally transverse pair $(T,T')$, we are going to construct a normally embedded H\"older triangle $T'$ in several steps.
In Step 1, we define a set of $n$ arcs $\check\lambda'_j$ in $\R^3_{u,v,z}$ that will be a supporting family for $T'$,
assuming that $\R^2_{u,v}=\{(u,v,z):z=0\}$ is a subspace of $\R^3_{u,v,z}$.
In Step 2, the triangle $T'$ in $\R^{n+2}_{u,v,z,\mathbf w}$, where $\mathbf w=(w_1,\ldots,w_{n-1})$, is constructed as the union of $n-1$
subtriangles $\tilde T'_j=T(\check\lambda'_{j-1},\check\lambda'_j)\subset\R^4_{u,v,z,w_j}$ based on the models from Definition \ref{model}.
Each space $\R^4_{u,v,z,w_j}$ is assumed to be a subspace of the space $\R^{n+2}_{u,v,z,\mathbf w}$ defined by equations $w_k=0$ for all $k\ne j$.
In Step 3, we prove that $T'$ is normally embedded.
In step 4, we prove that the distance function $f(x)=dist(x,T')$ on $T$
is totally transverse and satisfies conditions of Theorem \ref{transverse-blocks},
thus $(T,T')$ is a totally transverse normal pair.

{\bf Step 1.} For $j=0,\ldots,n-1$, let $\check\lambda'_j\subset\R^3_{u,v,z}$ be the arc
$\{(u,v)\in\check\lambda_i,\, z=u^{\check q_i}\}$, where $j=\pi(i)$ and $\check q_i=ord_{\check\lambda_i} f$.
Let $\check q'_j=\check q_i=tord(\check\lambda'_j,T)$.
The set of arcs $\{\check\lambda'_j\}_{j=0}^{n-1}$ will be a supporting family for the H\"older triangle $T'$ associated with
the distance function $g(x')=dist(x',T)$ on $T'$.
In particular, $\gamma'_1=\check\lambda'_0$ and $\gamma'_2=\check\lambda'_{n-1}$ will be the boundary arcs of $T'$.
Let $\beta'_j=tord(\check\lambda'_{j-1},\check\lambda'_j)$ for $j=1,\ldots,n-1$.
From the non-archimedean property of the tangency order we have, for $j=\pi(i_1)$ and $k=\pi(i_2)$, isometry
\begin{equation}\label{tord-lambda}
tord(\check\lambda'_j,\check\lambda'_k)=tord(\check\lambda_{i_1},\check\lambda_{i_2}).
\end{equation}
Since $T$ is normally embedded and the arcs $\check\lambda_i$ satisfy the block inequalities (\ref{beta:blocks}),
the arcs $\check\lambda'_j$ satisfy combinatorial normal embedding inequalities (see Definition \ref{combinatorialLNE})
necessary for the H\"older triangle $T'$ with a supporting family $\{\check\lambda'_j\}_{j=0}^{n-1}$ to be normally embedded:
\begin{equation}\label{check-LNE}
tord(\check\lambda'_j,\check\lambda'_l)=\min\left(tord(\check\lambda'_j,\check\lambda'_k),tord(\check\lambda'_k,\check\lambda'_l)\right)\;\text{for}\;j<k<l.
\end{equation}
This follows from (\ref{tord-lambda}) when $j=\pi(i_1),\,k=\pi(i_2)$ and $l=\pi(i_3)$, where $i_2\in[i_1,i_3]$, as
$tord(\check\lambda_{i_1},\check\lambda_{i_3})=\min\left(tord(\check\lambda_{i_1},\check\lambda_{i_2}),tord(\check\lambda_{i_2},\check\lambda_{i_3})\right)$ in that case.
If, e.g., $i_1<i_3<i_2$ then $i_2\in B_\pi(\{i_1,i_3\})$, thus $tord(\check\lambda_{i_1},\check\lambda_{i_2})=tord(\check\lambda_{i_1},\check\lambda_{i_3})\le
tord(\check\lambda_{i_2},\check\lambda_{i_3})$ due to (\ref{beta:blocks}). The other cases are similar.

Since the arcs $\check\lambda_i$ satisfy inequalities (\ref{beta:blocks}), the arcs $\check\lambda'_j$ satisfy the block inequalities (\ref{beta:blocks-prime}).

{\bf Step 2.}
Consider the product $\R^{n+2}_{u,v,z,\mathbf w}$ of $\R^3_{u,v,z}$ and $\R^{n-1}_{\mathbf w}$,where $\mathbf w=(w_1,\dots,w_{n-1})$.
Each space $\R^4_{u,v,z,w_j}$ is a subspace of $\R^{n+2}_{u,v,z,\mathbf w}$ defined by equations $w_k=0$ for all $k\ne j$.
We define $2n-2$ arcs $\theta^\pm_j$ in $\R^{n+2}_{u,v,z,\mathbf w}$ as follows:\newline
For $0<j\le n-1$, let $\theta^-_j=\{(u,v,z)\in\check\lambda'_j,\; w_j=u^{\beta'_j}\}\subset\R^4_{u,v,z,w_j}$.\newline
For $0\le j<n-1$, let $\theta^+_j=\{(u,v,z)\in\check\lambda'_j,\; w_{j+1}=u^{\beta'_{j+1}}\}\subset\R^4_{u,v,z,w_{j+1}}$.

From the inequalities (\ref{q:blocks}) and (\ref{beta:blocks}), we get $\beta'_j\le\min(\check q'_{j-1},\check q'_j)$
for each $j=1,\ldots,n-1$.

For $1\le j\le n-1$, consider a $(\check q'_{j-1},\check q'_j,\beta'_j)$-model $(T_{\beta'_j},T'_{\beta'_j})$ (see Definition \ref{model}),
where $T_{\beta'_j}=T(\lambda^+,\lambda^-)$ is the standard $\beta'_j$-H\"older triangle and $T'_{\beta'_j}=T'^+\cup \hat T'\cup T'^-$
is the union of three $\beta'_j$-H\"older triangles $T'^+=T(\lambda'^+,\theta^+),\;\hat T'=T(\theta^+,\theta^-)$ and $T'^-=T(\lambda'^-,\theta^-)$.

Let $h_j:T'_{\beta'_j}\to \R^4_{u,v,z,w_j}$ be a map preserving the variable $u$, such that
$h_j(\lambda'^+)=\lambda'_{j-1}$, $h_j(\lambda'^-)=\lambda'_j$, $h_j(\theta^+)=\theta^+_{j-1}$, $h_j(\theta^-)=\theta^-_j$ and,
for any small positive $u_0$, the map $h_j$ is linear
on each of the three straight line segments of $T'_{\beta'_j}\cap\{u=u_0\}$.
This defines $h_j$ completely.
Each space $\R^4_{u,v,z,w_j}$ is a subspace of the space $\R^{n+2}_{u,v,z,\mathbf w}$ defined by equations $w_k=0$ for all $k\ne j$.
We'll show in Step 3 that $h_j$ defines an outer bi-Lipschitz map
$T'_{\beta'_j}\to\tilde T'_j$, where $\tilde T'_j=T'^+_{j-1}\cup \hat T'_j\cup T'^-_j,\;
T'^+_{j-1}=h_j(T'^+),\;\hat T'_j=h_j(\hat T'),\;T'^-_j=h_j(T'^-)$ (see Figure \ref{fig:order}).

The arcs $\theta^\pm_j$ will belong to minimum zones of a minimal pizza on $T'$ associated with $g$.
These arcs will be located on $T'=T(\gamma'_1,\gamma'_2)$ in the following order (see Figure \ref{fig:order}):
\begin{equation}\label{order}
\check\lambda'_0,\theta^+_0,\theta^-_1,\check\lambda'_1,\theta^+_1,\ldots,\theta^-_{j-1},\check\lambda'_{j-1},\theta^+_{j-1},\theta^-_j,
\check\lambda'_j,\theta^+_j,\ldots,\theta^-_{n-2},\check\lambda'_{n-2},\theta^+_{n-2},\theta^-_{n-1}, \check\lambda'_{n-1}.
\end{equation}
Note that each variable $w_j$ has the order $\beta'_j$ on the arcs $\theta^-_{j-1}$ and $\theta^+_j$,
and is identically zero on all arcs $\theta^-_k$ for $k\ne j-1$, on all arcs $\theta^+_k$ for $k\ne j$, and on all arcs $\check\lambda'_k$.

{\bf Step 3.}
Let us show first that each H\"older triangle $\tilde T'_j$ is normally embedded.
It is the union of three normally embedded $\beta'_j$-H\"older triangles $T(\check\lambda'_{j-1},\theta^+_{j-1})$,
$T(\theta^-_j,\check\lambda'_j)$ and $T(\theta^+_{j-1},\theta^-_j)$,
each of them being a family of straight line segments.
The first two of them are families of straight line segments
of length $u^{\beta'_j}$ parallel to the axis $w_j$, the third one connects them by a family of the straight line segments in $\{w_j=u^{\beta'_j},\; w_k\equiv
0\;\text{for}\;k\ne j\}$.
Since $tord(\check\lambda'_{j-1},\check\lambda'_j)=\beta'_j$, the tangency order between
$T(\check\lambda'_{j-1},\theta^+_{j-1})$ and $T(\theta^-_{j},\check\lambda'_j)$ is also $\beta'_j$.
This implies that $\tilde T'_j$ is normally embedded.

To show that $T'$ is combinatorially normally embedded (see Definition \ref{combinatorialLNE})
we need to prove that any two H\"older triangles $\tilde T'_j$ and $\tilde T'_k$ are transverse.
Let $\eta\subset\tilde T'_j$ and $\eta'\subset\tilde T'_k$ be any two arcs,
and let ${\rm proj}_T \eta$ and ${\rm proj}_T \eta'$ be their projections to $T$.
Note that the variable $w_j$, which is non-zero on $\tilde T'_j$, vanish on $\tilde T'_k$,
and the variable $w_k$, which is non-zero on $\tilde T'_k$, vanish on $\tilde T'_j$.
Thus $tord(\eta,\eta')\le tord(\eta,{\rm proj}_T \eta')$ and $tord(\eta,\eta')\le tord({\rm proj}_T \eta,\eta')$

If $k=j+1$, then $\tilde T'_j$ and $\tilde T'_{j+1}$ are consecutive H\"older triangles in $T'$, and
$\tilde T'_j\cap\tilde T'_{j+1}=\check\lambda'_j$.

When $tord(\eta,\check\lambda'_{j-1})=tord(\eta,\check\lambda'_j)=\beta'_j$, then
$tord(\eta,T)=ord_\eta w_j=\beta'_j$ and
$tord(\eta,\eta')\le tord(\eta,{\rm proj}_T \eta')\le\beta'_j=tord(\eta,\check\lambda'_j)$.
Similarly, when $tord(\eta',\check\lambda'_j)=tord(\eta',\check\lambda'_{j+1})=\beta'_{j+1}$, then
$tord(\eta,\eta')\le\beta'_{j+1}=tord(\eta',\check\lambda'_{j+1})$.

If $tord(\eta,\check\lambda'_j)>\beta'_j$, then
 $\eta\subset T(\check\lambda'_j,\theta^-_j)$ and $tord(\eta,\check\lambda'_j)=ord_\eta w_j\le tord(\eta,\eta')$,
as $w_j\equiv 0$ on $\tilde T'_{j+1}$.
Similarly, if $tord(\eta',\check\lambda'_j)>\beta'_{j+1}$, then
 $\eta'\subset T(\check\lambda'_j,\theta^+_j)$ and $tord(\eta',\check\lambda'_j)=ord_{\eta'} w_{j+1}\le tord(\eta,\eta')$.

If $tord(\eta,\check\lambda'_{j-1})>\beta'_j$ and
$tord(\eta',\check\lambda'_{j+1})>\beta'_{j+1}$, then
$tord(\eta,\lambda'_j)=\beta'_j\le tord(\eta,\eta')$ by the non-archimedean property of the tangency order, as
$tord(\lambda'_{j-1},\lambda'_{j+1})\ge\min(\beta'_j,\beta'_{j+1})$ by (\ref{check-LNE}).
Thus the H\"older triangles $\tilde T'_j$ and $\tilde T'_{j+1}$ are transverse, with $\tilde\gamma=\tilde\gamma'=\check\lambda'_j$ in Definition \ref{def:transverse}.

Let $j<k-1, j-1=\sigma(i_{-}), j=\sigma(i_{+}), k-1=\sigma(l_{-}), k=\sigma(l_{+})$.

If $tord(\check\lambda'_j,\check\lambda'_{k-1})<\max(\beta'_j,\beta'_k)$, then $\tilde T'_j$ and $\tilde T'_k$ are transverse by the non-archimedean property
of the tangency order, thus we may assume that
$tord(\check\lambda'_j,\check\lambda'_{k-1})\ge\max(\beta'_j,\beta'_k)$.

The same arguments as in the case $k=j+1$ above show that
$tord(\eta,\eta')\le\beta'_j=tord(\eta,\check\lambda'_j)=tord(\eta,\check\lambda'_{k-1})$
when $tord(\eta,\check\lambda'_{j-1})=tord(\eta,\check\lambda'_j)=\beta'_j$
and $tord(\eta,\eta')\le\beta'_k=tord(\eta',\check\lambda'_{k-1})=tord(\eta',\check\lambda'_j)$
when $tord(\eta',\check\lambda'_{k-1})=tord(\eta',\check\lambda'_k)=\beta'_k$.

If $\kappa=tord(\eta,\check\lambda'_j)>\beta'_j$, then
 $\eta\subset T(\check\lambda'_j,\theta^-_j)$ and $\kappa=ord_\eta w_j$.
 If $\kappa\le tord(\check\lambda'_j,\check\lambda'_{k-1})$, then $tord(\eta,\check\lambda_{k-1})=\kappa$,
  and the same arguments as in the case $k=j+1$ show that $tord(\eta,\eta')\le tord(\eta,\lambda'_{k-1})$,
as $w_j\equiv 0$ on $\tilde T'_k$.

Otherwise, if $\kappa>\alpha$, then $tord(\eta,\check\lambda_{k-1})=\alpha=tord(\eta,\eta')$
when $tord(\eta',\check\lambda'_{k-1})\ge\alpha$ and $tord(\eta,\check\lambda_{k-1})<\alpha=tord(\eta,\eta')$
when $tord(\eta',\check\lambda'_{k-1})<\alpha$.
Similarly, if $tord(\eta',\check\lambda'_{k-1})>\beta'_k$, then
 $\eta'\subset T(\check\lambda'_{k-1},\theta^+_k)$ and $tord(\eta',\check\lambda'_j)\le tord(\eta,\eta')$.

If $tord(\eta,\check\lambda'_{j-1})>\beta'_j$, then $tord(\eta,\eta')\le tord(\eta,\check\lambda'_{k-1})$
by the non-archimedean property of the tangency order, by the same argument as in the case $k=j+1$.
Similarly, if $tord(\eta',\check\lambda'_k)>\beta'_k$, then $tord(\eta,\eta')\le tord(\eta',\check\lambda'_j)$
by the non-archimedean property of the tangency order.
Thus the H\"older triangles  $\tilde T'_j$ and $\tilde T'_k$ are transverse,
 with $\tilde\gamma=\check\lambda'_j$ and $\tilde\gamma'=\check\lambda'_{k-1}$ in Definition \ref{def:transverse}.

By Proposition \ref{combinatorialLNE-prop}, the H\"older triangle $T'$ is normally embedded.

{\bf Step 4.} We show first that the distance function $g(x')=dist(x',T)$ on $T'$ is totally transverse.
This would imply that $f(x)=dist(x,T')$ is a totally transverse function on $T$,
thus its contact equivalence class is completely determined by the exponents $\beta_j$ of $T_j$ and the orders $\check q_i$ of $f|_{\check\lambda_i}$.

Let $\eta$ be any arc in $T'_j$. If $tord(\eta,\check\lambda'_{j-1})>\beta'_j$ then $\eta\subset T(\check\lambda'_{j-1},\theta^+_{j-1})$ and $tord(\eta,T)=ord_\eta w_j$. If
$tord(\eta,\check\lambda'_j)>\beta'_j$, then
 $\eta\subset T(\check\lambda'_j,\theta^-_j)$ and $tord(\eta,T)=ord_\eta w_j$.
 Otherwise, if $\eta\in G(T'_j)$, then $tord(\eta,T)=\beta'_j$.
 This implies that the distance function $g|_{T'_j}$ is totally transverse.
 Since $T'=\bigcup_j T'_j$ is normally embedded, the distance function $g(x)=dist(x,T)$ on $T'$ is totally transverse.
 Proposition \ref{pizzaslice-oriented} implies that the distance function $f(x)=dist(x,T')$ on $T$ is also totally transverse:
  a coherent slice $T_\ell$ for a minimal pizza on $T$ associated with $f$ would correspond to a coherent slice for a minimal pizza on $T'$
  associated with $g$, a contradiction. Lemma \ref{determination} implies that the function $f$ is
  determined, up to contact Lipschitz equivalence, by the exponents $\beta_j$ and $\check q_i$.
  The permutation $\pi$ is the extended permutation of the pair $(T,T')$
   by the construction of arcs $\lambda'_j$. Thus the distance function $f$ satisfies conditions of Theorem
  \ref{transverse-blocks}.
\end{proof}

\section{Realization Theorem for General Pairs}\label{sec:blocks-general}
In this section we formulate the necessary and sufficient conditions for the existence of a general normal pair $(T,T')$ of H\"older triangles with the given
$\sigma\tau$-pizza invariant.
For this purpose we define the notions of \emph{pre-pizza} and \emph{twin pre-pizza} associated with a non-negative
Lipschitz function $f$ on a normally embedded H\"older triangle $T$.\newline
Throughout this section we use the following notations:\newline
$\bullet\;(T,T')$, where $T=T(\gamma_1,\gamma_2)$ and $T'=T(\gamma'_1,\gamma'_2)$, is a normal pair of
$\beta$-H\"older triangles, oriented from $\gamma_1$ to $\gamma_2$ and from $\gamma'_1$ to $\gamma'_2$, respectively.\newline
$\bullet\;\Lambda=\{T_\ell\}_{\ell=1}^p$ and $\Lambda'=\{T_{\ell'}\}_{\ell'=1}^{p'}$ are compatible minimal pizzas on $T$ and $T'$
associated with the distance functions $f(x)=dist(x,T')$ and $g(x')=dist(x',T)$, respectively (see Definition \ref{compatible}).\newline
$\bullet\;\M=\{M_i\}_{i=1}^m$ and $\M'=\{M'_i\}_{i=1}^m$ are the sets of maximum zones of $\Lambda$ and $\Lambda'$,
ordered according to the orientations of $T$ and $T'$.
We assume that $m>0$, thus at least one maximum zone exists.\newline
$\bullet\;$ The permutation $\sigma$ of the set $[m]=\{1,\ldots,m\}$ is the characteristic permutation between the sets $\M$ and $\M'$ of maximum zones
of $\Lambda$ and $\Lambda'$ (see Definition \ref{characteristic}).\newline
$\bullet\;$ The sets $\mathcal L$ and $\mathcal L'$ of coherent pizza slices of $\Lambda$ and $\Lambda'$ have the same number of elements $L$.
The characteristic correspondence $\tau:\ell'=\tau(\ell)$ defines a permutation $\upsilon$ of the set $[L]=\{1,\ldots,L\}$
and a sign function $s:[L]\to\{+,-\}$ (see Definitions \ref{def:tau},\ \ref{upsilon}).\newline
$\bullet\;$ The disjoint unions $\K=\M\cup\mathcal L$ and $\K'=\M'\cup\mathcal L'$ of the sets
of maximum zones and coherent pizza slices of $\Lambda$ and $\Lambda'$, respectively, have the same number of elements $K$.
The permutations $\sigma$ and $\upsilon$ define the combined characteristic permutation $\omega$ of the set $[K]=\{1,\ldots,K\}$ (see Definition \ref{omega}).\newline
Note that the permutation $\sigma$ acts on maximum zones of $\Lambda$ and $\Lambda'$,
and the correspondence $\tau$ acts (although not one-to-one, see Remark \ref{rem:tau}) on the boundary arcs of
coherent pizza slices of $\Lambda$ and $\Lambda'$.
Additionally, the boundary arcs $\gamma_1$ and $\gamma_2$ of $T$ naturally correspond to the boundary arcs $\gamma'_1$ and $\gamma'_2$ of $T'$.
All these arcs are called \emph{essential} (see Definition \ref{def:primary}), and all other arcs of $\Lambda$ and $\Lambda'$ are called \emph{non-essential}. Each non-essential arc of $\Lambda$ is a common boundary arc of two transverse pizza slices.
A pre-pizza $\tilde\Lambda$ is obtained from $\Lambda$ by deleting non-essential arcs and replacing
two transverse pizza slices adjacent to each non-essential arc by a single H\"older triangle of $\tilde\Lambda$.
The pizza $\Lambda$ can be recovered from a pre-pizza $\tilde\Lambda$ by restoring non-essential arcs (see Remark \ref{pre-pizza:remark}).

If a pair $(T,T')$ is totally transverse, then essential arcs of the pizzas $\Lambda$ and $\Lambda'$ are either the boundary arcs of $T$ and $T'$
or the arcs $\lambda_\ell$ and $\lambda'_{\ell'}$ in the maximum zones of $\Lambda$ and $\Lambda'$, respectively.
In particular, $\Lambda$ and $\Lambda'$ have the same number $n$ of essential arcs. This allows us to define an extended permutation $\pi$ (see Definition \ref{def:pi}) of the set $[n]=\{0,\ldots,n-1\}$, and to formulate the necessary and sufficient conditions for the existence of
a totally transversal normal pair of H\"older triangles in Section \ref{sec:blocks}. 

That is not true for a general normal pair $(T,T')$:
since the action of $\tau$ on the boundary arcs of coherent pizza slices is not one-to-one,
the numbers of essential arcs, and even the numbers of pizza slices, of $\Lambda$ and $\Lambda'$ may be different 
(see Examples \ref{example:varpi} and \ref{example:varpi2}).
To restore equality, we define \emph{twin pre-pizzas} $\check\Lambda$ and $\check\Lambda'$, 
obtained from pre-pizzas $\tilde\Lambda$ and $\tilde\Lambda'$ by adding new arcs, called \emph{twin arcs}.
Twin arcs are defined so that $\tau$ becomes a one-to-one correspondence between the sets of 
boundary arcs of coherent pizza slices of $\check\Lambda$ and $\check\Lambda'$.
Pre-pizzas $\tilde\Lambda$ and $\tilde\Lambda'$ can be recovered from twin pre-pizzas
 $\check\Lambda$ and $\check\Lambda'$ by replacing each pair of twin arcs by a single arc.
This allows us to define a permutation $\varpi$ (see Definition \ref{def:varpi}) 
of the set $[\mathcal N]=\{0,\ldots,\mathcal N-1\}$ of indices of arcs $\check\lambda_k$ of $\check\Lambda$, 
which is the same as the set of indices of arcs $\check\lambda'_{k'}$ of $\check\Lambda'$ (see Lemma \ref{lem:mathcalN})
using the permutation $\sigma$ of the maximum zones of $\Lambda$ and $\Lambda'$,
 the permutation $\upsilon$ of coherent pizza slices of $\Lambda$ and $\Lambda'$ defined by $\tau$,
 and the sign function $s$ on coherent pizza slices of $\Lambda$ and $\Lambda'$ defined by $\tau$.
It is an analog of the permutation $\pi$ defined for the totally transverse case in Section \ref{sec:blocks}.

The twin pre-pizza defines the relation of the pizzas of the two distance function.  
It determines the number of the pizza slices of the distance function $g$. 
Notices that the number of pizza slices of $f$ may be different from the number of the pizza slices 
of the distance function $g$. It is illustrated by the example \ref{example:varpi}.

Propositions \ref{prop:varpi1} and \ref{prop:varpi2} describe the combinatorial and metric properties of $\varpi$ 
for a given normal pair $(T,T')$ of H\"older triangles.
These properties become \emph{admissibility conditions} (see Definition \ref{def:admissible}) for the permutation $\varpi$,
necessary and sufficient for the existence and uniqueness of a normal pair $(T,T')$, when only a pizza $\Lambda$ on $T$,
the permutations $\sigma$ and $\upsilon$, and the sign function $s$ are given.

Given a minimal pizza $\Lambda$ associated with a non-negative Lipschitz function $f$ on a normally embedded H\"older triangle $T$ and an admissible permutation $\varpi$,
the construction of a normal pair $(T,T')$ is similar to that in the previous section, although the admissibility conditions are more complicated.
For a given pizza $\Lambda$, permutations $\sigma$ and $\upsilon$, and a sign function $s$, an admissible permutation $\varpi$, if exists, is unique,
and the existence conditions for $\varpi$ are explicitly formulated.

\begin{definition}\label{def:primary}\normalfont
Let $\Lambda=\{T_\ell\}_{\ell=1}^p$, where $T_\ell=T(\lambda_{\ell-1},\lambda_\ell)$,
be a minimal pizza on a normally embedded H\"older triangle $T$ associated with a non-negative Lipschitz function $f$ on $T$, and
let $\{D_\ell\}_{\ell=0}^p$ be the pizza zones of $\Lambda$ (see Lemma \ref{MP}) such that $\lambda_\ell\in D_\ell$.
If $T_\ell$ is a coherent pizza slice, then $D_{\ell-1}$ and $D_\ell$ are called \emph{primary pizza zones},
the arcs $\lambda_{\ell-1}$ and $\lambda_\ell$ are called \emph{primary arcs}, and
their indices are called \emph{primary indices} of $\Lambda$.
The pair $(D_{\ell-1},D_\ell)$ is called a \emph{primary pair} of pizza zones,
the pair $(\lambda_{\ell-1},\lambda_\ell)$ is called a \emph{primary pair} of arcs,
and the pair $(\ell-1,\ell)$ is called a \emph{primary pair} of indices of $\Lambda$.\newline
A pizza zone $D_\ell$ and an arc $\lambda_\ell\in D_\ell$ are \emph{essential} if $D_\ell$ is
either a boundary arc of $T$, or a maximum zone of $\Lambda$, or a primary pizza zone.
Otherwise, $D_\ell$ and $\lambda_\ell$ are \emph{non-essential}.
\end{definition}

\begin{lemma}\label{lem:non-essential}
If $D_\ell$ is a non-essential pizza zone of $\Lambda$, then both $D_{\ell-1}$ and $D_{\ell+1}$ are essential pizza zones of $\Lambda$,
and $D_\ell$ is a transverse minimum zone of $\Lambda$ adjacent to transverse pizza slices $T_\ell$ and $T_{\ell+1}$. In particular,
\begin{equation}\label{non-essential}
\mu(D_\ell)=q_\ell=\beta_{\ell-1}=\beta_\ell<\min(q_{\ell-1},q_{\ell+1}).
\end{equation}
\end{lemma}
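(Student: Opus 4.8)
The plan is to unwind the definition of a non-essential pizza zone and then play the structure of transverse pizza slices against the minimality of $\Lambda$. Since $D_\ell$ is non-essential, Definition~\ref{def:primary} tells us that $D_\ell$ is an interior pizza zone (hence $0<\ell<p$, so $p\ge 2$), that it is not a maximum zone, and that it is not a primary pizza zone. The last fact means that neither $T_\ell$ nor $T_{\ell+1}$ is coherent, so the two pizza slices adjacent to $D_\ell$ are both transverse (Definition~\ref{def:pizzaslicezone-transverse}); as $p\ge 2$, neither of them is the exceptional single-slice case, so $Q_\ell$ and $Q_{\ell+1}$ are not points and $\mu_\ell(q)\equiv q$ on $Q_\ell$, $\mu_{\ell+1}(q)\equiv q$ on $Q_{\ell+1}$.

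Next I would observe that, since $q_\ell\in Q_\ell\cap Q_{\ell+1}$, we have $\mu_\ell(q_\ell)=\mu_{\ell+1}(q_\ell)=q_\ell$, so by Definition~\ref{pizza-decomp} the exponent of $D_\ell$ is $\mu(D_\ell)=\nu_\ell=\max(\mu_\ell(q_\ell),\mu_{\ell+1}(q_\ell))=q_\ell$; hence $D_\ell$ is a transverse pizza zone by Definition~\ref{def:pizzaslicezone-transverse}, and, not being a maximum zone, it is a minimum zone. To obtain the chain $(\ref{non-essential})$, note that $Q_\ell$ and $Q_{\ell+1}$ not being points forces $q_{\ell-1}\ne q_\ell\ne q_{\ell+1}$, so $q_\ell$ is strictly between $q_{\ell-1}$ and $q_{\ell+1}$, or strictly below both, or strictly above both. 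In the ``between'' position, $Q_\ell\cap Q_{\ell+1}=\{q_\ell\}$ and the function equal to $q$ on $Q_\ell\cup Q_{\ell+1}$ is affine, so condition~(d) of Definition~\ref{abstract-pizza} makes $\Lambda$ reducible, contradicting minimality. In the ``above'' position, $\beta_\ell=\min_{q\in Q_\ell}\mu_\ell(q)=q_{\ell-1}$ and $\beta_{\ell+1}=q_{\ell+1}$, so $\max(\beta_\ell,\beta_{\ell+1})<q_\ell\ge\max(q_{\ell-1},q_{\ell+1})$ and $D_\ell$ is a maximum zone by Definition~\ref{maxmin}, again a contradiction. Hence $q_\ell<\min(q_{\ell-1},q_{\ell+1})$, and then $\beta_\ell=\min(q_{\ell-1},q_\ell)=q_\ell$ and $\beta_{\ell+1}=\min(q_\ell,q_{\ell+1})=q_\ell$, which is $(\ref{non-essential})$.

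Finally, for the essentiality of the neighbours: if $\ell=1$ then $D_{\ell-1}=\{\gamma_1\}$ is a boundary arc, hence essential, and likewise if $\ell+1=p$ for $D_{\ell+1}$. Otherwise, were $D_{\ell-1}$ non-essential, the very same argument applied to $D_{\ell-1}$ (whose adjacent slices $T_{\ell-1}$ and $T_\ell$ are transverse because $D_{\ell-1}$ is not primary) would yield $q_{\ell-1}<\min(q_{\ell-2},q_\ell)\le q_\ell$, contradicting $q_{\ell-1}>q_\ell$ just established; so $D_{\ell-1}$ is essential, and symmetrically so is $D_{\ell+1}$.

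The whole argument is bookkeeping with the definitions; the one point that needs care is the trichotomy in the middle paragraph, where one must check that the ``between'' position is exactly the one triggering the reducibility condition~(d) of Definition~\ref{abstract-pizza}, while the ``above'' position is ruled out not by minimality but by the defining inequality of a maximum zone in Definition~\ref{maxmin}.
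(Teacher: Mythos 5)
Your argument is correct and follows essentially the same route as the paper's (much terser) proof, supplying in full the justification --- via the reducibility condition (d) of Definition \ref{abstract-pizza} and the maximum-zone inequalities of Definition \ref{maxmin} --- for the paper's bare assertion that a common pizza zone of two transverse slices of a minimal pizza must be either a maximum or a minimum zone. Note only that you correctly derive $\beta_\ell=\beta_{\ell+1}=q_\ell$ for the two slices $T_\ell$ and $T_{\ell+1}$ adjacent to $D_\ell$, whereas the displayed formula (\ref{non-essential}) reads $\beta_{\ell-1}=\beta_\ell$; this is an index misprint in the statement, not a gap in your proof.
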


\begin{proof}
Since $\Lambda$ is a minimal pizza on $T$ associated with $f$, a common pizza zone $D_\ell$ of two transverse pizza slices of $\Lambda$
is either a maximum or a minimum zone.
As $D_\ell$ is non-essential, it is a minimum zone. This implies that each of the zones $D_{\ell-1}$ and $D_{\ell+1}$ is either a maximum zone or
a boundary zone of a coherent pizza slice, thus both $D_{\ell-1}$ and $D_{\ell+1}$ are essential pizza zones of $\Lambda$.
Since $D_\ell$ is adjacent to two transverse pizza slices, it is a transverse zone.
\end{proof}

\begin{definition}\label{def:pre-pizza}\normalfont
Let $\{\tilde D_j\}_{j=0}^{N-1}$ be essential pizza zones of $\Lambda$ (see Definition \ref{def:primary}) ordered according to orientation of $T$,
and let $\tilde\lambda_j\in\tilde D_j$ be essential arcs of $\Lambda$.
Let $\tilde T_j=T(\tilde\lambda_{j-1},\tilde\lambda_j)$ for $0<j<N$.
Then each H\"older triangle $\tilde T_j$ is either a coherent pizza slice of $\Lambda$, or a transverse pizza slice $T_\ell$ of $\Lambda$ such that
both zones $D_{\ell-1}$ and $D_\ell$ are essential,
or the union of two adjacent transverse pizza slices $T_\ell$ and $T_{\ell+1}$ of $\Lambda$ such that $D_\ell$
is a non-essential pizza zone of $\Lambda$.
A \emph{pre-pizza} $\tilde\Lambda=\{\tilde T_j\}_{j=1}^{N-1}$ on $T$ associated with $f$ is a decomposition of $T$ into H\"older triangles
$\tilde T_j$ with the following \emph{toppings} $\{\tilde q_j,\tilde\beta_j,\tilde Q_j,\tilde\mu_j,\tilde\nu_j\}$:\newline
{\bf 1)} If $\tilde\lambda_j=\lambda_\ell$, then $\tilde q_j=ord_{\tilde\lambda_j} f=q_\ell$ and $\tilde\nu_j=\nu_T(\tilde\lambda_j,f)=\nu_\ell$.\newline
{\bf 2)} If $\tilde T_j=T_\ell$ is a coherent pizza slice of $\Lambda$, then
 $\tilde\beta_j=\beta_\ell,\;\tilde Q_j=[\tilde q_{j-1},\tilde q_j]=Q_\ell$, and
 $\tilde\mu_j(q)=\mu_\ell(q)$ is an affine function on $\tilde Q_j$, or $\tilde\mu_j(\tilde q_j)=\tilde\beta_j$
 when $\tilde Q_j=\{\tilde q_j\}$ is a point.\newline
{\bf 3)} If $\tilde D_j=D_\ell=M_i$ is a maximum zone of $\Lambda$, then $\tilde q_j=q_\ell=\bar q_i,\;\tilde\beta_j=\beta_\ell$ when $j>0$ and
$\tilde\beta_{j+1}=\beta_{\ell+1}$ when $j<N-1$. In that case $\tilde D_j$ is called a \emph{maximum zone} of $\tilde\Lambda$.\newline
{\bf 4)} If $\tilde T_j=T_\ell\cup T_{\ell+1}$ is the union of two pizza slices of $\Lambda$, then $\tilde q_{j-1}=q_{\ell-1},\;\tilde q_j=q_{\ell+1}$ and
$\tilde\beta_j=\beta_{\ell-1}=\beta_\ell<\min(\tilde q_{j-1},\tilde q_j)$. In that case, $(j-1,j)$ is called a \emph{gap pair} of indices of $\tilde\Lambda$.\newline
A pair $(j-1,j)$ of consecutive indices of $\tilde\Lambda$ is called \emph{primary} if $\tilde T_j$ is a coherent pizza slice and \emph{secondary} otherwise.
If $\Lambda$ is a totally transverse pizza, then all pairs $(j-1,j)$ of indices of $\tilde\Lambda$ are secondary, and $\tilde\Lambda$ is called a
\emph{totally transverse} pre-pizza.
\end{definition}

\begin{lemma}\label{max-pre-pizza}
A pizza zone $\tilde D_j$ of a pre-pizza $\tilde\Lambda=\{\tilde T_j\}_{j=1}^{N-1}$ corresponding to $\Lambda$
is a maximum zone if, and only if, the following conditions are satisfied:
\begin{equation}\label{maxpreA}
\text{If\ }j>0,\;\text{then either\ }(j-1,j)\;\text{is a gap pair or\ }\tilde q_j\ge\tilde q_{j-1}.
\end{equation}
\begin{equation}\label{maxpreB}
\text{If\ }j<N-1,\;\text{then either\ }(j,j+1)\;\text{is a gap pair or\ }\tilde q_j\ge\tilde q_{j+1}.
\end{equation}
\end{lemma}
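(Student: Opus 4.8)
The plan is to unfold Definition \ref{def:pre-pizza}: by item (3) of that definition a pizza zone $\tilde D_j$ of $\tilde\Lambda$ is a maximum zone of $\tilde\Lambda$ precisely when the essential pizza zone it equals, say $\tilde D_j=D_\ell$ with $\ell$ its index in $\Lambda$, satisfies the maximum-zone inequalities (\ref{eq:max}) of Definition \ref{maxmin}. So the statement to prove is that $D_\ell$ satisfies (\ref{eq:max}) if, and only if, (\ref{maxpreA}) and (\ref{maxpreB}) hold for $\tilde D_j$. First I would observe that (\ref{eq:max}) splits into a \emph{left part} --- either $\ell=0$, or $\beta_\ell<q_\ell$ together with $q_\ell\ge q_{\ell-1}$ --- and a \emph{right part} --- either $\ell=p$, or $\beta_{\ell+1}<q_\ell$ together with $q_\ell\ge q_{\ell+1}$ --- and that a direct inspection of (\ref{eq:max}) shows $D_\ell$ is a maximum zone of $\Lambda$ exactly when both parts hold. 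By the symmetry between the two orientations of $T$ it then suffices to prove that the left part is equivalent to (\ref{maxpreA}), and I would carry out the argument only there.

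Next I would split on whether $(j-1,j)$ is a gap pair of $\tilde\Lambda$. If it is, then by Definition \ref{def:pre-pizza}(4) the pizza zone $D_{\ell-1}$ of $\Lambda$ immediately preceding $D_\ell$ is non-essential, and Lemma \ref{lem:non-essential}, specifically equality (\ref{non-essential}) applied to $D_{\ell-1}$, yields $\beta_\ell=q_{\ell-1}<q_\ell$; hence $\beta_\ell<q_\ell$ and $q_\ell>q_{\ell-1}$, so the left part holds, while (\ref{maxpreA}) holds by definition of a gap pair --- both sides are true. If $(j-1,j)$ is not a gap pair, then $\tilde T_j=T_\ell$ is a single pizza slice of $\Lambda$, the arc $\tilde\lambda_{j-1}=\lambda_{\ell-1}$ is essential, $\tilde q_{j-1}=q_{\ell-1}$, and (\ref{maxpreA}) is exactly the inequality $q_\ell\ge q_{\ell-1}$. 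Since the left part trivially implies $q_\ell\ge q_{\ell-1}$, the whole matter comes down to the implication $q_\ell\ge q_{\ell-1}\Longrightarrow\beta_\ell<q_\ell$.

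I would prove this implication as follows. If $q_{\ell-1}<q_\ell$ it is immediate, since $\beta_\ell=\min_{q\in Q_\ell}\mu_\ell(q)\le\mu_\ell(q_{\ell-1})\le q_{\ell-1}<q_\ell$ using $\mu_\ell(q)\le q$. If $q_{\ell-1}=q_\ell$, then $Q_\ell=\{q_\ell\}$ is a point and $\beta_\ell=\mu_\ell(q_\ell)\le q_\ell$; suppose for contradiction $\beta_\ell=q_\ell$. Because $\lambda_{\ell-1}$ is essential, $D_{\ell-1}$ is a boundary arc of $T$, a maximum zone of $\Lambda$, or a primary pizza zone. It cannot be a maximum zone, since (\ref{eq:max}) would force $q_{\ell-1}>\beta_\ell=q_\ell=q_{\ell-1}$. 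If $D_{\ell-1}=\gamma_1$, then $Q_\ell=\{q_\ell\}\ne Q_{\ell+1}$ and $\beta_\ell=q_\ell\ge\mu_{\ell+1}(q_\ell)$ trigger reducibility condition (b) of Definition \ref{abstract-pizza}, contradicting minimality of $\Lambda$; if $D_{\ell-1}$ is primary, i.e. a boundary zone of a coherent pizza slice, then $q_{\ell-1}=q_\ell$ triggers reducibility condition (a) (when $Q_{\ell-1}$ is a point) or (c) (when it is not, again since $\beta_\ell=q_\ell\ge\mu_{\ell-1}(q_{\ell-1})$), once more contradicting minimality. Hence $\beta_\ell<q_\ell$ in all cases. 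Running the mirror-image argument on the right --- using (\ref{non-essential}) for the deleted zone $D_{\ell+1}$ when $(j,j+1)$ is a gap pair, and essentiality of $\lambda_{\ell+1}$ together with the reducibility dichotomy otherwise --- gives the equivalence of the right part with (\ref{maxpreB}), and completes the proof. (The standing assumption $m>0$ is used to exclude the single degenerate pizza $p=1$, $q_0=q_1\le\beta$, which the minimality reductions do not reach.)

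The step I expect to be the main obstacle is the point-interval case $q_{\ell-1}=q_\ell$ and its mirror: here the geometric inequality $\beta_\ell<q_\ell$ is invisible from the exponents $q_\ell$ alone and must be extracted from minimality of $\Lambda$ through the reducibility conditions (a)--(d) of Definition \ref{abstract-pizza}; matching the correct reducibility condition to the type of the essential neighbour is the only genuine bookkeeping.
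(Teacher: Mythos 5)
Your proof is correct and follows the same basic route as the paper's own argument: unfold Definition \ref{def:pre-pizza} and Lemma \ref{lem:non-essential} and run a case analysis on which adjacent pairs are gap pairs. The genuine added value of your version is that it justifies a step the paper only asserts: in the reverse direction the paper states that when $(j-1,j)$ is not a gap pair and $\tilde D_j$ is not a maximum zone one necessarily has $q_{\ell-1}>q_\ell$, with no argument. Your reduction of this to the implication $q_\ell\ge q_{\ell-1}\Rightarrow\beta_\ell<q_\ell$, settled in the equality case $q_{\ell-1}=q_\ell$ by matching the configuration against the reducibility conditions (a)--(c) of Definition \ref{abstract-pizza} (equivalently, Lemma \ref{lem:abstract-pizza}), is exactly the missing bookkeeping, and you carry it out correctly. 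Two minor points. First, in the subcase $D_{\ell-1}=\gamma_1$ you invoke only condition (b), which requires $Q_{\ell+1}\ne\{q_\ell\}$; when $Q_{\ell+1}=\{q_\ell\}$ condition (a) applies instead, so the dichotomy should read ``(a) or (b)'' there, parallel to the ``(a) or (c)'' you correctly state in the primary case (in fact the reducibility argument needs nothing about the essentiality type of $D_{\ell-1}$ beyond whether $\ell=1$ or $\ell\ge 2$). Second, your explicit exclusion of the degenerate pizza $p=1$, $q_0=q_1\le\beta$ is warranted: the lemma as literally stated fails there, and the paper's proof tacitly relies on the standing assumption $m>0$ of Section \ref{sec:blocks-general} for the same reason, so flagging it is a point in your favor rather than a gap.
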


\begin{proof}
If $j>0$ and $\tilde D_j$ is a maximum zone of $\tilde\Lambda$, then
$\tilde T_j$ is either a coherent pizza slice $T_\ell$ of $\Lambda$, or a transverse pizza slice $T_\ell$ of $\Lambda$,
or the union of two transverse pizza slices
$T_{\ell-1}$ and $T_\ell$ of $\Lambda$ with a common minimum pizza zone $D_\ell$.
In the first case, $\tilde q_j=q_\ell\ge\tilde q_{j-1}=q_{\ell-1}$.
In the second case, $\tilde q_j=q_\ell>q_{\ell-1}=\tilde q_{j-1}$.
In the third case, $(j-1,j)$ is a gap pair of indices of $\tilde\Lambda$.
Thus condition (\ref{maxpreA}) is satisfied for $\tilde D_j$.\newline
Similarly, if $j<N-1$ and $\tilde D_j$ is a maximum zone, then condition (\ref{maxpreB}) is satisfied for $\tilde D_j$.\newline
If $\tilde D_j$ is not a maximum zone of $\tilde\Lambda$, then either $(j-1,j)$ or $(j,j+1)$ is not a gap pair.
If $(j,j+1)$ is a gap pair and $(j-1,j)$ is not a gap pair, then $\tilde T_j=T_\ell$ is a pizza slice of $\Lambda$
and $\tilde q_{j-1}=q_{\ell-1}>q_\ell=\tilde q_j$, thus condition (\ref{maxpreA}) is not satisfied.\newline
Similarly, if $(j-1,j)$ is a gap pair and $(j,j+1)$ is not a gap pair, then $\tilde T_{j+1}=T_\ell$ is a pizza slice of $\Lambda$
and $\tilde q_{j+1}=q_{\ell+1}>q_\ell=\tilde q_j$, thus condition (\ref{maxpreB}) is not satisfied.\newline
If both $(j-1,j)$ and $(j,j+1)$ are not gap pairs, then
either $j=0$ and $\tilde T_1=T_1$ is a pizza slice of $\Lambda$ such that $\tilde q_1=q_1>q_0=\tilde q_0$,
or $j=N-1$ and $\tilde T_{N-1}=T_p$ is a pizza slice of $\Lambda$ such that $\tilde q_{N-2}=q_{p-1}>q_p=\tilde q_{N-1}$, or
$0<j<N-1$,
$\tilde T_j=T_\ell$ and $\tilde T_{j+1}=T_{\ell+1}$ are pizza slices of $\Lambda$,
and either $\tilde q_{j-1}=q_{\ell-1}>q_\ell=\tilde q_j$ or $\tilde q_{j+1}=q_{\ell+1}>q_\ell=\tilde q_j$,
or both $\tilde q_{j-1}>\tilde q_j$ and $\tilde q_{j+1}>\tilde q_j$, thus at least one of conditions (\ref{maxpreA}) and (\ref{maxpreB}) is not satisfied.
\end{proof}

\begin{remark}\label{pre-pizza:remark} \normalfont
A minimal pizza $\Lambda$ on $T$ associated with $f$ can be recovered as a refinement of a pre-pizza $\tilde\Lambda$ by
adding a generic arc in each H\"older triangle $\tilde T_j$ such that $(j-1,j)$ is a gap pair.
If $f$ is totally transverse, then all pizza zones of $\Lambda$ are either maximum or minimum zones, with all interior minimum zones being non-essential.
Thus a totally transverse pre-pizza $\tilde\Lambda$ on $T$ is a decomposition of $T$ by the arcs $\lambda_\ell$ in
interior maximum zones of $\Lambda$.
\end{remark}

\begin{definition}\label{def:twin-pre-pizza}\normalfont
Let $\{\tilde\lambda_j\}_{j=0}^{N-1}$ be the arcs of a
pre-pizza $\tilde\Lambda=\{\tilde T_j\}_{j=1}^{N-1}$ on $T$ associated with $f$.
A \emph{twin pre-pizza} $\check\Lambda$ on $T$ associated with $f$ is a decomposition of $T$ into H\"older triangles $\check T_k$
obtained by the following operations:\newline
{\bf 1.} Each arc $\tilde\lambda_j$ common to coherent pizza slices $\tilde T_j$ and $\tilde T_{j+1}$,
such that $\tilde D_j$ is a transverse pizza zone of $\tilde\Lambda$ (i.e., $\tilde\nu_j=\mu(\tilde D_j)=\tilde q_j$,
see Definition \ref{def:pizzaslicezone-transverse}) and $\tilde D_j$ is not a maximum zone,
is replaced by \emph{twin arcs} $\tilde\lambda^-_j\in\tilde D_j$ and $\tilde\lambda^+_j\in\tilde D_j$, such that
$\tilde\lambda^-_j \prec\tilde\lambda^+_j$ and $tord(\tilde\lambda^-_j,\tilde\lambda^+_j)=\tilde q_j$.\newline
{\bf 2.} If the boundary arc $\gamma_1=\tilde\lambda_0$ of $T$ is a transverse minimum zone (see Definition \ref{def:pizzaslicezone-transverse}) and
$\tilde T_1$ is a coherent pizza slice of $\tilde\Lambda$, then an interior arc $\tilde\lambda^+_0\subset\tilde T_1$,
such that $tord(\tilde\lambda^+_0,\tilde\lambda_0)=\tilde q_0$, is added.
Since $\mu_T(\tilde\lambda_0,f)=\tilde q_0$, we have $ord_{\tilde\lambda^+_0} f=\tilde q_0$.\newline
{\bf 3.} If the boundary arc $\gamma_2=\tilde\lambda_{N-1}$ of $T$ is a transverse minimum zone and $\tilde T_{N-1}$ is a coherent pizza slice of $\tilde\Lambda$, then an interior arc $\tilde\lambda^-_{N-1}\subset\tilde T_{N-1}$,
such that $tord(\tilde\lambda^-_{N-1},\tilde\lambda_{N-1})=\tilde q_{N-1}$, is added.
Since $\mu_T(\tilde\lambda_{N-1},f)=\tilde q_{N-1}$, we have $ord_{\tilde\lambda^-_{N-1}}f=\tilde q_{N-1}$.\newline
Applying these operations to $\tilde\Lambda$ and ordering all arcs according to orientation of $T$, we obtain a \emph{twin pre-pizza}
$\check\Lambda=\{\check T_k\}_{k=1}^{\mathcal N-1}$ on $T$ associated with $f$, where $\check T_k=T(\check\lambda_{k-1},\check\lambda_k)$
is a $\check\beta_k$-H\"older triangle, with the set $\{\check\lambda_k\}_{k=0}^{\mathcal N-1}$ of $\mathcal N\ge N$ arcs.
If $\check\lambda_k\in\tilde D_j$, then $\check q_k=\tilde q_j$ and $\check\nu_k=\tilde\nu_j$.
If $\check\lambda_1=\tilde\lambda^+_0$, then $\check q_1=\check\nu_1=\tilde q_0$.
If $\check\lambda_{\mathcal N-2}=\tilde\lambda^-_{N-1}$, then $\check q_{\mathcal N-2}=\check\nu_{\mathcal N-2}=\tilde q_{N-1}$.
A pair of arcs $(\check\lambda_{k-1},\check\lambda_k)$ such that $\check T_k\subset\tilde T_j$ is a coherent pizza slice for $f$,
and the pair of indices $(k-1,k)$ of $\check\Lambda$, is \emph{primary}.
In that case, $\check\beta_k=\tilde\beta_j,\; \check q_{k-1}=\tilde q_{j-1},\; \check q_k=\tilde q_j,\; \check Q_k=\tilde Q_j$,
 and $\check\mu_k(q)=\tilde\mu_j(q)$ is an affine function on $\check Q_k$, or $\check\mu_k(\check q_k)=\check\beta_k$ when
 $\check Q_k=\{\check q_k\}$ is a point.
If a pair $(k-1,k)$ of indices of $\check\Lambda$ is not primary, then it is \emph{secondary}.\newline
If $\check\lambda_k\subset\tilde D_j$, where $\tilde D_j$ is a maximum zone of $\tilde\Lambda$, then $\check\lambda_k$ is a \emph{maximum arc}
of $\check\Lambda$ and $k$ is a \emph{maximum index} of $\check\Lambda$.
In that case, $\check q_k=\tilde q_j=\bar q_i$, $\check\beta_k=\tilde\beta_j$ and $\check\beta_{k+1}=\tilde\beta_{j+1}$.\newline
If $\check T_k=\tilde T_j$, where $(j-1,j)$ is a gap pair of indices of $\tilde\Lambda$,
then $(k-1,k)$ is a \emph{gap pair} of indices of $\check\Lambda$,
$\check q_{k-1}=\tilde q_{j-1},\; \check q_k=\tilde q_j$ and $\check\beta_k=\tilde\beta_j<\min(\check q_{k-1},\check q_k)$.\newline
\emph{Twin pairs of arcs} of $\check\Lambda$ are defined as either $(\check\lambda_{k-1},\check\lambda_k)=(\tilde\lambda^-_j,\tilde\lambda^+_j)$,
 where $\tilde\lambda^-_j$ and $\tilde\lambda^+_j$ are twin arcs in $\tilde D_j$, or $(\check\lambda_0,\check\lambda_1)$ when $\check\lambda_1=\tilde\lambda_0^+$,
or $(\check\lambda_{\mathcal N-2},\check\lambda_{\mathcal N-1})$ when $\check\lambda_{\mathcal N-2}=\tilde\lambda_{N-1}^-$, and $(k-1,k)$ is called a \emph{twin pair} of indices of $\check\Lambda$.
The number $\mathcal N$ of arcs $\check\lambda_k$ of $\check\Lambda$ is equal to the number $N$ of arcs $\tilde\lambda_j$ of $\tilde\Lambda$
plus the number os twin pairs of arcs of $\check\Lambda$.
\end{definition}

\begin{remark}\label{rem:twin-pre-pizza}\normalfont Let $\{\check\lambda_k\}_{k=0}^{\mathcal N-1}$ be the arcs of a twin pre-pizza $\check\Lambda$
on $T$ corresponding to a pre-pizza $\tilde\Lambda$ associated with $f$.
If $(\check\lambda_{k-1},\check\lambda_k)$ is a secondary pair of arcs of $\check\Lambda$, then exactly one of the following three properties holds:\newline
{\bf(A)} $\check\lambda_{k-1}$ and $\check\lambda_k$ are twin arcs and $\check q_{k-1}=\check q_k=\check\beta_k$.
If $1<k<\mathcal N-1$, then $\check\lambda_{k-1}$ and $\check\lambda_k$ belong to a transverse pizza zone $\tilde D_j$ of
$\tilde\Lambda$ adjacent to two coherent pizza slices, such that $\tilde D_j$ is not a maximum zone of $\tilde\Lambda$,
thus both $(k-2,k-1)$ and $(k,k+1)$ are primary pairs of indices of $\check\Lambda$ corresponding to primary pairs $(j-1,j)$ and $(j,j+1)$ of indices of $\tilde\Lambda$, such that
\begin{equation}\label{maxtwin}
\max(\check\mu_{k-1}(\check q_{k-1}),\check\mu_{k+1}(\check q_k))=\check\beta_k,\quad\check q_{k-1}=\check q_k<\max(\check q_{k-2},\check q_{k+1})=\max(\tilde q_{j-1},\tilde
q_{j+1}).
\end{equation}
If $k=1$, then $\check\lambda_0=\gamma_1$ is a transverse boundary arc of $T$ which is not a maximum arc of $\tilde\Lambda$,
thus $(1,2)$ is a primary pair of indices of $\check\Lambda$
corresponding to a primary pair $(0,1)$ of indices of $\tilde\Lambda$, such that
\begin{equation}\label{maxtwinB}
 \check\mu_2(\check q_1)=\check\beta_1,\quad\check q_0=\check q_1<\check q_2=\tilde q_1\quad\text{(see (\ref{maxpreB}) in Lemma \ref{max-pre-pizza})}.
\end{equation}
If $k=\mathcal N-1$, then $\gamma_2$ is a transverse boundary arc of $T$ which is not a maximum arc of $\tilde\Lambda$,
thus $(\mathcal N-3,\mathcal N-2)$ is a primary pair of indices of $\check\Lambda$, corresponding to a primary pair $(N-2,N-1)$ of indices of $\tilde\Lambda$,
such that
\begin{equation}\label{maxtwinA}
 \check\mu_{\mathcal N-2}(\check q_{\mathcal N-2})=\check\beta_{\mathcal N-1},\quad\check q_{\mathcal N-2}=\check q_{\mathcal N-1}<\check q_{\mathcal N-3}=\tilde
 q_{N-2}\quad\text{(see (\ref{maxpreA}) in Lemma \ref{max-pre-pizza})}.
\end{equation}
{\bf(B)} $T(\check\lambda_{k-1},\check\lambda_k)$
is a transverse pizza slice of $\Lambda$, thus $\check q_{k-1}=\check\nu_{k-1}\ne\check q_k=\check\nu_k$
and $\check\beta_k=\min(\check q_{k-1},\check q_k)$.\newline
{\bf(C)} $T(\check\lambda_{k-1},\check\lambda_k)$
is the union of two transverse pizza slices of $\Lambda$,
thus $\check\beta_k<\min(\check q_{k-1},\check q_k)$, and $(k-1,k)$ is a gap pair of indices of $\check\Lambda$.
\end{remark}

\begin{figure}
\centering
\includegraphics[width=6in]{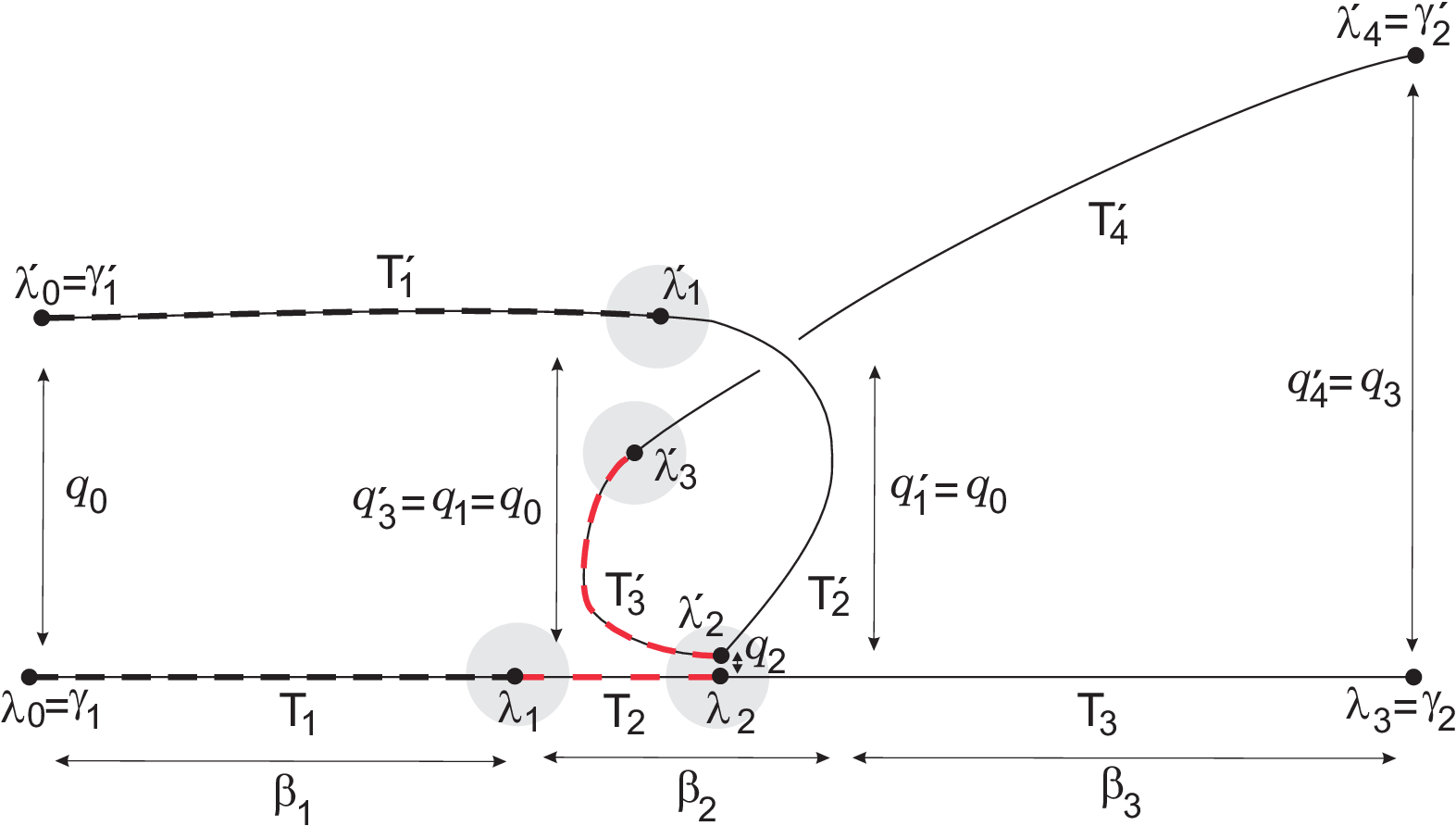}
\caption{A normal pair of H\"older triangles in Example \ref{example:varpi}. Coherent pizza slices are shown in dashed lines. }\label{fig:varpi}
\end{figure}

\begin{figure}
\centering
\includegraphics[width=5in]{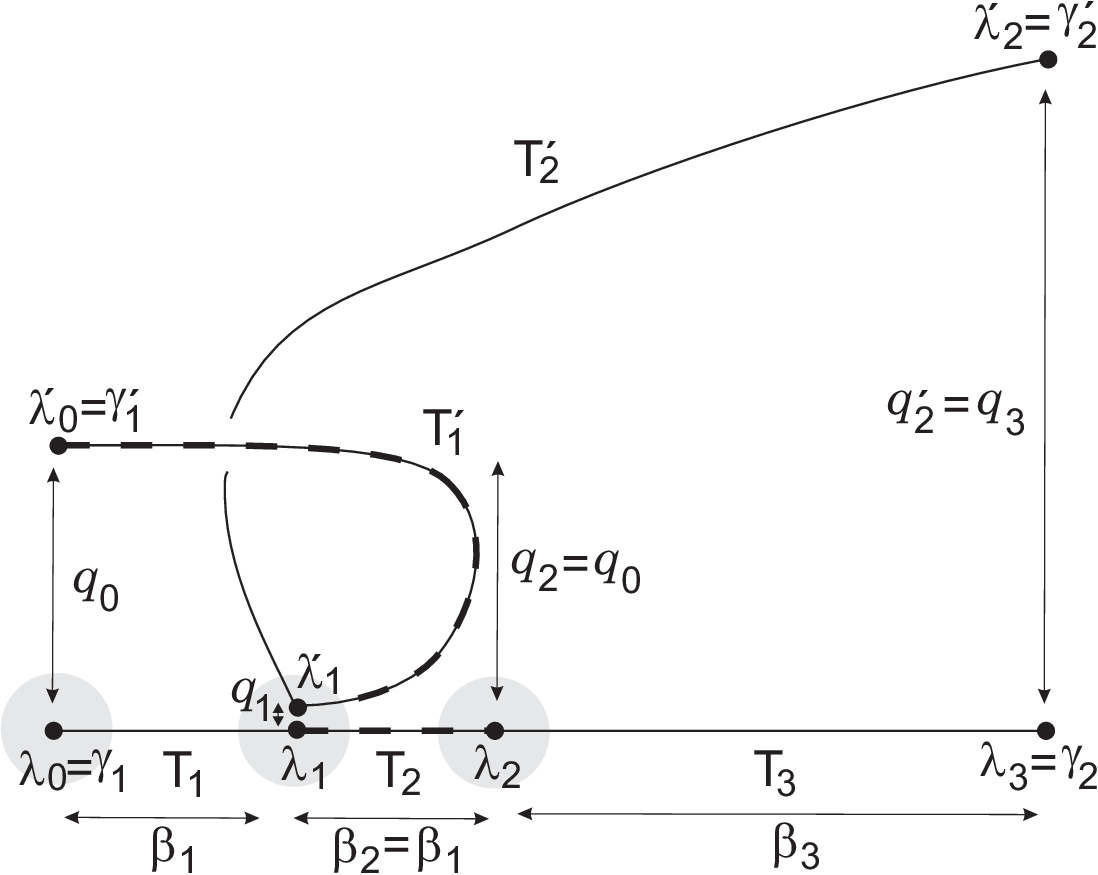}
\caption{A normal pair of H\"older triangles in Example \ref{example:varpi2}. Coherent pizza slices are shown in dashed lines.}\label{fig:varpi2}
\end{figure}

\begin{example}\label{example:varpi}\normalfont
A normal pair $(T,T')$ of H\"older triangles in Figure \ref{fig:varpi} has different numbers of pizza slices in $T$ and $T'$.
This is related to the presence of twin arcs in $T$ and the absence of them in $T'$.
The minimal pizza $\Lambda$ on $T$ associated with the distance function $f$ has three pizza slices $T_1,\,T_2$ and $T_3$,
with exponents $\beta_1,\,\beta_2$ and $\beta_3$, such that $\beta_2>\beta_1$ and $\beta_2>\beta_3$.
The minimal pizza $\Lambda'$ on $T'$ associated with the distance function $g$ has four pizza slices $T'_1,\,T'_2,\,T'_3$ and $T'_4$,
with exponents $\beta'_1=\beta_1,\,\beta'_2=\beta'_3=\beta_2$ and $\beta'_4=\beta_3$.
The pizza slices $T_1$ and $T_2$ of $\Lambda$, and the pizza slices $T'_1$ and $T'_3$ of $\Lambda'$, are coherent.
The orders $q_\ell$ of $f$ on the pizza zones $D_\ell$ of $\Lambda$ satisfy inequalities
$q_2>q_0=q_1>q_3$, and the orders $q'_{\ell'}$ of $g$ on the pizza zones $D'_{\ell'}$ of $\Lambda'$ satisfy equalities $q'_0=q'_1=q'_3=q_0$, $q'_2=q_2$ and $q'_4=q_3$.
The pizza zones $D_0=\{\gamma_1\}=M_1$ and $D_2=M_2$ are the only maximum zones of $\Lambda$, and the permutation $\sigma$ is trivial: $\sigma(1)=1$ and $\sigma(2)=2$.
The correspondence $\tau$, where $\tau(1)=1$ and $\tau(2)=3$, is positive on $T_1$ and negative on $T_2$.

There are no non-essential pizza zones in $T$, thus pre-pizza $\tilde\Lambda$ is the same as pizza:
$N=n=4$ and $\tilde\lambda_j=\lambda_j$ for $j=0,\ldots,3$.
The zones $\tilde D_1=D_1$ and $\tilde D_2=D_2$ of $\tilde\Lambda$ are transverse: $\mu(D_1)=q_0$ and $\mu(D_2)=q_2$.
The arc $\tilde\lambda_1=\lambda_1$, a common boundary arc of two coherent pizza slices, belongs to a transverse pizza zone $D_1$ of $\Lambda$ which is not a maximum zone.
By Definition \ref{def:twin-pre-pizza}, it should be replaced by twin arcs $\tilde\lambda^-_1$ and $\tilde\lambda^+_1$ in the twin pre-pizza $\check\Lambda$.
Thus $\mathcal N=5$, $\check\lambda_0=\gamma_1$, $\check\lambda_1=\tilde\lambda^-_1$,
$\check\lambda_2=\tilde\lambda^+_1$, $\check\lambda_3=\tilde\lambda_2=\lambda_2$, $\check\lambda_4=\gamma_2$.
There are no non-essential pizza zones in $T'$, thus pre-pizza $\tilde\Lambda'$ is the same as pizza: $N'=n'=5$ and $\tilde\lambda'_j=\lambda'_j$ for $j=0,\ldots,4$.
The zones $\tilde D'_1=D'_1$, $\tilde D'_2=D'_2$ and $\tilde D'_3=D'_3$ are transverse: $\mu(D'_1)=\mu(D'_3)=q_0$ and $\mu(D'_2)=q_2$.
Since each of the arcs $\lambda'_1,\;\lambda'_2$ and $\lambda'_3$ in these zones is not a common boundary arc of two
coherent pizza slices, the twin pre-pizza $\check\Lambda'$ is the same as pre-pizza $\tilde\Lambda'$:
$\mathcal N'=5=\mathcal N$ and $\check\lambda'_k=\tilde\lambda'_k=\lambda'_k$ for $k=0,\ldots,4$.
\end{example}

\begin{example}\label{example:varpi2}\normalfont
A normal pair $(T,T')$ of H\"older triangles in Figure \ref{fig:varpi2} has
different numbers of pizza slices in $T$ and $T'$. There are no twin arcs in $T$, but there is a twin arc in $T'$ corresponding to its boundary arc $\gamma'_1$.
The minimal pizza $\Lambda$ on $T$ associated with the distance function $f$ has three pizza slices $T_1,\,T_2$ and $T_3$,
with exponents $\beta_1,\,\beta_2$ and $\beta_3$, such that $\beta_1=\beta_2>\beta_3$.
The minimal pizza $\Lambda'$ on $T'$ associated with the distance function $g$ has two pizza slices $T'_1$ and $T'_2$,
with exponents $\beta'_1=\beta_1$ and $\beta'_2=\beta_3$.
The pizza slices $T_2$ of $\Lambda$ and $T'_1$ of $\Lambda'$ are coherent.
The orders $q_\ell$ of $f$ on the pizza zones $D_\ell$ of $\Lambda$ satisfy inequalities
$q_1>q_0>q_3$, and the orders $q'_{\ell'}$ of $g$ on the pizza zones $D'_{\ell'}$ of $\Lambda'$ satisfy equalities $q'_0=q_0,\;q'_1=q_1$ and $q'_2=q_3$.
The pizza zones $D_0=\{\gamma_1\}$ and $D_3$ are minimum zones of $\Lambda$, the pizza zone $D_1$ is the only maximum zone of $\Lambda$, and the permutation $\sigma$ is
trivial: $\sigma(1)=1$.
The correspondence $\tau$, where $\tau(2)=1$, is negative on $T_2$.

There are no non-essential pizza zones of $\Lambda$, thus $\tilde\Lambda=\Lambda,\;N=n=4$ and $\tilde\lambda_j=\lambda_j$ for $j=0,\ldots,3$.
The zones $\tilde D_1=D_1$ and $\tilde D_2=D_2$ are transverse: $\mu(D_1)=q_0$ and $\mu(D_2)=q_2$.
Since there is a single coherent pizza slice $T_2$ of $\Lambda$, and both boundary arcs of $T_2$ are interior arcs of $T$,
there are no twin arcs in $T$, thus $\check\Lambda=\tilde\Lambda$, $\mathcal N=N=4$ and $\check\lambda_k=\tilde\lambda_k$ for $k=0,\ldots,3$.
There are no non-essential pizza zones in $T'$, thus $\tilde\Lambda'=\Lambda',\;N'=n'=3$
and $\tilde\lambda'_j=\lambda'_j$ for $j=0,1,2$.
Since $\tilde\lambda'_0=\lambda'_0$ is a transverse boundary arc of $T'$ adjacent to a coherent pizza slice $\tilde T'_1=T'_1$ of $\Lambda'$,
and $\{\lambda'_0\}$ is a minimum zone of $\Lambda'$, the twin pre-pizza $\check\Lambda'$ contains a twin arc $\check\lambda'_1$
of the boundary arc $\check\lambda'_0=\tilde\lambda'_0=\gamma'_1$ of $T'$, such that $tord(\check\lambda'_0,\check\lambda'_1)=q'_0,\;
\mathcal N'=4,\;\check\lambda'_2=\tilde\lambda'_1$ and $\check\lambda'_3=\tilde\lambda'_2$.
\end{example}

\begin{remark}\label{rem:mathcalN}\normalfont
In Examples \ref{example:varpi} and \ref{example:varpi2} we have $\mathcal N'=\mathcal N$.
We are going to prove (see Lemma \ref{lem:mathcalN}) that this equality holds for any normal pair of H\"older triangles.
\end{remark}

\begin{lemma}\label{lem:mathcalN}
Let $(T,T')$ be a normal pair of H\"older triangles, with the distance functions $f(x)=dist(x,T')$ and $g(x')=dist(x',T)$
on $T$ and $T'$, respectively. Then the number $\mathcal N$ of arcs $\check\lambda_k$ in a twin pre-pizza $\check\Lambda$ on $T$
associated with $f$ is the same as
the number of arcs $\check\lambda'_{k'}$ in a twin pre-pizza $\check\Lambda'$ on $T'$ associated with $g$.
\end{lemma}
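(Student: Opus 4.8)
The plan is to count the arcs of $\check\Lambda$ and of $\check\Lambda'$ by splitting them into two groups and matching the groups separately, using the characteristic permutation $\sigma$ for one group and the characteristic correspondence $\tau$ for the other. First I would partition the arcs $\{\check\lambda_k\}_{k=0}^{\mathcal N-1}$ of $\check\Lambda$ into the set $\mathcal A_1$ of arcs that are maximum arcs of $\check\Lambda$ or boundary arcs of $T$, and the set $\mathcal A_2$ of all the remaining arcs. Since the maximum arcs of $\check\Lambda$ are exactly the arcs lying in the maximum zones of $\Lambda$ (Definition \ref{def:twin-pre-pizza}), and no maximum zone is affected by the twinning operations, $\mathcal A_1$ is in bijection with $\M\cup\{\gamma_1,\gamma_2\}$, so $|\mathcal A_1|=n$ with $n$ as in Definition \ref{def:pi}. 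By Proposition \ref{sigma}, $T$ and $T'$ have the same number $m$ of maximum zones, and $\gamma_1$ (resp.\ $\gamma_2$) is a maximum zone of $\Lambda$ if and only if $\gamma'_1$ (resp.\ $\gamma'_2$) is a maximum zone of $\Lambda'$; hence the analogous set $\mathcal A_1'$ for $\check\Lambda'$ also has $n$ elements, and the statement reduces to $|\mathcal A_2|=|\mathcal A_2'|$.

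Next I would identify $\mathcal A_2$. Every arc of $\check\Lambda$ is either an essential arc of $\Lambda$ or a twin arc (Definitions \ref{def:primary}, \ref{def:pre-pizza}, \ref{def:twin-pre-pizza}); an essential arc that is neither a maximum zone nor a boundary arc of $T$ is a primary arc, hence a boundary arc of a coherent pizza slice, and every twin arc is, by construction, a boundary arc of a coherent pizza slice. So $\mathcal A_2$ is exactly the set of pizza zones of $\check\Lambda$ that bound a coherent pizza slice and are neither maximum zones nor boundary arcs of $T$. For each coherent pizza slice $T_\ell$ of $\Lambda$ (equivalently, of $\check\Lambda$) I would let $a(\ell)\in\{0,1,2\}$ be the number of its two boundary zones that lie in $\mathcal A_2$, and let $c$ be the number of pizza zones of $\check\Lambda$ that are common boundary zones of two coherent pizza slices and lie in $\mathcal A_2$. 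Counting each arc of $\mathcal A_2$ with multiplicity $1$ or $2$ according to the number of coherent pizza slices it bounds (a pizza zone bounds at most two pizza slices) gives
\begin{equation*}
|\mathcal A_2|=\sum_{\ell:\,T_\ell\in\mathcal L}a(\ell)-c ,
\end{equation*}
and the same identity holds for $\check\Lambda'$.

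Finally I would match the two sides using $\tau$. The correspondence $\tau$ is a bijection between the coherent pizza slices of $\Lambda$ and of $\Lambda'$, and, extended to the boundary zones of coherent pizza slices as in Remark \ref{rem:tau}, it sends the two boundary zones of $T_\ell$ to the two boundary zones of $T'_{\tau(\ell)}$, preserving the exponent $\mu$. Proposition \ref{sigma=tau} shows that a boundary zone of $T_\ell$ is a maximum zone of $\Lambda$ if and only if its image is a maximum zone of $\Lambda'$, and the equality $\mu(\tau(D))=\mu(D)$ together with condition (\ref{tord-tord}) and the compatibility of $\Lambda$ and $\Lambda'$ (Definition \ref{compatible}) forces a boundary zone of $T_\ell$ to be a boundary arc of $T$ if and only if its image is a boundary arc of $T'$, with $\gamma_1\leftrightarrow\gamma'_1$ and $\gamma_2\leftrightarrow\gamma'_2$. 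Hence a boundary zone of $T_\ell$ lies in $\mathcal A_2$ if and only if its image lies in $\mathcal A_2'$, so $a(\ell)=a'(\tau(\ell))$ and the two sums agree. By Remark \ref{rem:coherent-zone}, $\tau$ also restricts to a bijection between the coherent pizza zones common to two coherent pizza slices of $\Lambda$ and of $\Lambda'$; since, after twinning, a common boundary zone of two coherent pizza slices that lies in $\mathcal A_2$ is necessarily such a coherent zone (every transverse common zone having been split into a pair of twin arcs), this together with Proposition \ref{sigma=tau} gives $c=c'$. Therefore $|\mathcal A_2|=|\mathcal A_2'|$, and combining with $|\mathcal A_1|=|\mathcal A_1'|=n$ yields $\mathcal N=|\mathcal A_1|+|\mathcal A_2|=\mathcal N'$.

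I expect the main obstacle to be the bookkeeping in this last step: one has to check, boundary zone by boundary zone, that the extended correspondence of Remark \ref{rem:tau} together with the three twinning operations of Definition \ref{def:twin-pre-pizza} really preserves $\mathcal A_2$-membership — in particular that a boundary arc of $T$ adjacent to a coherent slice is replaced by a twin arc (and so leaves $\mathcal A_2$) exactly when the corresponding boundary arc of $T'$ is, and that no interior pizza zone adjacent to a coherent slice is a singular zone, so that a singular boundary zone of a coherent slice must in fact be a boundary arc of the triangle. This is precisely where the apparent asymmetry of Examples \ref{example:varpi} and \ref{example:varpi2}, in which $\Lambda$ and $\Lambda'$ have different numbers of pizza slices, is reconciled.
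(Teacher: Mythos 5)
Your overall strategy (partition the arcs of $\check\Lambda$ into maximum/boundary arcs $\mathcal A_1$ and the rest $\mathcal A_2$, then count $\mathcal A_2$ by inclusion--exclusion over coherent pizza slices using $\tau$) is a genuinely different decomposition from the paper's, which derives and term-by-term matches the explicit formula $\mathcal N=2L-n_2-m_2+m_0+\delta$, citing \cite[Propositions 3.9 and 4.10]{BG}. Your first step, $|\mathcal A_1|=|\mathcal A_1'|=n$ via Proposition \ref{sigma}, and your identification $c=c'$ via Remark \ref{rem:coherent-zone}, are fine.

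However, the key claim in your last step is false. You assert that ``$\mu(\tau(D))=\mu(D)$ together with condition (\ref{tord-tord}) and compatibility forces a boundary zone of $T_\ell$ to be a boundary arc of $T$ if and only if its image is a boundary arc of $T'$.'' Remark \ref{rem:tau} explicitly states the opposite: ``A boundary arc of $T$ adjacent to a coherent pizza slice of $\Lambda$ may be assigned by $\tau$ to an interior pizza zone of $\Lambda'$, and an interior pizza zone of $\Lambda$ may be assigned to a boundary arc of $T'$.'' Example \ref{example:varpi2} realizes this: $\tau$ is negative on $T_2$, so $\tau(D_2)=D'_0=\{\gamma'_1\}$, with $D_2$ an interior zone but $D'_0$ a boundary arc of $T'$. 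In that example your conclusion $a(2)=a'(1)$ still happens to hold, but only because $\gamma'_1$ is a transverse minimum zone and therefore acquires a twin arc (operation~2 of Definition \ref{def:twin-pre-pizza}), which puts the relevant boundary arc of $\check T'_2$ into $\mathcal A_2'$. The twinning operations are precisely what compensate for the breakdown of boundary-arc preservation, so the invariant you must verify is not ``$D$ is a boundary arc iff $\tau(D)$ is,'' but rather ``the boundary arc of the coherent pizza slice of $\check\Lambda$ on the $D$-side lies in $\mathcal A_2$ iff its $\check\Lambda'$-counterpart lies in $\mathcal A_2'$,'' which requires a case analysis in which transversality of $D$ (not its boundary/interior status) plays the decisive role, together with Proposition \ref{sigma=tau} for the maximum-zone case. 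As written the proof does not establish $a(\ell)=a'(\tau(\ell))$, and the obstacle you flag at the end is not a bookkeeping nuisance but a real gap in the argument.
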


\begin{proof}
The characteristic permutation $\sigma$ defines a bijection between the sets of maximum zones of $\check\Lambda$
and $\check\Lambda'$. Thus the two sets have the same cardinality $m$. 
Similarly, the characteristic correspondence $\tau$ (see Definition \ref{def:tau}) defines a bijection between the sets of coherent pizza slices of $\check\Lambda$ and $\check\Lambda'$. Thus the two sets have the same cardinality $L$. 
It follows from \cite[Proposition 3.9]{BG} that there is a one-to-one correspondence between the sets of coherent interior pizza zones of
$\check\Lambda$ and $\check\Lambda'$.
Thus the two sets have the same cardinality $n_2$. 
It follows from \cite[Proposition 4.10]{BG} that there is a one-to-one correspondence between the set of transverse maximum zones of $\check\Lambda$
 adjacent to  two coherent pizza slices and the set of transverse maximum zones of $\check\Lambda'$ adjacent to two coherent pizza slices.
Thus the two sets have the same cardinality $m_2$. 
It follows from \cite[Proposition 4.10]{BG} that there is a one-to-one correspondence between the set of transverse maximum zones of $\check\Lambda$ not adjacent to any coherent pizza slices and the set of transverse maximum zones of $\check\Lambda'$ not adjacent to any coherent pizza slices.
Thus the two sets have the same cardinality $m_0$. 
The number $\delta$ of transverse boundary arcs of $T$ which are minimum zones of $\check\Lambda$ is the same as the number of
transverse boundary arcs of $T'$ which are minimum zones of $\check\Lambda'$.

We are going to prove that $\mathcal N = 2L-n_2-m_2+m_0+\delta$.
Note that the number in the right-hand side is the same for $\check\Lambda$ and $\check\Lambda'$.

The number $2L$ counts the boundary arcs of coherent pizza slices of $\Lambda$, with the common boundary arcs of two coherent pizza slices being counted twice.
If such a boundary arc belongs to a coherent pizza zone, then it must be counted once, and the number $n_2$ should be subtracted from $2L$.
If such an arc belongs to a transverse maximum zone, then it must be counted once, and the number $m_2$ should be subtracted.\newline
To each of the remaining common boundary arcs of two coherent slices correspond two twin arc of $\check\Lambda$, thus such arcs must be counted twice,
as they are counted in $2L$. 
If a boundary arc $\check\gamma$ of $T$ is adjacent to a coherent pizza slice, it is counted once in $2L$.
However, if $\check\gamma$ is transverse and $\{\check\gamma\}$ is a minimum zone, then it corresponds to twin arcs in $\check\Lambda$,
thus it should be counted twice.
Accordingly, the number $\delta$ should be added.\newline
This takes care of all boundary arcs of coherent pizza slices in $\check\Lambda$.\newline
 The remaining arcs in $\check\Lambda$ correspond to maximum zones not adjacent to any coherent pizza slices, thus the number $m_0$ should be added.
\end{proof}

\begin{definition}\label{def:varpi}\normalfont
Let $(T,T')$ be a normal pair of H\"older triangles with the distance functions $f(x)=dist(x,T')$ and $g(x')=dist(x',T)$ on $T$ and $T'$, respectively.
If $\check\Lambda=\{\check\lambda_k\}_{k=0}^{\mathcal N-1}$ is a twin pre-pizza on $T$ associated with $f$, and  $\check\Lambda'=\{\check\lambda'_{k'}\}_{k'=0}^{\mathcal N-1}$ is a twin pre-pizza on $T'$ associated with $g$, then
the combination of the characteristic permutation $\sigma$ and the characteristic correspondence $\tau$ of the pair $(T,T')$, together with assignments
$\check\lambda_0\mapsto\check\lambda'_0$ and $\check\lambda_{\mathcal N-1}\mapsto\check\lambda'_{\mathcal N-1}$
for the boundary arcs of $T$ and $T'$, defines a one-to-one correspondence $\check\tau$ between the sets of arcs $\{\check\lambda_k\}$ and  $\{\check\lambda'_{k'}\}$:
if $(\check\lambda_{k-1},\check\lambda_k)$ is a primary pair of arcs of $\check\Lambda$ corresponding to a coherent pizza slice $T_\ell$ of $\Lambda$,
then the coherent pizza slice of $\check\Lambda'$ corresponding to the coherent pizza slice $T'_{\tau(\ell)}$ of $\Lambda'$
is bounded by the arcs $\check\tau(\lambda_{\ell-1})$ and $\check\tau(\lambda_\ell)$ of $\check\Lambda'$,
in the same (resp., opposite) order when $\tau$ is positive (resp., negative) on $T_\ell$,
and if $\check\lambda_k$ is the arc of $\check\Lambda$ contained in a maximum zone $M_i$ of $\Lambda$, then the maximum zone $M'_{\sigma(i)}$
of $\Lambda'$ contains the arc $\check\tau(\check\lambda_k)$ of $\check\Lambda'$.
Remark \ref{rem:coherent-zone} implies that the definition of $\check\tau$ is consistent
when an arc of $\check\Lambda$ is a boundary arcs of a coherent pizza slice of $\Lambda$ and belongs either to a coherent pizza zone or to a maximum zone of $\Lambda$.
Since the sets of arcs of $\check\Lambda$ and $\check\Lambda'$ are ordered according to orientations of $T$ and $T'$
and have the same number of elements $\mathcal N$,
this defines a permutation $\varpi$ of the set $[\mathcal N]=\{0,\ldots,\mathcal N-1\}$.
\end{definition}

\begin{remark}\label{rem:varpi+}\normalfont
The permutation $\varpi$ in Example \ref{example:varpi} is $(0,1,3,2,4)$.
Note that the zone $\check D_1$ is ``split'' (see Remark \ref{rem:tau}): the arc $\check\lambda_1=\tilde\lambda^-_1$ is mapped to $\check\lambda'_1=\lambda'_1$, but the arc
$\check\lambda_2=\tilde\lambda^+_1$ is mapped to $\check\lambda'_3=\lambda'_3$.
\end{remark}

\begin{definition}\label{determined-varpi}\normalfont
Let $\Lambda$ be a minimal pizza on a normally embedded H\"older triangle $T$
with $m$ maximum zones and $L$ coherent pizza slices, associated with a
non-negative Lipschitz function $f$ on $T$, and let $(\sigma,\upsilon,s)$ be an allowable triple
of a permutation $\sigma$ of the set $[m]=\{1,\ldots,m\}$, a permutation $\upsilon$ of the set $[L]=\{1,\ldots,L\}$ and a sign function $s:[L]\to\{+,-\}$
(see Definition \ref{def:allowable}). Let $\check\Lambda=\{\check T_k\}_{k=1}^{\mathcal N}$ be the corresponding twin pre-pizza associated with $f$.
Then there is a unique permutation $\varpi$ of the set $[\mathcal N]=\{0,\ldots,\mathcal N-1\}$ with the following properties:\newline
$\mathbf 1)$ $\varpi(0)=0$ and $\varpi(\mathcal N-1)=\mathcal N-1$.\newline
$\mathbf 2)$ $\varpi$ is compatible with the permutation $\sigma$: if the arcs $\check\lambda_k$ and $\check\lambda_l$ of $\check\Lambda$
belong to maximum zones $M_i$ and $M_j$
of $\Lambda$, then $\varpi(k)<\varpi(l)$ if, and only if, $\sigma(i)<\sigma(j)$.\newline
$\mathbf 3)$ $\varpi$ is compatible with the permutation $\upsilon$: if the arcs $\check\lambda_k$ and $\check\lambda_l$ of $\check\Lambda$, where $k\ne l$, are boundary arcs
of the $i$-th and $j$-th coherent pizza slice of $\check\Lambda$, then $\varpi(i)<\varpi(j)$ if, and only if, $\upsilon(i)<\upsilon(j)$.\newline
$\mathbf 4)$ $\varpi$ is compatible with the sign function $s$ in the following sense: if $\check\lambda_{k-1}$ and $\check\lambda_k$ are boundary arcs of the $j$-th coherent pizza slice $\check T_k$
of $\check\Lambda$, then $\varpi(k-1)=\varpi(k)-1$ when $s(j)=+$ and $\varpi(k-1)=\varpi(k)+1$ when $s(j)=-$.\newline
$\mathbf 5)$ $\varpi$ is compatible with the permutation $\omega$ of the set $[K]=\{1,\ldots,K\}$, where $K=m+L$ (see Definition \ref{omega} and Proposition \ref{prop:allowable}) in the following sense: if an arc $\check\lambda_l$ of $\check\Lambda$ belongs to a maximum zone $M_i$, and if $\check T_k$ is a coherent pizza slice of $\check\Lambda$,
then $\varpi(l)$ is less than at least one of the indices $\varpi(k-1)$ and $\varpi(k)$ if, and only if,
the image by $\omega$ of the index in $[K]$ corresponding to $M_i$ is less than the image by $\omega$ of the index in $[K]$ corresponding to $\check T_k$.
Since $\omega$ is determined by $\Lambda,\;\sigma,\;\upsilon$ and $s$ (see Remark \ref{omega-remark}) this condition may be reformulated as follows:\newline
$\mathbf 5')$ If an arc $\check\lambda_l$ of $\check\Lambda$ belongs to a maximum zone $M_i$ of $\Lambda$, and if
$\check T_k=T(\check\lambda_{k-1},\check\lambda_k)$ is a coherent pizza slice of $\check\Lambda$
which belongs to a caravan $C$,
then $\varpi(l)<\max(\varpi(k-1),\varpi(k))$ if, and only if, $\sigma(i)\le j_-(C)$ (see Definition \ref{def:order} and Proposition \ref{prop:order}).\newline
A permutation $\varpi$ with these properties is called \emph{determined} by $\Lambda,\;\sigma,\;\upsilon$ and $s$.
\end{definition}

\begin{proposition}\label{sigma-upsilon-varpi}
Let $(T,T')$, be a normal pair of H\"older triangles with the minimal pizzas
$\Lambda$ and $\Lambda'$ on $T$ and $T'$ associated with the distance functions $f(x)=dist(x,T')$ and $g(x')=dist(x',T)$, respectively.
Let $m$ be the number of maximum zones of $\Lambda$. Let a permutation $\sigma$
 of the set $[m]=\{1,\ldots,m\}$ be the characteristic
permutation of the pair $(T,T')$.
Let $L$ be the number of coherent pizza slices of $\Lambda$,
and let the permutation $\upsilon$ of the set $[L]=\{1,\ldots,L\}$ and the sign function $s:[L]\to\{+,-\}$ be
defined by the characteristic correspondence $\tau$ of the pair $(T,T')$.
Let $\mathcal N$ be the number of arcs of the twin pre-pizza $\check\Lambda$ on $T$, and
let $\varpi$ of the permutation of the set $[\mathcal N]=\{0,\ldots,\mathcal N-1\}$.
Then the permutation $\varpi$ is determined by $\Lambda,\;\sigma,\;\upsilon$ and $s$ (see Definition \ref{determined-varpi}).
If $(T,T'')$ is another normal pair with the same minimal pizza $\Lambda$ on $T$, the same characteristic permutation $\sigma$, the same
permutation $\upsilon$ and the same sign function $s$, then the pairs $(T,T')$ and $(T,T'')$ have the same permutation $\varpi$ (see
Remark \ref{omega-remark}).
\end{proposition}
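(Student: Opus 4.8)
The plan is to show that the permutation $\varpi$ attached to the pair $(T,T')$ by Definition \ref{def:varpi} satisfies properties $\mathbf 1)$--$\mathbf 5')$ of Definition \ref{determined-varpi}, i.e.\ that it is \emph{determined} by $\Lambda,\sigma,\upsilon,s$, and then to invoke the uniqueness of such a permutation to obtain the last assertion. First I would note that $\varpi$ is a well-defined permutation of $[\mathcal N]$: Lemma \ref{lem:mathcalN} guarantees that the twin pre-pizzas $\check\Lambda$ on $T$ and $\check\Lambda'$ on $T'$ have the same number $\mathcal N$ of arcs, and every arc of $\check\Lambda$ is a maximum arc, a boundary arc of a coherent pizza slice, or one of the two endpoints $\check\lambda_0,\check\lambda_{\mathcal N-1}$ --- an essential pizza zone is a boundary arc of $T$, a maximum zone, or a primary zone, and every twin arc added in Definition \ref{def:twin-pre-pizza} is a boundary arc of a coherent pizza slice of $\check\Lambda$ --- so $\check\tau$ of Definition \ref{def:varpi} is defined on all of them, consistently by Remark \ref{rem:coherent-zone} and Proposition \ref{sigma=tau}.

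Next I would read off properties $\mathbf 1)$--$\mathbf 4)$ directly from Definition \ref{def:varpi}. Property $\mathbf 1)$ holds because $\check\tau$ is built to send $\check\lambda_0\mapsto\check\lambda'_0$ and $\check\lambda_{\mathcal N-1}\mapsto\check\lambda'_{\mathcal N-1}$. Property $\mathbf 2)$ follows from the $\sigma$-part of $\check\tau$: an arc $\check\lambda_k\in M_i$ is sent into $M'_{\sigma(i)}$, and by Proposition \ref{sigma} together with Remark \ref{non-arch} the maximum zones $M'_1\prec\ldots\prec M'_m$ of $\Lambda'$ are indexed consistently with orientation of $T'$. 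Property $\mathbf 3)$ follows likewise from the $\tau$-part of $\check\tau$ and Definition \ref{upsilon}, since the coherent pizza slices of $\check\Lambda$ are exactly those of $\Lambda$, ordered by orientation of $T$, and $\tau$ takes the $i$-th of them to the $\upsilon(i)$-th coherent pizza slice of $\Lambda'$. Property $\mathbf 4)$ is the sign bookkeeping: by Definition \ref{def:varpi} the boundary arcs $\check\lambda_{k-1},\check\lambda_k$ of the $j$-th coherent pizza slice $\check T_k$ go to the two boundary arcs of the corresponding coherent pizza slice of $\check\Lambda'$, in the same order when $\tau$ is positive on $\check T_k$ and in the opposite order when $\tau$ is negative; since those two arcs are consecutive in $\check\Lambda'$ (Remark \ref{rem:coherent-zone}), this gives $\varpi(k-1)=\varpi(k)-1$ when $s(j)=+$ and $\varpi(k-1)=\varpi(k)+1$ when $s(j)=-$.

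The main work, and the hard part, will be property $\mathbf 5')$: for an arc $\check\lambda_l\in M_i$ and a coherent pizza slice $\check T_k=T(\check\lambda_{k-1},\check\lambda_k)$ belonging to a caravan $C$, one must show $\varpi(l)<\max(\varpi(k-1),\varpi(k))$ if and only if $\sigma(i)\le j_-(C)$. Here $\max(\varpi(k-1),\varpi(k))$ is the position in $\check\Lambda'$ of the last (in the orientation of $T'$) boundary arc of the coherent pizza slice $\check T'_{k'}=\check\tau(\check T_k)$, while $\varpi(l)$ is the position of $\check\tau(\check\lambda_l)\in M'_{\sigma(i)}$. By Corollary \ref{cor:tied}(B) the caravan $C'=\tau(C)$ contains $\check T'_{k'}$, and by Proposition \ref{prop:order} exactly $j_-(C)$ maximum zones of $\Lambda'$ precede $C'$; since the $M'_j$ are ordered by index, $M'_{\sigma(i)}\prec C'$ iff $\sigma(i)\le j_-(C)$ and $C'\prec M'_{\sigma(i)}$ iff $\sigma(i)>j_-(C)$. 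I would then split into cases according to whether $M'_{\sigma(i)}$ precedes $C'$, follows $C'$, or is a boundary zone of a coherent pizza slice of $C'$ (the last possibility occurring, by Proposition \ref{sigma=tau} and Corollary \ref{cor:tied}(C), precisely when $M_i$ is a boundary zone of a coherent pizza slice of $C$). In the first two cases $\check\tau(\check\lambda_l)$ lies entirely before, resp.\ entirely after, $\check T'_{k'}$ and the equivalence is immediate. The delicate case is when $M'_{\sigma(i)}$ is a boundary zone of some coherent pizza slice of $C'$; there one must use the orientation of the caravan, the common sign of $\tau$ on $C$ (Corollary \ref{cor:tied}(A)), the behaviour of tied pizza slices (Proposition \ref{prop:tied}), and the ``splitting'' of pizza zones by $\tau$ recorded in Remark \ref{rem:tau} and illustrated in Remark \ref{rem:varpi+}, to decide on which side of the rightmost boundary arc of $\check T'_{k'}$ the arc $\check\tau(\check\lambda_l)$ lands; a careful comparison with the definition of $j_-(C)$ in Definition \ref{def:order} then yields the claimed equivalence.

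Finally, having verified $\mathbf 1)$--$\mathbf 5')$ for the geometric $\varpi$, I would record that a permutation of $[\mathcal N]$ with these properties is unique, by an argument parallel to the proof of Proposition \ref{prop:allowable}: Property $\mathbf 2)$ fixes the relative order of $\varpi$ on the maximum arcs, Properties $\mathbf 3)$ and $\mathbf 4)$ fix the image of each coherent pizza slice as a pair of consecutive indices with its orientation and fix the mutual order of these slices, Property $\mathbf 5')$ fixes how the maximum arcs interleave with the coherent pizza slices (the numbers $j_-(C)$ being themselves determined by $\Lambda,\sigma,\upsilon,s$ via Definition \ref{def:order}, Theorem \ref{omega-omega} and Remark \ref{omega-remark}), and Property $\mathbf 1)$ pins down the endpoints; a merge-type reconstruction then recovers $\varpi$ from these data. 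Hence $\varpi$ is determined by $\Lambda,\sigma,\upsilon$ and $s$, and in particular, if $(T,T'')$ is another normal pair with the same $\Lambda$, $\sigma$, $\upsilon$ and $s$, then its permutation $\varpi$ satisfies the same properties $\mathbf 1)$--$\mathbf 5')$ and therefore coincides with that of $(T,T')$.
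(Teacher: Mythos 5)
Your proposal is correct and takes essentially the same approach as the paper: both reduce determination of $\varpi$ to three facts — that $\upsilon$ fixes the order of coherent pizza slices in $\check\Lambda'$, that $s$ fixes the orientation of each such slice, and that Proposition \ref{prop:order} (via $j_-(C)$ and Theorem \ref{omega-omega}) fixes how maximum zones interleave with the caravans — and then conclude uniqueness. The paper phrases this as a direct reconstruction of $\varpi$ from $(\Lambda,\sigma,\upsilon,s)$, while you phrase it as verifying properties $\mathbf 1)$--$\mathbf 5')$ and invoking the uniqueness in Definition \ref{determined-varpi}, but the mathematical content and the cited tools (Proposition \ref{prop:order}, Corollary \ref{cor:tied}, Remark \ref{rem:coherent-zone}) are the same.
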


\begin{proof} According to Theorem \ref{omega-omega}, the combined characteristic permutation $\omega$ of the set $[K]=\{1.\ldots,K\}$,
where $K=m+L$, is determined by the pizza $\Lambda$, permutations $\sigma$ and $\upsilon$, and the sign function $s$.
The order of coherent pizza slices in $\check\Lambda'$ is the same as their order in $\Lambda'$,
which is determined by the permutation $\upsilon$.
The sign function $s$ defines the order of boundary arcs of each coherent pizza slice $\check T'$ of $\check\Lambda'$:
Let $\check T'$ be the $k'$-th coherent pizza slice of $\check\Lambda'$, where $k'=\upsilon(k)$,
and let $\check T$ be the $k$-th coherent pizza slice of $\check\Lambda$.
If $s(k)=+$ (resp., $s(k)=-$) then the boundary arcs of $\check T'$ have the same (resp., opposite) order as the corresponding boundary arcs of $\check T$.
If $\check T'\prec\check T''$ are two coherent pizza slices of $\check\Lambda'$, then either they do not have common boundary arcs
and the boundary arcs of $\check T'$ precede the boundary arcs of $\check T''$, or they have a common boundary arc $\lambda'$
(which belongs either to a coherent pizza zone or to a maximum zone of $\check\Lambda'$) and the other
boundary arc of $\check T'$ precedes $\lambda'$, while $\lambda'$ precedes the other boundary arc of $\check T''$.

To define $\varpi$, it is enough to know the number of maximum zones of $\check\Lambda'$ preceding each of its coherent pizza slice,
but it is the same as the number of maximum zones of $\Lambda'$ preceding each of its coherent pizza slice,
which was defined in Proposition \ref{prop:order}.
\end{proof}

\begin{remark}\label{general-block-remark}\normalfont
The properties of blocks in Lemma \ref{lem:block} hold if we replace the permutation $\chi$ of the set $[n]$ by the permutation $\varpi$
of the set $[\mathcal N]$.
\end{remark}

\begin{proposition}\label{prop:varpi1} Let $(T,T')$ be a normal pair of H\"older triangles with the minimal pizzas $\Lambda$ and $\Lambda'$
on $T$ and $T'$ associated with the distance functions $f(x)=dist(x,T')$ and $g(x')=dist(x',T)$, and let $\check\Lambda$ and $\check\Lambda'$ be the corresponding twin pre-pizzas.
If $\mathcal N$ is the number of arcs in $\check\Lambda$, same as the number of arcs in $\check\Lambda'$,
then the permutation $\varpi$ of the set $[\mathcal N]=\{0,\ldots,\mathcal N-1\}$ in Definition \ref{def:varpi} satisfies the following properties:\newline
{\bf 1.} $\varpi(0)=0$ and $\varpi({\mathcal N}-1)={\mathcal N}-1$.\newline
{\bf 2.} An arc $\check\lambda_k$ of $\check\Lambda$ belongs to a maximum zone $M_i$ if, and only if, an arc $\check\lambda'_{\varpi(k)}$ of $\check\Lambda'$ belongs to the maximum zone $M'_{\sigma(i)}$, thus $\varpi$ is compatible with $\sigma$ on the maximum zones (see Remark \ref{rem:allowable}).\newline
{\bf 3.} The permutation $\varpi$ is compatible with the permutation $\upsilon$, the sign function $s$, and the combined characteristic permutation $\omega$ of the pair $(T,T')$.\newline
{\bf 4.} If $C$ is a caravan of pizza slices $\Lambda$, then the indices in $\check\Lambda$ of the boundary arcs of coherent pizza slices of $C$
are mapped by $\varpi$ to the indices in $\check\Lambda'$ of the boundary arcs of the corresponding coherent pizza slices of the caravan $C'=\tau(C)$
of pizza slices of $\Lambda'$, in the same (resp., opposite) order when $s(C)=+$ (resp., when $s(C)=-$).
In particular, the set of indices in $\check\Lambda$ of the boundary arcs of all pizza slices of $C$ is a block of $\varpi$, on which $\varpi$ acts either preserving or reversing the order.\newline
{\bf 5.} The set of indices in $\check\Lambda$ of the maximum zones in $\mathcal A(C)$ adjacent to a caravan $C$ of pizza slices of $\Lambda$ is mapped by $\varpi$
to the set of indices in $\check\Lambda'$ of the maximum zones in $\mathcal A(C')$ adjacent to the caravan $C'=\tau(C)$ of pizza slices of $\Lambda'$.
\end{proposition}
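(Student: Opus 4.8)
The plan is to derive all five statements from the construction of the correspondence $\check\tau$ in Definition \ref{def:varpi} together with the structural results about $\tau$ and caravans established in Section \ref{sec:tau}; only item {\bf 4} requires genuine work, the rest being bookkeeping.

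First I would settle {\bf 1}: by construction $\check\tau$ sends the boundary arc $\check\lambda_0=\gamma_1$ of $T$ to $\check\lambda'_0=\gamma'_1$ and $\check\lambda_{\mathcal N-1}=\gamma_2$ to $\check\lambda'_{\mathcal N-1}=\gamma'_2$; since the arcs of $\check\Lambda$ and $\check\Lambda'$ are ordered by the orientations of $T$ and $T'$, this gives $\varpi(0)=0$ and $\varpi(\mathcal N-1)=\mathcal N-1$. For {\bf 2}, Definition \ref{def:varpi} prescribes that $\check\lambda_k$ lies in a maximum zone $M_i$ of $\Lambda$ exactly when $\check\tau(\check\lambda_k)$ lies in $M'_{\sigma(i)}$, and conversely every maximum arc of $\check\Lambda'$ arises this way; combined with the fact that the maximum zones of $\Lambda$ and $\Lambda'$ are indexed by the orientations of $T$ and $T'$ and that $\sigma$ is an order isometry between them (Proposition \ref{sigma}, Remark \ref{non-arch}), this is precisely the compatibility of $\varpi$ with $\sigma$ of Remark \ref{rem:allowable}. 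For {\bf 3}, compatibility with $\upsilon$ holds because $\check\tau$ maps the boundary arcs of the $i$-th coherent pizza slice of $\check\Lambda$ to the boundary arcs of the $\upsilon(i)$-th coherent pizza slice of $\check\Lambda'$, the orders of coherent pizza slices in $\check\Lambda,\check\Lambda'$ being the same as in $\Lambda,\Lambda'$; compatibility with $s$ is the rule built into Definition \ref{def:varpi} that $\check\tau$ preserves (resp.\ reverses) the order of the two boundary arcs of a coherent pizza slice according as $\tau$ is positive (resp.\ negative) on it, so for a primary pair $(k-1,k)$ one gets $\varpi(k-1)=\varpi(k)-1$ or $\varpi(k-1)=\varpi(k)+1$; and compatibility with $\omega$ is exactly Proposition \ref{sigma-upsilon-varpi} together with Definition \ref{omega}, since by Proposition \ref{prop:order} the number of maximum zones of $\Lambda'$ preceding a coherent pizza slice equals $j_-(C)$, which is the datum defining $\omega$.

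The core of the proposition is {\bf 4}. Let $C$ be a caravan of pizza slices of $\Lambda$. By item {\bf (A)} of Corollary \ref{cor:tied} the sign of $\tau$ is constant, call it $s(C)$, on all coherent pizza slices of $C$, and by item {\bf (B)} there is a caravan $C'=\tau(C)$ of $\Lambda'$ with $\tau$ restricting to an order-preserving (if $s(C)=+$) or order-reversing (if $s(C)=-$) bijection between the coherent pizza slices of $C$ and those of $C'$. By Remark \ref{rem:caravan}, $V(C)$ is a zone in $V(T)$ and $V(C')$ is a zone in $V(T')$; consequently the arcs of $\check\Lambda$ that are boundary arcs of pizza slices of $C$ occupy a consecutive range of indices in $\check\Lambda$, and likewise for $C'$ in $\check\Lambda'$. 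Using the definition of $\check\tau$ on the primary pairs of arcs coming from the coherent pizza slices of $C$, together with the fact that the pizza zones shared by tied coherent pizza slices — coherent pizza zones and maximum zones adjacent to two coherent slices, and the zones common to tied triangles — are mapped one-to-one and with the correct sign by $\tau$ (Remark \ref{rem:coherent-zone}, Proposition \ref{prop:tied}), one checks that $\check\tau$ carries the arcs of $C$ in $\check\Lambda$ bijectively and monotonically onto the arcs of $C'$ in $\check\Lambda'$, preserving their order when $s(C)=+$ and reversing it when $s(C)=-$. Since both index sets are segments, this says the set of indices in $\check\Lambda$ of the boundary arcs of all pizza slices of $C$ is a block of $\varpi$ on which $\varpi$ acts monotonically, which contains the assertion about the coherent pizza slices of $C$. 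Finally, {\bf 5} follows from item {\bf (C)} of Corollary \ref{cor:tied}, which says $\sigma$ maps $\mathcal A(C)$ onto $\mathcal A(C')$, combined with {\bf 2}: $\varpi$ sends the index of the arc of $\check\Lambda$ in $M_i\in\mathcal A(C)$ to the index of the arc of $\check\Lambda'$ in $M'_{\sigma(i)}\in\mathcal A(C')$.

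The hard part will be the block statement in {\bf 4}: a caravan interleaves transverse pizza slices with coherent ones, and passing from $\Lambda$ to the twin pre-pizza $\check\Lambda$ simultaneously deletes non-essential arcs and inserts twin arcs, so one must verify that these modifications are local to the caravan — that is, that the arcs associated with $C$ remain consecutive in $\check\Lambda$ and the sign of $\check\tau$ is globally consistent along $C$. This is precisely where the zone property of $V(C)$ from Remark \ref{rem:caravan} and the "tied" analysis of Proposition \ref{prop:tied} do the work.
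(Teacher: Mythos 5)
Your proof is correct and follows essentially the same route as the paper's (very terse) argument: properties \textbf{1}--\textbf{3} read off Definition \ref{def:varpi}, property \textbf{4} reduces to Proposition \ref{prop:order} and the compatibility of $\varpi$ with $\tau$, $\upsilon$, $\omega$, and property \textbf{5} reduces to the fact that $\sigma$ carries $\mathcal A(C)$ to $\mathcal A(C')$. You invoke Corollary \ref{cor:tied}, Remark \ref{rem:caravan} and Proposition \ref{prop:tied} to supply the detail behind the paper's one-line justifications, and you correctly flag the only genuinely nontrivial point — that passing from $\Lambda$ to $\check\Lambda$ (deleting non-essential arcs, inserting twins) keeps the index set of a caravan a segment and keeps the sign of $\check\tau$ constant along it, which is exactly what Proposition \ref{prop:tied} and the zone property of $V(C)$ guarantee.
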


\begin{proof} Properties {\bf 1} and {\bf 2} are part of Definition \ref{def:varpi} of the permutation $\varpi$.\newline
Property {\bf 3} follows from Definition \ref{def:varpi}, since the action of $\varpi$ on the primary pairs of indices of $\check\Lambda$
 is compatible with the action of $\tau$ on the boundary arcs of coherent pizza slices of $\Lambda$, while the permutation $\upsilon$,
 sign function $s$ and combined characteristic permutation $\omega$ are defined by that action of $\tau$.\newline
Property {\bf 4} follows from Proposition \ref{prop:order}, where this property is stated for the permutation $\upsilon$, and
from compatibility of $\varpi$ with $\tau$, $\upsilon$ and $\omega$.\newline
Property {\bf 5} follows from the normal embedding property of $T$ and $T'$.
\end{proof}

\begin{remark}\label{varpi-isometry}\normalfont
Let $(T,T')$ be a normal pair of H\"older triangles with the minimal pizzas $\Lambda$ and $\Lambda'$
on $T$ and $T'$ associated with the distance functions $f(x)=dist(x,T')$ and $g(x')=dist(x',T)$, and let $\check\Lambda$ and $\check\Lambda'$ be the corresponding twin pre-pizzas.
Then the map $\check\lambda_k\mapsto \check\lambda'_{\varpi(k)}$
from the set $\{\check\lambda_k\}$ of arcs of $\check\Lambda$ to the set $\{\check\lambda'_{k'}\}$ of arcs of $\check\Lambda'$ induced by $\varpi$
is an isometry with respect to the metric
$\xi$ (see Definition \ref{tord}) defined by the tangency order: $tord(\check\lambda'_{\varpi(i)},\check\lambda'_{\varpi(j)})=tord(\check\lambda_i,\check\lambda_j)$.
\end{remark}

\begin{proposition}\label{prop:varpi2} Let $(T,T')$ be a normal pair of H\"older triangles with the minimal pizzas $\Lambda$ and $\Lambda'$
on $T$ and $T'$ associated with the distance functions $f(x)=dist(x,T')$ and $g(x')=dist(x',T)$, and let $\check\Lambda$ and $\check\Lambda'$ be the corresponding twin pre-pizzas.\newline
If $(k'-1,k')$ is a secondary pair of indices of $\check\Lambda'$,
where $k'-1=\varpi(i)$ and $k'=\varpi(j)$, then $\check\beta'_{k'}=tord(\check\lambda_i,\check\lambda_j)$,
and one of the following properties holds:\newline
$\mathbf{(A)}$ $(k'-1,k')$ is a twin pair of indices of $\check\Lambda'$ and $\check q_i=\check q_j=tord(\check\lambda_i,\check\lambda_j)$.\newline
There are three subcases of $\mathbf{(A)}$:\newline
$\mathbf{(A_1)}$ $1<k'<\mathcal N-1,\;
(k'-2,k'-1)$ and $(k',k'+1)$ are primary pairs of indices of $\check\Lambda'$,
\begin{equation}\label{maxtwinprime}
\max(\check\mu'_{k'-1}(\check q'_{k'-1}),\check\mu'_{k'+1}(\check q'_{k'}))=\check\beta'_{k'},\quad\check q'_{k'-1}=\check q'_{k'}<\max(\check q'_{k'-2},\check q'_{k'+1}),
\end{equation}
the arcs $\check\lambda_i$ and $\check\lambda_j$ belong either to two transverse pizza zones of $\Lambda$ which are not maximum zones or
to the same transverse pizza zone of $\Lambda$ which is not a maximum zone.\newline
$\mathbf{(A_2)}$ $k'=1,\; i=0,\; (1,2)$ is a primary pair of indices of $\check\Lambda'$,
\begin{equation}\label{maxtwinprimeB}
\check\mu'_2(\check q'_1)=\check\beta'_1,\quad\check q'_0=\check q'_1<\check q'_2,
\end{equation}
the arc $\gamma_1$ is a transverse boundary arc of $T$ which is not a maximum zone,
the arc $\check\lambda_j$ belongs either to a transverse pizza zone of $\Lambda$ which is not a maximum zone
or to the $\check q_0$-order zone of $f$ containing $\gamma_1$.\newline
$\mathbf{(A_3)}$ $k'=\mathcal N-1,\; j=\mathcal N-1,\; (\mathcal N-3,\mathcal N-2)$ is a primary pair of indices of $\check\Lambda'$,
\begin{equation}\label{maxtwinprimeA}
\check\mu'_{\mathcal N-2}(\check q'_{\mathcal N-2})=\check\beta'_{\mathcal N-1},\quad\check q'_{\mathcal N-2}=\check q'_{\mathcal N-1}<\check q'_{\mathcal N-3},
\end{equation}
the arc $\gamma_2$ is a transverse boundary arc of $T$ which is not a maximum zone,
the arc $\check\lambda_i$ belongs either to a transverse pizza zone of $\Lambda$ which is not a maximum zone
or to the $\check q_{\mathcal N-1}$-order zone of $f$ containing $\gamma_2$.\newline
$\mathbf{(B)}$ $T(\check\lambda'_{k'-1},\check\lambda'_{k'})$ is a transverse pizza slice of $\Lambda'$,
$\check q_i\ne\check q_j$ and $\min(\check q_i,\check q_j)=tord(\check\lambda_i,\check\lambda_j)$.\newline
$\mathbf{(C)}$ $(k'-1,k')$ is a gap pair of indices of $\check\Lambda'$ and
$\min(\check q_i,\check q_j)>tord(\check\lambda_i,\check\lambda_j)$.
\end{proposition}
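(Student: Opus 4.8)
The plan is to deduce Proposition~\ref{prop:varpi2} from the structural description of secondary pairs in Remark~\ref{rem:twin-pre-pizza}, applied to the twin pre-pizza $\check\Lambda'$ on $T'$, and then transported back to $\check\Lambda$ through the permutation $\varpi$. Two elementary facts about $\varpi$ do all the work. First, by Remark~\ref{varpi-isometry} the map $\check\lambda_k\mapsto\check\lambda'_{\varpi(k)}$ is an isometry for the tangency-order metric, so with $k'-1=\varpi(i)$ and $k'=\varpi(j)$ one gets at once
\[
\check\beta'_{k'}=tord(\check\lambda'_{k'-1},\check\lambda'_{k'})=tord(\check\lambda_i,\check\lambda_j),
\]
which is the first assertion of the proposition. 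Second, I would check that $\varpi$ also preserves the orders of the distance functions, $\check q'_{\varpi(k)}=\check q_k$ for all $k$. This follows from the construction of $\check\tau$ in Definition~\ref{def:varpi}: every arc of $\check\Lambda$ is either a boundary arc of $T$, a maximum arc, or a boundary arc of a coherent pizza slice of $\check\Lambda$; on maximum arcs $\check\tau$ restricts to $\sigma$, where $\bar q_i=\bar q'_{\sigma(i)}$ by Proposition~\ref{sigma}; on boundary arcs of coherent pizza slices it is governed by the $\tau$-correspondence between coherent pizza slices, which preserves the endpoint orders $q_{\ell-1},q_\ell$ and the toppings $Q_\ell,\mu_\ell$ by Proposition~\ref{pizzaslice-oriented} and Remark~\ref{rem:tau} (this also covers the interior twin arcs added in items~2 and~3 of Definition~\ref{def:twin-pre-pizza}, whose order equals that of the adjacent boundary arc of a coherent pizza slice); and $\gamma_1,\gamma_2$ go to $\gamma'_1,\gamma'_2$, whose orders agree by~(\ref{tord-tord}).

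With these two facts in hand, I would apply Remark~\ref{rem:twin-pre-pizza}, read for $\check\Lambda'$ on $T'$, to the secondary pair $(k'-1,k')$: it satisfies exactly one of the properties (A), (B), (C) stated there, with $\check\lambda_\bullet,\check q_\bullet,\check\beta_\bullet$ replaced by $\check\lambda'_\bullet,\check q'_\bullet,\check\beta'_\bullet$, and with the three sub-cases (\ref{maxtwin}), (\ref{maxtwinB}), (\ref{maxtwinA}) becoming (\ref{maxtwinprime}), (\ref{maxtwinprimeB}), (\ref{maxtwinprimeA}). Feeding $\check q'_{\varpi(k)}=\check q_k$ into these relations turns the inequalities on $\check q'_{k'\pm2}$ into those of Proposition~\ref{prop:varpi2}, and turns ``$\check q'_{k'-1}=\check q'_{k'}=\check\beta'_{k'}$'', ``$\check q'_{k'-1}\neq\check q'_{k'}$ with $\check\beta'_{k'}=\min$'', and ``$\check\beta'_{k'}<\min$'' into the corresponding statements about $\check q_i,\check q_j$. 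For the assertions on which pizza zones of $\Lambda$ contain $\check\lambda_i,\check\lambda_j$ in case (A): by definition of twin arcs, $\check\lambda'_{k'-1}$ and $\check\lambda'_{k'}$ are boundary arcs of coherent pizza slices of $\check\Lambda'$ lying in a transverse pizza zone of $\tilde\Lambda'$ that is not a maximum zone; pulling back by $\varpi$ (which sends boundary arcs of coherent pizza slices to boundary arcs of coherent pizza slices) and using Remark~\ref{rem:coherent-zone} together with Proposition~\ref{sigma=tau}, a boundary arc of a coherent pizza slice lying in a maximum zone (resp.\ in a coherent pizza zone) must correspond under $\tau$ to one lying in a maximum zone (resp.\ in a coherent pizza zone). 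Hence $\check\lambda_i,\check\lambda_j$ cannot lie in a maximum zone or a coherent pizza zone of $\Lambda$, so they lie in transverse pizza zones of $\Lambda$ that are not maximum zones — in the same such zone when they are twin arcs of $\check\Lambda$, in two such zones otherwise; the endpoint sub-cases $(A_2)$ and $(A_3)$ are handled by the same argument together with $\varpi(0)=0$ and $\varpi(\mathcal N-1)=\mathcal N-1$, which force one of $\check\lambda_i,\check\lambda_j$ to be a boundary arc of $T$.

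The main obstacle will be the bookkeeping in case (A): a twin pair of $\check\Lambda'$ need not pull back to a twin pair of $\check\Lambda$, so one must verify carefully that $\varpi$ nevertheless sends the two boundary arcs of the relevant coherent pizza slices of $\check\Lambda'$ to boundary arcs of coherent pizza slices of $\check\Lambda$ with matching order, sign, and toppings, and then pin down exactly which pizza zones of $\Lambda$ they fall into using only the $\sigma$- and $\tau$-compatibility of $\varpi$. The endpoint sub-cases $(A_2)$ and $(A_3)$, where one partner is a transverse boundary arc of $T$ that is not a maximum zone, need a separate short check that the remaining partner may indeed land in the $\check q_0$-order zone (resp.\ $\check q_{\mathcal N-1}$-order zone) of $f$ containing $\gamma_1$ (resp.\ $\gamma_2$), which is where the twin arc added in item~2 (resp.\ item~3) of Definition~\ref{def:twin-pre-pizza} sits.
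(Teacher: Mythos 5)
Your proof follows essentially the same route as the paper's: apply Remark~\ref{rem:twin-pre-pizza} to the twin pre-pizza $\check\Lambda'$, and transport everything back through the isometry of Remark~\ref{varpi-isometry}. The paper's own proof is only a two-sentence pointer to exactly these two remarks; you have filled in the details — in particular, you spell out the order-preservation fact $\check q'_{\varpi(k)}=\check q_k$ (implicit in the paper via Proposition~\ref{sigma}, Proposition~\ref{pizzaslice-oriented}, and Remark~\ref{rem:tau}) and the $\sigma$- and $\tau$-compatibility argument pinning down which pizza zones of $\Lambda$ contain $\check\lambda_i,\check\lambda_j$ in case~(A), neither of which the paper makes explicit.
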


\begin{proof}
The statement follows from Remark  \ref{rem:twin-pre-pizza} applied to $\check\Lambda'$ instead of $\check\Lambda$, and from Remark \ref{varpi-isometry}.
In particular, (\ref{maxtwinprime}),  (\ref{maxtwinprimeB}) and (\ref{maxtwinprimeA}) follow from (\ref{maxtwin}), (\ref{maxtwinB}) and (\ref{maxtwinA}),
respectively.
Properties $\mathbf{(B)}$ and $\mathbf{(C)}$ follow from the corresponding properties in Remark \ref{rem:twin-pre-pizza}.
\end{proof}

The following theorem is an analog of Theorem \ref{beta-block}.

\begin{theorem}\label{thm:varpi}
Let ${\mathcal B}_{ij}=B_\varpi(\{\check\lambda_i,\check\lambda_j\})$ be the minimal block of $\varpi$ containing $\{i,j\}$,
and let ${\mathcal B}'_{ij}=B_{\varpi^{-1}}(\{\check\lambda'_i,\check\lambda'_j\})$ be the minimal block of $\varpi^{-1}$ containing $\{i,j\}$. Then
\begin{equation}\label{dart:blocks}
tord(\check\lambda_i,\check\lambda_j)\le tord(\check\lambda_k,\check\lambda_l)\; \text{for}\; \{k,l\}\subset {\mathcal B}_{ij},
\end{equation}
\begin{equation}\label{dart:blocks-prime}
tord(\check\lambda'_i,\check\lambda'_j)\le tord(\check\lambda'_k,\check\lambda'_l)\; \text{for}\; \{k,l\}\subset {\mathcal B}'_{ij}.
\end{equation}
\end{theorem}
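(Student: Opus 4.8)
The plan is to mirror the proof of Theorem~\ref{beta-block} essentially verbatim, replacing the extended permutation $\pi$ of $[n]$ by the permutation $\varpi$ of $[\mathcal N]$, the isometry relating the two supporting families (cf.\ (\ref{tord-lambda})) by the isometry statement of Remark~\ref{varpi-isometry}, and the block calculus of Lemma~\ref{lem:block} by its counterpart for $\varpi$ on $[\mathcal N]$ (Remark~\ref{general-block-remark}). For a nonempty subset $I\subseteq[\mathcal N]$ I would set $T_I=T(\check\lambda_a,\check\lambda_b)\subset T$ and $T'_I=T(\check\lambda'_a,\check\lambda'_b)\subset T'$, where $a=\min I$ and $b=\max I$; note $T_I=T_{S(I)}$, where $S(I)$ is the minimal segment of $[\mathcal N]$ containing $I$. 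Since $T$ and $T'$ are normally embedded, $\mu(T_I)=tord(\check\lambda_a,\check\lambda_b)$ and $\mu(T'_I)=tord(\check\lambda'_a,\check\lambda'_b)$, and any two arcs contained in a normally embedded H\"older triangle have tangency order at least its exponent.

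The key step is the analog of the main claim in the proof of Theorem~\ref{beta-block}: for every nonempty $I\subseteq[\mathcal N]$ one has $\mu(T_{\mathfrak S(I)})=\mu(T_I)$, where $\mathfrak S(I)=\varpi^{-1}(S(\varpi(S(I))))$. Write $\beta=\mu(T_I)$. The inclusion $I\subseteq\mathfrak S(I)$ gives $T_I\subseteq T_{\mathfrak S(I)}$, hence $\mu(T_{\mathfrak S(I)})\le\beta$. For the reverse inequality, put $S'=S(\varpi(S(I)))$; its endpoints are $\varpi(c_1)$ and $\varpi(c_2)$ for suitable $c_1,c_2\in S(I)$, so by Remark~\ref{varpi-isometry} one has $\mu(T'_{S'})=tord(\check\lambda'_{\varpi(c_1)},\check\lambda'_{\varpi(c_2)})=tord(\check\lambda_{c_1},\check\lambda_{c_2})\ge\mu(T_I)=\beta$, since $\check\lambda_{c_1},\check\lambda_{c_2}$ lie in $T_I$. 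The endpoints of $\mathfrak S(I)=\varpi^{-1}(S')$ are mapped by $\varpi$ into $S'$, so the corresponding arcs of $\check\Lambda'$ lie in $T'_{S'}$, whence their tangency order is at least $\mu(T'_{S'})\ge\beta$; transporting back by the isometry of Remark~\ref{varpi-isometry} yields $\mu(T_{\mathfrak S(I)})=tord(\check\lambda_{\min\mathfrak S(I)},\check\lambda_{\max\mathfrak S(I)})\ge\beta$, which proves the claim.

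Iterating gives $\mu(T_{\mathfrak S^k(I)})=\mu(T_I)$ for all $k$, and by Remark~\ref{general-block-remark} the sequence $\mathfrak S^k(I)$ stabilizes at $B_\varpi(I)$, so $\mu(T_{B_\varpi(I)})=\mu(T_I)$. Applying this to $I=\{i,j\}$ gives $\mu(T_{\mathcal B_{ij}})=tord(\check\lambda_i,\check\lambda_j)$; then for any $\{k,l\}\subseteq\mathcal B_{ij}$ the arcs $\check\lambda_k,\check\lambda_l$ lie in $T_{\mathcal B_{ij}}$, so $tord(\check\lambda_k,\check\lambda_l)\ge\mu(T_{\mathcal B_{ij}})=tord(\check\lambda_i,\check\lambda_j)$, which is (\ref{dart:blocks}). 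Inequality (\ref{dart:blocks-prime}) follows by running the same argument with $\varpi^{-1}$ in place of $\varpi$ — it is again an isometry between the arcs of $\check\Lambda'$ and $\check\Lambda$ by Remark~\ref{varpi-isometry} — and with $\mathcal B'_{ij}=B_{\varpi^{-1}}(\{i,j\})$.

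The only delicate point, and where I expect to spend most of the effort, is the bookkeeping: carefully tracking the endpoints of the segments produced by $S(\cdot)$ and of their $\varpi$-preimages, so that Remark~\ref{varpi-isometry} may legitimately be invoked at each stage, and checking that this isometry indeed holds for the twin pre-pizza families (which it does, being precisely the content of Remark~\ref{varpi-isometry}). Once this is in place the argument is formally identical to the totally transverse case treated in Theorem~\ref{beta-block}, so no genuinely new geometric input beyond normal embedding of $T$ and $T'$ and the isometry property of $\varpi$ is needed.
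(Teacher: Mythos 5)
Your proof is correct and follows exactly the approach the paper intends: the paper gives no explicit proof of Theorem~\ref{thm:varpi} (Remark~\ref{rem:dart} only notes it reduces to Theorem~\ref{beta-block} in the totally transverse case), and your argument faithfully transplants the proof of Theorem~\ref{beta-block} to the twin pre-pizza setting by substituting the $\varpi$-isometry of Remark~\ref{varpi-isometry} for the supporting-family isometry~(\ref{tord-lambda}), and the $\varpi$-version of Lemma~\ref{lem:block}(v) via Remark~\ref{general-block-remark} for the $\pi$-block calculus. Your write-up is in fact more careful than the paper's terse proof of Theorem~\ref{beta-block}, supplying the details it leaves implicit: the two-sided comparison $\mu(T_{\mathfrak S(I)})=\mu(T_I)$ via normal embedding of $T$ and the round-trip through $T'_{S'}$, and the $\mathfrak S^k$-iteration stabilizing at $B_\varpi(I)$.
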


\begin{remark}\label{rem:dart}\normalfont
If a pair $(T,T')$ is totally transverse, then ${\mathcal N}=n$, $\varpi=\pi$, the sets of arcs $\{\check\lambda_j\}$ in Definitions
\ref{def:varpi} and \ref{supporting-family} are the same,
and the inequalities (\ref{dart:blocks}) and (\ref{dart:blocks-prime})
are the same as the inequalities (\ref{beta:blocks}) and (\ref{beta:blocks-prime}).
\end{remark}

Let $\Lambda$ be a minimal pizza on a normally embedded
H\"older triangle $T$ associated with a non-negative Lipschitz function $f(x)$ on $T$, and let $\check\Lambda=\{\check T_k\}_{k=0}^{\mathcal N-1}$,
where $\check T_k=T(\check\lambda_{k-1},\check\lambda_k)$,  be the corresponding twin pre-pizza on $T$.
Let $\varpi:[\mathcal N]\to [\mathcal N]$, where $[\mathcal N]=\{0,\ldots,\mathcal N-1\}$, be a permutation, such that $\varpi(0)=0,\;\varpi(\mathcal N-1)=\mathcal N-1$,
and each primary pair $(k-1,k)$ of indices of $\check\Lambda$, corresponding to a coherent pizza slice $\check T_k$, is mapped by $\varpi$ to a pair of consecutive indices $(\varpi(k-1),\varpi(k))$, either $\varpi(k-1)=\varpi(k)-1$ or $\varpi(k-1)=\varpi(k)+1$.
We are going to formulate \emph{admissibility conditions} for the permutation $\varpi$, necessary and sufficient for the existence of
a normal pair $(T,T')$ of H\"older triangles such that $f$ is contact equivalent to the distance function $dist(x,T')$ on $T$ and
the permutation $\varpi$ is compatible with the permutations $\sigma$ and $\upsilon$, sign function $s$ and
combined characteristic permutation $\omega$ of the pair $(T,T')$.
If such a pair $(T,T')$ exists, then it is unique up to outer Lipschitz equivalence, due to Theorem \ref{complete}.

\begin{definition}\label{def:admissible}\normalfont
Let $\Lambda$ be a minimal pizza on a normally embedded H\"older triangle $T$,
associated with a non-negative Lipschitz function $f$ on $T$.
Let $m$ and $L$ be the number of maximum zones and coherent pizza slices of $\Lambda$, respectively.
Let $\sigma$ and $\upsilon$ be allowable permutations of $[m]=\{1,\ldots,m\}$ and $[L]=\{1,\ldots,L]$, respectively,
and let $s:[L]\to\{+,-\}$ be an allowable sign function (see Definition \ref{def:allowable}).
Let $\omega$ be the unique permutation of the set $[K]=\{1,\ldots,K\}$, where $K=m+L$, compatible
with $(\sigma,\upsilon,s)$ (see Proposition \ref{prop:allowable}).
Let $\check\Lambda=\{\check T_k\}_{k=1}^{\mathcal N}$, where $\check T_k=T(\check\lambda_{k-1},\check\lambda_k)$,
be the twin pre-pizza on $T$ corresponding to $\Lambda$.
A permutation $\varpi$ of the set $[\mathcal N]=\{0,\ldots,\mathcal N-1\}$, such that $\varpi(0)=0$ and $\varpi(\mathcal N-1)=\mathcal N-1$,
is called \emph{admissible} with respect to $\Lambda,\;\sigma,\;\upsilon$ and $s$,
if the following \emph{admissibility conditions} are satisfied.\newline
{\bf 1.} The permutation $\varpi$ is determined by $\Lambda\;\sigma,\;\upsilon$ and $s$ (See Definition \ref{determined-varpi}).\newline
{\bf 2.} The block inequalities (\ref{dart:blocks}) for $\varpi$ are satisfied.
\end{definition}

\begin{theorem}\label{general-blocks}
Let $\Lambda$ be a minimal pizza on a normally embedded H\"older triangle $T=T(\gamma_1,\gamma_2)$ associated
with a non-negative Lipschitz function $f(x)$ on $T$.
Let $m$ and $L$ be the numbers of maximum zones and coherent pizza slices of $\Lambda$, respectively, and let $(\sigma,\;\upsilon,\; s)$,
where $\sigma$ is a permutation of the set $[m]=\{1,\ldots,m\}$, $\upsilon$ is a permutation of the set $[L]=\{1,\ldots,L\}$ and $s:[L]\to\{+,-\}$
is a sign function on $[L]$, be an allowable triple (see Definition \ref{def:allowable}).
If $\check\Lambda=\{\check T_k\}_{k=1}^{\mathcal N}$, where $\check T_k=T(\check\lambda_{k-1},\check\lambda_k)$, is a twin pre-pizza on $T$
associated with $f$ and $\varpi$ is an admissible permutation of the set $[\mathcal N]=\{0,\ldots,\mathcal N-1\}$,
then there exists a unique, up to outer Lipschitz equivalence, normal pair $(T,T')$ of H\"older triangles,
such that the distance function $dist(x,T')$ on $T$ is Lipschitz contact equivalent to $f$, the permutation $\sigma$ is the characteristic permutation of the pair $(T,T')$, the permutation
$\upsilon$ and sign function $s$ are defined by the characteristic correspondence $\tau$ of the pair $(T,T')$ (see Definition \ref{upsilon}) and
$\varpi$ is associated with $\sigma$ and $\tau$ as in Definition \ref{def:varpi}.
\end{theorem}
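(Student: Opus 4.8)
The plan is to imitate the proof of Theorem \ref{transverse-blocks}: realize $T'$ by gluing together, along the twin pre-pizza $\check\Lambda$ on $T$, a coherent piece for each coherent pizza slice of $\Lambda$ and a model piece for each secondary pair of indices of $\check\Lambda$, and then use the block inequalities (\ref{dart:blocks}) together with Proposition \ref{combinatorialLNE-prop} to conclude that $T'$ is normally embedded. First I would settle uniqueness: if $(T,T')$ and $(T,T'')$ both satisfy the conclusion, then the minimal pizzas on $T$ are combinatorially equivalent (both distance functions are contact equivalent to $f$, see Proposition \ref{contact_equiv}), the characteristic permutations $\sigma$ coincide, and the permutations $\upsilon$, the sign functions $s$, and the permutations $\varpi$ coincide. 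As in Remark \ref{rem:tau}, Definition \ref{def:varpi} and Remark \ref{varpi-isometry}, the arcs and all toppings of the twin pre-pizzas $\check\Lambda'$ and $\check\Lambda''$ are recovered from $\check\Lambda$ and $\varpi$ via the tangency-order isometry $\check\lambda'_{\varpi(k)}\leftrightarrow\check\lambda_k$, so the pizzas $\Lambda'$ and $\Lambda''$ obtained from them by collapsing twin pairs and restoring non-essential arcs are combinatorially equivalent. Hence the $\sigma\tau$-pizzas of $(T,T')$ and $(T,T'')$ are combinatorially equivalent, and Theorem \ref{complete} produces an orientation-preserving outer bi-Lipschitz homeomorphism $T\cup T'\to T\cup T''$ carrying $T'$ to $T''$.

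For existence I would normalize $T=T_\beta\subset\R^2_{u,v}$ and parameterize arcs by $u$. Adjoining a coordinate $z$, I set $\check\lambda'_{\varpi(k)}$ to be the graph $\{(u,v)\in\check\lambda_k,\ z=u^{\check q_k}\}$ for every $k\in[\mathcal N]$; since $f$ is Lipschitz, $\check\lambda'_{\varpi(k)}\leftrightarrow\check\lambda_k$ is an isometry for the tangency order, and, because $\varpi$ is admissible and hence compatible with $\sigma$, $\upsilon$, $s$ and $\omega$ (Definitions \ref{determined-varpi} and \ref{def:admissible}), these arcs are placed consistently with the maximum zones, the coherent pizza slices, and their orientations of the pizza $\Lambda'$ to be built. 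Adjoining one further coordinate $w_{k'}$ for each index $k'$, I build the $k'$-th piece $\check T'_{k'}=T(\check\lambda'_{k'-1},\check\lambda'_{k'})$ inside $\R^4_{u,v,z,w_{k'}}$. When $(k'-1,k')$ is a primary pair of $\check\Lambda'$, corresponding through $\varpi$, $\tau$ and $s$ to a coherent pizza slice $T_\ell$ of $\Lambda$, the piece is a copy of the graph $\Gamma_\ell$ of $f|_{T_\ell}$, raised to $w_{k'}=u^{\check\beta'_{k'}}$ over its interior and joined by thin collars to the already placed boundary arcs, glued exactly as in the bottom row of diagram (\ref{diagram}), using \cite[Proposition 3.2 and Theorem 3.20]{BG}. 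When $(k'-1,k')$ is a secondary pair, the piece is the triangle $T'_{\check\beta'_{k'}}$ of the $(\check q'_{k'-1},\check q'_{k'},\check\beta'_{k'})$-model of Definition \ref{model}, with $\check\beta'_{k'}=tord(\check\lambda_{\varpi^{-1}(k'-1)},\check\lambda_{\varpi^{-1}(k')})$; a gap pair yields $\check\beta'_{k'}<\min(\check q'_{k'-1},\check q'_{k'})$, whose two pizza slices restore the missing non-essential arc, and a twin pair yields a degenerate model (cf.\ Remark \ref{model-remark}). Finally $T'=\bigcup_{k'}\check T'_{k'}$, with boundary arcs $\gamma'_1=\check\lambda'_0$ and $\gamma'_2=\check\lambda'_{\mathcal N-1}$.

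Next I would prove that $T'$ is normally embedded. Each piece $\check T'_{k'}$ is normally embedded: the secondary pieces are models (Remark \ref{model-remark}), and the primary pieces are normally embedded since $\Gamma_\ell$ is normally embedded and $(T_\ell,\Gamma_\ell)$ is a normal pair by \cite{BG}. Because $\check\lambda'_{\varpi(k)}\leftrightarrow\check\lambda_k$ is an isometry for $tord$ and $T$ is normally embedded, the block inequalities (\ref{dart:blocks-prime}), valid since $\varpi$ is admissible (Definition \ref{def:admissible}), yield the combinatorial normal embedding relations (\ref{check-LNE1}) among the arcs $\check\lambda'_{k'}$; and a coordinatewise argument identical to Step 3 of Theorem \ref{transverse-blocks} — exploiting that $w_{k'}$ vanishes off $\check T'_{k'}$, so that the tangency of an arc of $\check T'_{l'}$ to $\check T'_{k'}$ is bounded by its tangency to the projection of $\check T'_{k'}$ to $\R^3_{u,v,z}$ — shows that any two pieces $\check T'_{k'}$ and $\check T'_{l'}$ are transverse in the sense of Definition \ref{def:transverse}. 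Proposition \ref{combinatorialLNE-prop} then gives that $T'$ is normally embedded.

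It remains to verify that $(T,T')$ is a normal pair with the prescribed invariant. On each coherent slice $T_\ell$ the distance $dist(x,T')$ equals $f|_{T_\ell}$ up to equivalence, since $(T_\ell,\check T'_{\tau(\ell)})$ is by construction the realization of the coherent pair $(T_\ell,\Gamma_\ell)$; the block inequalities (\ref{dart:blocks}) and the coordinate separation of the pieces guarantee that no other part of $T'$ lies asymptotically closer to $x$, which is the analogue of Lemma \ref{count-order1} and is exactly where the compatibility of $\varpi$ with the sign function $s$ and with the combined characteristic permutation $\omega$ (Definition \ref{determined-varpi}), controlling which maximum zones precede a given caravan, is used. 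On the secondary pieces the distance behaves as in the model. Hence the minimal pizza on $T$ for $dist(x,T')$ is combinatorially equivalent to $\Lambda$, so contact equivalent to $f$ by Proposition \ref{contact_equiv}; the characteristic permutation is $\sigma$ by the placement of the maximum arcs and Proposition \ref{sigma}; $\tau$ induces the permutation $\upsilon$ and the sign function $s$ by the chosen orientations of the primary pieces; and $\varpi$ is associated with $\sigma$ and $\tau$ as in Definition \ref{def:varpi} by construction. The main obstacle is precisely this last step: showing that the \emph{globally} defined distance function $dist(x,T')$ restricts on each piece to the intended local model and, symmetrically, that $dist(x',T)$ on $T'$ is fully controlled, without the coherent pieces creating spurious tangencies. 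This requires working through the three cases (A), (B), (C) of secondary pairs in Remark \ref{rem:twin-pre-pizza} and using the full strength of the block inequalities (\ref{dart:blocks}) and of the admissibility of $\varpi$.
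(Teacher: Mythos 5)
Your overall architecture coincides with the paper's: uniqueness is reduced to Theorem \ref{complete} by observing that $\Lambda$, $\sigma$, $\upsilon$, $s$ and $\varpi$ determine the $\sigma\tau$-pizza up to combinatorial equivalence; existence is obtained by placing the arcs $\check\lambda'_{\varpi(k)}$ as graphs $z=u^{\check q_k}$ over $\check\lambda_k$, filling in coherent pieces over the coherent pizza slices and model pieces (Definition \ref{model}) in fresh coordinates $w_{k'}$ over the secondary pairs, and invoking the block inequalities (\ref{dart:blocks}) together with Proposition \ref{combinatorialLNE-prop} to prove that $T'$ is normally embedded. This is exactly the paper's Steps 1--5.

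There is, however, one concrete step that fails as written: your construction of the coherent pieces. You take ``a copy of the graph $\Gamma_\ell$ of $f|_{T_\ell}$, raised to $w_{k'}=u^{\check\beta'_{k'}}$ over its interior and joined by thin collars to the already placed boundary arcs.'' Here $\check\beta'_{k'}=tord(\check\lambda'_{k'-1},\check\lambda'_{k'})=\beta_\ell$, and by Definition \ref{pizza-decomp} every $q\in Q_\ell$ satisfies $q\ge\mu_\ell(q)\ge\beta_\ell$, so $f(x)\lesssim u^{\beta_\ell}$ on all of $T_\ell$. Consequently the distance from a point of $T_\ell$ to your raised piece (and, on generic arcs, to its collars as well) has order exactly $\beta_\ell$, constant on $T_\ell$. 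For a genuinely coherent slice the width function $\mu_\ell$ is non-constant with $\mu_\ell(q)<q$ somewhere, so $ord_\gamma f>\beta_\ell$ on generic arcs of $T_\ell$; your raised piece therefore turns $T_\ell$ into a transverse slice for $dist(\cdot,T')$, the resulting minimal pizza on $T$ is not combinatorially equivalent to $\Lambda$, and $\tau$ does not act as prescribed. The paper avoids this by not lifting the coherent pieces into any $w$-coordinate at all: in Step 2 of its proof the coherent piece is placed directly in $\R^3_{u,v,z}$ as the translate of the graph of the standard pizza slice $\psi_{\check\beta_k,\check q_{k-1},\check q_k,\check\mu_k}$ over $\check T_k$, so its distance to $T_\ell$ is comparable to $f|_{T_\ell}$ by construction; the coordinates $w_{k'}$ are reserved for the secondary (model) pieces only, and the separation arguments in Steps 4 and 5 rely precisely on the fact that $w_{k'}$ vanishes on every coherent piece. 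Apart from this, your plan is sound, but the verification you yourself flag as the ``main obstacle'' --- that the globally defined distance function restricts on each piece to the intended local model --- is the substance of the paper's Step 5 and still has to be carried out along the lines you indicate.
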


\begin{proof}
The proof is similar to the proof of Theorem \ref{transverse-blocks}, except Step 2 where coherent pizza slices of $\check\Lambda'$ are defined.
We may assume that $T=T_\beta$ is a standard $\beta$-H\"older triangle (\ref{Formula:Standard Holder triangle}) in $\R^2_{uv}$.
In particular, the arcs $\check\lambda_k$ in $T$ are germs at the origin of the graphs $\{u\ge 0,\,v=\check\lambda_k(u)\}$,
where $\check\lambda_k(u)\ge 0$ are Lipschitz functions, and each H\"older triangle $\check T_k=T(\check\lambda_{k-1},\check\lambda_k)$,
for $k=1,\ldots,\mathcal N-1$, is a $\check\beta_k$-H\"older triangle.
It follows from the non-archimedean property of the tangency order that
$tord(\check\lambda_k,\check\lambda_l)=\min_{i:k<i<l}(tord(\check\lambda_k,\check\lambda_i),tord(\check\lambda_i,\check\lambda_l))$.
We are going to construct a normally embedded $\beta$-H\"older triangle $T'$ with a
twin pre-pizza $\check\Lambda'$ on $T'$ associated with the distance function $g(x')=dist(x',T)$.

{\bf Step 1.}
We define a family of arcs $\{\check\lambda'_{k'}\subset\R^3_{u,v,z}\}$ for $k'=\varpi(k)$ as the germs
$\{(u,v)\in\check\lambda_k,\, z=u^{\check q_k}\}$, where $\check q_k=ord_{\check\lambda_k} f$.
They will be the arcs of a twin pre-pizza $\check\Lambda'$ on $T'$ associated with $g$.
Let $\check q'_{\varpi(k)}=\check q_k$ and $\check\nu'_{\varpi(k)}=\check\nu_k$ for $k=0,\ldots,\mathcal N-1$.
Note that $\check\nu_k=\check q_k$ when an arc $\check\lambda_k$ belongs to a transverse pizza zone
of $\check\Lambda$.
This implies that $\check\nu'_{k'}=\check q'_{k'}$ for $k'=\varpi(k)$ in that case.
The arcs $\gamma'_1=\check\lambda'_0$ and $\gamma'_2=\check\lambda'_{\mathcal N-1}$ will be the boundary arcs of $T'$.
Let $\check\beta'_{k'}=tord(\check\lambda'_{k'-1},\check\lambda'_{k'})$ for $k'=1,\ldots,\mathcal N-1$.
Pizza slices $\check T'_{k'}$ of $\check\Lambda'$ will be $\check\beta'_{k'}$-H\"older triangles.

From the non-archimedean property of the tangency order we have, for $j=\varpi(i_1)$ and $k=\varpi(i_2)$, isometry
\begin{equation}\label{tord-lambda-prime}
tord(\check\lambda'_j,\check\lambda'_k)=tord(\check\lambda_{i_1},\check\lambda_{i_2}),
\end{equation}
same as (\ref{tord-lambda}) in Step 1 of the proof of Theorem \ref{transverse-blocks}.
Since $T$ is normally embedded and the arcs $\check\lambda_i$ satisfy the block inequalities (\ref{dart:blocks}), the arcs $\check\lambda'_j$ satisfy combinatorial normal
embedding inequalities (see Definition \ref{combinatorialLNE}) necessary for the H\"older triangle $T'$ to be normally embedded:
\begin{equation}\label{dart-LNE}
tord(\check\lambda'_j,\check\lambda'_l)=\min_k\left(tord(\check\lambda'_j,\check\lambda'_k),tord(\check\lambda'_k,\check\lambda'_l)\right)\;\text{for}\;j<k<l,
\end{equation}
same as (\ref{check-LNE}) in Step 1 of the proof of Theorem \ref{transverse-blocks}.

{\bf Step 2.} For each primary pair of indices $(k-1,k)$ of $\check\Lambda$ corresponding to a coherent pizza slice $\check T_k$ of
$\check\Lambda$, we define a $\check\beta_k$-H\"older triangle as follows.\newline
Consider the standard pizza slice
\begin{equation}\label{standard}
z=\psi_{\check\beta_k,\check q_{k-1},\check q_k,\check\mu_k}(u,v)
\end{equation}
(see Definition \ref{pizza_slice_standard}) on the standard $\check\beta_k$-H\"older triangle $T_{\check\beta_k}$,
then translate its graph in $\R^3_{u,v,z}$ to a graph $T(\check\lambda'_{\varpi(k-1)},\check\lambda'_{\varpi(k)})$
of a Lipschitz function over $\check T_k$ by a bi-Lipschitz
homeomorphism $H_k$ in $\R^2_{u,v}$ preserving the $u$-coordinate, such that $H_k(\{v\equiv 0\})=\check\lambda_{k-1}$ and $H_k(\{v=u^{\check\beta_k}\})=\check\lambda_k$.
It follows from Proposition \ref{prop:varpi1} that $\varpi(k-1)$ and $\varpi(k)$ are consecutive indices,
either $\varpi(k-1)=\varpi(k)-1$ or $\varpi(k-1)=\varpi(k)+1$.
The H\"older triangle $T(\check\lambda'_{k'-1},\check\lambda'_{k'})$,  where $k'=\max(\varpi(k-1),\varpi(k))$,
will be a coherent pizza slice $\check T'_{k'}$ of $\check\Lambda'$,
with the same width function $\mu(q)$ as the standard pizza slice (\ref{standard}),
and $(k'-1,k')$ will be a primary pair of indices of $\check\Lambda'$.

{\bf Step 3.} If $k'-1$ and $k'$ are consecutive indices in $[\mathcal N]$ such that $(k'-1,k')$ is not
a primary pair of indices of $\check\Lambda'$ defined in Step 2,
then $(k'-1,k')$ will be a secondary pair of indices of $\check\Lambda'$.
The arcs $\check\lambda'_{k'-1}$ and $\check\lambda'_{k'}$ are already defined in Step 1.
We define two arcs $\theta^+_{k'-1}=\{(u,v,z)\in\check\lambda'_{k'-1},\;w_{k'}=u^{\check\beta'_{k'}}\}$ and
$\theta^-_{k'}=\{(u,v,z)\in\check\lambda'_{k'},\;w_{k'}=u^{\check\beta'_{k'}}\}$ in $\R^4_{u,v,z,w_{k'}}$, then
consider the $(\check q'_{k'-1},\check q'_{k'},\beta'_{k'})$-model $(T_{\beta'_{k'}},T'_{\beta'_{k'}})$ (see Definition \ref{model})
and define a map $h_{k'}:T'_{\beta'_{k'}}\to \R^4_{u,v,z,w_{k'}}\subset\R^{\mathcal N+2}_{u,v,z,\mathbf w}$, where $\mathbf w=(w_1,\ldots,w_{\mathcal N-1})$,
as in Step 2 of the proof of Theorem \ref{transverse-blocks}.

Let $\check T'_{k'}=T(\check\lambda'_{k'-1},\check\lambda'_{k'})=h_{k'}(T'_{\beta'_{k'}})\subset \R^{\mathcal N+2}_{u,v,z,\mathbf w}$.
There are three possible cases (see Proposition \ref{prop:varpi2}).

{\bf (A)} If $\check q'_{k'-1}=\check q'_{k'}=\beta'_{k'}$, then the arcs $\check\lambda'_{k'-1}$ and $\check\lambda'_{k'}$ will be twin arcs of
$\check\Lambda'$.

{\bf (B)} If $\check q'_{k'-1}\ne\check q'_{k'}$ and $\min(\check q'_{k'-1},\check q'_{k'})=\beta'_{k'}$,
then $\check T'_{k'}$ will be a transverse pizza slice of $\check\Lambda'$.

{\bf (C)} If $\min(\check q'_{k'-1},\check q'_{k'})>\beta'_{k'}$, then $\check T'_{k'}$ will be
the union of two transverse pizza slices, and $(k'-1,k')$ will be a gap pair of indices of $\check\Lambda'$.

{\bf Step 4.} Let $T'=\bigcup_{k'=1}^{\mathcal N} \check T'_{k'}$ be the union of all
 H\"older triangles defined in Steps 2 and 3.
 To prove that $T'$ is normally embedded, we are going to show that it is combinatorially normally embedded (see Definition \ref{combinatorialLNE}).

As in Step 3 of the proof of Theorem \ref{transverse-blocks}, we show first that each partial H\"older triangle
$\check T'_{k'}=T(\check\lambda'_{k'-1},\check\lambda'_{k'})$ is normally embedded.
For a primary pair of indices $(k'-1,k')$ of $\check\Lambda'$, the triangle $\check T'_{k'}$
is normally embedded as the graph of a Lipschitz function.
For a secondary pair of indices $(k'-1,k')$ of $\check\Lambda'$, the triangle $\check T'_{k'}$
is constructed from the $(\check q'_{k'-1},\check q'_{k'},\beta'_{k'})$-model $(T_{\beta'_{k'}},T'_{\beta'_{k'}})$ (see Definition \ref{model}),
and the same arguments as in Step 3 of the proof of Theorem \ref{transverse-blocks} show that it is normally embedded.

Next, we have to show that any two H\"older triangles $\check T'_{k'}$ and $\check T'_{l'}$, where $k'\ne l'$, are transverse.
If both pairs of indices $(k'-1,k')$ and $(l'-1,l')$ are primary, then the two H\"older triangles are transverse as the graphs of Lipschitz functions
on two subtriangles $\check T_k$ and $\check T_l$ of $T$, where $k\ne l$.\newline
If one of the two pairs of indices, say $(k'-1,k')$, is secondary and another one is primary, the proof is similar to the argument in
Step 3 of the proof of Theorem \ref{transverse-blocks}, as the variable $w_{k'}$, which is non-zero on $\check T'_{k'}$,
vanishes on the graph of a Lipschitz function $\check T'_{l'}$.\newline
Finally, if both pairs of indices are secondary, the proof is exactly the same as in Step 3 of Theorem \ref{transverse-blocks}.

{\bf Step 5.} We are going to show that the twin pre-pizza on $T$ associated with the distance function $\tilde f(x)=dist(x,T')$
is combinatorially equivalent to the twin pre-pizza $\check\Lambda$ on $T$ associated with $f$.

If $\check T_k$ is a coherent pizza slice of $\check\Lambda$, then, by construction in Step 2 of this proof,
the corresponding sub-triangle $\check T'_{k'}$ of $\lambda'$, where $k'=\max(\varpi(k-1),\varpi(k))$, is a graph of a Lipschitz function on $\check T_k$
Lipschitz contact equivalent to $f|_{\check T_k}$.
Thus the pair $(\check T_k,\check T'_{k'})$ is outer Lipschitz equivalent to the pair $(\check T_k, graph(f|_{\check T_k})$.
Let us show that $\tilde f|_{\check T_k}$ is Lipschitz contact equivalent to $f|_{\check T_k}$.

For any arc $\eta'$ of a coherent pizza slice $\check T'_{l'}$ of $\check\Lambda'$, where $l'=\max(\varpi(l-1),\varpi(l))$ and $l\ne k$,
the order of tangency of $\eta'$ with any arc $\eta$ of $\check T_k$
cannot exceed $ord_\eta f$, as $\check T'_{k'}$ and $\check T'_{l'}$
are graphs of functions contact equivalent to restrictions of the Lipschitz function $f$ on $T$ to distinct pizza slices
$\check T_k$ and $\check T_l$ of $\check\Lambda$.
The same arguments as in Step 4 of the proof of Theorem \ref{transverse-blocks} show that
the order of tangency of an arc $\eta$ of $\check T_k$ with any arc $\eta'$ of a non-coherent triangle $T'_{\ell'}$ of $\check\Lambda'$
cannot exceed  $ord_{\eta} f$,
as the variable $w_{l'}$ involved in construction of $\check T'_{l'}$ in Step 3 of this proof vanishes on $\check T'_{k'}$.\newline
Thus $f|_{\check T_k}$ is contact equivalent to restriction of $\tilde f$ to $T_k$.

The same arguments as in Step 4 of the proof of Theorem \ref{transverse-blocks} show that any non-coherent
triangle $\check T_k$ of $\check\Lambda$ is either a transverse pizza slice for the distance function $\tilde f$
or the union of two transverse pizza slices with a common minimum zone.
In both cases, $\tilde f$ restricted to $\check T_k$ is completely determined by the exponent
$\check\beta_k$ of $\check T_k$ and the orders $\check q_{k-1}$ and $\check q_k$ of $f$ on its boundary arcs.
This implies that $\tilde f$ restricted to $\check T_k$ is contact equivalent to $f$ on all triangles $\check T_k$ of $\check\Lambda$.

This completes the proof of Theorem \ref{general-blocks}.
\end{proof}

\end{document}